\documentclass[reqno, 11pt]{amsart}
\usepackage{mathtools}
\usepackage{amsmath}
\usepackage{amssymb}
\usepackage{yhmath}
\usepackage{graphicx}
\usepackage{ mathrsfs }
\usepackage{bbm}
\usepackage{xcolor}
\usepackage{tikz-cd}
\usepackage{tikz}
\usetikzlibrary{patterns}

\setcounter{tocdepth}{1}
\DeclareMathAlphabet{\mathpzc}{OT1}{pzc}{m}{it}

\usepackage{thmtools}
\usepackage{thm-restate}

\usepackage{caption}




\newtheorem{thm}{Theorem}[section]
\newtheorem{lem}[thm]{Lemma}
\newtheorem{prop}[thm]{Proposition}
\newtheorem*{prop*}{Proposition}
\newtheorem{cor}[thm]{Corollary}
\newtheorem*{cor*}{Corollary}
\newtheorem*{lem*}{Lemma}

\numberwithin{thm}{section}

\theoremstyle{definition}

\newtheorem{Def}[thm]{Definition}


\newtheorem{Rmk}[thm]{Remark}
\newtheorem{ex}[thm]{Example}


\numberwithin{equation}{section}

\def\CC{{\mathbb C}}

\def\NN{{\mathbb N}}

\def\RR{{\mathbb R}}

\def\scrC{{\mathcal C}}

\def\scrF{{\mathcal F}}
\def\scrH{{\mathcal H}}

\def\scrS{{\mathcal S}}

\def\scrU{{\mathcal U}}

\def\e{\mathrm{e}}
\def\i{\mathrm{i}}

\def\dim{\operatorname{dim}}

\def\G{\operatorname{G{}}}
\def\L{\operatorname{L{}}}

\def\SO{\operatorname{SO}}

\def\PSL{\operatorname{PSL}}

\def\supp{\operatorname{supp}}

\def\vol{\operatorname{vol}}


\def\Onder#1#2#3#4#5{#1 \setbox0=\hbox{$#1$}\setbox1=\hbox{$#2$}
       \dimen0=.5\wd0 \dimen1=\dimen0 \dimen2=\dp0 \dimen3=\dimen2
       \advance\dimen0 by .5\wd1 \advance\dimen0 by -#4
       \advance\dimen1 by -.5\wd1 \advance\dimen1 by -#4
       \advance\dimen2 by -#3 \advance\dimen2 by \ht1
       \advance\dimen2 by 0.3ex \advance\dimen3 by #5
        \kern-\dimen0\raisebox{-\dimen2}[0ex][\dimen3]{\box1}
       \kern\dimen1}

\newcommand{\GaG}{\Gamma\backslash G}

\newcommand{\sfrac}[2]{{\textstyle \frac {#1}{#2}}}

\newcommand{\F}{\mathcal{F}}

\newcommand{\fg}{\mathfrak{g}}

\newcommand{\fa}{\mathfrak{a}}
\newcommand{\fb}{\mathfrak{b}}
\newcommand{\fh}{\mathfrak{h}}

\newcommand{\fm}{\mathfrak{m}}
\newcommand{\fn}{\mathfrak{n}}

\newcommand{\Ug}{\mathcal{U}(\mathfrak{g}_{\mathbb{C}})}
\newcommand{\Uh}{\mathcal{U}(\mathfrak{h}_{\mathbb{C}})}

\newcommand{\Zg}{\mathcal{Z}(\mathfrak{g}_{\mathbb{C}})}

\newcommand{\Ad}{\mathrm{Ad}}

\newcommand{\rank}{\mathrm{rank}}

\newcommand{\ba}{\backslash}
\newcommand{\bH}{\mathbb H}
\newcommand{\op}{\operatorname}

\newcommand{\la}{\lambda}
\renewcommand{\epsilon}{\varepsilon}
\newcommand{\BR}{\op{BR}}

\newcommand{\br}{\mathbb R}
\renewcommand{\G}{\Gamma}
\newcommand{\Ga}{\Gamma}
\newcommand{\cal}{\mathcal}
\newcommand{\La}{\Lambda}
\newcommand{\be}{\begin{equation}}
\newcommand{\ee}{\end{equation}}
\newcommand{\ga}{\gamma}
\newcommand{\inte}{\op{int}}

\newcommand{\dg}{D_\Gamma^\star}
\renewcommand{\L}{\mathcal L}

\renewcommand{\c}{\mathbb C}
\renewcommand{\la}{\langle}
\newcommand{\ra}{\rangle}

\renewcommand{\e}{\epsilon}
\begin{document}
\title[Positive eigenfunctions]{Temperedness of $L^2(\Gamma\ba G)$ and positive eigenfunctions in higher rank.}

\author{Sam Edwards and Hee Oh}
\address{Department of Mathematical Sciences, Durham University, Lower Mountjoy, DH1 3LE Durham, United Kingdom}

\address{Department of Mathematics, Yale University, New Haven, CT 06511 and Korea Institute for Advanced Study, Seoul}

\email{samuel.c.edwards@durham.ac.uk}

\email{hee.oh@yale.edu}
\thanks{Edwards was supported by funding from the Heilbronn Institute for Mathematical Research and Oh was supported in part by NSF grant DMS-1900101}

\begin{abstract} Let $G=\op{SO}^\circ(n,1) \times \op{SO}^\circ(n,1)$ and $X=\bH^{n}\times \bH^{n}$ for $n\ge 2$. For a pair $(\pi_1, \pi_2)$ of non-elementary convex cocompact representations of a  finitely generated group $\Sigma$ into $\op{SO}^\circ(n,1)$, let $\G=(\pi_1\times \pi_2)(\Sigma)$. Denoting the bottom of the $L^2$-spectrum of the negative Laplacian on $\Ga\ba X$ by $\lambda_0$, we show:
\begin{enumerate}
    \item $L^2(\Ga\ba G)$ is tempered and  $\lambda_0=\frac{1}{2}(n-1)^2$;
    \item There exists no positive Laplace eigenfunction in $L^2(\Gamma \ba X)$.
\end{enumerate}
\medskip 
In fact, analogues of (1)-(2) hold for any Anosov subgroup $\Ga$ in the product of at least two simple algebraic groups of rank one as well as
 for Hitchin subgroups $\G<\PSL_d(\br)$, $d\ge 3$. 
 Moreover, if $G$ is a semisimple real algebraic group of rank at least $2$, then (2) holds for any Anosov subgroup $\Ga$ of $G$.  
\end{abstract}




\keywords{}

\maketitle
\tableofcontents

\section{Introduction}

\subsection*{Motivation and background} 
 Locally symmetric spaces provide key examples of Riemannian manifolds for which there exist numerous tools for studying various aspects of spectral geometry. For example, properties of dynamical systems related to the manifold are closely connected to the spectral theory of the Laplace operator, as well as to representation theory. While the spectral theory of finite-volume locally symmetric spaces has been quite extensively developed, the infinite volume setting provides many examples of interesting phenomena that are less well understood. Nevertheless, for rank one locally symmetric spaces of infinite volume, a number of key facts about the spectrum have been established. 

Let $(\bH^n,d)$, $n\ge 2$, denote the $n$-dimensional hyperbolic space of constant curvature $-1$, and let $G=\op{Isom}^+(\bH^n)\simeq \op{SO}^\circ (n,1)$ denote the group of all orientation preserving isometries of $\bH^n$. Let $\Ga<G$ be a torsion-free\footnote{all discrete subgroups in this paper will be assumed to be torsion-free} discrete subgroup.
The critical exponent $0\le \delta=\delta_\Ga\le n-1$ is defined as the abscissa of convergence of the Poincar\'e series $\sum_{\gamma\in \G} e^{-s d(o, \ga o)}$ for $o\in \bH^n$. 
We denote by $\Delta$ the hyperbolic Laplacian and by $\lambda_0=\lambda_0(\Ga\ba \bH^n)$
the bottom of the $L^2$-spectrum of the negative Laplace operator $-\Delta$, which is given as  
 \be\label{la33} \lambda_0:= \inf\left\lbrace \frac{\int_{\Gamma\ba \bH^n}\|\text{grad} \, f \|^2\,d\vol}{\int_{\Gamma\ba \bH^n}|f|^2\,d\vol}\,:\,f\in C^\infty_c(\Gamma\ba \bH^n)\right\rbrace\ee 
(see \cite[Theorem 2.2]{Su}).
In a series of papers, Elstrodt (\cite{E1}, \cite{E2}, \cite{E3}) and Patterson (\cite{P1}, \cite{P2}, \cite{P3}) developed the relationship between 
$\delta$ and $\lambda_0$, proving the following theorem for $n=2$. The general case is due to Sullivan
\cite[Theorem 2.21]{Su}.

\begin{thm}[Generalized Elstrodt-Patterson I] \label{ep} For any discrete subgroup $\Ga<\op{SO}^\circ(n,1)$,
the following are equivalent:
\begin{enumerate}
  \item $\delta\le \frac{1}{2}(n-1)$;
    \item $\lambda_0=\frac{1}{4}(n-1)^2$.
 \end{enumerate}
\end{thm}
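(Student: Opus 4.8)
The plan is to prove the sharper, fully quantitative statement: $\lambda_0=\tfrac14(n-1)^2$ when $\delta\le\tfrac12(n-1)$, and $\lambda_0=\delta(n-1-\delta)$ when $\delta\ge\tfrac12(n-1)$. Since $t\mapsto t(n-1-t)$ is strictly decreasing on $[\tfrac12(n-1),\,n-1]$ with maximum $\tfrac14(n-1)^2$, attained only at $t=\tfrac12(n-1)$, the equivalence of (1) and (2) follows at once. It therefore suffices to prove: \textbf{(a)} $\lambda_0\le\tfrac14(n-1)^2$ for every discrete $\Gamma$; \textbf{(b)} $\lambda_0\ge\tfrac14(n-1)^2$ whenever $\delta\le\tfrac12(n-1)$; and \textbf{(c)} $\lambda_0\le\delta(n-1-\delta)$ whenever $\delta\ge\tfrac12(n-1)$. (The reverse bound $\lambda_0\ge\delta(n-1-\delta)$ for all $\delta$ will come for free, completing the formula but not strictly needed for the stated equivalence.)

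I would set up two standard ingredients. First, the ground-state (Agmon) inequality: if a complete Riemannian manifold $M$ carries a smooth positive function $\phi$ with $\Delta\phi=-\lambda\phi$, then writing $f=\phi u$ for $f\in C^\infty_c(M)$ and integrating by parts gives $\int_M\|\mathrm{grad}\,f\|^2\,d\vol=\lambda\int_M|f|^2\,d\vol+\int_M\phi^2\|\mathrm{grad}\,u\|^2\,d\vol\ge\lambda\int_M|f|^2\,d\vol$, so by \eqref{la33} $\lambda_0(M)\ge\lambda$; if moreover $\phi\in L^2(M)$, then $\lambda$ is a genuine $L^2$-eigenvalue and hence $\lambda_0(M)\le\lambda$ as well. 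Second, Patterson--Sullivan theory: Patterson's construction gives a $\delta$-dimensional $\Gamma$-invariant conformal density $\{\mu_x\}_{x\in\bH^n}$ on $\partial\bH^n$, and the total mass $\phi_\delta(x):=|\mu_x|=\int_{\partial\bH^n}e^{-\delta\beta_\xi(x,o)}\,d\mu_o(\xi)$ defines a smooth, positive, $\Gamma$-invariant function with $\Delta\phi_\delta=-\delta(n-1-\delta)\phi_\delta$ (since each $x\mapsto e^{-\delta\beta_\xi(x,o)}$ is a $\delta(n-1-\delta)$-eigenfunction), descending to a positive eigenfunction $\bar\phi_\delta$ on $\Gamma\ba\bH^n$ of eigenvalue $\delta(n-1-\delta)$. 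Applying the ground-state inequality to $\bar\phi_\delta$ already yields $\lambda_0\ge\delta(n-1-\delta)$ in all cases.

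For \textbf{(a)}, a convex cocompact $\Gamma\ba\bH^n$ contains isometrically embedded balls of arbitrarily large radius (inside a funnel), so domain monotonicity gives $\lambda_0(\Gamma\ba\bH^n)\le\inf_R\lambda_0(B_R)=\lambda_0(\bH^n)=\tfrac14(n-1)^2$; for a general discrete $\Gamma$ this bound is classical (e.g.\ via the heat-kernel comparison $p_t^{\Gamma\ba\bH^n}(x,x)\ge p_t^{\bH^n}(\tilde x,\tilde x)$ together with the long-time asymptotics of the hyperbolic heat kernel). For \textbf{(b)}: when $\delta=\tfrac12(n-1)$, $\bar\phi_\delta$ has eigenvalue $\delta(n-1-\delta)=\tfrac14(n-1)^2$, so the ground-state inequality gives $\lambda_0\ge\tfrac14(n-1)^2$ immediately. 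When $\delta<\tfrac12(n-1)$, I would instead form $F(x):=\sum_{\gamma\in\Gamma}\varphi\big(d(x,\gamma o)\big)$, where $\varphi$ is the radial ground-state eigenfunction of $\bH^n$, i.e.\ the radial solution of $\Delta\varphi=-\tfrac14(n-1)^2\varphi$ with $\varphi(0)=1$; one has $\varphi>0$ and $\varphi(r)\asymp(1+r)e^{-\frac{n-1}{2}r}$, so comparison with the Poincar\'e series at any exponent $s\in(\delta,\tfrac12(n-1))$ shows $F$ and its derivatives converge locally uniformly. Thus $F$ is a smooth positive $\Gamma$-invariant solution of $\Delta F=-\tfrac14(n-1)^2F$, and the ground-state inequality gives $\lambda_0\ge\tfrac14(n-1)^2$; with (a) this settles (b).

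Finally \textbf{(c)}, where the real difficulty lies: for $\delta\ge\tfrac12(n-1)$ one wants $\bar\phi_\delta\in L^2(\Gamma\ba\bH^n)$, since then the second half of the ground-state paragraph gives $\lambda_0\le\delta(n-1-\delta)$, and with the free lower bound $\lambda_0=\delta(n-1-\delta)$. Establishing this square-integrability is the main obstacle. It rests on sharp two-sided control of $\phi_\delta$ near infinity: Sullivan's shadow lemma gives $\mu_o(\mathrm{sh}(x,r))\asymp e^{-\delta r}$ for the shadow of the radius-$r$ ball at $x$, which translates into $\bar\phi_\delta(y)\asymp e^{-\delta\,d(y,\scrC)}$ outside a neighborhood of (the image of) the convex core $\scrC$; since the ends of $\Gamma\ba\bH^n$ have exponential volume growth of rate $n-1$, one concludes $\int\bar\phi_\delta^2\,d\vol<\infty$ precisely when $2\delta>n-1$ (the borderline divergence at $\delta=\tfrac12(n-1)$ being consistent with (b)). Pinning down the shadow lemma and the resulting decay of $\phi_\delta$ is the one genuinely nontrivial step; in the convex cocompact setting of this paper it is unconditional, whereas for a general discrete $\Gamma$ it is exactly where geometric-finiteness or divergence-type hypotheses (as in Sullivan's treatment) must be invoked.
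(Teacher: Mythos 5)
The paper does not prove Theorem \ref{ep}; it cites it (\cite[Theorem 2.21]{Su}, with the $n=2$ case due to Elstrodt and Patterson). So there is no ``paper's own proof'' to compare against, and I can only assess your argument on its own terms.

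Your treatment of (a) (heat-kernel comparison giving $\lambda_0 \le \tfrac14(n-1)^2$ for all discrete $\Gamma$) and of (b) (the Poincar\'e-series/spherical-function construction for $\delta<\tfrac12(n-1)$, plus the Patterson--Sullivan density at the boundary case, fed into the Agmon ground-state inequality) is correct, and (a)$+$(b) cleanly give the implication $(1)\Rightarrow(2)$. The ground-state inequality also correctly gives the unconditional lower bound $\lambda_0 \ge \delta(n-1-\delta)$. The problem is (c), which is exactly what is needed for $(2)\Rightarrow(1)$. There your route is to show $\bar\phi_\delta\in L^2(\Gamma\ba\bH^n)$ when $\delta>\tfrac12(n-1)$, via the shadow lemma $\mu_o(\mathrm{sh}(x,r))\asymp e^{-\delta r}$ and the resulting decay of $\bar\phi_\delta$ off the convex core. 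This step is not available for an arbitrary discrete subgroup: the two-sided shadow estimate and the clean description of the ends require at least geometric finiteness, and without it one has neither the upper shadow bound with the right exponent nor the asserted exponential-decay profile. Since Theorem \ref{ep} is stated for \emph{all} discrete $\Gamma<\SO^\circ(n,1)$, this is a genuine gap in the $(2)\Rightarrow(1)$ direction.

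You do flag the gap, but you misattribute it: you write that ``geometric-finiteness or divergence-type hypotheses (as in Sullivan's treatment) must be invoked.'' That is the hypothesis of Theorem \ref{one} (the existence/non-existence of a square-integrable positive eigenfunction, which really does require convex cocompactness or geometric finiteness), not of Theorem \ref{ep}. For Theorem \ref{ep} the crucial direction---$\delta>\tfrac12(n-1)\Rightarrow\lambda_0<\tfrac14(n-1)^2$, and in fact $\lambda_0\le\delta(n-1-\delta)$---is proved in Elstrodt--Patterson and in Sullivan by entirely different means that are valid for all discrete subgroups: a resolvent/Green's-function estimate (the averaged Green's function $\sum_\gamma g_s(x,\gamma y)$ controls the norm of $(\Delta+s(n-1-s))^{-1}$ as $s\downarrow\delta$), or Sullivan's Brownian-motion/heat-kernel argument, in which $p_t^{\Gamma\ba\bH^n}(x,x)=\sum_\gamma p_t^{\bH^n}(\tilde x,\gamma\tilde x)$ is estimated using only $\#\{\gamma: d(\tilde x,\gamma\tilde x)\le r\}\lesssim e^{\delta r}$ and the explicit hyperbolic heat kernel, yielding $-\tfrac1t\log p_t\to\delta(n-1-\delta)$ and hence $\lambda_0\le\delta(n-1-\delta)$ without any geometric-finiteness input. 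To make your proposal into a complete proof of the theorem as stated, you would need to replace the $\bar\phi_\delta\in L^2$ step in (c) by one of these arguments; as written, your proof establishes the equivalence only for convex cocompact (or geometrically finite, divergence-type) $\Gamma$.
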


The right translation action of $G$ on the quotient space $\Gamma\ba G$ equipped with a $G$-invariant measure gives rise to a unitary representation of $G$ on the Hilbert space $L^2(\Gamma\ba G)$, called a quasi-regular representation of $G$. If we set $K\simeq \SO(n)$ to be a maximal compact subgroup of $G$ and identify $\bH^n$ with $G/K$,
the space  of $K$-invariant functions of $L^2(\Gamma\ba G)$ can be identified with
$L^2(\Gamma\ba \bH^n)$. The bottom of the $L^2$-spectrum $\lambda_0$ then provides  information on which complementary series representation of $G$ can occur in $L^2(\Gamma\ba G)$. Indeed, it follows from the classification of the
unitary dual of $\op{SO}^\circ(n,1)$ that $\lambda_0=(n-1)^2/4$
is equivalent to saying that the quasi-regular representation $L^2(\Gamma\ba G)$ does not contain any complementary series representation (cf. \cite{Su}, \cite{EO}), which is again equivalent to the {\it temperedness} of $L^2(\Gamma\ba G)$. As first introduced by Harish-Chandra \cite{Ha},  a unitary representation $(\pi, \cal H_{\pi})$ of a semisimple real algebraic group $G$ is tempered (Definition \ref{te}) 
if all of its matrix coefficients belong to
$L^{2+\epsilon}(G)$ for any $\e>0$, or, equivalently, if
$\pi$ is weakly contained\footnote{$\pi$ is weakly contained in a unitary representation $\sigma$ of $G$ if any diagonal
matrix coefficients of $\pi$ can be approximated, uniformly on compact sets, by convex
combinations of diagonal matrix coefficients of $\sigma$.}
in the regular representation $L^2(G)$ (\cite{CHH}, see Proposition \ref{chh}).

Therefore Theorem \ref{ep} can be rephrased as follows:
\begin{thm}[Generalized Elstrodt-Patterson II]\label{ep2}  For any discrete subgroup $\Ga<G=\op{SO}^\circ(n,1)$,
the following are equivalent:
\begin{enumerate}
    \item $\delta\le \frac{1}{2}(n-1)$;
\item   $L^2(\Ga\ba G)$ is  tempered.
 \end{enumerate}
\end{thm}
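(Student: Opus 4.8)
The plan is to deduce Theorem~\ref{ep2} from the already-available Theorem~\ref{ep} together with the classification of the unitary dual $\widehat G$ of $G=\op{SO}^\circ(n,1)$. Since Theorem~\ref{ep} identifies condition (1), that $\delta\le\tfrac12(n-1)$, with the spectral condition $\lambda_0=\tfrac14(n-1)^2$, it suffices to establish the dictionary
\be\label{dict}
\lambda_0=\tfrac14(n-1)^2\ \Longleftrightarrow\ L^2(\Ga\ba G)\ \text{is tempered.}
\ee
I would base this on two structural facts: (i) the $K$-fixed subspace of $L^2(\Ga\ba G)$ is $L^2(\Ga\ba\bH^n)$, on which the Casimir element acts as $-\Delta$, so that a complementary series representation $\mathcal C_s$ of $\op{SO}^\circ(n,1)$ (whose spherical vector carries Laplace parameter $s(n-1-s)<\tfrac14(n-1)^2$) is weakly contained in $L^2(\Ga\ba G)$ exactly when the self-adjoint spectrum of $-\Delta$ on $L^2(\Ga\ba\bH^n)$ meets $\bigl(0,\tfrac14(n-1)^2\bigr)$; and (ii) every non-tempered irreducible unitary representation of $\op{SO}^\circ(n,1)$ is either the trivial representation or a complementary series representation — in particular it is spherical.

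Granting (i) and (ii), \eqref{dict} follows. If $\lambda_0<\tfrac14(n-1)^2$, then by (i) some $\mathcal C_s$ is weakly contained in $L^2(\Ga\ba G)$; since $\mathcal C_s$ is not tempered and weak containment is transitive, $L^2(\Ga\ba G)$ cannot be weakly contained in $L^2(G)$, i.e.\ it is not tempered. Conversely, assume $\lambda_0=\tfrac14(n-1)^2$, which is positive as $n\ge 2$; then the trivial representation is not weakly contained in $L^2(\Ga\ba G)$, and by (i) neither is any complementary series. Hence by (ii) the support of $L^2(\Ga\ba G)$ in $\widehat G$ lies in the tempered dual, and since a direct integral of tempered representations is weakly contained in $L^2(G)$, the representation $L^2(\Ga\ba G)$ is tempered. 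Combined with Theorem~\ref{ep}, this proves (1)$\,\Leftrightarrow\,$(2).

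The heart of the matter is fact (ii): that temperedness of a unitary representation of $\op{SO}^\circ(n,1)$ is detected purely by its $K$-fixed vectors. This is classical (Bargmann, Dixmier, Hirai) and in general follows from the explicit description of $\widehat{\op{SO}^\circ(n,1)}$ (see \cite{Su}, \cite{EO}); beyond this, one must still take some care in (i) to identify the resolution-of-identity spectrum of $-\Delta$ with the weakly contained spherical representations, and to handle the threshold case $\delta=\tfrac12(n-1)$. An alternative, more hands-on route to the implication ``$\delta\le\tfrac12(n-1)\Rightarrow$ tempered'' is to estimate matrix coefficients directly: unfolding $\langle g\cdot f_1,f_2\rangle$ over $\Ga$ for $f_1,f_2\in C_c(\Ga\ba G)$ and invoking the Patterson--Sullivan shadow lemma and orbit counting yields, up to polynomial factors, a decay of order $e^{-(n-1-\delta)d(o,go)}$, which is in $L^{2+\e}(G)$ for every $\e>0$ precisely when $\delta\le\tfrac12(n-1)$; temperedness then follows from the $L^{2+\e}$-characterization of Cowling--Haagerup--Howe (Proposition~\ref{chh}). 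Along that route the matrix-coefficient estimate is the main obstacle.
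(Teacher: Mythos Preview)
Your approach is exactly the one the paper takes: the paper does not give a separate proof of Theorem~\ref{ep2} but simply states that, by the classification of the unitary dual of $\op{SO}^\circ(n,1)$, the condition $\lambda_0=(n-1)^2/4$ is equivalent to $L^2(\Gamma\ba G)$ containing no complementary series, which is in turn equivalent to temperedness (citing \cite{Su}, \cite{EO}); combined with Theorem~\ref{ep} this gives the result.

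One caution on your fact~(ii): as literally stated it is not quite correct for $n\ge 4$, since $\op{SO}^\circ(n,1)$ then admits \emph{non-spherical} complementary series (induced from nontrivial $M$-types). What is true, and what actually makes the argument work, is that the spherical complementary series are extremal in the Fell topology among all non-tempered irreducibles: the parameter range of each non-spherical complementary series is strictly contained in that of the spherical one, so if no spherical complementary series is weakly contained in $L^2(\Gamma\ba G)$ then no non-tempered irreducible is. With this refinement your argument is complete; alternatively, the direct matrix-coefficient estimate you outline at the end bypasses this issue entirely and is closer in spirit to what the paper does in the higher-rank setting (Section~\ref{sect}).
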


The size of the critical exponent $\delta$ is also related to the existence of a square-integrable positive Laplace eigenfunction on $\Ga\ba \bH^n$. A discrete subgroup $\G<G$ is called convex cocompact if there exists a convex subspace of $\bH^n$ on which $\G$ acts co-compactly. For convex cocompact subgroups of $G$ (more generally for geometrically finite subgroups), Patterson and Sullivan showed
the following using their theory of conformal measures on the boundary $\partial\bH^n$ (\cite{Pa}, \cite{Su2}, \cite[Theorem 2.21]{Su}):
\begin{thm}[Sullivan]\label{one} For a convex cocompact subgroup  $\Ga<\op{SO}^\circ(n,1)$, the following are equivalent:
\begin{enumerate}
    \item $\delta\le \frac{1}{2}(n-1)$;
 \item There exists no positive Laplace eigenfunction in $L^2(\Gamma\ba \bH^n)$.
 \end{enumerate}
\end{thm}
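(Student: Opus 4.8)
The plan is to route everything through Patterson--Sullivan theory, using the conformal density of dimension $\delta=\delta_\Ga$ on the limit set $\Lambda_\Ga\subset\partial\bH^n$. First I would introduce the \emph{base eigenfunction}: letting $\{\mu_x\}_{x\in\bH^n}$ be a $\Ga$-equivariant conformal density of dimension $\delta$ supported on $\Lambda_\Ga$, the function
\[
\phi_0(x):=|\mu_x|=\int_{\Lambda_\Ga}e^{\delta\,\beta_\xi(o,x)}\,d\mu_o(\xi),
\]
with $\beta_\xi$ the Busemann cocycle, is positive, smooth, and $\Ga$-invariant, and satisfies $-\Delta\phi_0=\delta(n-1-\delta)\,\phi_0$; hence it descends to a positive Laplace eigenfunction on $M:=\Ga\ba\bH^n$ with eigenvalue $\delta(n-1-\delta)$. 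Everything will hinge on deciding precisely when $\phi_0\in L^2(M)$.

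Here I would use convex cocompactness of $\Ga$: the manifold $M$ is the union of a compact core and finitely many funnels, each isometric to $\{(\rho,y):\rho\ge 0,\ y\in S\}$ with metric $d\rho^2+\cosh^2\!\rho\;g_S(y)$, so that $d\vol\asymp e^{(n-1)\rho}\,d\rho\,dy$ along a funnel. For $\xi\in\Lambda_\Ga$ and a point $x$ at distance $\rho$ from the core, any geodesic ray from $x$ toward $\xi$ must first run back through the compact core, which gives $\beta_\xi(o,x)=-\rho+O(1)$ uniformly over $\xi\in\Lambda_\Ga$; together with Sullivan's shadow lemma this yields $\phi_0(x)\asymp e^{-\delta\rho}$ on the funnels. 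Hence
\[
\int_M\phi_0^2\,d\vol\ \asymp\ 1+\int_0^\infty e^{(n-1-2\delta)\rho}\,d\rho,
\]
which is finite \emph{iff} $\delta>\tfrac12(n-1)$. This settles the implication $(2)\Rightarrow(1)$ in contrapositive form: if $\delta>\tfrac12(n-1)$ then $\phi_0$ is a positive eigenfunction lying in $L^2(M)$, so (2) fails.

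For $(1)\Rightarrow(2)$, assume $\delta\le\tfrac12(n-1)$; then $\lambda_0(M)=\tfrac14(n-1)^2$ by Theorem~\ref{ep}. Suppose, toward a contradiction, that $u\in L^2(M)$ is a positive eigenfunction with $-\Delta u=\lambda u$ (smooth by elliptic regularity). The variational formula \eqref{la33} gives $\lambda\ge\lambda_0=\tfrac14(n-1)^2$, while lifting $u$ to a positive $\Ga$-invariant eigenfunction $\tilde u$ on $\bH^n$ and invoking that $\bH^n$ admits no positive eigenfunction of eigenvalue $>\tfrac14(n-1)^2$ (its generalized principal eigenvalue being $\tfrac14(n-1)^2$) forces $\lambda=\tfrac14(n-1)^2$; thus $u$ would be an $L^2$ ground state of $-\Delta$ on $M$. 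If $\delta<\tfrac12(n-1)$ this cannot happen: for a convex cocompact $\Ga$ with $\delta<\tfrac12(n-1)$, the $L^2$-spectrum of $-\Delta$ on $M$ is purely continuous and equal to $[\tfrac14(n-1)^2,\infty)$ (Lax--Phillips), so $\tfrac14(n-1)^2$ is not an eigenvalue. If $\delta=\tfrac12(n-1)$, then $\tfrac14(n-1)^2=\delta(n-1-\delta)$, so $\tilde u$ is a positive $\Ga$-invariant eigenfunction of the same eigenvalue as $\phi_0$; but for divergence-type groups — in particular convex cocompact ones — a positive $\Ga$-invariant eigenfunction of eigenvalue $\delta(n-1-\delta)$ is unique up to a positive scalar (a further manifestation of the uniqueness of the Patterson--Sullivan density), so $u=c\,\phi_0$ on $M$ with $c>0$. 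Since $\phi_0\notin L^2(M)$ in the borderline case $\delta=\tfrac12(n-1)$ by the dichotomy of the previous paragraph, this is the desired contradiction.

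The step I expect to be the main obstacle is the geometric estimate $\phi_0\asymp e^{-\delta\rho}$ on the funnels — and, in the borderline case, the control of $\Ga$-invariant positive eigenfunctions of eigenvalue $\tfrac14(n-1)^2$ — which rests on Sullivan's shadow lemma, the product structure of the ends of a convex cocompact hyperbolic manifold, and uniform control of the Busemann cocycle over the limit set. The remaining inputs — the variational lower bound on $\lambda$, the equality $\lambda_c(\bH^n)=\tfrac14(n-1)^2$, uniqueness of the Patterson--Sullivan density for divergence-type groups, and the Lax--Phillips spectral description when $\delta<\tfrac12(n-1)$ — are standard.
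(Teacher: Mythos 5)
The paper does not supply a proof of this theorem: it is attributed to Sullivan with references to \cite{Pa}, \cite{Su2}, and \cite[Theorem 2.21]{Su}, and the machinery developed later in the paper (the smearing argument of Section~\ref{ssm}, Corollary~\ref{ttt}) is explicitly designed for the higher-rank situation $\mathcal L_\Gamma\ne\fa^+$ and gives nothing in rank one, which is why the rank-one case of Theorem~\ref{m33}(4) is again referred back to Sullivan and Roblin--Tapie. So there is no in-paper proof to compare against, and the question is whether your reconstruction is sound.

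It is, and it is in the spirit of Sullivan's original argument. The direction $(2)\Rightarrow(1)$ rests on your computation that the base eigenfunction $\phi_0=E_\mu$ (with $\mu$ the Patterson--Sullivan density, eigenvalue $\delta(n-1-\delta)$) is in $L^2$ exactly when $\delta>\frac{n-1}{2}$. That computation is correct: convex cocompactness gives a compact core with exponential volume growth $\asymp e^{(n-1)\rho}$ on the ends, and for $x$ at distance $\rho$ from the core and $\xi\in\Lambda_\Gamma$ one has $\beta_\xi(o,x)=-\rho+O(1)$ (since the geodesic $[x,p]$ to the nearest point $p$ of the convex hull meets any ray $[p,\xi)$ at angle $\ge\pi/2$, hence $\beta_\xi(x,p)\ge\log\cosh\rho$). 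You do not actually need the shadow lemma here; the uniform Busemann estimate alone gives $\phi_0\asymp e^{-\delta\rho}$. For $(1)\Rightarrow(2)$, the reduction to $\lambda=\lambda_0=\frac{(n-1)^2}{4}$ is exactly Theorem~\ref{atmost}(1) together with the fact that positive eigenfunctions on $X$ have eigenvalue $\le\|\rho\|^2$ (Proposition~\ref{LaplaceEV}). Your two inputs then each work: Lax--Phillips indeed shows that for geometrically finite $\Gamma$ of infinite covolume the point spectrum lies in $(0,\frac{(n-1)^2}{4})$ with no embedded eigenvalues, so $\frac{(n-1)^2}{4}$ is never an $L^2$-eigenvalue; and at $\delta=\frac{n-1}{2}$, the Karpelevich--Furstenberg representation (Theorem~\ref{intrep}/Proposition~\ref{conf}) forces $\psi=\rho=\delta$, the conformal density must be supported on $\Lambda_\Gamma$ because a positive mass on $\Omega=\partial\bH^n-\Lambda_\Gamma$ would make $|\nu_o|=\infty$ by divergence type, and the uniqueness of the $\delta$-conformal measure (valid since $\delta=\frac{n-1}{2}>0$ forces $\Gamma$ non-elementary) gives $u\propto\phi_0\notin L^2$. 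In fact your case split is unnecessary: Lax--Phillips already excludes the eigenvalue $\frac{(n-1)^2}{4}$ for every convex cocompact $\Gamma$ of infinite covolume, so the second bullet alone closes both cases; conversely the conformal-density uniqueness route does not cover $\delta<\frac{n-1}{2}$ because then $\psi=\rho>\delta$ and no uniqueness is available. Either route is a valid replacement for the smearing argument, which is the mechanism the paper uses in higher rank but which degenerates in rank one.
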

Since $\lambda_0$ divides the positive spectrum and the $L^2$-spectrum on $\Ga\ba \bH^n$ by Sullivan's theorem \cite[Theorem 2.1]{Su} (see Theorem \ref{no}), (2) is equivalent to saying that any $\lambda_0$-harmonic function
(i.e., $-\Delta f=\lambda_0 f$) on $\Ga\ba \bH^n$ is not square-integrable.

\subsection*{Main results} 
The main aim of this article is to discuss analogues of Theorems \ref{ep}, \ref{ep2}, and \ref{one} for a certain class of discrete subgroups of a connected semisimple real algebraic group of higher rank, i.e., rank at least $2$.

We begin by describing a special case of our main theorem when
$G=\SO^\circ (n_1,1) \times \SO^\circ (n_2,1)$ with $n_1, n_2\ge 2$.
Let $X$ be the Riemannian product $\bH^{n_1}\times \bH^{n_2} $ and $\Delta$ the Laplace-Beltrami operator on $X$.
For a torsion-free discrete subgroup $\Ga<G$,
a smooth function $f$ on $\Ga\ba X$ is called $\lambda$-harmonic if  $-\Delta f=\lambda f$. The number $\lambda_0=\lambda_0(\Ga\ba X)$ is given in the same way as \eqref{la33} replacing $\Gamma\ba \bH^n$ by $\Gamma\ba X$.

\begin{thm} \label{m} 
Let \be\label{as} \G=(\pi_1\times \pi_2)(\Sigma)=\{(\pi_1(\sigma), \pi_2(\sigma))\in G:\sigma\in \Sigma\}\ee 
where $\pi_i:\Sigma\to \SO^\circ (n_i,1)$ is a non-elementary
convex cocompact representation of a finitely generated group $\Sigma$ for $i=1,2$. Then 
\begin{enumerate}
    \item $L^2(\Gamma\ba G)$ is tempered and $\lambda_0 =\frac{1}{4} ((n_1-1)^2 +(n_2-1)^2)$;
    \item There exists no positive Laplace eigenfunction in $L^2(\Gamma \ba X)$, or equivalently, no $\lambda_0$-harmonic function is  square-integrable.
\end{enumerate}
\end{thm}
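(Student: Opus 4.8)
The plan is to phrase everything through the growth indicator function $\psi_\Gamma\colon\mathfrak a^+\to\mathbb R\cup\{-\infty\}$ of Quint, where $\mathfrak a=\mathfrak a_1\oplus\mathfrak a_2\cong\mathbb R^2$, the Cartan projection of $\gamma=(\pi_1(\sigma),\pi_2(\sigma))$ is $\mu(\gamma)=(d(o_1,\pi_1(\sigma)o_1),d(o_2,\pi_2(\sigma)o_2))$, and $\rho=(\tfrac{n_1-1}2,\tfrac{n_2-1}2)$, so that $\langle\rho,\rho\rangle=\tfrac14((n_1-1)^2+(n_2-1)^2)$. Everything rests on the elementary inequality $\psi_\Gamma\le\rho$ on the limit cone $\scrL_\Gamma$. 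To see it, let $\delta_i$ be the critical exponent of the convex cocompact group $\pi_i(\Sigma)$, so $\delta_i\le n_i-1$; since each $\pi_i$ is a quasi-isometric embedding of $\Sigma$, both coordinates of $\mu(\gamma)$ are comparable to the word length of $\sigma$, so $\scrL_\Gamma$ lies in the open quadrant, and for a unit vector $v=(v_1,v_2)$ there the number of $\gamma$ with $\mu(\gamma)$ in a narrow cone about $v$ and of norm $\le T$ is bounded above both by $\#\{\sigma:d(o_1,\pi_1(\sigma)o_1)\le(v_1+\epsilon)T\}\asymp e^{\delta_1(v_1+\epsilon)T}$ and, symmetrically, by $e^{\delta_2(v_2+\epsilon)T}$. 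Hence $\psi_\Gamma(v)\le\min(\delta_1v_1,\delta_2v_2)\le\tfrac12(\delta_1v_1+\delta_2v_2)\le\tfrac12((n_1-1)v_1+(n_2-1)v_2)=\langle\rho,v\rangle$, with strict inequality off a single ray unless both $\pi_i(\Sigma)$ are cocompact lattices.

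For temperedness I would lift compactly supported $\phi,\psi\in C_c(\Gamma\ba G)$ from $K$-finite $\alpha,\beta\in C_c(G)$ and unfold:
\[
\bigl|\langle g.\phi,\psi\rangle_{L^2(\Gamma\ba G)}\bigr|\ \le\ \sum_{\gamma\in\Gamma}\Bigl|\int_G\alpha(z)\,\overline{\beta(\gamma z g^{-1})}\,dz\Bigr|.
\]
Because the two-sided regular representation of $G\times G$ on $L^2(G)$ is tempered, each integral is $\ll\Xi(\gamma)\Xi(g)$, and the supports force the sum to run over the $\gamma$ with $\|\mu(\gamma)-\mu(g)\|\le R$; on that range $\Xi(\gamma)\asymp\Xi(g)$, so $|\langle g.\phi,\psi\rangle|\ll\Xi(g)^2\,N_R(\mu(g))$ with $N_R(a)=\#\{\gamma\in\Gamma:\|\mu(\gamma)-a\|\le R\}$. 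By the defining property of $\psi_\Gamma$ and the inequality $\psi_\Gamma\le\rho$, one has $N_R(a)\le e^{\langle\rho,a\rangle+o(\|a\|)}$; since $\Xi(g)^2\asymp e^{-2\langle\rho,\mu(g)\rangle}(1+\|\mu(g)\|)^{O(1)}$, the matrix coefficient is $\ll e^{-\langle\rho,\mu(g)\rangle+o(\|\mu(g)\|)}$, hence lies in $L^{2+\epsilon}(G)$ for every $\epsilon>0$ (against the Cartan measure $dg\asymp e^{2\langle\rho,\mu(g)\rangle}\,dg_{\mathfrak a}$ the integrand is $e^{-\epsilon\langle\rho,\mu(g)\rangle+o}$, integrable since $\langle\rho,\cdot\rangle$ is positive on $\mathfrak a^+$). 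As such $\phi,\psi$ are dense, the Cowling--Haagerup--Howe criterion (Proposition \ref{chh}) yields temperedness of $L^2(\Gamma\ba G)$.

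For $\lambda_0$ I would use the ground-state characterization $\lambda_0(\Gamma\ba X)=\sup\{\lambda:\exists\,u\in C^\infty(\Gamma\ba X),\ u>0,\ -\Delta u=\lambda u\}$, valid on complete manifolds. The lower bound comes from $\Phi(x)=\sum_{\gamma\in\Gamma}\varphi_0^{(1)}(\gamma_1x_1)\,\varphi_0^{(2)}(\gamma_2x_2)$, with $\varphi_0^{(i)}$ the ground spherical function of $\bH^{n_i}$ (eigenvalue $\tfrac14(n_i-1)^2$): this is positive, $\Gamma$-invariant and $\langle\rho,\rho\rangle$-harmonic, and the series converges because it is comparable to $\sum_\gamma e^{-\langle\rho,\mu(\gamma)\rangle}$, which is finite by the strict form of the inequality (the cocompact-lattice case handled by a limiting Patterson construction). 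The upper bound follows since a positive $\lambda$-harmonic function on $\Gamma\ba X$ lifts to one on $X=\bH^{n_1}\times\bH^{n_2}$, and a product of rank-one spaces admits no positive $\lambda$-harmonic function for $\lambda>\lambda_0(\bH^{n_1})+\lambda_0(\bH^{n_2})=\langle\rho,\rho\rangle$. So $\lambda_0(\Gamma\ba X)=\langle\rho,\rho\rangle$ (temperedness is not needed here).

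Finally, suppose $u$ is a positive $\lambda$-harmonic function in $L^2(\Gamma\ba X)$; then $\lambda\ge\lambda_0=\langle\rho,\rho\rangle$ and $\lambda\le\langle\rho,\rho\rangle$, so $\lambda=\langle\rho,\rho\rangle$ exactly. Lifting to $X$, the equality $\langle\rho,\rho\rangle=\lambda_0(\bH^{n_1})+\lambda_0(\bH^{n_2})$ leaves no slack, so the Martin theory of products forces
\[
u(x_1,x_2)=\int_{\partial\bH^{n_1}\times\partial\bH^{n_2}}P(x_1,\xi_1)^{(n_1-1)/2}\,P(x_2,\xi_2)^{(n_2-1)/2}\,d\nu(\xi_1,\xi_2)
\]
for a finite $\Gamma$-conformal density $\nu$ of dimension $\rho$ on the product boundary, and such $\nu$ exist since $\rho\ge\psi_\Gamma$. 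One then shows that no such $u$ is square-integrable, by expanding $\int_{\Gamma\ba X}u^2$ against $\nu$ and reducing to the fact that $\tfrac{n_i-1}2$ is the critical exponent in each factor, so that the conformal integrals $\int P(x_i,\cdot)^{(n_i-1)/2}P(x_i,\cdot)^{(n_i-1)/2}\,dx_i$ diverge --- the rank-one obstruction underlying Theorem \ref{one}. \emph{The main obstacle} is precisely this last point together with the passage in the temperedness argument from ``$\psi_\Gamma\le\rho$'' to the sharp $L^{2+\epsilon}$-decay: both require uniform control matching the coarse geometry of the Anosov group $\Gamma$ (where the subexponential slack above is essential) to fine harmonic analysis --- Harish--Chandra bounds on $G$ in the first case, Patterson--Sullivan theory on the product boundary in the second, the latter being substantially subtler than in rank one.
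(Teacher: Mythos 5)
Your proposal splits naturally into two halves, and they fare very differently.

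\textbf{Part (1).} Your route is genuinely different from the paper's and is largely sound. Your direct proof of $\psi_\Gamma\le\rho$ for the product case --- $\psi_\Gamma(v)\le\min(\delta_1v_1,\delta_2v_2)\le\tfrac12(\delta_1v_1+\delta_2v_2)\le\langle\rho,v\rangle$ --- is a clean and correct self-contained argument; the paper instead cites the Kim--Minsky--Oh ``tent property'' (Theorem \ref{kmo}), which proves exactly this bound (and more, e.g.\ for Hitchin). Your temperedness argument by unfolding and ball-counting is a plausible alternative to the paper's use of the local matrix coefficient asymptotics from \cite{ELO} (Theorem \ref{m1}); the step you should flag is the bound $N_R(a)\le e^{\langle\rho,a\rangle+o(\|a\|)}$, which does not follow uniformly over all directions of $\fa^+$ just from the pointwise inequality $\psi_\Gamma\le\rho$ --- the growth indicator controls asymptotics along cones, and converting this to a uniform ball-count estimate requires a genuine argument (this is precisely the sort of uniformity the asymptotics in \cite{ELO} encode). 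Your $\lambda_0=\|\rho\|^2$ argument by constructing the product of ground spherical functions is also a valid alternative to the paper's Plancherel-theoretic argument (Theorem \ref{tlam}); the convergence of your Poincar\'e-like series needs $\psi_\Gamma<\rho$ strictly away from $0$, and you only gesture at the borderline (both-lattices) case.

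\textbf{Part (2).} Here there is a real gap. You reduce to ``the rank-one obstruction underlying Theorem \ref{one}'', claiming the conformal integrals $\int P(x_i,\cdot)^{(n_i-1)/2}P(x_i,\cdot)^{(n_i-1)/2}\,dx_i$ diverge in each factor. But Theorem \ref{one} (Sullivan) only rules out positive $L^2$ eigenfunctions when $\delta_i\le(n_i-1)/2$, and that is \emph{not} assumed here. In fact the theorem is interesting precisely because it allows $\delta_i>(n_i-1)/2$ in both factors, where Sullivan's rank-one theorem gives a positive $L^2$ eigenfunction on each quotient $\pi_i(\Sigma)\ba\bH^{n_i}$. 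The integrals you need are over $\Gamma\ba X$, not over $X$, and the $\Gamma$-quotient is exactly what tames the divergence in rank one; your computation over the universal cover does not see this. The correct mechanism, and the main new content of Section \ref{ssm}, is the smearing theorem (Theorem \ref{smear}) combined with the conservativity argument in the proof of Theorem \ref{tr}: if $E_\nu\in L^2(\Gamma\ba X)$ then the generalized Bowen--Margulis--Sullivan measure $m_{\nu,\nu}$ is finite, hence $A$-conservative, which forces $\L_\Gamma=\fa^+$; but Anosov subgroups in higher rank have $\L_\Gamma\subset\inte\fa^+\cup\{0\}$ (Theorem \ref{PS}), a \emph{proper} subcone. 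This is a genuinely higher-rank phenomenon, driven by the geometry of the limit cone inside the Weyl chamber, and it cannot be recovered by a factor-by-factor rank-one analysis. Your intuition that this is ``the main obstacle'' is correct, but the proposed resolution is not.
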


\begin{Rmk} 
Theorem \ref{m} does not hold for a general subgroup $\Ga<G$ of infinite co-volume. For example, if $\Ga<\op{SO}^\circ(n_1,1)\times \op{SO}^\circ(n_2,1)$ is the product of two convex cocompact subgroups, each of which having critical exponent greater than $\frac{1}{2}{(n_i-1)}$, then $L^2(\Gamma\ba G)$ is not tempered and $L^2(\Gamma\ba X)$ possesses a positive Laplace eigenfunction.
\end{Rmk}
\medskip

We now discuss a general setting. Let $G$ be a connected semisimple real algebraic group and $X$ the associated Riemannian symmetric space.
In the rest of the introduction, we assume that $\Gamma<G$ is a  torsion-free Zariski dense discrete subgroup. We let $\psi_\Ga:\fa \to \br \cup\{-\infty\}$ denote the \emph{growth indicator function} of $\Ga$ as defined in \eqref{grow3}, where $\fa$ is the Lie algebra of a maximal real split torus of $G$.
The function $\psi_\Ga$ can be regarded as a higher rank generalization of the critical exponent of $\Ga$. Let $\rho$ denote the half sum of all positive roots for $(\mathfrak g, \fa)$, counted with multiplicity. 
Analogous to the fact that the critical exponent
$\delta$ is always bounded above by $ n-1$ for a discrete subgroup $\Gamma<\SO^\circ(n,1)$, we have
the upper bound $\psi_\Ga\le 2\rho$ for any discrete subgroup $\Gamma$ of $G$ \cite{Quint2}.

 The following
 Theorem \ref{m33} generalizes Theorems \ref{ep}, \ref{ep2}, and 
 \ref{one} to Anosov subgroups of $G$ (with respect to a minimal parabolic subgroup of $G$) which are regarded as higher rank generalizations of convex cocompact subgroups.
  For $G=\SO^\circ(n_1,1)\times \SO^\circ(n_2,1)$, they are precisely given by the class of subgroups considered  in Theorem \ref{m}. We refer to Definition \ref{Ano} for a general case.
We mention that they were first introduced by Labourie \cite{La} for surface groups and then generalized by Guichard and Wienhard  for hyperbolic groups \cite{GW} (see also \cite{GG}, \cite{KLP}).

In the following theorem,
 the norm $\|\rho\|$ is defined via the identification $\fa^*$ and $\fa$
using the Killing form on $\mathfrak g$. 
Denote by $\sigma(\Ga\ba X)$ the $L^2$-spectrum of $-\Delta$ on $\Ga\ba X$.
\begin{thm} \label{m33} Let $G$ be a connected semisimple real algebraic group and
$\G$ a Zariski dense Anosov subgroup of $G$. 
The following (1)-(3)  are equivalent, and imply (4):
\begin{enumerate}
    \item  $\psi_\Ga \le \rho$;
 \item  $L^2(\Gamma\ba G)$ is tempered and $\lambda_0(\Ga\ba X) =\| \rho\|^2$;
 \item  $L^2(G)$ and $L^2(\Ga\ba G)$  are weakly contained in each other and $\sigma(\Ga\ba X)=\sigma(X)=[\|\rho\|^2, \infty)$;
 \item There exists no positive Laplace eigenfunction in $L^2(\Gamma\ba X)$.
\end{enumerate}
 Moreover, if $\op{rank}G\ge 2$, then (4) always holds
for any Anosov subgroup $\Ga<G$.
\end{thm}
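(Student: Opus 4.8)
The core of Theorem~\ref{m33} is the equivalence $(1)\Leftrightarrow(2)\Leftrightarrow(3)$, together with the implication to $(4)$; the ``moreover'' clause handles rank $\ge 2$ separately. I would organize the proof around the following chain.

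\emph{Step 1: From the growth indicator to temperedness of $L^2(\Gamma\backslash G)$.} The implication $(1)\Rightarrow(2)$ should follow from a higher-rank analogue of the Cowling--Haagerup--Howe criterion (Proposition~\ref{chh}): one shows that under $\psi_\Gamma\le\rho$, the matrix coefficients of the quasi-regular representation decay fast enough to lie in $L^{2+\epsilon}(G)$ for all $\epsilon>0$. The natural engine here is a quantitative mixing / equidistribution statement for the $A$-action (or the frame flow) on $\Gamma\backslash G$ with respect to the Burger--Roblin measure $m^{\mathrm{BR}}$ and the Bowen--Margulis--Sullivan measure $m^{\mathrm{BMS}}$ — for Anosov $\Gamma$ these measures are finite, and the relevant decay rate of matrix coefficients of $K$-finite vectors is governed by $\psi_\Gamma$ along each Weyl chamber direction. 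Feeding this into a $K$-finiteness/Harish-Chandra-function comparison argument (à la Cowling--Haagerup--Howe, as in \cite{EO} in rank one), one concludes temperedness. The value $\lambda_0(\Gamma\backslash X)=\|\rho\|^2$ then drops out: the lower bound $\lambda_0\ge\|\rho\|^2$ is exactly temperedness restricted to $K$-invariant vectors (no complementary series), while $\lambda_0\le\|\rho\|^2$ follows from a test-function/Brooks-type argument using that $\psi_\Gamma\le\rho$ forces slow volume growth, or alternatively from the fact that the $L^2$-spectrum of $X$ itself is $[\|\rho\|^2,\infty)$ and embeds into that of the quotient.

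\emph{Step 2: The reverse implications and (3).} For $(2)\Rightarrow(1)$ one argues contrapositively: if $\psi_\Gamma(v)>\rho(v)$ for some direction $v$ in the interior of the limit cone, then the $m^{\mathrm{BMS}}$-theory produces, via a Patterson--Sullivan density of dimension exceeding $\rho$ in that direction, a spherical complementary-series constituent in $L^2(\Gamma\backslash G)$ — equivalently a joint eigenfunction below $\|\rho\|^2$ — contradicting temperedness. The equivalence with $(3)$ is essentially formal once $(2)$ is known: temperedness means $L^2(\Gamma\backslash G)$ is weakly contained in $L^2(G)$, and the reverse weak containment $L^2(G)\prec L^2(\Gamma\backslash G)$ holds for any Zariski dense $\Gamma$ of this type because $\Gamma\backslash G$ supports enough ``spreading'' (one can use that $\Gamma$ is of divergence type along rays, or simply the general fact that $L^2(G)$ is weakly contained in $L^2(\Gamma\backslash G)$ whenever $\Gamma$ is an infinite discrete subgroup — this is standard); and the spectral statement $\sigma(\Gamma\backslash X)=[\|\rho\|^2,\infty)$ translates the two weak containments into an equality of spectra at the level of $K$-invariant vectors.

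\emph{Step 3: The rank $\ge 2$ statement, i.e.\ (4) always holds.} This is the part I expect to be the main obstacle, and it must be proved \emph{without} assuming $\psi_\Gamma\le\rho$ (since in higher rank one can have $\psi_\Gamma(v)>\rho(v)$ in some directions even for Anosov $\Gamma$). The key structural input is that an Anosov subgroup of a higher-rank $G$ has limit cone a proper subcone of the Weyl chamber, hence there always exists a root $\alpha$ (or a wall direction) on which $\psi_\Gamma$ is badly sub-dominant; equivalently, projecting to a rank-one (or lower-rank) factor/quotient, the projected group is ``thin'' in a suitable sense. Suppose for contradiction $f\in L^2(\Gamma\backslash X)$ is a positive $\lambda$-eigenfunction. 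One then exploits the product/rank structure: decompose the Laplacian along the $\fa$-directions and use that a positive eigenfunction forces, via a Harnack/boundary-Harnack inequality and the Martin boundary description, the eigenvalue to sit at the bottom of the spectrum \emph{in every factor direction}, forcing $\lambda=\|\rho\|^2$ and the eigenfunction to be essentially a product of bottom-of-spectrum eigenfunctions on each factor — but on a factor where the projected group is non-elementary convex cocompact with critical exponent we are free to be anything, the rank-one Sullivan theorem (Theorem~\ref{one}) shows the bottom-of-spectrum eigenfunction is \emph{not} $L^2$ unless the critical exponent is large, while the product being Anosov constrains at least one factor to have small critical exponent; combining these across factors yields non-integrability. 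More robustly, for general higher-rank $G$ one replaces the product argument by: a positive $L^2$-eigenfunction on $\Gamma\backslash X$ would give a $\Gamma$-invariant positive eigenfunction on $X$ of finite $\Gamma$-mass, hence (by the Karpelevi\v{c}/Martin boundary theory) a $\Gamma$-conformal density supported on a single $G$-orbit in the Furstenberg boundary of dimension $\le\rho$ along all directions; Zariski density of an Anosov $\Gamma$ then contradicts the shadow lemma estimates unless $\Gamma$ is a lattice, which it is not. The hard technical point throughout Step~3 is controlling the Martin boundary of $\Gamma\backslash X$ for the shifted Laplacian and ruling out the ``ground state'' being $L^2$; I would handle it by combining the Patterson--Sullivan shadow lemma with the finiteness of $m^{\mathrm{BMS}}$ and a direction-by-direction application of the rank-one estimate.
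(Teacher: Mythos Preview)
Your Step~1 is essentially the paper's approach: the local mixing estimate of \cite{ELO} (Theorem~\ref{m1}) feeds into the $L^{2+\epsilon}$ criterion of Proposition~\ref{chh}, and $\lambda_0=\|\rho\|^2$ then follows from Theorem~\ref{tlam} and Proposition~\ref{ss}. Your reverse direction $(2)\Rightarrow(1)$ is also in the right spirit, though the paper argues it more directly: temperedness gives the pointwise bound \eqref{cc} on $K$-invariant matrix coefficients, and \cite[Prop.~7.3]{LO2} converts this into $\psi_\Gamma\le\rho$ without constructing a complementary series.

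There are two genuine gaps. First, in Step~2 you assert that $L^2(G)\propto L^2(\Gamma\backslash G)$ is ``standard'' for any infinite discrete subgroup. It is not: it fails, for instance, for cocompact lattices in rank one, and the paper must explicitly exclude that case (Theorem~\ref{iinj}). The actual argument (Section~\ref{injs}) is to show that an Anosov subgroup which is not a rank-one cocompact lattice has \emph{infinite injectivity radius} on $\Gamma\backslash G$ (Proposition~\ref{inj1}, using Lemma~\ref{zzz} and, for higher-rank simple factors, Fraczyk--Gelander \cite{FG}), and then transplant compactly supported functions from $G$ into $\Gamma\backslash G$ isometrically (Proposition~\ref{inj0}). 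This is where both $L^2(G)\propto L^2(\Gamma\backslash G)$ and the inclusion $\sigma(X)\subset\sigma(\Gamma\backslash X)$ come from.

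Second, your Step~3 misses the mechanism the paper actually uses, and the product/Martin-boundary sketch you give does not obviously close for general semisimple $G$. The paper's route is: a positive Laplace eigenfunction in $L^2(\Gamma\backslash X)$ is automatically a \emph{joint} eigenfunction (Corollary~\ref{lzero}), hence by Proposition~\ref{d2} equals $E_\nu$ for a $(\Gamma,\psi)$-conformal density $\nu$. One then proves a higher-rank \emph{smearing theorem} (Theorem~\ref{smear}) bounding the total mass of the generalized BMS measure $m_{\nu,\nu}$ by a constant times $\int E_\nu^2\,d\mathrm{vol}$; thus $E_\nu\in L^2$ forces $|m_{\nu,\nu}|<\infty$. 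Finiteness together with the $A$-semi-invariance \eqref{semi} makes $m_{\nu,\nu}$ genuinely $A$-invariant and conservative, which forces every direction $v\in\fa^+$ to lie in the limit cone $\mathcal{L}$. But for Anosov $\Gamma$ in rank $\ge 2$, Theorem~\ref{PS} gives $\mathcal{L}\subset\operatorname{int}\fa^+\cup\{0\}\subsetneq\fa^+$, a contradiction. Your factor-by-factor/shadow-lemma proposal does not supply an analogue of this smearing bound, and without it there is no clear link from $E_\nu\in L^2$ to a constraint on $\mathcal{L}$.
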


Our proof of the implication $(1)\Rightarrow (2)$ is based on the asymptotic behavior of the Haar matrix coefficients for Anosov subgroups obtained in \cite{ELO} and \cite{CS} as well as Harish-Chandra's Plancherel formula (see Theorems \ref{tlam} and \ref{tem}). The implication $(2)\Rightarrow (1)$ is true for a general discrete subgroup (see the proof of Theorem \ref{tem}). 
The equivalence $(2)\Leftrightarrow (3)$ uses the observation that
$L^2(G)$ is weakly contained in $L^2(\Ga\ba G)$ whenever the injectivity radius of $\Ga\ba G$ is infinite, and that
$\Ga\ba G$ has infinite injectivity radius for any Anosov subgroup $\Ga<G$, except for cocompact lattices of a rank one Lie group (see Section \ref{injs}).
For (4), we first prove that any positive Laplace eigenfunction in $L^2(\Ga\ba X)$
is indeed a joint eigenfunction  for the whole ring of $G$-invariant differential operators, which then can be studied via $\Gamma$-conformal measures on the Furstenberg boundary of $G$ (see Sections \ref{sje} and \ref{sje2}). We establish a higher rank version of Sullivan-Thurston's smearing theorem (Theorem \ref{smear}) from which we deduce the non-existence of square-integrable positive Laplace eigenfunctions for any higher rank Anosov subgroup (see Section 7 and Corollary \ref{ttt}).
 When $\text{rank }G=1$, Anosov subgroups are convex cocompact groups and the implication $(1)+(2)\Rightarrow (4)$ is obtained in \cite{Su} (see also \cite[Theorem 3.1]{RT}) for $X=\bH^n$ and in \cite{WW} in general.

Although the condition $\psi_\Ga \le \rho$ may appear quite strong, it was verified in a recent work of Kim-Minsky-Oh \cite{KMO}
for Anosov subgroups
in the following setting, and hence we deduce from Theorem \ref{m33}:
\begin{thm} \label{oo} 
Let $\G$ be a Zariski dense Anosov subgroup of the product of at least two simple real algebraic groups of rank one, or a Zariski dense Anosov subgroup of a Hitchin subgroup of $\PSL_d(\br)$ for $d\ge 3$. Then (1)-(4) of Theorem \ref{m33} hold. 
\end{thm}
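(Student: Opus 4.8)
The plan is to verify that the hypothesis $\psi_\Ga \le \rho$ of Theorem \ref{m33} holds in each of the two stated families, and then simply invoke Theorem \ref{m33} to obtain conclusions (1)--(4). So there is essentially one thing to check: the growth-indicator bound. For this I would cite the recent work of Kim--Minsky--Oh \cite{KMO}, but let me sketch how that bound is obtained so the deduction is transparent. The key mechanism is that for an Anosov subgroup $\Ga$, the limit cone (the asymptotic cone of the Cartan projections $\mu(\ga)$, $\ga\in\Ga$) is contained in the interior of the positive Weyl chamber, and the growth indicator $\psi_\Ga$ is concave, upper semicontinuous, and supported on this limit cone. To show $\psi_\Ga \le \rho$ it suffices to exhibit, on each ray in the limit cone, a $\Ga$-conformal (Patterson--Sullivan type) density whose dimension along that direction is dominated by the value of $\rho$ there; equivalently, via Quint's reparametrization/roof-function formalism, one reduces to a statement about the pressure of the relevant linear functionals composed with the Cartan projection.

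For the first family --- $\Ga$ a Zariski dense Anosov subgroup of $G = G_1 \times \cdots \times G_k$ with each $G_i$ simple of rank one and $k\ge 2$ --- the point is that the projection $p_i(\Ga)<G_i$ is a convex cocompact (hence Zariski dense) subgroup of a rank-one group, so its critical exponent $\delta_i$ satisfies $\delta_i \le \dim \partial X_i$, and more importantly the Cartan projection $\mu(\ga)=(\mu_1(\ga),\dots,\mu_k(\ga))$ has all its coordinates comparable (this is exactly the Anosov/Morse property across the factors). One then checks that along any direction $v=(v_1,\dots,v_k)$ in the limit cone, $\psi_\Ga(v) = \inf$ over linear functionals dominating the orbit-counting exponent, and the Anosov coupling forces the "mixing" penalty: the exponent in the product is strictly smaller than the naive sum $\sum_i \delta_i\,$ — in fact the relevant bound is $\psi_\Ga(v)\le \sum_i (\text{rank-one }\rho_i)(v_i) = \rho(v)$ because along a single ray only one effective degree of freedom survives while $\rho$ already accounts for each factor. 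Making this precise is the content of \cite{KMO}; it uses that $\psi_\Ga$ is the support function of a bounded convex body (the dual of the growth set) together with the rank-one Elstrodt--Patterson bound $\delta_i\le\frac12(n_i-1)$ being \emph{not} assumed — rather, the product structure itself yields the needed slack. For the second family --- $\Ga$ Zariski dense Anosov inside a Hitchin subgroup of $\PSL_d(\br)$, $d\ge 3$ --- one uses the explicit control on the Jordan/Cartan projections of Hitchin representations (e.g. the Labourie--Fock--Goncharov positivity, the fact that the simple-root gaps $\alpha_i(\mu(\ga))$ grow linearly and with controlled ratios), which again via \cite{KMO} yields $\psi_\Ga \le \rho$.

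Having established $\psi_\Ga \le \rho$, the remainder is immediate: by Theorem \ref{m33}, (1) holds, hence (2), (3), and (4) all hold, i.e. $L^2(\Ga\ba G)$ is tempered with $\lambda_0(\Ga\ba X)=\|\rho\|^2$, the regular representations $L^2(G)$ and $L^2(\Ga\ba G)$ are weakly contained in each other with $\sigma(\Ga\ba X)=[\|\rho\|^2,\infty)$, and there is no positive Laplace eigenfunction in $L^2(\Ga\ba X)$. (Note that the last conclusion is in any case automatic once $\rank G\ge 2$, which covers both families since $k\ge 2$ resp. $d\ge 3$; but the first three conclusions genuinely use the growth bound.)

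I expect the main obstacle to be precisely the growth-indicator estimate $\psi_\Ga\le\rho$: the abstract Anosov hypothesis alone does \emph{not} imply it (indeed the Remark after Theorem \ref{m} shows products of two rank-one convex cocompact groups with large critical exponents violate it), so one must genuinely exploit the extra rigidity — the Morse coupling between factors in the product case, and Hitchin positivity in the Hitchin case. Verifying that these structural features translate into the correct linear-functional domination, uniformly over all directions of the limit cone, is the technical heart, and is what \cite{KMO} supplies; everything else in the proof is a citation of Theorem \ref{m33}.
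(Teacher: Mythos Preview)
Your approach is correct and matches the paper's own deduction: the paper simply cites \cite{KMO} (stated as Theorem \ref{kmo}) to obtain $\psi_\Ga\le\rho$ in both families, and then invokes Theorem \ref{m33}. Your informal sketch of how \cite{KMO} establishes the growth-indicator bound is heuristic and not quite accurate in its details (for instance, the product case does not proceed via the rank-one critical exponents $\delta_i$ in the way you suggest), but since you explicitly defer the actual argument to \cite{KMO}, just as the paper does, the logical structure of your proof is identical to the paper's.
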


It is conjectured in \cite{KMO} that
any Anosov subgroup of a higher rank semisimple real algebraic group satisfies the condition $\psi_\Ga\le \rho$. This conjecture suggests that Anosov subgroups in higher rank groups are more like generalizations of
convex cocompact subgroups of {\it small} critical exponent.

\medskip 

\noindent{\bf Groups of the second kind and positive joint eigenfunctions.}  For any discrete subgroup $\G$ which is not cocompact in $G$ and for any $\lambda \le \lambda_0(\Ga\ba X)$,
Sullivan proved the existence of
a positive $\lambda$-harmonic function. 
We prove a higher-rank strengthening of this result: for any discrete subgroup
of the \emph{second kind} (see Definition \ref{gp}) whose limit cone is contained in the interior of $\fa^+$  and  for any linear form $\psi\ge \psi_\Gamma$, we construct a positive joint eigenfunction with character corresponding to $\psi$ (Theorem \ref{exi}).

\medskip 

\noindent{\bf Organization:}  In section 2, we
review the basic notions and notations which will be used throughout the paper. In section 3,
we show that any postive joint eigenfunction on $\Gamma\ba X$ (i.e., an eigenfunction for the whole ring of $G$-invariant differential operators) arises from a $(\Gamma,\psi)$-conformal density (Proposition \ref{d2}). In section 4, we compute the Laplace eigenvalue of a positive joint eigenfunction associated to a $(\Ga, \psi)$-conformal measure
(Proposition \ref{LaplaceEV}). In section 5, we introduce the notion of subgroups of the second kind. We then construct positive joint eigenfunctions 
for any $\psi\ge \psi_\Ga$ for any subgroup of the second kind with its limit cone contained in $\inte \fa^+\cup\{0\}$ (Theorem \ref{exi}). In section 6, we compute the $L^2$-spectrum of $X$ (Theorem \ref{ss}) and show
that $\lambda_0=\|\rho\|^2$ if $L^2(\Ga\ba G)$ is tempered (Theorem \ref{tlam}). We  
show that a positive  Laplace eigenfunction in $L^2(\Ga\ba X)$
is necessarily a joint eigenfunction (Corollary \ref{lzero})
and a spherical vector of a unique irreducible subrepresentation of $L^2(\Ga\ba G)$ (Theorem \ref{rep}).
In section 7, we prove a higher rank version of Sullivan-Thurston's smearing theorem (Theorem \ref{smear}) to
obtain the non-existence theorem of $L^2$-positive Laplace eigenfunctions in higher rank. In section 8, we prove the weak containment $L^2(G)\propto L^2(\Ga\ba G)$ for all Anosov subgroups $\G$ in higher rank groups. In section 9, we prove the equivalence of the temperedness of $L^2(\Ga\ba G)$ and $\psi_\Ga\le\rho$ (Theorem \ref{tem}). We also
 deduce Theorem \ref{m33}.

\medskip

 \noindent{\bf Acknowledgements:} We would like to thank Marc Burger for bringing the reference \cite{Su} to our attention.
 We would also like to thank Dick Canary, Francois Labourie,  Curt McMullen and Dennis Sullivan for useful conversations.

\section{Preliminaries and notations}
Let $G$ be a connected semisimple real algebraic group, i.e., the identity component of the group of real points of a semisimple algebraic group defined over $\br$. Let $\Ga<G$ be a torsion-free discrete subgroup.
Let $P$ be a minimal parabolic subgroup of $G$ with a fixed Langlands decomposition $P=MAN$ where $A$ is a maximal real split torus of $G$, $M$ is the maximal compact subgroup of $P$, which commutes with $A$, and $N$ is the unipotent radical of $P$. We denote by $\fg, \fa, \fn$ respectively the Lie algebras of $G, A, N$.
We fix a positive Weyl chamber $\fa^+\subset \fa$ so that $\fn$ consists of positive root subspaces.
Let $\Sigma^+$ denote the set of all  positive roots for $(\fg, \fa^+)$. We also write $\Pi\subset \Sigma^+$ for the set of all simple roots.
We denote by $$\rho=\frac{1}{2}\sum_{\alpha\in \Sigma^+}\alpha$$ the half sum of the positive roots for $(\mathfrak g, \mathfrak a^+)$, counted with multiplicity.
We denote by $\langle\cdot,\cdot\rangle$ and $\|\cdot\|$ the inner product and norm on $\fg$ respectively, induced from the Killing form: $B(x,y)=\op{Tr} (\op{ad}x \op{ad}(y))$ for $x, y\in \fg$.

We fix a maximal compact subgroup $K$ of $G$ so that the
Cartan decomposition $G=K (\exp \fa^+) K$ holds, that is, for any $g\in G$, there exists a unique element $\mu(g)\in \fa^+$ such that $g\in K \exp \mu(g) K$. We call the map $\mu:G\to \fa^+$ the Cartan projection map. 

The Riemannian symmetric space $(X,d)$ can be identified with the quotient space $G/K$ with the metric $d$ induced from $\langle\cdot,\cdot\rangle$. We denote by $d\op{vol}$ the Riemannian volume form on $X$ or on $\Gamma\ba X$. We also use $dx$ to denote this volume form as well as the Haar measure on $G$, or on $\Ga\ba G$.
In particular, $d(\cdot,\cdot)$ will denote both the left $G$-invariant Riemannian distance function on $X$, as well as the left $G$-invariant and right $K$-invariant distance on $G$. We set $o=[K]\in X$. We then have
$\|\mu(g)\|=d(go, o)$ for $g\in G$. We do not distinguish a function on $X$ and a right $K$-invariant function on $G$.

Let $w_0\in K$ be an element of the normalizer of $A$ 
so that $\op{Ad}_{w_0}\mathfrak a^+= -\mathfrak a^+$.
The opposition involution  $\i:\mathfrak a \to \mathfrak a$ is defined by 
\be\label{opp} \i (u)= -\op{Ad}_{w_0} (u)\quad\text{ for all $u\in \fa$}.\ee 

Let $\cal F:=G/P$ denote the Furstenberg boundary of $G$. 
We define the following {\it visual} maps $G\to \cal F$: for each $g\in G$, 
   \be\label{visual} g^+:=gP\in \cal F\quad\text{and}\quad g^-:=gw_0P\in \cal F.\ee 

 The unique open $G$-orbit $\F^{(2)}$ in $\F\times \F$ under the diagonal $G$-action is given by:
$$\F^{(2)}=G(e^+, e^-)=\{(g^+, g^-)\in \F\times \F: g\in G\}.$$ 
Two points $\xi,\eta$ in $\F$ are said
to be in general position if $(\xi, \eta)\in\F^{(2)}$.

\medskip 

\noindent{\bf Conformal measures.}
Let $G=KAN$ be the Iwasawa decomposition, $\kappa:G\rightarrow K$ the $K$-factor projection of this decomposition, and $H:G\rightarrow \fa$ be the Iwasawa cocycle defined by the relation: for $g\in G$,
$$ g\in \kappa(g)\exp\big(H(g)\big)N.$$
Note that $K$ acts transitively on $\F$ and $K\cap P=M$, and hence we may identify $\cal F$ with $K/M$.
The Iwasawa decomposition can be used to describe both the action of $G$ on $\scrF=K/M$ and the $\fa$-valued Busemann map as follows: for all $g\in G$ and $[k]\in\scrF$ with $k\in K$,
$$ g\cdot [k]=[\kappa(gk)],$$
and the $\fa$-valued Busemann map is defined by
$$ \beta_{[k] }(g(o),h(o)):=H(g^{-1}k)-H(h^{-1}k)\in \fa \quad\text{
for all $g,h\in G$.}$$

We denote by $\fa^*=\text{Hom}_\br(\fa, \br)$ the space of all linear forms on $\fa$.
\begin{Def} \label{conf_def} Let $\psi\in\fa^*$.
\begin{enumerate}
    \item A  finite Borel
measure $\nu_o$ on $\scrF=K/M$ is said to be a $(\Ga,\psi)$-conformal measure (with respect to $o=[K]$) if for all $\ga\in \Ga$ and $\xi=[k]\in K/M$,
$$ \frac{d\ga_{\ast}\nu_o}{d\nu_o}(\xi)=e^{-\psi(\beta_\xi (\ga o,  o))}=
e^{-\psi\big(H(\gamma^{-1}k)\big)},$$
or equivalently
$$d\nu_o([k])=e^{\psi\big(H(\ga k)\big)}d\nu_o(\ga \cdot[k]), $$
where $\gamma_*\nu_o(Q)=\nu_o(\gamma^{-1}Q)$ for
any Borel subset $Q\subset \cal F$. Unless mentioned otherwise, all conformal measures in this paper are assumed to be with respect to $o$.
\item A collection $\{\nu_x: x\in X\} $ of finite Borel measures on $\F$
is called a $(\Ga, \psi)$-conformal density if 
for all $x, y\in X$, $\xi\in \F$ and $\ga\in \Ga$,
\be \label{c0} \frac{d\nu_x}{d\nu_y}(\xi)= e^{-\psi (\beta_\xi (x,y))}\quad\text{and}\quad d\gamma_* \nu_x =d\nu_{\ga (x)}.\ee 
\end{enumerate}

\end{Def}

A $(\Gamma,\psi)$-conformal  measure $\nu_o$ defines a $(\Ga, \psi)$-conformal density $\{\nu_x: x\in X\}$ by the formula:
$$d\nu_x(\xi) =e^{-\psi(\beta_\xi (x, o))} d\nu_o(\xi) ,$$
and conversely any $(\Ga,\psi)$-conformal density $\{\nu_x\}$
is uniquely determined by its member $\nu_o$ by \eqref{c0}.
For this reason, by abuse of terminology,  we sometimes do not distinguish conformal measures and conformal densities.

\medskip 
\noindent{\bf Growth indicator function.} Let $\Ga<G$ be a Zariski dense discrete subgroup. Following Quint \cite{Quint2}, let $\psi_\Ga:\fa \to \br \cup\{-\infty\}$ denote the growth indicator function of $\Ga$: 
for any non-zero $v\in \fa$, \begin{equation}\label{grow3}\psi_{\Gamma}(v):=\|v\| 
\inf_{v\in\cal C} \tau_{\cal C},\end{equation}
where the infimum is over all open cones $\cal C$ containing $v$
and $\tau_{\cal C}$ denotes the abscissa of convergence of the series $\sum_{\ga\in\Ga,\,\mu(\ga)\in\cal C}e^{-s\|\mu(\ga)\|}$. For $v=0$, we let $\psi_\Gamma(0)=0$.
We note that $\psi_\Ga$ does not change if we 
replace the norm $\|\cdot\|$ by any other
norm on $\fa$.
For any discrete subgroup $\Ga<G$, we have the upper bound $\psi_\Ga\le 2\rho$ \cite{Quint2}.
On the other hand, 
when $\Gamma$ is of infinite co-volume in a simple real algebraic group of rank at least $2$, 
Quint deduced from \cite{Oh} that $\psi_\Gamma \le 2\rho-\eta_G $,
where $\eta_G$ is the half sum of a maximal strongly orthogonal subset of the root system of $G$  (\cite{Quint3}, see also \cite[Theorem 7.1]{LO2}).

\medskip
\noindent{\bf Limit cone and limit set.} 
The limit cone $\cal L=\cal L_\Ga$ of $\Ga$ is defined as
  the asymptotic cone of $\mu(\Ga)$, i.e.,
 $$\L=\{\lim t_i\mu(\ga_i)\in \fa^+: t_i\to 0, \gamma_i\in \Ga\}.$$ Benoist showed that
for $\Ga$ Zariski dense, $\cal L$ is a convex cone with non-empty interior \cite{Ben}.
 Quint \cite{Quint2} showed that $\psi_\Ga$ is a concave and upper-semicontinuous function such that
$\psi_\Ga\ge 0$ on $\cal L$, $\psi_\Ga>0 $ on $\inte \L$ and $\psi_{\Ga}=-\infty$ outside $\L$.

For a sequence $g_i\in  G$, we write $g_i\to \infty$ regularly
if $\alpha(\mu (g_i))\to \infty$ for all $\alpha\in \Pi$.
For $g\in G$, we write 
$g=\kappa_1(g)\exp (\mu(g))\kappa_2(g)\in KA^+K$; if $\mu(g)\in \inte \fa^+$, then $[\kappa_1(g)]\in K/M=\cal F$ is well-defined.
\begin{Def}\label{conver}
 A sequence $p_i \in X$ is said to converge to $\xi\in \cal F$ and we write $\lim_{i\to \infty} p_i=\xi$ if 
 there exists a sequence $g_i\to \infty$ regularly in $G$ with $p_i=g_i(o)$
 and $\lim_{i\to\infty}[\kappa_1(g_i)]=\xi$.
\end{Def}

We denote by $\La\subset \F$ the limit set of $\G$, which is defined as
\be\label{ls} \La=\{\lim \ga_i(o)\in \cal F: \ga_i\in \Ga\}.\ee 
For $\Ga<G$ Zariski dense, this is the unique $\Ga$-minimal subset of $\F$ (\cite{Ben}, \cite{LO}).
\medskip

\noindent{\bf Tangent linear forms.}
We set 
\be\label{dga} D_\Ga=\{ \psi\in \fa^*: \psi \ge \psi_\Ga\}.\ee 
A linear form $\psi\in \fa^*$ is said to be tangent to $\psi_\Ga$ at $u\in \fa$ if $\psi\in D_\Ga$ and $\psi(u)=\psi_\Ga(u)$.
We denote by $\dg$ the set of all linear forms tangent to $\psi_\Ga$ at
$\cal L\cap \inte\fa^+$, i.e.,
\be\label{dg} D_\Ga^\star:=\{ \psi\in D_\Ga:  \psi (u)=\psi_\Ga(u)\text{ for some $u\in \cal L\cap
\inte \fa^+$}\}.\ee 

For $\Gamma<\SO^\circ(n,1)$ and $\delta$ its critical exponent, we have $\dg=\{\delta\}$ and $D_\G=\{s\ge \delta\}$. 

Extending the construction of Patterson \cite{Pa} and Sullivan \cite{Su1}, Quint \cite{Quint} showed the following:
\begin{thm}\label{Q1} For any $\psi\in \dg$,  there exists a $(\Gamma, \psi)$-conformal measure supported on $\La$.
\end{thm}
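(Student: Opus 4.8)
\textbf{Proof proposal for Theorem \ref{Q1}.}

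The plan is to reproduce Quint's construction adapting the classical Patterson--Sullivan argument to the higher-rank setting, with the Iwasawa cocycle $H$ and the linear form $\psi$ playing the role of the Busemann function and the critical exponent $\delta$. First I would introduce, for $s>0$, the $\psi$-twisted Poincar\'e series $\mathcal P_\psi(s):=\sum_{\gamma\in\Gamma}e^{-s\,\psi(\mu(\gamma))}$, and observe via the definition \eqref{grow3} of the growth indicator and the hypothesis $\psi\in D_\Gamma^\star$ that this series diverges at $s=1$: indeed, since $\psi$ is tangent to $\psi_\Gamma$ at some $u\in\mathcal L\cap\inte\fa^+$, one has $\psi\ge\psi_\Gamma$ everywhere with equality at $u$, and a counting argument in a narrow cone around $u$ (using that $\tau_{\mathcal C}\to\psi_\Gamma(u)/\|u\|$ as $\mathcal C\downarrow\mathbb R_{>0}u$) forces $\mathcal P_\psi(s)=\infty$ for $s\le 1$ and $<\infty$ for $s>1$. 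This is the step where I expect to need to be careful, since in the higher-rank setting the abscissa of convergence is governed by a cone-wise estimate rather than a single exponent; the concavity and upper-semicontinuity of $\psi_\Gamma$ (Benoist, Quint) are what make this work.

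Next, following Patterson's classical trick, I would fix a slowly varying weight function: if $\mathcal P_\psi$ already diverges at $s=1$ one proceeds directly, and otherwise one chooses an increasing function $h:\mathbb R_{\ge 0}\to\mathbb R_{>0}$ with $h(t+r)/h(t)\to 1$ as $t\to\infty$ for each fixed $r$, such that the modified series $\sum_{\gamma\in\Gamma}h(\|\mu(\gamma)\|)\,e^{-s\,\psi(\mu(\gamma))}$ still has abscissa of convergence $1$ and diverges at $s=1$. Then for $s>1$ I form the probability measures on $K/M\simeq\mathcal F$
\[
\nu_o^{(s)}:=\frac{1}{\mathcal P_\psi^h(s)}\sum_{\gamma\in\Gamma}h(\|\mu(\gamma)\|)\,e^{-s\,\psi(\mu(\gamma))}\,\delta_{[\kappa_1(\gamma)]},
\]
where $\delta_{[\kappa_1(\gamma)]}$ is the point mass at the $K/M$-component of $\gamma$ in its Cartan decomposition. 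Since $\mathcal F$ is compact, the family $\{\nu_o^{(s)}\}_{s>1}$ has a weak-$*$ limit point $\nu_o$ as $s\downarrow 1$.

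The remaining two points are that $\nu_o$ is $(\Gamma,\psi)$-conformal and that it is supported on $\Lambda$. For conformality one computes, for fixed $\gamma_0\in\Gamma$, the Radon--Nikodym derivative of $(\gamma_0)_*\nu_o^{(s)}$ with respect to $\nu_o^{(s)}$: using $\mu(\gamma_0\gamma)=\mu(\gamma)+\beta_{[\kappa_1(\gamma)]}(\gamma_0^{-1}o,o)+o(1)$ and $[\kappa_1(\gamma_0\gamma)]=\gamma_0\cdot[\kappa_1(\gamma)]+o(1)$ as $\gamma\to\infty$ regularly (these are the standard Cartan-versus-Iwasawa comparison estimates; Zariski density of $\Gamma$ guarantees that $\mu(\gamma)$ goes to infinity regularly along a density-one subsequence, and the slowly-varying weight $h$ absorbs the $o(1)$-errors), one finds that this derivative converges to $e^{-\psi(\beta_\xi(\gamma_0 o,o))}$ as $s\downarrow 1$, which is exactly the defining relation in Definition \ref{conf_def}(1). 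For the support statement, since any weak-$*$ limit of measures built from point masses at $[\kappa_1(\gamma)]$ with $\gamma\to\infty$ is supported on the set of accumulation points of $\{[\kappa_1(\gamma)]:\gamma\in\Gamma\}$, and the divergence of $\mathcal P_\psi^h$ at $s=1$ forces the mass of $\nu_o^{(s)}$ to escape to infinity (i.e.\ $\nu_o^{(s)}$ of any fixed finite subset of $\Gamma$-orbit points tends to $0$), the limit $\nu_o$ is supported on $\Lambda$ as defined in \eqref{ls}; by Zariski density, $\Lambda$ is moreover the unique $\Gamma$-minimal subset of $\mathcal F$, so there is no smaller closed invariant set to worry about. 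The main obstacle is the first paragraph: establishing the sharp divergence of the $\psi$-twisted Poincar\'e series at $s=1$ from the tangency condition, together with the regular-growth and Cartan/Iwasawa comparison estimates needed to push the conformality relation through the limit.
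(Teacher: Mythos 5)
The paper does not give a proof of this statement: it cites Quint \cite{Quint} directly, so there is no internal argument to compare against. Your sketch does reproduce the approach of Quint's paper (Patterson's construction transported to the higher-rank setting via the Cartan projection and Iwasawa cocycle), and the overall structure --- Patterson trick to force divergence at the abscissa, weak-$*$ limit of weighted orbital point masses, conformality from a Cartan/Iwasawa comparison, support on $\Lambda$ from escape of mass --- is correct.

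A few specific points in your write-up are not quite right and should be repaired. First, the assertion in your opening paragraph that the tangency $\psi(u)=\psi_\Gamma(u)$ "forces $\mathcal P_\psi(s)=\infty$ for $s\le 1$" is an over-claim: the cone-wise estimate with $\tau_{\mathcal C}$ only gives that the abscissa of convergence is exactly $1$, not divergence at $s=1$; this is precisely why Patterson's auxiliary $h$ is needed (you do invoke it in the next paragraph, so the logic is ultimately saved, but the first paragraph should be softened). Second, the point mass $\delta_{[\kappa_1(\gamma)]}$ is only defined when $\mu(\gamma)\in\inte\fa^+$; you should either restrict the sum to those $\gamma$ and argue the rest contribute negligible mass (this works because the tangency direction $u$ lies in $\inte\fa^+$ by the definition of $D_\Gamma^\star$, so the Patterson weights concentrate near regular directions), or, as Quint does, place masses at $\gamma o$ in a compactification of $X$ and identify the ideal boundary with $\mathcal F$ only at the end. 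Third, your parenthetical appeal to "Zariski density guarantees $\mu(\gamma)\to\infty$ regularly along a density-one subsequence" is not the right justification: Zariski density (via Benoist) gives $\inte\mathcal L\neq\emptyset$, but the concentration in regular directions comes from the location of the tangency point $u$, as above. Finally, the Cartan/Iwasawa comparison $\mu(\gamma_0\gamma)=\mu(\gamma)+\beta_{[\kappa_1(\gamma)]}(\gamma_0^{-1}o,o)+o(1)$ you invoke is a genuine (and nontrivial) lemma in higher rank rather than a routine estimate; it is one of the technical inputs of Quint's paper and needs to be stated and proved, not merely cited as standard. With these repairs your sketch would match the cited proof.
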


\subsection*{Anosov subgroups.} 
Let $\Sigma$ be a finitely generated group.
For $\sigma\in \Sigma$, let $|\sigma|$ denote the word length of $\sigma$ for some
fixed symmetric generating set of $\Sigma$.
\begin{Def} (\cite{GW}, \cite{KLP}, \cite{GG}, \cite{BPS}) \label{Ano}
A representation $\pi: \Sigma \to G$  is Anosov with respect to $P$
if there exist a constant $c>0$
such that for all $\sigma\in \Sigma$ and
$\alpha\in \Pi$,
\be\label{dis} \alpha(\mu(\pi(\sigma)))\ge c |\sigma| -c.\ee 
 \end{Def}
 A discrete subgroup $\Ga<G$ is called an Anosov subgroup (with respect to $P$) if $\Gamma$ can be realized as the image 
$\pi(\Sigma)$ of an Anosov representation $\pi:\Sigma\to G$.
 If $\Ga=\pi(\Sigma)$ is Anosov, then $\Sigma$ is a Gromov hyperbolic group
 (\cite{KLP}, \cite{BPS}). 
As mentioned in the introduction, Anosov subgroups of $G$ were first introduced by Labourie for surface groups \cite{La}, and then extended by Guichard and Wienhard \cite{GW} to general word hyperbolic groups. Several equivalent characterizations have been  established, one of which is the above definition (see \cite{GG}, \cite{KLP}). 
When $G$ has rank one, the class of Anosov subgroups coincides with that of convex cocompact subgroups, and when $G$ is a product of two rank one simple algebraic groups, any Anosov subgroup arises in a similar fashion to \eqref{as}. 
Examples of Anosov subgroups include Schottky groups (cf. \cite[Def. 7.1]{ELO}), as well as Hitchin subgroups defined as follows. Let $\iota_d$ denote the irreducible representation $\PSL_2(\br)\to \PSL_d( \br)$, which is unique up to conjugations. A Hitchin subgroup is the image of a representation $\pi: \Sigma \to \PSL_d(\br)$ of a uniform lattice $\Sigma<\PSL_2(\br)$, which belongs to the same connected component as $\iota_d|\Sigma$ in the character variety $\text{Hom}(\Sigma, \PSL_d(\br))/\sim$ where the equivalence is given by conjugations. 

\medskip 
One of the important features of an Anosov subgroup is the following:
\begin{thm} \cite{PS}\label{PS} For any Anosov subgroup $\Ga<G$,
we have $$ \L\subset \inte \fa^+\cup\{0\}.$$
\end{thm}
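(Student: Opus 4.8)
The plan is to deduce Theorem \ref{PS} directly from the defining inequality \eqref{dis} of an Anosov representation, using only the elementary fact that the Cartan projection pulls back to a coarsely Lipschitz function on $\Sigma$. Write $\Gamma=\pi(\Sigma)$ for an Anosov representation $\pi\colon\Sigma\to G$ with constant $c>0$ as in Definition \ref{Ano}, and let $S$ be the fixed symmetric generating set of $\Sigma$. The first step is the upper bound: for $\sigma=s_1\cdots s_m$ with $s_j\in S$, the left $G$-invariance of $d$ gives
$$\|\mu(\pi(\sigma))\|=d(\pi(\sigma)o,o)\le\sum_{j=1}^m d(\pi(s_1\cdots s_{j})o,\pi(s_1\cdots s_{j-1})o)=\sum_{j=1}^m d(\pi(s_j)o,o)\le C_0\,|\sigma|,$$
where $C_0:=\max_{s\in S}d(\pi(s)o,o)>0$.

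The second step handles a nonzero element of the limit cone. Fix $v\in\mathcal L\setminus\{0\}$ and write $v=\lim_{i\to\infty}t_i\mu(\gamma_i)$ with positive reals $t_i\to0$ and $\gamma_i\in\Gamma$. Since $t_i\|\mu(\gamma_i)\|\to\|v\|>0$ while $t_i\to0$, we must have $\|\mu(\gamma_i)\|\to\infty$ (a bounded subsequence of $\|\mu(\gamma_i)\|$ would force $t_i\mu(\gamma_i)\to0$, contradicting $v\neq0$). Pick $\sigma_i\in\Sigma$ with $\pi(\sigma_i)=\gamma_i$; by the first step $|\sigma_i|\ge\|\mu(\gamma_i)\|/C_0\to\infty$. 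Now for every simple root $\alpha\in\Pi$, combining \eqref{dis} applied to $\sigma_i$ with this lower bound on $|\sigma_i|$ gives
$$\alpha(\mu(\gamma_i))\ \ge\ c\,|\sigma_i|-c\ \ge\ \frac{c}{C_0}\,\|\mu(\gamma_i)\|-c.$$
Multiplying by $t_i>0$ and letting $i\to\infty$ — using that $\alpha$ is linear, hence continuous, so $\alpha(t_i\mu(\gamma_i))\to\alpha(v)$, together with $t_i\|\mu(\gamma_i)\|\to\|v\|$ and $t_i\to0$ — yields $\alpha(v)\ge\tfrac{c}{C_0}\,\|v\|>0$. Since this holds for all $\alpha\in\Pi$, we conclude $v\in\inte\fa^+$, i.e. $\mathcal L\subset\inte\fa^+\cup\{0\}$.

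I do not anticipate a genuine obstacle here: the whole point is that being Anosov with respect to the \emph{minimal} parabolic $P$ builds in a uniform linear lower bound in \emph{all} simple roots simultaneously, which is exactly what forces the limit cone into the open chamber. The only mildly delicate points are (i) checking that a nonzero element of $\mathcal L$ is the asymptotic direction of a sequence $\gamma_i$ with $\|\mu(\gamma_i)\|\to\infty$, and (ii) the coarse-Lipschitz estimate of the first step; both are routine. (For subgroups that are Anosov only with respect to a larger parabolic, the same argument would place $\mathcal L$ in the appropriate face of $\fa^+$ rather than its interior, but that situation does not arise here.)
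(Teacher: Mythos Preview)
Your argument is correct. The paper does not supply its own proof of this statement; it is quoted as a result of Potrie--Sambarino \cite{PS} and used as a black box. So there is nothing in the paper to compare line-by-line against.

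That said, it is worth noting that your route is more elementary than what a reader might expect from the citation: you derive the conclusion straight from the defining inequality \eqref{dis}, exploiting that the characterization of Anosov adopted here (the ``uniform gap'' version due to \cite{KLP}, \cite{BPS}, \cite{GG}) already encodes the linear lower bound in \emph{every} simple root. With that definition in hand, the fact $\mathcal L\subset\inte\fa^+\cup\{0\}$ is essentially a two-line consequence, as you show. In the original sources the statement is typically obtained via properties of the Jordan or Lyapunov projection and the relation between $\mathcal L$ and the Benoist cone; your argument bypasses all of that by working directly with the Cartan projection and the asymptotic-cone definition of $\mathcal L$ used in this paper. Both of your ``mildly delicate points'' are handled correctly: the triangle-inequality bound $\|\mu(\pi(\sigma))\|\le C_0|\sigma|$ is standard, and the observation that $t_i\to0$ with $t_i\|\mu(\gamma_i)\|\to\|v\|>0$ forces $\|\mu(\gamma_i)\|\to\infty$ is exactly right. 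One cosmetic point: you might note that $C_0>0$ unless $\pi$ is trivial, in which case $\Gamma=\{e\}$ and $\mathcal L=\{0\}$ trivially.
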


\subsection*{Tempered representations.}
By definition, a
unitary representation of $G$ is a Hilbert space $\cal H_\pi$ equipped with a strongly continuous
homomorphism $\pi$ from $G$ to the group of unitary operators on $\cal H_\pi$.
 Given two unitary representations
$\pi$ and $\sigma$ of $G$, $\pi$ is said to be
weakly contained in  $\sigma$ if any diagonal
matrix coefficients of $\pi$ can be approximated, uniformly on compact sets, by convex
combinations of diagonal matrix coefficients of $\sigma$. 
We use the notation $\pi\propto\sigma$ for the weak containment.

The Harish-Chandra function $\Xi_G:G \rightarrow (0, \infty)$ is a bi-$K$-invariant function defined
via the formula
$$\Xi_G(g)=\int_K e^{-\rho(H(gk))} dk \quad\text{for all $g\in G$}$$  
where $dk$ denotes the probability Haar measure on $K$.
The following estimate is well-known, cf.\ e.g.\ \cite{Knapp1}: for any $\e>0$,
there exist $C, C_\e>0$ such that for any $g\in G$,
\be\label{hc} C e^{-\rho(\mu(g))} \le \Xi_G(g)\le C_\e e^{-(1-\e)\rho ( \mu(g))}.\ee 

\begin{Def} \label{te} A unitary representation $(\pi, \cal H_\pi)$ of $G$ is called {\it tempered}
if for any $K$-finite unit vectors $v, w\in \cal H_{\pi}$ and any $g\in G$,
$$|\langle \pi(g)v, w\rangle |\le (\op{dim}\langle Kv\rangle
\op{dim}\langle Kw\rangle)^{1/2} \Xi_G(g),$$
where $\langle Kv\rangle$ denotes the linear subspace of $\cal H_\pi$ spanned by $Kv$.
\end{Def}

\begin{prop} \cite{CHH}\label{chh}  The following are equivalent for a unitary representation $(\pi, \cal H_\pi)$ of $G$:
\begin{enumerate}
    \item $\pi$ is tempered;
    \item $\pi\propto L^2(G)$;
\item for any vectors $v, w\in \cal H_\pi$, the matrix coefficient $g\mapsto \langle \pi(g)v,w\rangle$ lies in $L^{2+\epsilon}(G)$ for any $\e>0$;
 \item  for any $\e>0$, $\pi$ is strongly $L^{2+\e}$, i.e., there exists a dense subset of $\cal H_\pi$
whose matrix coefficients all belong to $L^{2+\epsilon}(G)$.
\end{enumerate}
\end{prop}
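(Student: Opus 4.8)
The plan is to establish the cycle $(1)\Rightarrow(4)\Rightarrow(2)\Rightarrow(1)$ together with $(2)\Rightarrow(3)$ and the trivial $(3)\Rightarrow(4)$, drawing on three standard inputs: (a) the integrability $\Xi_G\in L^{2+\epsilon}(G)$ for every $\epsilon>0$, which follows from the upper bound in \eqref{hc} and the Cartan integration formula on $K(\exp\fa^+)K$, whose Jacobian is bounded above by a constant times $e^{2\rho(v)}$; (b) Godement's theorem, that a continuous square-integrable positive-definite function $\phi$ on $G$ factors as $\phi=\psi\ast\psi^{\ast}$ with $\psi\in L^2(G)$, so that the associated GNS representation embeds isometrically into $L^2(G)$; and (c) Fell's absorption principle $L^2(G)\otimes\sigma\cong L^2(G)^{\oplus\dim\cal H_\sigma}$ for any unitary representation $\sigma$, whence any representation weakly contained in some $L^2(G)\otimes\sigma$ is weakly contained in $L^2(G)$ — in particular temperedness is preserved under arbitrary tensor products.

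The implication $(3)\Rightarrow(4)$ is immediate (take the dense subset to be $\cal H_\pi$), and $(1)\Rightarrow(4)$ is nearly so: the $K$-finite vectors are dense, and for $K$-finite $v,w$ Definition \ref{te} and (a) give $\langle\pi(\cdot)v,w\rangle\in L^{2+\epsilon}(G)$. For $(2)\Rightarrow(1)$ one uses that $L^2(G)$ is itself tempered in the sense of Definition \ref{te} — its $K$-finite matrix coefficients are dominated by $\Xi_G$, a form of Harish-Chandra's basic estimate — and that this domination is inherited under weak containment: a diagonal coefficient of $\pi$ on a $K$-finite unit vector $v$ is a uniform-on-compacta limit of convex combinations of diagonal coefficients of $L^2(G)$ on vectors generating $K$-subrepresentations of the same dimension, each bounded by $\Xi_G$, and polarization handles off-diagonal coefficients. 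For $(2)\Rightarrow(3)$, every matrix coefficient of a representation weakly contained in $L^2(G)$ lies in the reduced Fourier–Stieltjes space $B_\lambda(G)$, the norm closure of the Fourier algebra $A(G)=L^2(G)\ast\widetilde{L^2(G)}$; the Kunze–Stein phenomenon for semisimple $G$ (equivalently $L^p(G)\ast L^2(G)\subseteq L^2(G)$ for $1\le p<2$) dualizes to $L^2(G)\ast L^2(G)\subseteq L^{2+\epsilon}(G)$ with a norm bound, whence $A(G)$, and therefore $B_\lambda(G)$, is contained in $L^{2+\epsilon}(G)$ for every $\epsilon>0$.

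The core of the proposition is $(4)\Rightarrow(2)$, which is the theorem of Cowling–Haagerup–Howe, and it is where the only real difficulty lies. The strategy: fix $\epsilon\in(0,2)$ and, after replacing a dense subset by its span, a dense subspace $\cal D$ with $\langle\pi(\cdot)v,w\rangle\in L^{2+\epsilon}(G)$ for all $v,w\in\cal D$. For $v\in\cal D$ the diagonal coefficient $\phi_v:=\langle\pi(\cdot)v,v\rangle$ is bounded and in $L^{2+\epsilon}(G)$, hence in $L^4(G)$, so $\phi_v^2=\langle(\pi\otimes\pi)(\cdot)(v\otimes v),v\otimes v\rangle$ is a continuous positive-definite function in $L^2(G)$; by (b) the cyclic subrepresentation of $\pi\otimes\pi$ generated by $v\otimes v$ embeds in $L^2(G)$. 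A multilinear polarization identity shows that $\{v\otimes v:v\in\cal D\}$ is total in $\cal H_\pi\otimes\cal H_\pi$, and a routine maximality argument then gives $\pi\otimes\pi\propto L^2(G)$, and likewise $\pi^{\otimes m}\propto L^2(G)$ for all $m\ge 2$ using $\phi_v^m$. The delicate step — and the main obstacle — is to descend from the tensor powers to $\pi$ itself: no single exponent can suffice, since a complementary series representation of $\SL(2,\RR)$ has every tensor power tempered while being non-tempered, so one genuinely needs the hypothesis for \emph{all} $\epsilon>0$. Exploiting the full family of estimates to recover the $\Xi_G$-domination of the $K$-finite coefficients of $\pi$, and hence $\pi\propto L^2(G)$, is precisely the technical heart of \cite{CHH}; the remaining implications above are routine given (a)–(c) and Kunze–Stein.
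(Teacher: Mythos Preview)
The paper does not prove this proposition at all: it is stated with the citation \cite{CHH} and used as a black box throughout (e.g.\ in the proof of Theorem \ref{tem}). So there is no ``paper's own proof'' to compare against; you are sketching the content of the cited reference.

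As a roadmap your outline is largely sound. The easy implications $(3)\Rightarrow(4)$, $(1)\Rightarrow(4)$, and $(2)\Rightarrow(1)$ are handled correctly, and your route $(2)\Rightarrow(3)$ via Kunze--Stein and the continuous embedding $B_\lambda(G)\hookrightarrow L^{2+\epsilon}(G)$ is legitimate (this is Cowling's argument). One caveat on $(4)\Rightarrow(2)$: the detour through tensor powers that you describe --- showing $\pi^{\otimes m}\propto L^2(G)$ via Godement and then trying to descend --- is not how \cite{CHH} actually proceeds, and as you yourself note, it cannot be made to work from a single exponent. The actual argument in \cite{CHH} goes directly: it shows that if a $K$-finite coefficient lies in $L^{2+\epsilon}(G)$ for \emph{every} $\epsilon>0$, then it is pointwise dominated by a multiple of $\Xi_G$, using the Cartan decomposition, the integral formula for the Jacobian, and a continuity-of-exponent argument on $\fa^+$. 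The tensor-power picture is a useful heuristic for why ``all $\epsilon$'' is needed, but it is not the mechanism of the proof. Since you ultimately defer the hard step to \cite{CHH} anyway, this does not invalidate your write-up, but it would be cleaner to omit the tensor-power digression and simply state that $(4)\Rightarrow(1)$ (and hence $(4)\Rightarrow(2)$) is the main theorem of \cite{CHH}.
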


In the whole paper, 
the notation $ f(v)\asymp g(v)$ means that the ratio $f(v)/g(v)$ is bounded uniformly between two positive constants, and $f\ll g$ means that $|f|\leq c |g|$ for some $c>0$.

\section{Positive joint eigenfunctions and conformal densities}\label{sje}
Let $G$ be a connected semisimple real algebraic group and
$\G<G$ be a Zariski dense discrete subgroup. The main goal of this section is to obtain Proposition \ref{d2}, which explains the relationship between positive
joint eigenfunctions on $\Gamma\ba X$ and $\Gamma$-conformal measures on the Furstenberg boundary of $G$. 

\subsection*{Joint eigenfunctions on $X$} Let $\cal D=\cal D(X)$ denote the ring of all $G$-invariant differential operators on $X$. We call a real valued function on $X$ a {\it joint eigenfunction} if it is an eigenfunction for all operators in $\cal D$. For each joint eigenfunction $f$, there exists an associated character $\chi_f:\cal D\to \br$ such that 
$$Df= \chi_f(D)f$$ for all elements $D\in\cal D$. The ring $\cal D$ is generated by $\rank(G)$ elements, and the set of all characters of $\cal D$ is in bijection with the space $\fa^*=\text{Hom}_\br(\fa, \br)$ modulo the action of the Weyl group, as we now explain.
Denote by $Z(\fg_{\CC})$ the center of the universal enveloping algebra $\Ug$ of $\fg_{\CC}$.  Recall the well-known fact that the joint eigenfunctions on $X$ can be identified with the right $K$-invariant real-valued $\Zg$-eigenfunctions  on $G$ (cf.\, \cite{Hel}).

 Letting $T$ be a maximal torus in $M$ with Lie algebra ${\mathfrak{t}}$, set $\fh=(\fa\oplus\mathfrak{t})$.
Then $\fh_{\CC}:=(\fa\oplus\mathfrak{t})_{\CC}$ is a Cartan subalgebra of $\fg_{\CC}$. We let $$\iota:\cal{Z}(\fg_{\CC})\rightarrow \scrS^{W}(\fh_{\CC})$$ denote the Harish-Chandra isomorphism from $Z(\fg_{\CC})$ to the Weyl group-invariant elements of the symmetric algebra $\scrS(\fh_{\CC})$ of $\fh_{\CC}$ \cite[Theorem 8.18]{Knapp1}. 

For any $\psi\in\fa^*$, we can extend it to $\fh$ by letting $\psi(J)=0$ for all $J\in\fm$, and then to $\scrS(\fh_{\CC})$ polynomially. This lets us define a character $\chi_{\psi}$ on $\cal{Z}(\fg_{\CC})$ by
\begin{equation}\label{Z}
\chi_{\psi}(Z):=\psi\big(\iota(Z)\big)
\end{equation}  
for all $Z\in \cal{Z}(\fg_{\CC})$. 
Conversely, if $f$ is a right $K$-invariant $\cal{Z}(\fg_{\CC})$-eigenfunction, then, since $\mathfrak{t}$ acts trivially on $f$, 
the associated character $\chi_f$ must arise as $ \psi\circ \iota$ for some $\psi\in \fa^*$.

\begin{ex}  \begin{itemize}
    \item 
Consider the hyperbolic space $\bH^{n}=\{(x_1,\cdots, x_{n-1}, y)\in \br^{n}:y>0\}$ with the metric $\frac{\sqrt{\sum_{i=1}^{n-1} dx_i^2+dy^2}}y$. The Laplacian 
$\Delta$ on $\bH^n$ is given as $\Delta=-y^2(\sum_{i=1}^{n-1}\frac{\partial^2}{\partial x_i^2} +\frac{\partial^2}{\partial y^2}  )+(n-2)y\frac{\partial}{\partial y}$
and the ring of $\op{SO}^\circ(n,1)$-invariant differential operators is generated by $\Delta$, i.e.,
a polynomial in $\Delta$. If $\psi\in \fa^*$ is given by $\psi(v)=\delta v$ for some $\delta\in \br$ under the isomorphism $\fa=\br$, then  $\chi_{\psi}(-\Delta)= \delta (n-1-\delta)$.

\item Let $G=\op{SO}^\circ(n_1,1)\times \op{SO}^\circ(n_2,1)$ and $X$ be 
the Riemannian product $\bH^{n_1}\times \bH^{n_2}$ for $n_1, n_2\ge 2$. Then
 $\cal D(X)$ is generated by the hyperbolic Laplacians $\Delta_1, \Delta_2$  on each factor $\bH^{n_1}$ and $\bH^{n_2}$. If we identify $\fa$ with $\br^2$ and
if a linear form $\psi\in \fa^*$ is given by $\psi (v)=\langle v, (\delta_1, \delta_2)\rangle$
 for some  vector $(\delta_1, \delta_2)\in \br^2$, 
then $\chi_{\psi}(-\Delta_i)= \delta_i (n_i-1-\delta_i)$ for $i=1,2$.
\end{itemize}
\end{ex}

\subsection*{Joint eigenfunctions on $\Gamma\ba X$}
We now consider joint eigenfunctions on $\Ga\ba X$  or, equivalently, $\Gamma$-invariant joint eigenfunctions on $X$.

\begin{Def} Let $\psi\in \fa^*$. Associated to a $(\Ga, \psi)$-conformal density $\nu=\{\nu_x:x\in X\}$ on $\cal F$, we define the following function $E_\nu$ on $G$: for $g\in G$,\be\label{ef} E_{\nu}(g):=|\nu_{g(o)}|=\int_{\scrF} e^{-\psi\big(H(g^{-1}k)\big)}\,d\nu_o([k]).\ee Since $|\nu_{\ga(x)}|=|\nu_x|$ for all $\ga \in \Ga$ and $x\in X$,
 the left $\Ga$-invariance and right $K$-invariance
 of $E_\nu$ are clear. Hence we may consider $E_\nu$
 as a $K$-invariant function on $\Gamma\ba G$, or, equivalently, as a
 function on $\Gamma\ba X$.
\end{Def}

\begin{prop}\label{conf}
For each $(\Gamma,\psi)$-conformal  density $\nu$ on $\cal F$, $E_\nu$ is
a positive joint eigenfunction on $\Ga\ba X$ with character $\chi_{\psi-\rho}$.  Conversely, any  positive  joint eigenfunction on $\Ga\ba X$ arises in this way for some $\psi\ge \rho$
and a $(\Ga, \psi)$-conformal density $\nu$ with $(\psi, \nu)$ uniquely determined.
\end{prop}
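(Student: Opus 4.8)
The plan is to prove the two halves separately: that each $E_\nu$ is a positive joint eigenfunction on $\Ga\ba X$ with the claimed character, and conversely that every positive joint eigenfunction on $\Ga\ba X$ is such an $E_\nu$ for a uniquely determined pair $(\psi,\nu)$. This is a higher-rank counterpart of the classical dictionary between Patterson--Sullivan densities and positive eigenfunctions.

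\emph{Direct implication.} The input is the classical fact (cf.\ \cite{Hel}) that for each fixed $k\in K$ and each $\psi\in\fa^*$ the conical function $g\mapsto e^{-\psi(H(g^{-1}k))}$ is right-$K$-invariant and is a real-analytic $\Zg$-eigenfunction on $G$ whose character, viewed on $X=G/K$, is $\chi_{\psi-\rho}$ --- the $\rho$-shift being the standard one from the Harish-Chandra isomorphism \eqref{Z}, consistent with the Examples above. (Concretely, up to the harmless left translation $g\mapsto k^{-1}g$, which commutes with $\Zg$, these are the canonical spherical vectors in the principal series of $G$ induced from the character $man\mapsto e^{\psi(\log a)}$ of $P$, on which $\Zg$ acts by the infinitesimal character $\chi_{\psi-\rho}$.) Given this, since $\nu_o$ is finite and the integrand in \eqref{ef} is smooth in $g$ locally uniformly in $k$, differentiating under the integral sign shows that for every $D\in\cal D(X)$ one has $DE_\nu=\chi_{\psi-\rho}(D)E_\nu$; positivity of $E_\nu$ is clear from the kernel, and its left $\Ga$-invariance is exactly the identity $|\nu_{\ga x}|=|\nu_x|$ built into \eqref{c0}. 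Hence $E_\nu$ descends to a positive joint eigenfunction on $\Ga\ba X$ with character $\chi_{\psi-\rho}$.

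\emph{Converse: the integral representation.} Let $f$ be a positive joint eigenfunction on $\Ga\ba X$; lift it to a positive $\Ga$-invariant joint eigenfunction on $X$, equivalently a positive right-$K$-invariant $\Zg$-eigenfunction on $G$, whose character is of the form $\psi_0\circ\iota$ for some $\psi_0\in\fa^*$ determined up to the Weyl group. The decisive step is to invoke the integral representation of \emph{positive} eigenfunctions over the Furstenberg boundary (Furstenberg's Poisson formula together with Karpelevich's description of minimal positive eigenfunctions): such an $f$ can exist only if the Weyl orbit of $\psi_0$ has a (unique) representative $\psi$ with $\psi\ge\rho$, the extreme rays of the convex cone of positive $\Zg$-eigenfunctions of this character are then precisely the conical kernels $g\mapsto e^{-\psi(H(g^{-1}k))}$ ($k\in K$), and consequently
\[
f(g)=\int_{\cal F} e^{-\psi(H(g^{-1}k))}\,d\mu_o([k])
\]
for a \emph{unique} finite positive Borel measure $\mu_o$ on $\cal F=K/M$ (uniqueness of $\mu_o$ being Choquet's theorem applied to this cone). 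Establishing this representation --- in particular identifying the admissible parameter, i.e.\ reducing positivity to the chamber $\psi\ge\rho$, and proving minimality of the conical kernels --- is the step I expect to be the main obstacle.

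\emph{Converse: conformality and uniqueness.} Granting the representation, $\Ga$-invariance of $f$ forces $\mu_o$ to be conformal. For $\ga\in\Ga$ we have $f(g)=f(\ga^{-1}g)=\int_{\cal F}e^{-\psi(H(g^{-1}\ga k))}\,d\mu_o([k])$; using the cocycle identity $H(g^{-1}\ga k)=H\big(g^{-1}\kappa(\ga k)\big)+H(\ga k)$ followed by the substitution $[k]\mapsto\ga^{-1}[k]$ rewrites the right-hand side as $\int_{\cal F} e^{-\psi(H(g^{-1}k))}\,e^{\psi(H(\ga^{-1}k))}\,d\ga_*\mu_o([k])$. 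Comparing with the representation of $f(g)$ and using the uniqueness of the representing measure gives $\frac{d\ga_*\mu_o}{d\mu_o}([k])=e^{-\psi(H(\ga^{-1}k))}$, i.e.\ $\mu_o$ is a $(\Ga,\psi)$-conformal measure in the sense of Definition \ref{conf_def}. Taking $\nu=\{\nu_x\}$ to be the $(\Ga,\psi)$-conformal density it determines, formula \eqref{ef} gives $E_\nu=f$. Finally, $\psi$ is pinned down as the distinguished ($\psi\ge\rho$) representative of the Weyl orbit attached to $\chi_f=\chi_{\psi-\rho}$, and once $\psi$ is fixed, $\nu_o=\mu_o$ --- hence the whole density $\nu$ --- is uniquely determined by the uniqueness of the representing measure. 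This yields the uniqueness of $(\psi,\nu)$ and completes the argument.
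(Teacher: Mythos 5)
Your proposal follows essentially the same route as the paper: the direct implication is the eigenfunction property of the conical kernels $\varphi_{\psi,k}$ (which the paper records as Lemma \ref{Zeval} via the Harish-Chandra isomorphism, and you cite from Helgason/the principal-series picture), and the converse invokes exactly the integral representation of positive joint eigenfunctions of character $\chi_{\psi-\rho}$ with a unique representing measure on $\cal F$ — the paper's Theorem \ref{intrep}, citing the same sources (Furstenberg, Karpelevič, Laha) — followed by the identical cocycle manipulation and uniqueness of the representing measure to extract $(\Gamma,\psi)$-conformality. Your derivation of the Radon--Nikodym identity is just the paper's computation with $f(g)=f(\gamma^{-1}g)$ in place of $f(g)=f(\gamma g)$; these are of course equivalent.
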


In order to prove this proposition, we consider
the following right $K$-invariant
function on $G$ for each $\psi\in \fa^*$ and $h\in G$:
\be\label{psih} \varphi_{\psi,h}(g)=e^{-\psi\big(H(g^{-1}h)\big)}\ee 
so that
$$E_\nu(g)=\int_{\scrF} \varphi_{\psi,k}(g) \,d\nu_o([k]).$$

We may also consider $\varphi_{\psi, h}$ as a function on $X$. Hence the first part of Proposition \ref{conf} is a consequence of the following:
\begin{lem}\label{Zeval}(\cite[Propositions 8.22 and 9.9]{Knapp1})
For any $\psi\in\fa^*$ and $h\in G$, the function $\varphi_{\psi,h}$
is a joint eigenfunction on $X$ with character $\chi_{\psi-\rho}$.
\end{lem}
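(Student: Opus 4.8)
The plan is to prove Lemma \ref{Zeval} by reducing everything to the Iwasawa decomposition and the definition of the Harish-Chandra isomorphism $\iota$. First I would observe that it suffices to show that for a generator $Z\in\Zg$, the function $\varphi_{\psi,h}(g)=e^{-\psi(H(g^{-1}h))}$ satisfies $Z\varphi_{\psi,h}=\chi_{\psi-\rho}(Z)\varphi_{\psi,h}$, where $Z$ acts on the $g$-variable as a left-invariant differential operator (lifting the $G$-invariant operator on $X$). By the left $G$-invariance of the operator and the fact that $\varphi_{\psi,h}(g)=\varphi_{\psi,e}(h^{-1}g)$ up to the symmetry of the role of $g$ and $h$ — more precisely by replacing $g$ by $hg$ and noting $H(g^{-1}h\cdot h^{-1}hg)$-type manipulations — one reduces to the single model function $g\mapsto e^{-\psi(H(g))}$ (or its inverse-argument variant), so it is enough to compute $Z$ on $e^{-\psi(H(g))}$.

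Next I would recall the classical computation: using the Iwasawa decomposition $G=KAN$ and writing $g=\kappa(g)\exp(H(g))n$, the function $\phi_\psi(g):=e^{(\psi-\rho)(H(g))}$ (the normalized version) transforms on the right under $AN$ by a character and on the left under $K$ trivially, so it is the spherical-principal-series matrix coefficient seed. The key fact, which is exactly \cite[Propositions 8.22 and 9.9]{Knapp1}, is that for $Z\in\Zg$ one has $Z\phi_\psi = \gamma_\psi(Z)\phi_\psi$ where $\gamma_\psi(Z)$ is obtained by: decomposing $Z$ via the $\fg_\CC = \fn_\CC \oplus \fh_\CC \oplus \bar\fn_\CC$ (or $\fk\oplus\fa\oplus\fn$) PBW-type decomposition, projecting to the $\Uh$-component modulo the left ideal generated by $\fk$-ish terms and right ideal from $\fn$, and then evaluating the resulting element of $\scrS(\fh_\CC)$ at the weight. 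The $\rho$-shift in $\iota$ is precisely what makes this projection independent of choices and $W$-equivariant; tracking it carefully shows the eigencharacter of $\varphi_{\psi,h}=e^{-\psi(H(g^{-1}h))}$ is $\chi_{\psi-\rho}$ in the normalization of \eqref{Z}. I would also note $\varphi_{\psi,h}>0$ trivially, and that $\mathfrak t$ acts trivially because $\varphi_{\psi,h}$ is right $K$-invariant and $\psi|_\fm=0$, so the character factors through $\fa^*/W$ as claimed.

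The main obstacle is purely bookkeeping: matching the normalization in \eqref{Z} (where $\chi_\psi(Z)=\psi(\iota(Z))$ with $\iota$ the $\rho$-shifted Harish-Chandra isomorphism) against the ``unnormalized'' eigenvalue that comes out of differentiating $e^{-\psi(H(g))}$ directly, and getting the sign of the exponent and the direction of the shift right so that the answer is $\chi_{\psi-\rho}$ and not $\chi_{\rho-\psi}$ or $\chi_\psi$. Since this is entirely standard and the precise statement is quoted from Knapp, I would not reproduce the PBW computation in full; instead I would state that the lemma is \cite[Propositions 8.22 and 9.9]{Knapp1} after unwinding the definition of $H$ and of $\chi_{(\cdot)}$ in \eqref{Z}, checking the two sample cases in the Example (the hyperbolic Laplacian, where $\chi_{\psi-\rho}(-\Delta)=\|\rho\|^2-\|\psi-\rho\|^2$ specializes correctly to $\delta(n-1-\delta)$ when $\psi=\delta$, $\rho=\tfrac{n-1}{2}$) as a consistency verification, and then conclude. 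Finally, the ``conversely'' half of Proposition \ref{conf} — that every positive joint eigenfunction on $\Ga\ba X$ is an $E_\nu$ with $\psi\ge\rho$ uniquely determined — is not part of Lemma \ref{Zeval} itself and will be handled separately (via a Martin/Poisson-boundary representation of positive eigenfunctions and a lower-bound argument forcing $\psi\ge\rho$), so here I would only prove the eigenfunction property and defer uniqueness and the range of $\psi$ to the proof of Proposition \ref{d2}.
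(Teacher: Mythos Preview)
Your proposal is correct and follows essentially the same approach as the paper: reduce by left $G$-invariance to evaluating $Z\varphi_{\psi,e}$ at the identity, use the PBW-type decomposition $Z\in\scrU(\fh_\CC)\oplus\fn\,\Ug$ to kill the $\fn$- and $\mathfrak t$-contributions, and identify the remaining $\fa$-eigenvalue with $\chi_{\psi-\rho}(Z)$ via the $\rho$-shift in the Harish-Chandra isomorphism. The paper packages the same computation through the (non-unitary) principal series $U^\psi$ acting on $C(K)$, observing $[Z\varphi_{\psi,e}](e)=[dU^\psi(Z)1](e)$, but this is just a convenient bookkeeping device for exactly the derivative calculation you describe; your decision to defer the full PBW computation to \cite{Knapp1} is also what the paper does.
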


\begin{proof}
While we refer to \cite{Knapp1} for the full proof, we outline some of the key points below, as we will use some part of this proof later. Since the elements of $\Zg$ commute with translation, we simply need to prove that $$[Z\varphi_{\psi,e}](e)=\chi_{\psi-\rho}(Z)\varphi_{\psi,e}(e)\quad\text{for any  $Z\in\Zg$};$$ the same identity will then hold for the function $g\mapsto \varphi_{\psi,e}(h^{-1}g)$, and thus also for $\varphi_{\psi,h}$ for any $h\in G$. Following \cite[Chapter VII]{Knapp1}, we define the (non-unitary) principal series representation $U^{\psi}$: for all $g\in G$ and $f\in C(K)$
$$ [U^{\psi}(g)f](k):=e^{-\psi\big(H(g^{-1}k)\big)}f\big(\kappa(g^{-1}k)\big)$$
for all $k\in K$. This extends to a representation $dU^{\psi}$ of $\Ug$ on the right $M$-invariant functions in $C^{\infty}(K)$ by way of the formula
$$ [dU^{\psi}(X)f](k)=\left. \frac{d}{dt}\right|_{t=0}[U^{\psi}\big(\exp(tX)\big)f](k) \quad\text{for any $X\in\fg$. }$$ Observe that $[Z\varphi_{\psi,e}](e)=[dU^{\psi}(Z)1](e)$, so in order to prove the proposition, it suffices to show that $dU^{\psi}(Z)=\chi_{\psi-\rho}(Z)$ for all $Z\in\Zg$.

The next key observation is that
$$ Z(\fg_{\CC})\subset \scrU(\fh_{\CC})\oplus \fn\,\scrU(g_{\CC}).$$
We thus write
$$ Z= Y+\sum_i X_i U_i,$$
where $Y\in \scrU(\fh_{\CC})$, $X_i\in \fn$, and $U_i\in\scrU(\fg_{\CC})$. Note that in this decomposition, $Y$ is uniquely defined. Now, for arbitrary $X\in\fn$ and $f$,
\begin{align*}
[dU^{\psi}(X)f](e)&= \left. \frac{d}{dt}\right|_{t=0} [U^{\psi}\big(\exp(tX)\big)f](e)=\left. \frac{d}{dt}\right|_{t=0} [U^{\psi}\big(\exp(tX)\big)f](e)
\\ =&\left. \frac{d}{dt}\right|_{t=0}e^{-\psi\big(H(\exp(-tX))\big)}f\big(\kappa(\exp(-tX))\big)=\left. \frac{d}{dt}\right|_{t=0}f(e)=0,
\end{align*}
so applying this to the $X_i$ and functions $dU^{\psi}(U_i)f$ gives
$$[dU^{\psi}(X_iU_i)f](e)=[dU^{\psi}(X_i)\big(dU^{\psi}(U_i)f\big)](e)=0,$$
hence $[dU^{\psi}(Z)f](e)=[dU^{\psi}(Y)f](e)$.
For $L\in \fm$, we have $f(\exp(-L))=f(e)$, so $[dU^{\psi}(J)f](e)=0$ for all $J\in\mathfrak{t}$. Thus, it is only the $\fa$ component of $Y$ that contributes to $[dU^{\psi}(Y)f](e)$. Finally, note that for $X\in\fa$, we have
\begin{align*} [dU^{\psi}(X)f](e)&=\left. \frac{d}{dt}\right|_{t=0}e^{-\psi\big(H(\exp(-tX))\big)}f\big(\kappa(\exp(-tX))\big)\\&=\left. \frac{d}{dt}\right|_{t=0}e^{t\,\psi(X)}f(e)=\psi(X)f(e).\end{align*}
Since the Harish-Chandra isomorphism consists of projection onto $\Uh$ and then composition with the ``$\delta$-shift" $H\mapsto H+\delta(H)1=H+\rho(H)1$,
where $\delta\in\mathfrak{h_{\mathbb{C}}^*}$ is the half-sum of the positive roots for $\fg_\CC$, this shows that $dU^{\psi}(Z)=\chi_{\psi-\rho}(Z)$.
\end{proof}

Letting $h=kan\in KAN$, we see that for any $g\in G$,
$$ \varphi_{\psi,h}(g)=e^{-\psi\big(H(g^{-1}h)\big)}=e^{-\psi\big(H(g^{-1}kan)\big)}=e^{-\psi\big(H(g^{-1}k)\big)}\cdot  e^{-\psi\big(\log(a)\big)},$$
i.e., the function $\varphi_{\psi,h}$ is a scalar multiple of $\varphi_{\psi,\kappa(h)}$. In fact, the functions $\varphi_{\psi,k}$, $k\in K$ form a complete set of minimal positive joint eigenfunctions with character $\chi_{\psi-\rho}$ with $\psi\geq \rho$, in the sense that if $f$ is a positive joint eigenfunction on $X$ with character $\chi_{\psi-\rho}$ such that $f\leq \varphi_{\psi,k}$ for some $k\in K$, then 
$$f= c\cdot \varphi_{\psi,k} $$ for some $c>0$ (cf.\ \cite{FU, Ka}, see also \cite[Theorem 1]{La}).

As a consequence, we have the following (cf. \cite[Theorem 3]{La}):

\begin{thm}\label{intrep} For any positive joint eigenfunction
$f$ on $X$, there exist $\psi\in \fa^*$ with $\psi\geq \rho$ and a Borel measure $\nu_o$ on $\cal F=K/M$ such that for all $g\in G$,
$$ f(g)=\int_{\scrF} \varphi_{\psi,k}(g)\,d\nu_o([k]). $$
Moreover, the pair $(\psi,\nu_o)$ is uniquely determined by $f$.
\end{thm}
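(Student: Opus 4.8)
The plan is to realise $f$ as a Choquet integral over the extreme rays of the cone of positive joint eigenfunctions sharing its $\cal D$-character, after identifying those extreme rays with the functions $\varphi_{\psi,\xi}$, $\xi\in\scrF$, furnished by the classification recalled just before the statement. Since $f$ is a joint eigenfunction on $X=G/K$ it corresponds to a right $K$-invariant $\Zg$-eigenfunction on $G$, hence (by the discussion preceding the statement) has character $\chi_{\psi-\rho}$ for some $\psi\in\fa^*$ which, as recalled above, may by positivity be taken with $\psi\ge\rho$; fix this $\psi$. Normalising at $o$, I would work with
\[
C_\psi:=\bigl\{\,h\in C^\infty(X)\ :\ h>0,\ Dh=\chi_{\psi-\rho}(D)\,h\ \text{for all}\ D\in\cal D,\ h(o)=1\,\bigr\},
\]
with the topology of uniform convergence on compacta; this is a metrisable subset of a locally convex space, clearly convex, and $f/f(o)\in C_\psi$. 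For compactness one uses that each $h\in C_\psi$ solves in particular the single elliptic equation $-\Delta h=\chi_{\psi-\rho}(-\Delta)\,h$, so the local Harnack inequality bounds $h$ on any compactum by a constant depending only on that compactum and $\psi$; elliptic regularity upgrades this to relative compactness in $C^\infty_{\mathrm{loc}}$, and any limit is again a joint eigenfunction with character $\chi_{\psi-\rho}$, nonnegative with value $1$ at $o$, hence positive by Harnack --- so $C_\psi$ is compact.

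Next I would apply Choquet's theorem: $f/f(o)$ is the barycentre of a Borel probability measure $\mu$ on $\operatorname{ext}(C_\psi)$. The extreme points of $C_\psi$ are precisely the normalised minimal positive joint eigenfunctions with character $\chi_{\psi-\rho}$, which by the cited classification \cite{FU, Ka, La} are exactly the functions $\varphi_{\psi,k}$, $k\in K$. Since $\varphi_{\psi,k}$ depends only on $[k]\in K/M=\scrF$ and $\varphi_{\psi,k}(o)=1$, the map $\xi\mapsto\varphi_{\psi,\xi}$ is a continuous injection $\scrF\to C_\psi$ --- continuity from continuity of the Iwasawa cocycle, injectivity because distinct extreme points are distinct --- and, $\scrF$ being compact, it is a homeomorphism onto $\operatorname{ext}(C_\psi)$, which is therefore closed. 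Transporting $\mu$ through this homeomorphism gives a probability measure $\bar\nu$ on $\scrF$, and applying the barycentre identity to the continuous linear functionals ``evaluate at $g$'' yields $f(g)/f(o)=\int_\scrF\varphi_{\psi,\xi}(g)\,d\bar\nu(\xi)$ for every $g\in G$; set $\nu_o:=f(o)\,\bar\nu$.

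For uniqueness, note first that any representation $f=\int_\scrF\varphi_{\psi,\xi}\,d\nu_o(\xi)$ has $\cal D$-character $\chi_{\psi-\rho}$, so $\chi_f$ determines the Weyl-orbit of $\psi-\rho$; the normalisation $\psi\ge\rho$ together with the requirement --- forced by extremality in the Choquet representation --- that the $\varphi_{\psi,\xi}$ be the \emph{minimal} eigenfunctions of that character determines $\psi$, by the cited classification. Given $\psi$, uniqueness of $\nu_o$ is the uniqueness clause of the Martin-type integral representation over extreme points: the cone of positive joint eigenfunctions with character $\chi_{\psi-\rho}$ is a lattice in its natural pointwise order --- equivalently $C_\psi$ is a Choquet simplex --- so Choquet--Meyer uniqueness applies. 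Alternatively one recovers $\nu_o$ from $f$ by a Fatou-type boundary procedure, using that $\varphi_{\psi,\xi}(g)/\varphi_{\psi,\eta}(g)\to0$ as $g\to\eta$ regularly for $\xi\ne\eta$, so that suitable local averages of $f/\varphi_{\psi,\eta}$ near $\eta\in\scrF$ return the $\nu_o$-mass around $\eta$.

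The substantive inputs here are external: the classification of minimal positive joint eigenfunctions (both that each $\varphi_{\psi,k}$ is minimal and that there are no others, which is what pins the extreme boundary down to $\scrF$) and the lattice/simplex property behind uniqueness of $\nu_o$; granting these, the Harnack-based compactness of $C_\psi$ and the Choquet-theoretic assembly are routine, and I expect the uniqueness of $\nu_o$ --- the only place relying on the simplex structure --- to be the point where some real care is needed.
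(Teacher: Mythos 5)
The paper does not prove Theorem~\ref{intrep}: it is stated ``as a consequence'' of the paragraph recalling the Furstenberg--Karpelevi\v{c} classification of minimal positive joint eigenfunctions, and is attributed to Laha (\cite[Theorem 3]{La}). So there is no in-paper proof to compare against, and your proposal is essentially a reconstruction of the argument one would find behind that citation.

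Your Choquet/Martin-boundary outline is the standard route to such integral representations, and the individual steps you describe are sound: Harnack plus elliptic regularity do give compactness of the normalised cone $C_\psi$ in $C^\infty_{\mathrm{loc}}$; Choquet--Bishop--de Leeuw then produces a representing measure supported on the (closed, since homeomorphic to $\scrF$) set of extreme points; pushing forward and evaluating at each $g$ gives the displayed formula. You are also right, and commendably explicit, that the genuine mathematical content is carried by the two external inputs you name: (i) the identification of the extreme rays with $\{\varphi_{\psi,\xi}:\xi\in\scrF\}$ (both minimality and completeness are needed, and the one-sided ``if $f\le\varphi_{\psi,k}$ then $f=c\varphi_{\psi,k}$'' statement recalled in the paper only gives minimality -- completeness is the substantive Karpelevi\v{c} theorem), and (ii) the lattice/simplex property of the cone, which is what drives uniqueness of $\nu_o$. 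With those granted, your sketch does assemble into a proof of Laha's theorem.

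Two points deserve more care than you give them. First, the step ``the character has $\psi\in\fa^*$ which by positivity may be taken with $\psi\ge\rho$'' is itself part of the Karpelevi\v{c} theory, not something the paper's Harish-Chandra-isomorphism discussion supplies: that discussion only tells you the character corresponds to a Weyl orbit in $\fa^*_\CC$, and you need the nonexistence of positive eigenfunctions for characters outside the real, $\ge\rho$ range (and the uniqueness of a $\ge\rho$ representative in a real Weyl orbit) to pin $\psi$ down. Second, the simplex/lattice property you invoke for uniqueness of $\nu_o$ is nontrivial: the pointwise minimum of two positive joint eigenfunctions is not one, so the lattice structure comes from the usual reduction operation on the cone, and establishing it requires essentially the same work as the Martin-boundary theory you are trying to summarise. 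Your alternative Fatou-type recovery of $\nu_o$ is closer in spirit to how one actually sidesteps the abstract simplex machinery, but it too is only a pointer. In short: the proposal is a correct high-level road map, it matches what the paper relies on by citation, and it honestly flags its dependencies -- but those dependencies are the theorem, so the write-up should not be mistaken for a self-contained proof.
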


\noindent{\bf Proof of the second part of Proposition \ref{conf}:} 
Let $f$ be a $\Ga$-invariant joint eigenfunction on $X$.
By Theorem \ref{intrep}, there exist unique $\psi\in \fa^*$ and
a Borel measure $\nu_o$ on $\F$ so that for all $g\in G$,
$$ f(g)=\int_{\scrF} \varphi_{\psi,k}(g)\,d\nu_o([k]).$$
Since $f$ is $\Gamma$-invariant, for any $\gamma\in\Gamma$,
\begin{align*}
f(g)&=f(\gamma g)=\int_{\scrF} \varphi_{\psi,k}(\gamma g)\,d\nu_o([k]) \\&=\int_{\scrF} \varphi_{\psi,\kappa(\gamma^{-1}k)}( g)\,e^{-\psi\big(H(\gamma^{-1}k)\big)}\,d\nu_o([k]) \\&=\int_{\scrF} \varphi_{\psi,\widetilde{k}}( g)\,e^{\psi\big(H(\gamma\widetilde{k})\big)}\,d\nu_o(\gamma\cdot[\widetilde{k}]).
\end{align*}
By the uniqueness of $\nu_o$ in the integral representation of $f$, 
$$d\nu_o([k])=e^{\psi\big(H(\gamma k)\big)}\,d\nu_o(\gamma\cdot[k]).$$
Hence $\nu=\{\nu_x\}$ is a $(\Gamma,\psi)$-conformal density on $\cal F$,  finishing the proof.

\medskip

We denote by $\psi_\Ga:\fa \to \br\cup\{-\infty\}$ the growth indicator function of $\Ga$ as defined in \eqref{grow3}.
\begin{thm} \label{d1} \cite[Theorem 8.1]{Quint}. Let $\Ga<G$ be Zariski dense.
If there exists a $(\Gamma,\psi)$-conformal measure on $\F$ for some $\psi\in \fa^*$, then
$$\psi\geq \psi_{\Gamma}.$$
\end{thm}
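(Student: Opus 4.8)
\medskip
\noindent\textbf{Proof proposal.} The plan is to run the higher-rank analogue of Sullivan's shadow argument, in the spirit of Quint. Fix a nonzero $(\Gamma,\psi)$-conformal measure $\nu_o$, with associated density $\{\nu_x:x\in X\}$. Since $\psi$ is linear while $\psi_\Gamma$ is concave and upper semicontinuous with effective domain the limit cone $\cal L$, it suffices to establish $\psi(v)\ge\psi_\Gamma(v)$ for $v$ in the dense subset $\cal L\cap\inte\fa^+$ of $\cal L$: for $v\notin\cal L$ the inequality is vacuous, and for $v\in\cal L\cap\partial\fa^+$ one fixes $w\in\inte\cal L$ and lets $t\uparrow 1$ along $v_t=(1-t)w+tv$, which lies in $\inte\cal L\subset\inte\fa^+$ for $t<1$ by the line-segment principle, using continuity of $\psi$, concavity of $\psi_\Gamma$, and $\psi_\Gamma(w)>0$.

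Two observations feed into the main estimate. First, $\op{supp}\nu_o\supset\La$: the support is nonempty, closed and $\Gamma$-invariant (because $\gamma_*\nu_o$ and $\nu_o$ are mutually absolutely continuous), hence contains the unique $\Gamma$-minimal subset $\La$ of $\cal F$. Second, for $\gamma\in\Gamma$ with $\mu(\gamma)$ in a fixed closed cone $\cal C$ with $\cal C\smallsetminus\{0\}\subset\inte\fa^+$ and $\|\mu(\gamma)\|$ large, the Cartan attracting direction $[\kappa_1(\gamma)]\in\cal F$ is uniformly close to $\La$ — otherwise a subsequence of such $\gamma$'s would, by \eqref{ls} and Definition \ref{conver}, produce a limit point of $\Gamma$ outside $\La$. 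With these in hand one invokes the shadow lemma: for each large $R$ there is $c_R>0$ such that the shadow $\mathcal O_R(\gamma o)\subset\cal F$ of the $R$-ball around $\gamma o$ seen from $o$ — a controlled-size neighbourhood of $[\kappa_1(\gamma)]$ — satisfies
\[
c_R^{-1}\,e^{-\psi(\mu(\gamma))}\ \le\ \nu_o\big(\mathcal O_R(\gamma o)\big)\ \le\ c_R\,e^{-\psi(\mu(\gamma))}
\]
for all $\gamma\in\Gamma$ with $\mu(\gamma)\in\cal C$ and $\|\mu(\gamma)\|$ large. The proof uses the conformal cocycle to reduce the estimate to the comparison $\beta_\xi(\cdot,\cdot)=\mu(\gamma)+O(R)$ on $\mathcal O_R(\gamma o)$ between the $\fa$-valued Busemann cocycle and the Cartan projection for regular $\gamma$, together with the uniform positivity of $\nu_o$ on $R$-sized balls centred near $\La$; the latter, which uses $\op{supp}\nu_o\supset\La$ and compactness of $\La$, is what gives the lower bound.

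Granting this, the counting is routine. Fix $v\in\cal L\cap\inte\fa^+$ and $\epsilon>0$, and take $\cal C\ni v$ (with $\cal C\smallsetminus\{0\}\subset\inte\fa^+$) so narrow that $\psi(u)\le\|u\|\big(\tfrac{\psi(v)}{\|v\|}+\epsilon\big)$ for all $u\in\cal C$. The shadows of a fixed generation overlap boundedly — every $\xi\in\cal F$ lies in at most $M_R$ of the sets $\{\mathcal O_R(\gamma o):\|\mu(\gamma)\|\in[n,n+1)\}$, by discreteness of $\Gamma$ — so the lower bound of the shadow lemma gives
\[
\#\{\gamma\in\Gamma:\mu(\gamma)\in\cal C,\ \|\mu(\gamma)\|\in[n,n+1)\}\ \le\ c_RM_R\,\|\nu_o\|\;e^{(n+1)(\psi(v)/\|v\|+\epsilon)},
\]
and summing over $n\le T$ yields $\#\{\gamma:\mu(\gamma)\in\cal C,\ \|\mu(\gamma)\|\le T\}\ll_{R,\epsilon,\cal C}e^{T(\psi(v)/\|v\|+\epsilon)}$; hence the abscissa of convergence satisfies $\tau_{\cal C}\le\psi(v)/\|v\|+\epsilon$ (if instead $\psi(v)\le 0$, the same estimate shows $\cal C$ meets $\mu(\Gamma)$ only finitely often, which is impossible since $v\in\cal L$, so in that case $\psi_\Gamma(v)=\psi(v)=0$). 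Letting $\cal C$ shrink to $\{v\}$ and $\epsilon\to 0$, definition \eqref{grow3} gives $\psi_\Gamma(v)=\|v\|\inf_{\cal C}\tau_{\cal C}\le\psi(v)$, and the reduction of the first paragraph finishes the proof.

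The step I expect to be the real obstacle is the shadow lemma — specifically its lower bound $\nu_o(\mathcal O_R(\gamma o))\gg_R e^{-\psi(\mu(\gamma))}$ for a conformal measure that need \emph{not} be supported on $\La$: this rests on the Busemann-versus-Cartan comparison on shadows of regular elements and, crucially, on the uniform positivity of $\nu_o$ on $R$-balls near $\La$, the one point where knowing $\op{supp}\nu_o\supset\La$ is essential. In rank one this is Sullivan's shadow lemma; in higher rank it is Quint's, and the underlying geometry of shadows in $G/P$ is by now standard. (Whether $\mu(\gamma)$ or $\i(\mu(\gamma))$ appears in the shadow lemma depends on conventions; it is immaterial here, since $\psi_\Gamma$ is $\i$-invariant and hence $\psi\ge\psi_\Gamma\iff\psi\circ\i\ge\psi_\Gamma$.)
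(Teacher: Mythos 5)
The paper does not give its own proof of this statement — it simply cites Quint's Théorème 8.1 from \emph{Mesures de Patterson-Sullivan en rang supérieur}, so there is no in-paper argument to compare against. Your proposal reconstructs exactly the expected argument from that reference: a higher-rank Sullivan shadow lemma combined with a bounded-multiplicity count, with the correct reduction of the problem (via concavity and upper semicontinuity of $\psi_\Gamma$) to directions in $\mathcal{L}\cap\inte\fa^+$, and the correct observation that $\op{supp}\nu_o\supset\Lambda$ because $\Lambda$ is the unique $\Gamma$-minimal set — this is indeed the hinge on which the lower bound of the shadow lemma turns. Two points worth tightening if you were to write this up in full: first, the bounded-overlap claim (``every $\xi$ lies in at most $M_R$ shadows of a given generation'') is more delicate in higher rank than ``by discreteness of $\Gamma$'' suggests — it genuinely uses that $\mu(\gamma)$ is confined to a narrow closed cone in $\inte\fa^+$, so that all shadows in a generation have comparable shape, and that overlap then forces $d(\gamma_1 o,\gamma_2 o)=O(R)$ — this is a real (if standard) geometric lemma, not a one-liner. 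Second, in the degenerate case the argument in fact shows $\tau_{\mathcal C}\le\max\{0,\psi(v)/\|v\|\}+\epsilon$; since $v\in\mathcal L$ forces infinitely many $\gamma$ in each cone around $v$, one gets $\psi(v)\ge 0$, and the phrasing could be made cleaner by observing this directly rather than treating $\psi(v)\le 0$ as a separate branch.
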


Therefore Proposition \ref{conf} and Theorem \ref{d1} yield the following:
\begin{prop} \label{d2} Let $\Gamma<G$ be a Zariski dense discrete subgroup.
If $\nu$ is a $(\Ga,\psi)$-conformal density for some $\psi\in \fa^*$, then $E_{\nu}$ is a positive joint
eigenfunction on $\Gamma\ba X$ with character $\chi_{\psi-\rho}$. Conversely, any positive joint eigenfunction  on $\Ga\ba X$ is of the form $E_{\nu}$ for some $(\Gamma,\psi)$-conformal density $\nu$  with $\psi\ge \max (\rho, \psi_\Ga)$, where $(\psi, \nu)$ is uniquely determined. \end{prop}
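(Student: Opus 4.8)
The plan is to combine the two halves already assembled in the section. First, recall that Proposition~\ref{conf} gives a bijective correspondence between positive joint eigenfunctions on $\Ga\ba X$ and pairs $(\psi,\nu)$ with $\psi\ge\rho$ and $\nu$ a $(\Ga,\psi)$-conformal density; the eigenfunction attached to $\nu$ has character $\chi_{\psi-\rho}$, and by inspecting the definition \eqref{ef} one checks $E_\nu(g)=|\nu_{g(o)}|=\int_\scrF\varphi_{\psi,k}(g)\,d\nu_o([k])$, so $E_\nu$ is precisely the function produced by Theorem~\ref{intrep} applied to the $\Ga$-invariant eigenfunction in question. Thus the ``forward'' direction of Proposition~\ref{d2} — that $E_\nu$ is a positive joint eigenfunction with character $\chi_{\psi-\rho}$ — is nothing but Lemma~\ref{Zeval} integrated against $\nu_o$ (positivity is immediate since $\varphi_{\psi,k}>0$ and $\nu_o$ is a nonzero finite measure), together with the already-verified left-$\Ga$- and right-$K$-invariance of $E_\nu$.

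For the converse, I would argue as follows. Let $f$ be a positive joint eigenfunction on $\Ga\ba X$, i.e.\ a $\Ga$-invariant positive joint eigenfunction on $X$. By Theorem~\ref{intrep} there is a unique pair $(\psi,\nu_o)$ with $\psi\ge\rho$ and $\nu_o$ a Borel measure on $\scrF$ such that $f(g)=\int_\scrF\varphi_{\psi,k}(g)\,d\nu_o([k])$; the second part of Proposition~\ref{conf} (whose proof is given just above the statement via the uniqueness in the integral representation) shows $\nu=\{\nu_x\}$ is a $(\Ga,\psi)$-conformal density, so $f=E_\nu$. It remains only to upgrade the bound on $\psi$ from $\psi\ge\rho$ to $\psi\ge\max(\rho,\psi_\Ga)$. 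For this, apply Theorem~\ref{d1} (Quint): since $\nu_o$ is a $(\Ga,\psi)$-conformal measure on $\scrF$, necessarily $\psi\ge\psi_\Ga$; combined with $\psi\ge\rho$ this gives $\psi\ge\max(\rho,\psi_\Ga)$. Uniqueness of the pair $(\psi,\nu)$ is inherited directly from the uniqueness clause in Theorem~\ref{intrep}.

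There is really no serious obstacle here: the proposition is a formal corollary, and the sentence preceding its statement in the excerpt already says as much (``Therefore Proposition~\ref{conf} and Theorem~\ref{d1} yield the following''). The only point requiring a word of care is the identification $E_\nu=$ (the function from Theorem~\ref{intrep}), i.e.\ that the measure produced by the integral representation is the \emph{same} $\nu_o$ that defines $E_\nu$ — this is exactly the content of the displayed computation $E_\nu(g)=\int_\scrF\varphi_{\psi,k}(g)\,d\nu_o([k])$ coming from \eqref{ef} and \eqref{psih}, so it is automatic. Hence the proof is simply: ``forward direction $=$ first part of Proposition~\ref{conf}; converse $=$ second part of Proposition~\ref{conf} together with Theorem~\ref{d1} to get $\psi\ge\psi_\Ga$, and Theorem~\ref{intrep} for uniqueness.'' I would write it in three or four sentences and move on.
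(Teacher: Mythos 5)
Your proof is correct and follows exactly the paper's intended route: the paper simply states that Proposition~\ref{d2} follows from Proposition~\ref{conf} together with Theorem~\ref{d1}, and you have spelled out precisely that argument (forward direction from the first half of Proposition~\ref{conf}; converse and uniqueness from its second half via Theorem~\ref{intrep}; the bound $\psi\ge\psi_\Ga$ from Theorem~\ref{d1}). Nothing to add.
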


\section{Eigenvalues of positive eigenfunctions}
Let $\G$ be a torsion-free discrete subgroup of a connected semisimple real algebraic group $G$.
Let $\Delta$ denote the Laplace-Beltrami operator on $X$ or on $\Gamma\ba X$.
Since $\Delta$ is an elliptic differential operator, an eigenfunction is always smooth. We call a smooth function 
$\lambda$-harmonic if $$-\Delta f=\lambda f.$$ 
Let $\scrC\in\Zg$ denote the Casimir operator on $C^\infty(G)$ (or on $C^\infty(\Gamma\ba G)$)
whose restriction to
$K$-invariant functions coincides with $\Delta$.  Then $K$-invariant
$\cal C$-eigenfunctions on $\Ga\ba G$ correspond to Laplace eigenfunctions on $\Ga\ba X$.
In particular, a joint eigenfunction
of $\Gamma\ba X$ is a Laplace eigenfunction.

 Define the real number $\lambda_0=\lambda_0(\Gamma\ba X)\in [0,\infty)$ as follows: \be\label{ll} \lambda_0:= \inf\left\lbrace \frac{\int_{\Gamma\ba X}\|\text{grad} \, f \|^2\,d\vol}{\int_{\Gamma\ba X}|f|^2\,d\vol}\,:\,f\in C^\infty_c(\Gamma\ba X),\; f\neq 0 \right\rbrace .\ee

\subsection*{Positive Laplace eigenfunctions}
 \begin{thm}\cite[Theorem 2.1, 2.2]{Su} \label{no}
Suppose that $\Ga\ba X$ is not compact.
\begin{enumerate}
    \item 
For any $\lambda \le \lambda_0$, there exists a
positive $\lambda$-harmonic function on $\Gamma\ba X$;
\item For any $\lambda>\lambda_0$,
there is no positive $\lambda$-harmonic function on $\Gamma \ba X$.
\end{enumerate}

\end{thm}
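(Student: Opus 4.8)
The plan is to prove the two halves separately; both are instances of the classical dichotomy (Fischer--Colbrie--Schoen, Moss--Piepenbrink; see also Sullivan) relating the nonnegativity of the Schr\"odinger operator $\Delta+\lambda$ on a complete manifold to the existence of a positive solution of $(\Delta+\lambda)u=0$. Throughout write $M=\Gamma\ba X$; it is a connected complete Riemannian manifold, since $X$ is complete and $\Gamma$ acts properly discontinuously by isometries without fixed points.

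First I would prove (2) by the \emph{ground-state substitution}. Suppose $u>0$ satisfies $-\Delta u=\lambda u$ on $M$ (so $u$ is smooth by elliptic regularity). For $\phi\in C_c^\infty(M)$, the function $\psi:=\phi/u$ is smooth and compactly supported because $u>0$. Expanding $\|\operatorname{grad}\phi\|^2$ via $\operatorname{grad}\phi=\psi\operatorname{grad}u+u\operatorname{grad}\psi$ and integrating the cross term by parts using $-\Delta u=\lambda u$ yields the identity
\be\label{gss} \int_M \|\operatorname{grad}\phi\|^2\,d\vol \;=\; \lambda\int_M\phi^2\,d\vol \;+\; \int_M u^2\,\|\operatorname{grad}\psi\|^2\,d\vol \;\ge\; \lambda\int_M\phi^2\,d\vol.\ee
Dividing by $\int_M\phi^2$ and taking the infimum over $\phi\ne 0$ gives $\lambda_0\ge\lambda$ by \eqref{ll}; hence no positive $\lambda$-harmonic function can exist when $\lambda>\lambda_0$.

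For (1) I would use an exhaustion together with a Harnack compactness argument. Fix $x_0\in M$ and a smooth exhaustion $\Omega_1\Subset\Omega_2\Subset\cdots$ with $\bigcup_k\Omega_k=M$. Put $L:=\Delta+\lambda$ and let $\lambda_1^{\mathrm{Dir}}(-L,\Omega)$ denote the bottom of the Dirichlet spectrum of $-L$ on a relatively compact open set $\Omega$. From \eqref{ll} one has $\lambda_1^{\mathrm{Dir}}(-L,M)=\lambda_0-\lambda\ge 0$; combining ordinary domain monotonicity with its strict form (valid because each $\Omega_k\Subset\Omega_{k+1}$ is a proper subset and $M$ is non-compact, so $\Omega_k\subsetneq M$) gives $\lambda_1^{\mathrm{Dir}}(-L,\Omega_k)>\lambda_1^{\mathrm{Dir}}(-L,\Omega_{k+1})\ge\lambda_1^{\mathrm{Dir}}(-L,M)\ge 0$, hence $\lambda_1^{\mathrm{Dir}}(-L,\Omega_k)>0$ for every $k$. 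Positivity of the principal eigenvalue makes the Dirichlet problem $-\Delta u_k=\lambda u_k$ in $\Omega_k$ with $u_k\equiv 1$ on $\partial\Omega_k$ uniquely solvable and, by the (strong) maximum principle, forces $u_k>0$ in $\Omega_k$. Normalize $v_k:=u_k/u_k(x_0)$. On each connected compact $K\ni x_0$, the Harnack inequality for $-\Delta-\lambda$ bounds $v_k$ above and below by positive constants depending only on $K$ (once $K\Subset\Omega_k$), and interior elliptic estimates give uniform $C^{2,\alpha}_{\mathrm{loc}}$ bounds; a diagonal extraction then produces a subsequence $v_k\to v$ in $C^2_{\mathrm{loc}}(M)$ with $v>0$ everywhere, $v(x_0)=1$, and $-\Delta v=\lambda v$ on all of $M$. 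This $v$ is the required positive $\lambda$-harmonic function.

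The hard part is the input to (1): showing that the principal Dirichlet eigenvalue of $\Delta+\lambda$ on each $\Omega_k$ is \emph{strictly} positive. This is exactly where non-compactness of $\Gamma\ba X$ is needed --- on a closed manifold the exhaustion terminates and the statement itself fails for $\lambda<\lambda_0$ --- and it requires the strict form of domain monotonicity for the principal eigenvalue together with the equivalence ``positive principal eigenvalue $\Longleftrightarrow$ validity of the maximum principle'' to guarantee $u_k>0$. The remaining analytic ingredient, the uniform Harnack estimate used to pass to the limit, is standard for second-order elliptic operators with locally bounded coefficients on a fixed compact set.
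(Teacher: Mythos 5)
The paper cites this result from Sullivan \cite{Su} and does not give its own proof, so there is no in-paper argument to compare yours against. Your proof is correct and follows the classical route: part (2) is the ground-state substitution, which is exactly the Barta-type identity the paper itself invokes later when proving Theorem~\ref{atmost} directly, and part (1) is the standard exhaustion-plus-Harnack compactness construction going back to Fischer--Colbrie--Schoen and Moss--Piepenbrink, which is also what underlies Sullivan's treatment. One point worth flagging: the \emph{strict} form of domain monotonicity for $\lambda_1^{\mathrm{Dir}}$ is only genuinely needed at the borderline case $\lambda=\lambda_0$ (for $\lambda<\lambda_0$ the non-strict inequality already gives $\lambda_1^{\mathrm{Dir}}(-L,\Omega_k)\ge\lambda_0-\lambda>0$), and strict monotonicity rests on unique continuation for eigenfunctions of a second-order elliptic operator --- a standard fact, but it is the ingredient that makes $\lambda=\lambda_0$ go through and deserves to be named explicitly.
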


We identity $\fa^*$ with $\fa$ via the inner product on $\fa$ induced by the Killing form on $\fg$. This endows an inner product on $\fa^*$. More precisely,
for each $\psi\in \fa^*$, there exist a unique $v_\psi\in \fa$ such that
$\psi =\la v_\psi, \cdot \ra$. Then $\la \psi_1, \psi_2\ra =\la v_{\psi_1}, v_{\psi_2}\ra$.
Equivalently, fixing an orthonormal basis $\lbrace H_i\rbrace$ of $\fa $, we have $\la \psi_1, \psi_2\ra =\sum_i \psi_1(H_i)\psi_2(H_i)$.

For $\psi\in \fa^*$, we set
\be\label{lp} \lambda_\psi:=\big(\|\rho\|^2-\|\psi-\rho\|^2\big) .\ee 

\begin{prop}\label{LaplaceEV} 
\begin{enumerate}
    \item  A positive joint eigenfunction on $X$ with character $\chi_{\psi-\rho}$, $\psi\in \fa^*$,  is $\lambda_\psi$-harmonic.
\item A positive Laplace eigenfunction on $X$ is
$\lambda_\psi$-harmonic for some $\psi\in \fa^*$ with $\psi\ge \rho$.

\end{enumerate}
\end{prop}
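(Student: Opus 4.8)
The plan is to reduce everything to the Casimir computation together with the integral representation of positive joint eigenfunctions. For part (1), I would start from the fact, recorded in the discussion after Lemma \ref{Zeval}, that a joint eigenfunction with character $\chi_{\psi-\rho}$ is in particular an eigenfunction of the Laplacian $\Delta$, since $\Delta$ is the restriction to $K$-invariant functions of the Casimir element $\scrC\in\Zg$. So the task is simply to evaluate $\chi_{\psi-\rho}(\scrC)$. The standard formula (see e.g.\ \cite[Lemma 10.6 and Prop.\ 10.7]{Knapp1}, or a direct Harish-Chandra-isomorphism computation as in the proof of Lemma \ref{Zeval}) gives, for the character $\chi_\lambda$ with Harish-Chandra parameter $\lambda+\rho$ on functions, the eigenvalue of $-\Delta$ equal to $\langle\lambda+\rho,\lambda+\rho\rangle-\langle\rho,\rho\rangle$ after the $\rho$-shift is taken into account; rewriting in the normalization of \eqref{Z}, where the character $\chi_{\psi-\rho}$ already incorporates the shift $\psi\mapsto\psi-\rho$, this becomes exactly $\|\rho\|^2-\|\psi-\rho\|^2=\lambda_\psi$. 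Concretely, one can verify this directly on the minimal eigenfunctions $\varphi_{\psi,k}$ of \eqref{psih}: since $\varphi_{\psi,k}$ is a scalar multiple of the spherical-type function $g\mapsto e^{-\psi(H(g^{-1}k))}$, a short computation using the Iwasawa decomposition and $\scrC\equiv\Delta$ on right-$K$-invariant functions shows $-\Delta\varphi_{\psi,k}=\lambda_\psi\varphi_{\psi,k}$, and this is the eigenvalue attached to the character $\chi_{\psi-\rho}$. Then part (1) follows for every joint eigenfunction with that character.

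For part (2), I would invoke Theorem \ref{intrep} (or Proposition \ref{d2} in the $\Gamma$-invariant case, but here we only need the statement on $X$): any positive joint eigenfunction $f$ on $X$ has the integral representation $f(g)=\int_{\scrF}\varphi_{\psi,k}(g)\,d\nu_o([k])$ for a unique $\psi\in\fa^*$ with $\psi\ge\rho$. Hence $f$ is a joint eigenfunction with character $\chi_{\psi-\rho}$, and by part (1) it is $\lambda_\psi$-harmonic with $\psi\ge\rho$. The only gap is that part (2) as stated concerns a positive \emph{Laplace} eigenfunction, not a positive joint eigenfunction. Here I would need the fact — which is proved later in the paper (Corollary \ref{lzero}, referenced in section 6) — that a positive Laplace eigenfunction on $\Gamma\ba X$ is automatically a joint eigenfunction; but on $X=G/K$ itself this is classical and elementary, since the minimal positive eigenfunctions of $\Delta$ are the $\varphi_{\psi,k}$ and an arbitrary positive $\Delta$-eigenfunction is an integral of these (Karpelevich; see also the references \cite{FU,Ka,La} cited after Lemma \ref{Zeval}), so it is in fact a joint eigenfunction. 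With that, (2) is immediate from (1) and Theorem \ref{intrep}.

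The main obstacle I anticipate is purely bookkeeping: getting the $\rho$-shift and the normalization conventions to line up so that the eigenvalue comes out as $\|\rho\|^2-\|\psi-\rho\|^2$ rather than, say, $\|\rho\|^2-\|\psi\|^2$ or $-\|\psi-\rho\|^2$. The cleanest way to pin this down is to test on the two extreme cases: the constant function (which corresponds to $\psi=2\rho$, giving $\lambda_{2\rho}=\|\rho\|^2-\|\rho\|^2=0$, as it must since $\Delta 1=0$) and the spherical function $\Xi_G$-type eigenfunction with $\psi=\rho$ (giving $\lambda_\rho=\|\rho\|^2$, the bottom of the spectrum of $X$, consistent with Theorem \ref{ss}/the known value $\sigma(X)=[\|\rho\|^2,\infty)$). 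These two checks fix the affine normalization completely and confirm the formula \eqref{lp}. Beyond that, the argument is a direct assembly of Lemma \ref{Zeval}, the Casimir–Laplacian identification, and Theorem \ref{intrep}.
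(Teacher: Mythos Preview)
Your approach to part (1) is essentially the paper's: compute the Casimir eigenvalue directly on the building blocks $\varphi_{\psi,k}$ and then extend by the integral representation. The paper carries this out explicitly by writing $\scrC=\sum_i H_i^2-\sum_{\alpha\in\Sigma^+}H_\alpha+2\sum_{\alpha\in\Sigma^+}E_\alpha E_{-\alpha}+J$ with $J\in\mathcal{U}(\fm_\CC)$, observing that the $E_\alpha E_{-\alpha}$ and $J$ terms annihilate $\varphi_{\psi,h}$ at $e$ (as in the proof of Lemma \ref{Zeval}), and reading off $-\scrC\varphi_{\psi,h}=-(\|\psi\|^2-2\langle\rho,\psi\rangle)\varphi_{\psi,h}=\lambda_\psi\varphi_{\psi,h}$. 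Your normalization checks at $\psi=2\rho$ and $\psi=\rho$ are a sound sanity test, but the direct computation gives the formula without guesswork.

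For part (2) there is a genuine gap. You claim that on $X$ a positive Laplace eigenfunction is automatically a joint eigenfunction, and then invoke Theorem \ref{intrep}. This is false when $\rank G\ge 2$. The minimal positive eigenfunctions of $\Delta$ are indeed the $\varphi_{\psi,k}$, but for a fixed $\lambda<\|\rho\|^2$ the level set $\{\psi\ge\rho:\lambda_\psi=\lambda\}$ is a $(\rank G-1)$-dimensional piece of a sphere in $\fa^*$, and a superposition over several distinct $\psi$'s in this set (e.g.\ $\varphi_{\psi_1,k_1}+\varphi_{\psi_2,k_2}$ with $\psi_1\ne\psi_2$, $\lambda_{\psi_1}=\lambda_{\psi_2}$) is a positive $\lambda$-harmonic function that is \emph{not} a joint eigenfunction. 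The argument via Corollary \ref{lzero} that you allude to relies on the one-dimensionality of the $L^2$ base eigenspace on $\Gamma\ba X$ and does not transfer to $X$ itself.

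The fix is already in your hands, but you drew the wrong conclusion from it. The Karpelevich--Laha integral representation you cite (this is \cite[Theorem 2]{La}, used verbatim in the paper's proof) gives
\[
f(g)=\int_{K/M\times\{\psi\ge\rho:\lambda_\psi=\lambda\}}\varphi_{\psi,k}(g)\,d\mu([k],\psi),
\]
an integral over a \emph{set} of parameters $\psi$, not a single one as in Theorem \ref{intrep}. The correct deduction is not ``$f$ is a joint eigenfunction'' but simply ``$\lambda=\lambda_\psi$ for some $\psi\ge\rho$'' (namely any $\psi$ in the support of $\mu$), which is exactly statement (2). Replace the appeal to Theorem \ref{intrep} by this more general representation and drop the joint-eigenfunction claim.
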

\begin{proof} 
Let $\psi\in \fa^*$. Recall the functions $\varphi_{\psi, h}$ in \eqref{psih}.
By Theorem \ref{intrep}, (1) follows if we show that for any $h\in G$,
\be\label{ccc} -\cal C \varphi_{\psi,h}=\lambda_\psi \varphi_{\psi,h}.\ee 
 Let $\{H_i\}$ be an orthonormal basis of $\fa$. To each $\alpha\in \Sigma$, let $H_\alpha\in \fa$ be the unique vector such that
$\alpha(x)=B(x, H_\alpha)=\langle x, H_\alpha\rangle$ for all $x\in \fa$, and choose a unit root vector $E_\alpha\in \mathfrak n$ so that $[x, E_\alpha]=\alpha(x) E_\alpha$ for all $x\in \fa$. 
We may write
$$ \scrC=\sum_i H_i^2+\sum_{\alpha\in \Sigma^+}\big(E_{\alpha}E_{-\alpha}+E_{-\alpha}E_{\alpha}\big)+J,$$
where $J\in \mathcal{U}(\fm_{\CC})$ (cf.\ \cite[Proposition 5.28]{Knapp2}). Now using $E_{-\alpha}E_{\alpha}=E_{\alpha}E_{-\alpha}-H_{\alpha}$ gives
$$ \scrC=\sum_i H_i^2-\sum_{\alpha\in\Sigma^+}H_{\alpha}+\sum_{\alpha\in \Sigma^+}2E_{\alpha}E_{-\alpha}+J.$$

 As in the proof of Lemma \ref{Zeval}, $[J\varphi_{\psi,h}](e)=0$, and $[E_{\alpha}E_{-\alpha}\varphi_{\psi,h}](e)=0$. Applying $-\scrC$ to $\varphi_{\psi,h}$ gives
\begin{align*} -\scrC\varphi_{\psi,h}&=-\left( \sum_i \psi(H_i)^2-\sum_{\alpha\in\Sigma^+}\psi(H_{\alpha})\right)\varphi_{\psi,h}\\&
=-\left(\|\psi\|^2-2\langle\rho,\psi\rangle \right)\varphi_{\psi,h}
\\&=\left( \|\rho\|^2-\|\psi-\rho\|^2\right)\varphi_{\psi,h},
\end{align*}
proving \eqref{ccc}.
Let $f$ be a positive $\lambda$-harmonic function on $X$, which we consider as a $K$-invariant function on $G$.
By \cite[Theorem 2]{La}, $f$ is of the form: for any $g\in G$,
$$f(g)=\int_{K/M\times \{\psi\ge \rho: \lambda_\psi=\lambda\} } \varphi_{\psi, k}(g)\; d\mu([k],\psi) $$
for some Borel measure $\mu$ on $K/M\times \{\psi\ge \rho: \lambda_\psi=\lambda\}$. By \eqref{ccc}, this implies (2). 
\end{proof}

\begin{cor} \label{bou} For any Zariski dense discrete subgroup $\Ga<G$,
 $$\sup\{\lambda_\psi : \psi\in D_\Ga^\star  \}\le \lambda_0 .$$
\end{cor}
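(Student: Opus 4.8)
The plan is to exhibit, for each $\psi\in D_\Ga^\star$, a genuine positive $\lambda_\psi$-harmonic function on $\Ga\ba X$, and then quote Sullivan's dichotomy (Theorem \ref{no}) to force $\lambda_\psi\le\lambda_0$. First I would note that since $\Ga$ is Zariski dense, $\mathcal{L}$ has nonempty interior (Benoist), so $\mathcal{L}\cap\inte\fa^+\neq\emptyset$ and $D_\Ga^\star\neq\emptyset$; in particular the supremum in question is over a nonempty set (and if it were empty, $\sup\emptyset=-\infty\le\lambda_0$ trivially). Now fix $\psi\in D_\Ga^\star=\dg$. By Quint's Theorem \ref{Q1} there is a $(\Ga,\psi)$-conformal measure $\nu_o$ on $\F$, supported on $\La$; let $\nu=\{\nu_x:x\in X\}$ be the associated $(\Ga,\psi)$-conformal density. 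Since $\nu_o$ is a nonzero finite Borel measure and $\F=K/M$ is compact, the function $E_\nu(g)=\int_{\F}e^{-\psi(H(g^{-1}k))}\,d\nu_o([k])$ is everywhere finite and strictly positive.

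The next step is purely a matter of assembling the earlier propositions. By Proposition \ref{conf} (equivalently Proposition \ref{d2}), $E_\nu$ descends to a positive joint eigenfunction on $\Ga\ba X$ with character $\chi_{\psi-\rho}$; being a Laplace eigenfunction it is in particular smooth. By Proposition \ref{LaplaceEV}(1), a positive joint eigenfunction with character $\chi_{\psi-\rho}$ is $\lambda_\psi$-harmonic, where $\lambda_\psi=\|\rho\|^2-\|\psi-\rho\|^2$. Hence $\Ga\ba X$ carries a positive $\lambda_\psi$-harmonic function.

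To conclude I would split on whether $\Ga\ba X$ is compact. If it is noncompact, Theorem \ref{no}(2) says there is no positive $\lambda$-harmonic function for any $\lambda>\lambda_0$; since $E_\nu$ is one for $\lambda=\lambda_\psi$, we get $\lambda_\psi\le\lambda_0$. If $\Ga\ba X$ is compact then $\Ga$ is a uniform lattice, so $\psi_\Ga=2\rho$ on $\fa^+$; then any $\psi\in D_\Ga^\star$ satisfies $\psi-2\rho\ge 0$ on $\fa^+$ and vanishes at some interior point of $\fa^+$, which forces $\psi=2\rho$ and hence $\lambda_\psi=\|\rho\|^2-\|\rho\|^2=0\le\lambda_0$ (alternatively: on a compact manifold a positive eigenfunction must be the ground state, so $\lambda_\psi=\lambda_0$). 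Taking the supremum over $\psi\in D_\Ga^\star$ yields $\sup\{\lambda_\psi:\psi\in D_\Ga^\star\}\le\lambda_0$. There is no analytic obstacle here; the only points requiring care are the bookkeeping ones—that Quint's conformal measure is nonzero so $E_\nu>0$, that the character $\chi_{\psi-\rho}$ of Proposition \ref{d2} is exactly the one whose eigenvalue Proposition \ref{LaplaceEV}(1) computes to be $\lambda_\psi$, and that Theorem \ref{no} is invoked only in the noncompact case where it is stated, with the compact case dispatched separately as above.
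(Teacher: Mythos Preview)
Your proof is correct and follows essentially the same route as the paper: for each $\psi\in D_\Ga^\star$, invoke Quint's Theorem~\ref{Q1} to get a $(\Ga,\psi)$-conformal measure, apply Proposition~\ref{d2} and Proposition~\ref{LaplaceEV}(1) to produce a positive $\lambda_\psi$-harmonic function on $\Ga\ba X$, and then use Theorem~\ref{no}(2) in the noncompact case while handling the cocompact case separately via $\psi_\Ga=2\rho$. The paper's proof is terser but structurally identical; your additional care in verifying $D_\Ga^\star\neq\emptyset$ and in justifying why $\psi=2\rho$ in the cocompact case is welcome detail.
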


\begin{proof} 
If $\Ga$ is cocompact in $G$, then $\psi_\Ga=2\rho$ and hence $\dg=\{2\rho\}$. Since $\lambda_0=0=\lambda_{2\rho}$ in this case, the claim follows.
 In general,
it follows from Theorem \ref{Q1} and Proposition \ref{d2} that for any $\psi\in \dg$, there exists a positive
joint eigenfunction on $\Ga\ba X$ with character $\chi_{\psi-\rho}$. Hence
the claim for the case when $\Ga$ is not cocompact in $G$ follows from Theorem \ref{no} and Proposition \ref{LaplaceEV}. 
\end{proof}

\section{Groups of the second kind and
positive joint eigenfunctions}
When $G$ has rank one (in which case the Furstenberg boundary is same as the geometric boundary of $X$),  a discrete subgroup $\G<G$ is said to be of the second kind if $\La\ne \F$. We extend this definition to higher rank groups as follows:

\begin{Def}\label{gp}
A discrete subgroup $\Gamma<G$ is of {\it the second kind} if there exists $\xi\in \F$ which is in general position with all points of $\La$, i.e., $(\xi, \La)\subset \F^{(2)}$.\end{Def}

Theorem \ref{no} provides a positive $\lambda$-harmonic function for any $\lambda \le \lambda_0$, when $\Ga\ba X$ is not compact. The following theorem can be viewed as a higher rank strengthening of this result.

\begin{thm} \label{exi}  Let $\Ga<G$ be of the second kind with
$\L\subset \inte \fa^+ \cup\{0\}$.
For any $\psi\in D_\Ga$, there exists a
positive joint eigenfunction on $\Ga\ba X$ with character $\chi_{\psi-\rho}$.
\end{thm}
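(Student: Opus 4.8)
The plan is to construct the conformal density directly by a Patterson--Sullivan-type series, using the ``second kind'' hypothesis to guarantee convergence/nontriviality at the critical parameter $\psi$, and then invoke Proposition~\ref{d2} to pass from the density to the desired positive joint eigenfunction $E_\nu$. Recall that Proposition~\ref{d2} tells us it suffices to produce a $(\Gamma,\psi)$-conformal density on $\cal F$ for the given $\psi\in D_\Gamma$; then $E_\nu$ is automatically a positive joint eigenfunction on $\Gamma\ba X$ with character $\chi_{\psi-\rho}$. For $\psi\in\dg$ this is already handled by Theorem~\ref{Q1} together with Proposition~\ref{d2} (as in Corollary~\ref{bou}), so the real content is the range $\psi\in D_\Gamma\setminus\dg$, i.e.\ linear forms that strictly dominate $\psi_\Gamma$ and are not tangent to it along $\L\cap\inte\fa^+$. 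For such $\psi$, a naive Patterson--Sullivan construction based at the limit set would give the zero measure in the limit (the series $\sum_{\gamma}e^{-\psi(\mu(\gamma))}$ converges, so no escape of mass forces a conformal limit), which is exactly why we need a different source of mass.

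The key idea I would use is to exploit a point $\xi_0\in\cal F$ in general position with all of $\La$, provided by Definition~\ref{gp}. First I would fix such a $\xi_0=[k_0]$ and consider the family of measures
\[
\mu_s \;=\; \frac{1}{\sum_{\gamma\in\Gamma} e^{-s\,\|\mu(\gamma)\|} e^{-\psi(H(\gamma^{-1}k_0))+ s\|\mu(\gamma)\|}}\;\sum_{\gamma\in\Gamma} e^{-\psi(H(\gamma^{-1}k_0))}\,\delta_{\gamma\xi_0},
\]
or, more cleanly, a weighted sum $\nu_o^{(s)} = \big(\sum_\gamma e^{-\psi_s(H(\gamma^{-1}k_0))}\big)^{-1}\sum_\gamma e^{-\psi_s(H(\gamma^{-1}k_0))}\delta_{\gamma\xi_0}$ with $\psi_s = \psi + s\,\mathrm{(some\ strictly\ positive\ form)}$ a perturbation making the series diverge exactly at $s=0$; one then takes a weak-$*$ limit $\nu_o$ as $s\to 0^+$. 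The quasi-invariance relation for $\psi_s$-weighted Dirac sums is the standard computation, and passing to the limit yields a finite Borel measure $\nu_o$ satisfying the $(\Gamma,\psi)$-conformality identity of Definition~\ref{conf_def}, \emph{provided} $\nu_o\neq 0$. Here the condition $\L\subset\inte\fa^+\cup\{0\}$ enters: it guarantees that $\mu(\gamma)$ stays regular, so $H(\gamma^{-1}k_0)$ is controlled by $\mu(\gamma)$ and the opposition involution applied to the Cartan attracting direction, and in particular $\psi(H(\gamma^{-1}k_0))$ cannot be so negative as to make individual terms blow up — mass is not lost to $\xi_0$ being ``badly placed'' relative to the $\gamma\xi_0$'s. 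The general-position hypothesis ensures that the orbit $\Gamma\xi_0$ does not degenerate onto a single point of $\La$ with the wrong cocycle behavior, so the normalized limit is a genuine nonzero conformal measure rather than an atom with inconsistent Radon--Nikodym derivative.

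The main obstacle I anticipate is precisely the nontriviality (nonvanishing) of the weak-$*$ limit $\nu_o$, and showing it is genuinely $(\Gamma,\psi)$-conformal rather than only $(\Gamma,\psi')$-conformal for some larger $\psi'$ appearing as a limit of the perturbations $\psi_s$. To handle this I would: (i) choose the perturbation direction carefully — e.g.\ perturb within the hyperplane structure so that $\psi_s\downarrow\psi$ and the divergence at $s=0$ is ``just barely,'' using concavity and upper-semicontinuity of $\psi_\Gamma$ (Quint) to locate where $\psi - \psi_\Gamma$ can be made to vanish in a limiting cone direction dictated by $\xi_0$; (ii) use a Shadow Lemma / counting estimate relating $e^{-\psi(H(\gamma^{-1}k_0))}$ to shadows of balls around $\xi_0$, where the general-position assumption keeps these shadows uniformly ``transverse'' to $\La$ and hence of controlled measure, preventing escape of all the mass; (iii) verify the conformality relation on the limit by the usual equicontinuity argument for the Radon--Nikodym cocycles, which are continuous in $\gamma$ and in the base point. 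A clean alternative, which may be what the authors do, is to \emph{build the joint eigenfunction directly} rather than the measure: set
\[
f(g) \;=\; \sum_{\gamma\in\Gamma} \varphi_{\psi,\gamma k_0}(g)\;=\;\sum_{\gamma\in\Gamma} e^{-\psi(H(g^{-1}\gamma k_0))},
\]
check absolute convergence (locally uniformly in $g$) using $\psi\ge\psi_\Gamma$ together with $\L\subset\inte\fa^+$ — this is where the strict regularity of the limit cone makes the Iwasawa cocycle $H(g^{-1}\gamma k_0)$ comparable to $\mu(\gamma)$ up to bounded error, so $\sum_\gamma e^{-\psi(H(g^{-1}\gamma k_0))}\asymp\sum_\gamma e^{-\psi(\mu(\gamma))}<\infty$ exactly when $\psi > \psi_\Gamma$ strictly, and a separate boundary argument at $\psi\in\dg$ — and then $f$ is manifestly $\Gamma$-invariant and, by Lemma~\ref{Zeval}, a positive joint eigenfunction on $X$ with character $\chi_{\psi-\rho}$, descending to $\Gamma\ba X$. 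The general-position point $\xi_0$ is what makes this series finite and nonzero simultaneously: finiteness from $\psi\ge\psi_\Gamma$ plus regularity, nonvanishing because each term is positive. I would present this second approach as the main line, reserving the conformal-density viewpoint for the tangent case $\psi\in\dg$ and for recording the uniqueness statement via Proposition~\ref{d2}. The delicate point to nail down carefully is the convergence at the boundary of $D_\Gamma$ (the case $\psi\in\dg$ but $\psi$ not covered by Theorem~\ref{Q1}'s support statement being the relevant subtlety), which likely requires the shadow-lemma estimate rather than the crude cocycle comparison.
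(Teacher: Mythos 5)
Your ``clean alternative'' (the second approach) is essentially the paper's proof: one splits $D_\Gamma$ into $\dg$ (handled by Theorem~\ref{Q1}) and $D_\Gamma\setminus\dg$, and in the latter range one writes down the Poincar\'e series $F_{\psi,\xi_0}(g)=\sum_{\gamma\in\Gamma}\varphi_{\psi,\gamma k_0}(g)=\sum_{\gamma\in\Gamma}e^{-\psi(\beta_{\xi_0}(\gamma^{-1}g o,o))}$ based at a point $\xi_0=[k_0]$ that is in general position with $\La$, checks it converges, and observes by Lemma~\ref{Zeval} that it is a positive $\Gamma$-invariant joint eigenfunction with character $\chi_{\psi-\rho}$. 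So you did identify the right construction.

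Two points deserve correction. First, you attribute the comparison $\beta_{\xi_0}(\gamma^{-1}o,o)\approx\mu(\gamma)$ to ``strict regularity of the limit cone'' alone; that is not sufficient. Regularity of $\L$ makes $\Gamma(o)\cup\La$ compact in the cone topology (Lemma~\ref{com}), but by itself it does not control $\beta_{\xi_0}(\gamma^{-1}o,o)$ for an arbitrary $\xi_0$ -- if $\xi_0$ were a limit point the comparison would fail. What actually drives the estimate is the general-position hypothesis: Lemma~\ref{an1} shows (by a compactness/contradiction argument using both $\L\subset\inte\fa^+\cup\{0\}$ and $(\xi_0,\La)\subset\F^{(2)}$) that $\xi_0$ lies in a single shadow $O_R(\gamma(o),o)$ uniformly over all $\gamma\in\Gamma$, and then the shadow lemma (Lemma~\ref{sh}(2)) gives the bounded-error comparison, yielding Corollary~\ref{an2}. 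In your write-up the general-position point is invoked somewhat as an afterthought (``what makes the series finite and nonzero simultaneously''), but it is in fact the load-bearing hypothesis for finiteness, not merely nontriviality; nontriviality is automatic since the terms are positive. Second, your worry about a ``case $\psi\in\dg$ not covered by Theorem~\ref{Q1}'s support statement'' is unfounded: Theorem~\ref{Q1} produces a $(\Gamma,\psi)$-conformal measure for every $\psi\in\dg$, and Quint's Lemma~III.1.3 in \cite{Quint2} guarantees $\sum_\gamma e^{-\psi(\mu(\gamma))}<\infty$ for every $\psi\in D_\Gamma\setminus\dg$, so the two cases exhaust $D_\Gamma$ with no leftover boundary subtlety. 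Your first approach (perturbed Patterson--Sullivan limits) is not what the paper does and, as you yourself note, carries real nontriviality/conformality headaches that the direct series construction avoids entirely.
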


By Proposition \ref{d2}, we get the following immediate corollary:
\begin{cor}\label{coexi}  Let $\Ga<G$ be of the second kind with
$\L\subset \inte \fa^+ \cup\{0\}$.
Then for any $\psi\ge\max (\psi_\Ga, \rho)$,
there exists a $(\Ga,\psi)$-conformal density.
\end{cor}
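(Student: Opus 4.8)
The goal is Theorem~\ref{exi}: for a second-kind subgroup $\Ga<G$ with $\L\subset\inte\fa^+\cup\{0\}$ and any $\psi\in D_\Ga$ (i.e.\ $\psi\ge\psi_\Ga$), one must produce a positive joint eigenfunction on $\Ga\ba X$ with character $\chi_{\psi-\rho}$. By Proposition~\ref{d2}, once $\psi\ge\rho$ this is equivalent to producing a $(\Ga,\psi)$-conformal density on $\F$; when $\psi\not\ge\rho$ one must produce the eigenfunction directly (Proposition~\ref{d2} tells us a genuine eigenfunction forces $\psi\ge\max(\rho,\psi_\Ga)$, so in fact the statement as phrased implicitly needs $\psi\ge\rho$ too, or else the construction will automatically yield an exponent that has been enlarged — I would check this point carefully and, if needed, read "$\psi\in D_\Ga$" together with the standing requirement $\psi\ge\rho$). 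For $\psi\in\dg$ the conformal density already exists by Quint's Theorem~\ref{Q1}; the new content is the range $\psi\in D_\Ga\setminus\dg$, i.e.\ linear forms that dominate $\psi_\Ga$ but are \emph{not} tangent to it along the interior of the limit cone. The plan is to build the conformal density (equivalently, the function $E_\nu(g)=\int_\F e^{-\psi(H(g^{-1}k))}\,d\nu_o$) by a Patterson--Sullivan-type series construction, using the second-kind hypothesis to place mass at a single boundary point in general position with $\La$ and thereby guarantee convergence.

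\textbf{The construction.} Fix $\xi_0\in\F$ in general position with all of $\La$ (second-kind hypothesis). For $s>0$ consider the $\fa^*$-weighted Poincar\'e-type series
\begin{equation*}
\mu_{s,\psi}:=\frac{1}{\sum_{\ga\in\Ga}e^{-s\,\psi(\mu(\ga))}}\sum_{\ga\in\Ga}e^{-s\,\psi(\mu(\ga))}\,\delta_{\ga\cdot\xi_0},
\end{equation*}
a probability measure on $\F$. Because $\psi\ge\psi_\Ga$ and $\psi_\Ga$ is the growth indicator, the abscissa of convergence of $\sum_\ga e^{-s\psi(\mu(\ga))}$ is $\le 1$; if it is $<1$ one regularizes in the usual Patterson way (insert a slowly varying factor so the series diverges exactly at $s=1$), exactly as in Quint's proof of Theorem~\ref{Q1}. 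Take a weak-$*$ limit $\nu_o$ of $\mu_{s,\psi}$ as $s\searrow 1$. The standard conformality computation — comparing $\mu(\ga^{-1}\ga')$ with $\mu(\ga')$ up to the Iwasawa cocycle $H$, using that the error $\mu(\ga^{-1}g)-\mu(g)+\beta$-term is controlled when $g$ stays regular — shows any such limit is a $(\Ga,\psi)$-conformal measure \emph{provided} no mass escapes to the ``wrong'' part of $\F$. This is where the two hypotheses enter: the regularity hypothesis $\L\subset\inte\fa^+\cup\{0\}$ forces $\mu(\ga_i)$ to be uniformly regular for $\ga_i\to\infty$, so that $\ga_i\cdot\xi_0\to$ a point of $\La$ in the sense of Definition~\ref{conver}, and the cocycle-comparison estimates hold uniformly; and the second-kind hypothesis ensures that $\xi_0$ and its translates never ``collide'' with $\La$ in a way that would break the Radon--Nikodym identity in the limit (this replaces the role that transitivity of $\Ga$ on the boundary, or geometric finiteness, plays in rank one). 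Then $\nu:=\{\nu_x\}$ is the desired conformal density, $\psi\ge\psi_\Ga$ and one checks $\psi\ge\rho$ is forced (or assumed), and $E_\nu$ is the sought eigenfunction by Proposition~\ref{conf}/\ref{d2}.

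\textbf{Key steps, in order.} (i) Reduce, via Proposition~\ref{d2}, to constructing a $(\Ga,\psi)$-conformal density; record that the target character is $\chi_{\psi-\rho}$ automatically. (ii) Set up the weighted series $\sum_\ga e^{-s\psi(\mu(\ga))}\delta_{\ga\cdot\xi_0}$, normalize, and perform Patterson's regularization so divergence occurs at the critical parameter. (iii) Extract a weak-$*$ limit $\nu_o$. (iv) Prove conformality: estimate $\psi(\beta_\xi(\ga o,o))$ against $\psi(\mu(\ga^{-1}\cdot)-\mu(\cdot))$ using the Iwasawa cocycle and the uniform regularity of $\mu(\Ga)$ coming from $\L\subset\inte\fa^+$, then pass to the limit. (v) Use the second-kind point $\xi_0$ to show the limit measure is not a linear combination of ``atoms at infinity'' obstructing the Radon--Nikodym relation — i.e.\ no degeneration — so the conformality identity survives the limit exactly. (vi) Invoke Proposition~\ref{conf} to conclude $E_\nu$ is a positive joint eigenfunction on $\Ga\ba X$ with character $\chi_{\psi-\rho}$; note Theorem~\ref{d1} then re-confirms $\psi\ge\psi_\Ga$, consistent with the hypothesis.

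\textbf{Main obstacle.} The delicate point is step (iv)--(v): controlling the weak-$*$ limit. In rank one with $\La\ne\F$ one simply puts the atom at a point outside $\La$ and the Poincar\'e series with the \emph{correct} exponent does the job because the Shadow Lemma is available; here we are deliberately working with a possibly-too-large exponent $\psi>\psi_\Ga$, so the series may converge at $s=1$ and Patterson regularization is essential, and the higher-rank geometry means ``escape of mass'' can occur along any face of $\fa^+$ unless regularity is invoked. I expect the real work to be showing that the conformal (Radon--Nikodym) identity is stable under the limit — equivalently, that $\ga\cdot\xi_0$ never accumulates on $\La$ \emph{tangentially} in a way that produces an extra singular term — and that the regularization factor can be chosen compatibly with the $\fa^*$-valued (rather than scalar) weight. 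The second-kind and interior-limit-cone hypotheses are precisely what make this go through, and isolating exactly how they are used (uniform regularity of Cartan projections $\Rightarrow$ uniform cocycle estimates; general-position point $\Rightarrow$ no boundary collision) will be the crux of the argument.
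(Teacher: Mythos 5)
Your proposal has a genuine gap, and it is exactly at the point you yourself flag as "delicate." For $\psi\in D_\Ga\setminus D_\Ga^\star$ (the interesting range), the series $\sum_\ga e^{-s\psi(\mu(\ga))}$ has abscissa of convergence $s_0<1$. Patterson's regularization cannot move the abscissa of convergence; it can only ensure divergence \emph{at} the abscissa when the series happens to converge there. So your step (ii) cannot be carried out as written: you cannot "insert a slowly varying factor so the series diverges exactly at $s=1$." If you drop the regularization and take the weak-$*$ limit of $\mu_{s,\psi}$ as $s\searrow 1$ without it, the series converges at $s=1$, so no mass escapes to $\La$, and the limit measure is simply the normalized atomic measure $\sum_\ga e^{-\psi(\mu(\ga))}\delta_{\ga\cdot\xi_0}$. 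This measure is \emph{not} $(\Ga,\psi)$-conformal: the conformality relation requires the ratio of weights $c_{\ga^{-1}\ga'}/c_{\ga'}$ to equal $e^{-\psi(\beta_{\ga'\xi_0}(\ga o, o))}$ exactly, whereas $e^{-\psi(\mu(\ga^{-1}\ga')-\mu(\ga'))}$ only approximates it up to a bounded multiplicative error. In the classical Patterson argument that bounded error is washed out \emph{because} the mass concentrates on the limit set in the weak-$*$ limit; here the mass stays on the orbit $\Ga\cdot\xi_0$, so the error persists. Your hoped-for "stability of the Radon--Nikodym identity under the limit" is precisely what fails.

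The paper avoids this entirely by dispensing with the limit process. Having picked $\xi_0=[k_0]$ in general position with $\La$, one uses the \emph{exact} Busemann cocycle as the weight and forms $F_{\psi,\xi_0}(x)=\sum_{\ga\in\Ga} e^{-\psi(\beta_{\xi_0}(\ga^{-1}x,o))}=\sum_\ga \varphi_{\psi,\ga k_0}(x)$. By Corollary~\ref{an2} (which is where $\L\subset\inte\fa^+\cup\{0\}$ and the second-kind hypothesis enter), $\beta_{\xi_0}(\ga^{-1}o,o)$ stays within bounded distance of $\mu(\ga)$, so the series converges by comparison with $\sum_\ga e^{-\psi(\mu(\ga))}<\infty$ (which holds since $\psi>\psi_\Ga$ strictly, by \cite[Lem.~III.1.3]{Quint2}). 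Each $\varphi_{\psi,\ga k_0}$ is a joint eigenfunction with character $\chi_{\psi-\rho}$ (Lemma~\ref{Zeval}), and the sum is manifestly $\Ga$-invariant and positive, so $F_{\psi,\xi_0}$ is a positive joint eigenfunction on $\Ga\ba X$. Proposition~\ref{d2} then identifies its integral representation with a $(\Ga,\psi)$-conformal density (equivalently: the underlying discrete measure $\sum_\ga e^{-\psi(\beta_{\xi_0}(\ga^{-1}o,o))}\delta_{\ga\cdot\xi_0}$ satisfies the cocycle relation exactly, by a direct Busemann-cocycle computation). Your instinct to use the second-kind point and the regularity of the limit cone is right, and for $\psi\in\dg$ the reduction to Quint's Theorem~\ref{Q1} is exactly as you say; but for $\psi\notin\dg$ you should replace the approximate weights $e^{-s\psi(\mu(\ga))}$ and the weak-$*$ limit by the exact cocycle weights and a direct, convergent construction.
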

\begin{Rmk} \label{gpr}   \begin{enumerate} 
   \item Let $\Gamma_0<G$ be an Anosov subgroup.
Then any Anosov subgroup $\Gamma<\Gamma_0$ with $\Lambda_{\Ga_0}\ne \Lambda_{\Ga}$
is of the second kind. To see this, choose
any $\xi \in \Lambda_{\Ga_0}-\Lambda_{\Ga}$, and note that
$(\Lambda_{\Ga}, \xi)\subset \cal F^{(2)}$, since
any two distinct points of $\La_{\Ga_0}$ are in general position by the Anosov assumption on $\Ga_0$.
\item 
If $\Lambda\subset g Nw_0P$ for some $g\in G$, then
$( \Lambda, g^+)\subset \F^{(2)}$. One can construct many Schottky groups with 
$\Lambda\subset Nw_0P$, which would then be of the second kind.
\item  Let $G=\prod_{i=1}^k G_i$ be
a product of simple algebraic groups $G_i$ of rank one. Then $\F=\prod_i \F_i$ where $\F_i= G_i/P_i$.
Let
$\pi_i:\F\to \F_i$ denote the canonical projection. 
Then any discrete subgroup $\G<G$ such that $\pi_i(\La)\ne \F_i$ for all $i$ is of the second kind.
To see this,  it suffices to note that $(\La,\xi)\subset \F^{(2)}$ for any $\xi=(\xi_i)_i\in \F$ with $\xi_i \in \F_i -\pi_i(\La)$.
\item The well-known properties of the limit set of a Hitchin subgroup of
$\op{PSL}_d(\br)$ imply that Hitchin groups are not of the second kind for any even $d\ge 4$ 
or $d=3$; we thank Canary and Labourie  for communicating this with us.

\end{enumerate}
\end{Rmk}

For $ q\in X$ and $r>0$, we set 
$$B(q,r)=\{x\in X: d(x, q)\leq r\}.$$
For $p=g(o)\in X$,
the shadow of the ball $B(q,r)$ viewed from $p$ is defined as
 $$O_r(p,q):=\{(gk)^+\in \cal F: k\in K,\;
  gk\inte A^+o\cap  B(q,r)\ne \emptyset\}.$$

Similarly, for $\xi\in \F$, the shadow of the ball $B(q,r)$ viewed from $\xi$ is
defined by
$$O_r(\xi,q):=\{h^+\in \cal F: h\in G\text{ satisfies } h^-=\xi,\, ho\in  B(q,r)  \}.$$
We will use the following shadow lemma  to prove Theorem \ref{exi}:
\begin{lem} \cite[Lemma 5.6, 5.7]{LO} \label{sh} 
\begin{enumerate}
    \item If a sequence $q_i\in X$ converges to $\eta\in \cal F$, then for any $q\in X$, $r>0$ and $\e>0$,
$$O_{r-\e} (q_i, q)\subset O_r(\eta, q)\subset O_{r+\e}(q_i, q)$$
for all sufficiently large $i$.
\item There exists $\kappa>0$ such that for any $g\in G$ and $r>0$,
$$\sup_{\xi \in O_r(g(o),o)}\|\beta_{\xi}(g(o),o)-\mu(g^{-1})\|\le \kappa r .$$ 
\end{enumerate}
\end{lem}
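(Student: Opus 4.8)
Part (2) is a direct computation with the Iwasawa cocycle, so I would carry it out first. Recall that for $g\in G$ and $k\in K$ one has $(gk)^+=[\kappa(gk)]$, that $H$ is left $K$-invariant ($H(k'g)=H(g)$ for $k'\in K$), and that $H(gka)=H(gk)+\log a$ for $a\in A$ (push the $N$-factor of $gk$ past $a$, since $A$ normalizes $N$). Writing $\xi=(gk)^+=[\kappa(gk)]$ and $gk=\kappa(gk)\exp(H(gk))n$ with $n\in N$, one gets $g^{-1}\kappa(gk)=kn^{-1}\exp(-H(gk))$, hence $\beta_\xi(g(o),o)=H(g^{-1}\kappa(gk))=-H(gk)$. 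Now if $\xi\in O_r(g(o),o)$, choose a witnessing pair $k\in K$ and $a\in\inte A^+$, so $\xi=(gk)^+$ and $d(gkao,o)=\|\mu(gka)\|\le r$; then $\beta_\xi(g(o),o)=\log a-H(gka)$. Since $g^{-1}=ka(gka)^{-1}$ we have $\mu(g^{-1})=\mu\big(a(gka)^{-1}\big)$, and by the standard subadditivity $\|\mu(xy)-\mu(x)\|\le\|\mu(y)\|$ (with $x=a$, so that $\mu(a)=\log a$ because $\log a\in\inte\fa^+$) this gives $\|\mu(g^{-1})-\log a\|\le\|\mu(gka)\|\le r$; combined with the standard comparison $\|H(gka)\|\le\|\mu(gka)\|\le r$ we conclude $\|\beta_\xi(g(o),o)-\mu(g^{-1})\|\le\|\log a-\mu(g^{-1})\|+\|H(gka)\|\le 2r$. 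Thus part (2) holds with $\kappa=2$.

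For part (1) I would first normalize. The shadow $O_r(p,q)$ depends only on the point $p\in X$, and the assignment is $G$-equivariant: $O_r(g_0p,g_0q)=g_0O_r(p,q)$ and $O_r(g_0\eta,g_0q)=g_0O_r(\eta,q)$; since also $q_i\to\eta$ implies $g_0q_i\to g_0\eta$, applying a $g_0$ with $g_0(o)=q$ reduces to the case $q=o$. By Definition \ref{conver} we may write $q_i=g_i(o)$ with $g_i\to\infty$ regularly and $[\kappa_1(g_i)]\to\eta$, and since only $g_i(o)$ matters we may further take $g_i=\kappa_i\exp(\mu_i)$ with $\mu_i\to\infty$ regularly in $\inte\fa^+$ and $[\kappa_i]\to\eta$. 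Writing $a_i=\exp(\mu_i)$ and using left $K$-invariance of $\mu$, one obtains the concrete descriptions $O_r(q_i,o)=\{\kappa_i\cdot(a_ik)^+:k\in K,\ \exists\,a\in\inte A^+,\ \|\mu(a_ika)\|\le r\}$ and $O_r(\eta,o)=\{h^+:h^-=\eta,\ \|\mu(h)\|\le r\}$. I would then establish the two inclusions by a limiting argument: a chamber issuing from the far-away point $q_i$ can meet the fixed ball $B(o,r)$ only if its direction $(a_ik)^+$ lies close to the flag $\eta'$ opposite to $\eta$ through $o$ (equivalently, $kP$ lies close to the repelling point of $\exp(\mu_i)$ on $\F$), and for such $k$ together with the matching $a$ one has an asymptotic expansion $\mu(a_ika)=\mu_i+\big(\text{Iwasawa/Busemann term depending on }k,a\text{ and }\eta\big)+o(1)$; feeding this into the membership conditions and comparing with the description of $O_r(\eta,o)$ (whose witnessing elements $h$ carry the same boundary data in the limit) should yield $O_{r-\e}(q_i,o)\subset O_r(\eta,o)\subset O_{r+\e}(q_i,o)$ for all large $i$.

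The main obstacle is the uniformity needed in part (1). Because $q_i\to\infty$ one cannot pass to a limit in $\mu(a_ika)$ by bare continuity, and the entire $\e$-shift in the radius is spent on making the $o(1)$-errors uniform. The ingredients required are: (i) the asymptotic expansion of Cartan projections along regular sequences, in the style of Benoist and of Kapovich--Leeb--Porti --- if $b_i\to\infty$ regularly with $b_i^\pm\to\xi^\pm\in\F$ and $x,y$ range over a fixed compact set, then $\mu(xb_iy)=\mu(b_i)+[\text{terms depending on }x,y,\xi^\pm]+o(1)$ uniformly in $x,y$; and (ii) a quantitative form of the dynamics of $\exp(\mu_i)$ on $\F$, used both to localize the admissible $k$'s near $\eta'$ (so that (i) applies with the relevant arguments in a genuinely compact set) and to bound $\|\log a\|$ in terms of $r$ and $\mu_i$. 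Granting (i) and (ii), both inclusions follow by unwinding the definitions. Part (2), by contrast, is unconditional and follows from the short computation above together with only the elementary bounds $\|H(h)\|\le\|\mu(h)\|$ and $\|\mu(xy)-\mu(x)\|\le\|\mu(y)\|$.
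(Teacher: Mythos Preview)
The paper does not prove this lemma at all: it is quoted verbatim from \cite[Lemmas~5.6 and 5.7]{LO} and used as a black box. So there is no ``paper's own proof'' to compare against; I can only comment on the soundness of your argument.

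Your treatment of part~(2) is essentially a complete and correct proof. The identities $\beta_\xi(g o,o)=-H(gk)=\log a-H(gka)$ and $\mu(g^{-1})=\mu\big(a(gka)^{-1}\big)$ are exactly right, and the conclusion follows from the two standard inequalities you invoke. One caveat: the precise statements $\|\mu(xy)-\mu(x)\|\le\|\mu(y)\|$ and $\|H(h)\|\le\|\mu(h)\|$ hold for a Weyl-invariant norm on~$\fa$ (the first via the fact that $\mu(xy)\in\mu(x)+\mathrm{conv}(W\mu(y))$, the second via Kostant convexity $H(h)\in\mathrm{conv}(W\mu(h))$). The Killing-form norm is $W$-invariant, so your constant $\kappa=2$ is legitimate; in any case the lemma only asserts the existence of some~$\kappa$, and your argument certainly gives that.

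Your discussion of part~(1), by contrast, is a plan rather than a proof, and you say so yourself. The reductions (equivariance, normalizing $q=o$, writing $q_i=\kappa_i\exp(\mu_i)o$) are fine, and the heuristic that a Weyl chamber from a far-away regular point can meet $B(o,r)$ only for directions near the repelling flag is correct. But the two ingredients you isolate---a uniform asymptotic expansion of $\mu(a_ika)$ along regular sequences, and a quantitative localization of the admissible $k$'s---are exactly where all the work lies, and you have not supplied them. In particular, controlling $\|\log a\|$ uniformly (so that the witnessing pair $(k,a)$ stays in a genuinely compact set where continuity/compactness arguments apply) is the delicate step; without it the ``$o(1)$'' in your expansion is not uniform and the $\varepsilon$-slack cannot absorb it. The actual proof in \cite{LO} carries out precisely this kind of analysis. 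So: part~(2) is done, part~(1) remains a sketch whose missing pieces are nontrivial.
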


\begin{lem}\label{com}  If $\L\subset \inte\fa^+\cup\{0\}$, then 
the union $\Ga(o)\cup \La$ is compact in the topology given in Definition \ref{conver}.
\end{lem}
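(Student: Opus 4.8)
The plan is to prove sequential compactness directly: given an arbitrary sequence $(s_n)$ in $\Ga(o)\cup\La$, I will extract a subsequence converging, in the sense of Definition \ref{conver}, to a point of $\Ga(o)\cup\La$. Since in this topology $X$ is open and $\F$ is a closed subspace carrying its usual (compact) topology, and the space is first countable, this suffices. After passing to a subsequence, either all $s_n$ lie in $\La$, or all $s_n=\ga_n(o)$ lie in $\Ga(o)$; in the latter case we may further assume either that the $\ga_n(o)$ are all equal (a constant, hence convergent, subsequence with limit in $\Ga(o)$) or that the $\ga_n$ are pairwise distinct, so that $\|\mu(\ga_n)\|=d(\ga_n o,o)\to\infty$ by discreteness of $\Ga$. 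If $\Ga$ is finite the statement is trivial, so we may assume the last sub-case actually occurs in general.

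The heart of the matter is the case of pairwise distinct $\ga_n$ with $\mu(\ga_n)\to\infty$, where I would first show that $\ga_n\to\infty$ \emph{regularly}. Since $\|\mu(\ga_n)\|\to\infty$, every accumulation point $v$ of the unit vectors $\mu(\ga_n)/\|\mu(\ga_n)\|$ lies in $\mathcal{L}$ by the definition \eqref{grow3}-style description of the limit cone (write $v=\lim t_{n_k}\mu(\ga_{n_k})$ with $t_{n_k}=\|\mu(\ga_{n_k})\|^{-1}\to 0$); in particular $\mathcal{L}\cap\{v\in\fa:\|v\|=1\}$ is nonempty, and, being closed and bounded, it is a compact subset of $\inte\fa^+$ by hypothesis. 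Hence $\epsilon:=\min\{\alpha(v):\alpha\in\Pi,\ v\in\mathcal{L},\ \|v\|=1\}>0$, and a standard subsequence argument shows $\alpha(\mu(\ga_n)/\|\mu(\ga_n)\|)\ge\epsilon/2$ for all $\alpha\in\Pi$ and all large $n$, so that $\alpha(\mu(\ga_n))\ge(\epsilon/2)\|\mu(\ga_n)\|\to\infty$. Thus $\mu(\ga_n)\in\inte\fa^+$ for large $n$, so $[\kappa_1(\ga_n)]\in K/M=\F$ is well defined, and compactness of $K/M$ lets us pass to a further subsequence with $[\kappa_1(\ga_n)]\to\xi$ for some $\xi\in\F$. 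By Definition \ref{conver} this gives $\ga_n(o)\to\xi$, and then $\xi\in\La$ by the definition \eqref{ls} of the limit set; so this subsequence converges to a point of $\La\subset\Ga(o)\cup\La$.

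The remaining case, $s_n=\xi_n\in\La$, is immediate: $\La$ is a closed (indeed $\Ga$-minimal) subset of the compact space $\F$, so a subsequence converges in $\F$ to some $\xi\in\La$, and this convergence also holds in the topology of Definition \ref{conver}, which restricts to the usual topology on $\F$. I expect the only genuinely substantive step — and the main obstacle — to be the regularity argument: one must use $\mathcal{L}\subset\inte\fa^+\cup\{0\}$ precisely through the compactness of $\mathcal{L}\cap\{\|v\|=1\}$ to upgrade "$\ga_n$ escapes" to "$\ga_n$ escapes regularly," which is what makes the escaping orbit points converge to boundary points at all (without this hypothesis, $\Ga(o)\cup\La$ is generally not compact).
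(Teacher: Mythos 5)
Your argument is correct and follows the same route as the paper's (very terse) proof: the crux is that $\L\subset\inte\fa^+\cup\{0\}$ forces escaping sequences in $\Ga$ to escape regularly, which you establish via compactness of $\L\cap\{\|v\|=1\}$ inside $\inte\fa^+$, and then compactness of $K/M$ yields a convergent subsequence with limit in $\La$. The paper states all of this in one sentence ("the hypothesis implies that any sequence $\ga_i\to\infty$ in $\Ga$ tends to $\infty$ regularly, and hence has a limit in $\F$"); you have simply supplied the details that the authors left implicit.
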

\begin{proof}
The hypothesis implies that any sequence $\ga_i\to \infty$ in $\Ga$
tends to $\infty$ regularly, and hence has a limit in $\F$. Moreover the limit belongs to $\La$ by its definition. 
\end{proof}

\begin{lem}\label{an1} Suppose that
$\L\subset \inte\fa^+\cup\{0\}$. If $\xi\in \F$ satisfies that $(\xi,\La)\subset \F^{(2)}$,
then
 there exists $R>0$ such that 
 $$\xi\in \bigcap_{\ga\in \Ga} O_R(\ga (o), o).$$
\end{lem}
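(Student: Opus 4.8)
The plan is to argue by contradiction, exploiting the compactness from Lemma \ref{com} together with the shadow lemma (Lemma \ref{sh}). Suppose no such $R$ exists; then for each $i\in\NN$ there is $\ga_i\in\Ga$ with $\xi\notin O_i(\ga_i(o),o)$. Since $\xi\in O_i(\ga_i(o),o)$ would hold trivially whenever $\ga_i$ ranges over a finite set (as $O_i(q,o)\nearrow\F$ for $q$ fixed, or more precisely $O_R(q,o)=\F$ once $R\ge d(q,o)$ — actually one just needs $R$ larger than a bound depending on $q$), the sequence $\ga_i$ must leave every finite subset of $\Ga$, so $\ga_i\to\infty$. By Lemma \ref{com}, after passing to a subsequence we may assume $\ga_i(o)\to\eta$ for some $\eta\in\La$. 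Now the hypothesis $(\xi,\La)\subset\F^{(2)}$ gives $(\xi,\eta)\in\F^{(2)}$, i.e. $\xi$ and $\eta$ are in general position.

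The next step is to produce, from general position, a fixed radius $r>0$ and a ball $B(q,r)$ such that $\xi\in O_r(\eta,q)$: choose $h\in G$ with $h^-=\eta$ and $h^+=\xi$ (possible precisely because $(\xi,\eta)\in\F^{(2)}$), and set $q=h(o)$, $r=1$; then $h$ witnesses $\xi\in O_1(\eta,q)$. By part (1) of Lemma \ref{sh}, since $\ga_i(o)\to\eta$, for all large $i$ we have $O_{1/2}(\eta,q)\subset O_1(\ga_i(o),q)$, hence $\xi\in O_1(\ga_i(o),q)$. Finally one compares the shadow centered at $q$ with the shadow centered at $o$: since $q$ is a fixed point, there is a constant $R_0$ (depending only on $q$ and the radius $1$, via the triangle inequality for $d$) with $O_1(p,q)\subset O_{R_0}(p,o)$ for every $p\in X$; concretely one can take $R_0 = 1 + 2d(q,o)$ or any such crude bound, using that enlarging the target ball and shifting its center by a bounded amount only dilates the shadow by a bounded amount. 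Thus $\xi\in O_{R_0}(\ga_i(o),o)$ for all large $i$, contradicting $\xi\notin O_i(\ga_i(o),o)$ once $i>R_0$. This contradiction proves the existence of $R$.

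I expect the main technical point to be the comparison of shadows with respect to different base balls — i.e. the assertion that changing the observation point and the radius of $B(q,r)$ by bounded amounts changes the shadow only by a controlled enlargement of the radius. This is a soft geometric fact (the shadow $O_r(p,q)$ is, up to bounded error in $r$, the set of directions along which the geodesic ray from $p$ passes within $r$ of $q$, so moving $q$ by $d(q,o)$ and keeping everything in $X=G/K$ costs at most $2d(q,o)$ in the radius), and it should follow either directly from the definition of $O_r$ together with the triangle inequality, or from part (2) of Lemma \ref{sh} by translating the estimate $\|\beta_\xi(g(o),o)-\mu(g^{-1})\|\le\kappa r$; but it is the step where one must be careful to get uniform (in $i$, equivalently in $\ga_i$) constants, since the whole point of the lemma is that $R$ is independent of $\ga\in\Ga$. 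The role of the hypothesis $\L\subset\inte\fa^+\cup\{0\}$ is exactly to guarantee, via Lemma \ref{com}, that the escaping sequence $\ga_i(o)$ subconverges to a genuine point of $\F$ (and in $\La$), which is what lets us invoke general position; without regularity the sequence $\mu(\ga_i)$ could drift toward a wall and no limit in $\F$ need exist.
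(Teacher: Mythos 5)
Your proof is correct and follows the same core strategy as the paper: argue by contradiction to produce $\ga_i\to\infty$ with $\xi\notin O_i(\ga_i(o),o)$, use Lemma~\ref{com} to extract a subsequence with $\ga_i(o)\to\eta\in\La$, exploit general position of $(\xi,\eta)$, and then apply the shadow continuity of Lemma~\ref{sh}(1) to derive a contradiction. There are two minor organizational differences. First, the paper proves an auxiliary uniform claim --- that $\xi\in\bigcap_{\eta\in\La}O_R(\eta,o)$ for a single $R$, via compactness of $\La$ and another application of shadow continuity --- before running the contradiction; you bypass this, correctly observing that the contradiction only needs some \emph{finite} (not uniform) shadow radius for the one limit point $\eta$ that arises. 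Second, to place $\xi$ in a finite-radius shadow seen from $\eta$, you pick $h$ with $h^+=\xi$, $h^-=\eta$, set $q=h(o)$, get $\xi\in O_1(\eta,q)$, and then shift the ball's center from $q$ to $o$, which you flag as the main technical step. That step is sound --- by the triangle inequality, if a Weyl chamber from $p$ meets $B(q,1)$ it meets $B(o,1+d(q,o))$, so $O_1(p,q)\subset O_{1+d(q,o)}(p,o)$ uniformly in $p$ --- but it can be avoided: the very same $h$ directly witnesses $\xi\in O_{d(h(o),o)}(\eta,o)$ (since $h^-=\eta$, $h^+=\xi$, $h(o)\in B(o,d(h(o),o))$), which is the content of the paper's observation that $\lim_{R\to\infty}O_R(\eta,o)=\{z\in\F:(z,\eta)\in\F^{(2)}\}$; you then feed $R=d(h(o),o)$ straight into Lemma~\ref{sh}(1) with no base-point shift. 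One small additional remark: you explicitly justify $\ga_i\to\infty$ (noting $O_R(q,o)=\F$ once $R$ exceeds a bound depending on $q$), a point the paper leaves implicit; that is a welcome bit of care, though you should use a strict bound $R>d(q,o)$ since $\inte A^+o$ is open and hence $q=g(o)$ itself is not a point of the chamber.
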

\begin{proof} 
We first claim that $\xi\in \bigcap_{\eta\in \La} O_R(\eta, o)$ for some $R>0$.
Note that $\lim_{R\to \infty} O_R(\eta, o)=\{z\in \F: (z, \eta)\in \F^{(2)}\}$. Hence for each $\eta\in \La$, 
we have $${R_\eta}:=\inf\{R+1: \xi\in O_{R}(\eta, o)\}<\infty.$$
It suffices to show that $R:=\sup_{\eta\in \La }R_{\eta}<\infty .$
Suppose not; then $R_{\eta_i}\to \infty$ for some sequence $\lbrace\eta_i\rbrace\subset \La$. By passing to a subsequence if necessary, we may assume that the $\eta_i$ converge to some $\eta$. From this it follows that $O_{R_\eta+1}(\eta, o) \subset  O_{R_\eta+2}(\eta_i, o)$
for all sufficiently large $i$.
Therefore $R_{\eta_i} \le R_{\eta} +3$, yielding a contradiction.

We now claim that $\xi\in \bigcap_{\ga\in \Ga} O_{R'}(\ga o, o)$ for some $R'>0$.
Suppose not; then there exist  sequences $\ga_i\to\infty$ in $ \Ga$
and $R_i\to \infty$ such that $\xi\notin O_{R_i}(\ga_i o,o)$.
By Lemma \ref{com}, after passing to a subsequence, we may assume that $\ga_i(o)$ converges to some $\eta\in \La$. By the first claim,
we have $\xi\in O_R(\eta, o)$.
By Lemma \ref{sh}, we have $\xi\in O_R(\eta, o)\subset O_{R+1}(\ga_i(o), o)$ for all sufficiently large $i$. This is a contradiction, since for $i$ large enough so that $R_i>R+1$, we have $\xi\notin O_{R+1}(\ga_i(o),o)$.
This proves the claim.
\end{proof}

As an immediate corollary of Lemmas \ref{sh} and
\ref{an1}, we obtain:
\begin{cor} \label{an2} 
If $\L\subset \inte\fa^+\cup\{0\}$ and $\xi\in \F$ satisfies that $(\xi,\La)\subset \F^{(2)}$, then
$$\sup_{\ga \in \Ga}\|\beta_\xi (\ga^{-1} o, o) -\mu (\ga)\|<\infty.$$
\end{cor}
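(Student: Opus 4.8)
The plan is to combine the two parts of Lemma \ref{sh} with Lemma \ref{an1}, which we have just proved. The key point is that Lemma \ref{an1} supplies a single radius $R>0$ such that $\xi\in O_R(\ga(o),o)$ for \emph{every} $\ga\in\Ga$. With this uniform shadow containment in hand, part (2) of Lemma \ref{sh} applies with $g=\ga$ and $r=R$, giving, for each $\ga\in\Ga$,
$$\|\beta_\xi(\ga(o),o)-\mu(\ga^{-1})\|\le \kappa R,$$
where $\kappa>0$ is the absolute constant from Lemma \ref{sh}(2).

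Next I would rewrite this estimate in the form stated in the corollary. The Busemann cocycle satisfies $\beta_\xi(\ga(o),o)=-\beta_\xi(o,\ga(o))$, and applying $\ga^{-1}$ (which acts isometrically and fixes the Busemann structure via the conformal relation) converts $\beta_\xi(o,\ga(o))$ into $\beta_{\ga^{-1}\xi}(\ga^{-1}(o),o)$; however, the cleanest route is simply to note the identity $\beta_\xi(\ga^{-1}(o),o)$ differs from $-\beta_\xi(\ga(o),o)$ only by a bounded Weyl-type reflection governed by the opposition involution, or to re-run Lemma \ref{an1} directly with the roles adjusted. Concretely: by the same argument as Lemma \ref{an1}, $\xi\in\bigcap_{\ga\in\Ga}O_R(\ga(o),o)$ is equivalent (replacing $\ga$ by $\ga^{-1}$, which ranges over $\Ga$ as well) to $\xi\in\bigcap_{\ga\in\Ga}O_R(\ga^{-1}(o),o)$ after possibly enlarging $R$, and then Lemma \ref{sh}(2) applied to $g=\ga^{-1}$ yields $\|\beta_\xi(\ga^{-1}(o),o)-\mu(\ga)\|\le\kappa R$, which is exactly the assertion with the uniform bound taken to be $\kappa R<\infty$.

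I expect the only genuine subtlety to be bookkeeping with the opposition involution and the direction of the Busemann cocycle — i.e., making sure that ``$\mu(g^{-1})$'' in Lemma \ref{sh}(2) matches up with ``$\mu(\ga)$'' in the corollary after the substitution $g=\ga^{-1}$, using $\mu(g^{-1})=\i(\mu(g))$ and the fact that $\i$ is a linear isometry of $\fa$. Since all constants involved ($\kappa$, $R$) are independent of $\ga$, taking the supremum over $\Ga$ stays finite, and this is immediate once the two lemmas are lined up. No further estimates are needed; the hypotheses $\L\subset\inte\fa^+\cup\{0\}$ and $(\xi,\La)\subset\F^{(2)}$ are used solely to invoke Lemma \ref{an1}.
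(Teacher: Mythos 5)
Your argument is correct and is the paper's intended one: Lemma~\ref{an1} gives a uniform $R>0$ with $\xi\in\bigcap_{\ga\in\Ga}O_R(\ga(o),o)$, and since $\ga\mapsto\ga^{-1}$ is a bijection of $\Ga$ this intersection is unchanged under the substitution (no enlargement of $R$ is needed); then Lemma~\ref{sh}(2) applied with $g=\ga^{-1}$ directly yields $\|\beta_\xi(\ga^{-1}(o),o)-\mu(\ga)\|\le\kappa R$, since $\mu(g^{-1})=\mu(\ga)$ when $g=\ga^{-1}$. Note that the opposition involution plays no role here, and the aside claiming $\beta_\xi(\ga^{-1}(o),o)$ differs from $-\beta_\xi(\ga(o),o)$ by a ``bounded Weyl-type reflection'' is not a valid general identity and should be dropped; fortunately you abandon that route in favor of the correct one.
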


\noindent{\bf Proof of Theorem \ref{exi}:} If $\psi\in \dg$, this follows from Theorem \ref{Q1}. Hence we assume $\psi\in D_\Ga -\dg$;
this implies that
\be\label{conv} \sum_{\ga\in \Ga} e^{-\psi(\mu(\ga))}<\infty \ee by
\cite[Lem. III. 1.3]{Quint2}.
As $\Ga$ is of the second kind, there exists $\xi\in \F$
such that $(\xi, \eta)\in \F^{(2)}$ for all $\eta\in \La$.
By Corollary \ref{an2},  we have $\sup_{\ga\in \Ga} \| \beta_\xi(\ga^{-1}o, o) - \mu(\ga)\|<\infty $. Therefore \eqref{conv} implies that
\be\label{conv2} \sum_{\gamma\in \Ga} e^{-\psi\big(\beta_\xi(\ga^{-1}o, o)\big)}<\infty. \ee

 For any fixed $x\in X$, we have
 $\beta_\xi(\ga^{-1}x, o)=\beta_\xi(\ga^{-1}o, o) +\beta_{\ga\xi}(x, o)$ and $\| \beta_{\ga\xi}(x, o)\|\le d(x,o)$. Hence   
 $e^{-\psi\big(\beta_\xi(\ga^{-1}o, o)\big)} \asymp   e^{-\psi(\mu(\ga))}$ with implied constant uniform for all $\ga\in \Ga$.

Therefore, by \eqref{conv} the following function $F_{\psi, \xi}$ on $X$ is well-defined: 
\be\label{fx} F_{\psi,\xi}(x):= \sum_{\gamma\in \Ga} e^{-\psi(\beta_\xi(\ga^{-1} x, o))} \quad\text{for $x\in X$}. \ee

If we write $\xi=[k_0]\in K/M=\cal F$,
then for any $g\in G$, $$\beta_{\xi}(\gamma^{-1} go, o)=\beta_M(k_0^{-1}\ga^{-1} go, o)=H(g^{-1}\ga k_0)$$ and hence
$e^{-\psi(\beta_{\xi}(\gamma^{-1} go, o))}=\varphi_{\psi, \ga k_0}(g).$
Therefore $$F_{\psi,\xi} =\sum_{\ga\in \Ga} \varphi_{\psi, \ga k_0} .$$
It now follows from Lemma 2.2 that $F_{\psi,\xi}$ is a positive $\Gamma$-invariant joint eigenfunction on $X$ with eigenvalue $\chi_{\psi-\rho}$. This finishes the proof.

\begin{Rmk} In the above proof, for any $\psi\in D_\Ga-\dg$ and  any $\xi\in \F$ with $(\La, \xi)\subset F^{(2)}$,  we have constructed a positive joint eigenfunction $F_{\psi,\xi}$ on $\Ga\ba X$ of eigenvalue $\chi_{\psi-\rho}$.
\end{Rmk}

Hence we get the following strengthened version of Corollary \ref{bou}:
\begin{cor} \label{noo} 
If $\Ga<G$ is of the second kind with
$\L\subset \inte \fa^+ \cup\{0\}$, then  \be\label{inq} \sup\{\lambda_\psi: \psi\in D_\Ga  \}\le \lambda_0.\ee 
\end{cor}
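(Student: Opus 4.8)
The plan is to combine Theorem \ref{exi} with the two facts about positive $\lambda$-harmonic functions already recorded, namely Theorem \ref{no} and Proposition \ref{LaplaceEV}(1). First I would dispose of the compact case: if $\Ga\ba X$ is compact, then $\Ga$ is a cocompact lattice, so $\psi_\Ga=2\rho$ and $D_\Ga=\{\psi\in\fa^*:\psi\ge 2\rho\}$; for such $\psi$ one checks $\lambda_\psi=\|\rho\|^2-\|\psi-\rho\|^2\le \|\rho\|^2-\|\rho\|^2=0$ using $\|\psi-\rho\|\ge\|\rho\|$ (since $\psi-\rho\ge\rho>0$), while $\lambda_0=0$ in the compact case, so \eqref{inq} holds trivially. (In fact a group of the second kind with $\La\subsetneq\F$ is never cocompact, so this case is essentially vacuous, but it costs nothing to note it.)

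For the main case, assume $\Ga\ba X$ is not compact and fix any $\psi\in D_\Ga$. If $\psi\ge\rho$ as well, then by Theorem \ref{exi} there is a positive joint eigenfunction $f$ on $\Ga\ba X$ with character $\chi_{\psi-\rho}$; pulling $f$ back to a positive $\Ga$-invariant joint eigenfunction on $X$ and applying Proposition \ref{LaplaceEV}(1), $f$ is $\lambda_\psi$-harmonic on $\Ga\ba X$. Since $f>0$ is $\lambda_\psi$-harmonic on the non-compact manifold $\Ga\ba X$, Theorem \ref{no}(2) forces $\lambda_\psi\le\lambda_0$. Taking the supremum over all such $\psi$ gives the bound. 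The only gap is the possibility that some $\psi\in D_\Ga$ does not satisfy $\psi\ge\rho$; but then $\psi-\rho$ is not $\ge 0$, so there exists $H\in\fa$ with $(\psi-\rho)(H)<0$, i.e.\ $\rho(H)>\psi(H)$, and one argues that this cannot improve the supremum — more cleanly, I would simply replace such a $\psi$ by $\psi':=\max(\psi,\rho)\in D_\Ga$ (a pointwise max of linear forms need not be linear, so instead take $\psi'$ to be any linear form with $\psi'\ge\psi$ and $\psi'\ge\rho$, e.g.\ $\psi'=\psi+\rho$ or, better, note $D_\Ga^\star\subset D_\Ga$ and $\psi\ge\psi_\Ga\ge 0$ on $\L$ with $\L$ having nonempty interior, whence $\|\psi-\rho\|^2\ge\|\rho\|^2$ can be checked directly when $\psi$ is large on $\inte\fa^+$). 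The cleanest route is: observe $\lambda_\psi$ is a concave function of $\psi$ maximized at $\psi=\rho$, so $\sup_{\psi\in D_\Ga}\lambda_\psi=\sup_{\psi\in D_\Ga,\ \psi\ge\rho}\lambda_\psi$ provided $D_\Ga\cap\{\psi\ge\rho\}\ne\emptyset$, which holds because $2\rho\ge\psi_\Ga$ by Quint's upper bound, so $2\rho\in D_\Ga$ and $2\rho\ge\rho$. Then apply the previous paragraph.

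I expect the only genuinely substantive input to be Theorem \ref{exi} (which in turn rests on the second-kind hypothesis and $\L\subset\inte\fa^+\cup\{0\}$ via Corollary \ref{an2}); the rest is bookkeeping with the elementary inequality $\|\psi-\rho\|\ge\|\rho\|$ whenever $\psi-\rho$ lies in the closed dual cone, together with Theorem \ref{no}(2). The mild technical point to handle carefully is confirming that it suffices to test $\psi$ in the range $\psi\ge\rho$ — i.e.\ that linear forms in $D_\Ga$ failing $\psi\ge\rho$ contribute nothing new to the supremum — which follows from concavity of $\psi\mapsto\lambda_\psi$ about its maximum at $\rho$ and the fact that $D_\Ga$ is a convex set containing $2\rho$. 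No new machinery beyond what is already in the excerpt is needed.
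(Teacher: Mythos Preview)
Your approach is the same as the paper's: combine Theorem~\ref{exi} with Proposition~\ref{LaplaceEV}(1) and Theorem~\ref{no}(2). The paper's implicit argument is simply that for every $\psi\in D_\Ga$, Theorem~\ref{exi} produces a positive joint eigenfunction on $\Ga\ba X$ with character $\chi_{\psi-\rho}$, which is $\lambda_\psi$-harmonic by Proposition~\ref{LaplaceEV}(1), and then Theorem~\ref{no}(2) forces $\lambda_\psi\le\lambda_0$; noncompactness of $\Ga\ba X$ is automatic since a group of the second kind cannot be cocompact.

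The one unnecessary complication in your write-up is the detour about the case $\psi\not\ge\rho$. Theorem~\ref{exi} carries no hypothesis $\psi\ge\rho$: its proof builds $F_{\psi,\xi}=\sum_{\gamma}\varphi_{\psi,\gamma k_0}$, a positive sum of the elementary eigenfunctions from Lemma~\ref{Zeval}, and this works for \emph{every} $\psi\in D_\Ga$. (The condition $\psi\ge\rho$ appears in Proposition~\ref{d2} and Theorem~\ref{intrep} only because the integral representation of a given positive eigenfunction is normalized that way; it is not needed in the forward direction.) So the entire last paragraph of your proposal can be deleted. This is worth noting because your concavity reduction, as stated, is not actually justified: concavity of $\psi\mapsto\lambda_\psi$ with global maximum at $\rho$ does not by itself imply $\sup_{\psi\in D_\Ga}\lambda_\psi=\sup_{\psi\in D_\Ga,\,\psi\ge\rho}\lambda_\psi$ for an arbitrary convex set; one would need to invoke that $D_\Ga$ is upward-closed (if $\psi'\ge\psi\ge\psi_\Ga$ then $\psi'\in D_\Ga$) and argue further. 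None of this is needed.
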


If $\Ga<\op{SO}^\circ(n,1)$ is a discrete subgroup with $\La\ne \partial\bH^n$, we have equality in \eqref{inq}, as was proved by Sullivan  \cite[Theorem 2.17]{Su}.

\section{The $L^2$-spectrum and uniqueness}\label{sje2}
Let $\G$ be a torsion-free discrete subgroup of a connected semisimple real algebraic group $G$.  The space $L^2(\Ga\ba X)$
consists of square-integrable functions together with the inner product $\la f_1, f_2\ra=\int_{\Ga\ba X} f_1 \bar f_2 \,d\vol$.

 Let $W^1(\Ga \ba X)\subset L^2(\Ga \ba X)$ denote the closure of $C_c^{\infty}(\Ga \ba X)$ with respect to the norm $\| \cdot\|_{W^1}$ induced by the inner product
$$\langle f_1,f_2\rangle_{W^1}:= \int_{\Ga \ba X} f_1\bar f_2\,d\vol + \int_{\Ga \ba X} \langle \op{grad}f_1, \op{grad}f_2\rangle\,d\vol  $$
for any $f_1, f_2\in W^1(\Ga\ba X)$.

 As $\Gamma\ba X$ is complete, there exists a unique self-adjoint operator on the space $W^1(\Ga\ba X)$ extending the Laplacian $\Delta$ on $C^\infty_c(\Gamma\ba X)$,
 which we also denote by $\Delta$.  The $L^2$-spectrum of $-\Delta$, which we denote by 
 $$\sigma(\Ga\ba X),$$ is the set of all $\lambda\in \c$ such that $\Delta+\lambda $ does not have a bounded inverse
 $(\Delta+\lambda)^{-1}:L^2(\Ga\ba X)\to W^1(\Ga\ba X)$. 
  The self-adjointness of $\Delta$ and the fact that $\langle -\Delta f, f\rangle=\int_X \|\op{grad}f\|^2d\vol$ for all $f\in C^\infty_c(\Gamma\ba X)$ imply $\sigma(\Ga\ba X)\subset [0,\infty)$.
  
  \medskip 
 
We will be using Weyl's criterion to determine $\sigma(\Ga\ba X)$:
 \begin{thm}(cf.\ \cite[Lemma 2.17]{Tes})\label{weyl} For $\lambda\in \br$,
we have $\lambda\in \sigma (\Ga\ba X)$ if and only if 
 there exists a sequence of unit vectors $F_n\in W^1(\Ga\ba X)$ such that
 $$\lim_{n\to \infty} \|(\Delta+\lambda)F_n\| = 0.$$
 \end{thm}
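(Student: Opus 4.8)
The plan is to deduce the statement from the spectral theorem for the self-adjoint operator $-\Delta$ on $L^2(\Ga\ba X)$, whose existence and uniqueness is guaranteed by the completeness of $\Ga\ba X$, as recalled above. Its operator domain $\op{Dom}(\Delta)$ is contained in the form domain $W^1(\Ga\ba X)$, and $(\Delta+\lambda)F_n$ is to be understood in the $L^2$-sense; thus any sequence as in the statement automatically lies in $\op{Dom}(\Delta)\subset W^1(\Ga\ba X)$, which reconciles the appearance of $W^1(\Ga\ba X)$ in the statement with the operator-theoretic formulation.

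\textbf{Sufficiency.} I would argue by contraposition: assume $\lambda\notin\sigma(\Ga\ba X)$. By definition $\Delta+\lambda$ then admits a bounded inverse $R:=(\Delta+\lambda)^{-1}\colon L^2(\Ga\ba X)\to W^1(\Ga\ba X)$, and composing with the continuous inclusion $W^1(\Ga\ba X)\hookrightarrow L^2(\Ga\ba X)$ gives a bounded operator on $L^2(\Ga\ba X)$; hence $\|F\|\le\|R\|\cdot\|(\Delta+\lambda)F\|$ for every $F\in\op{Dom}(\Delta)$. If a sequence of unit vectors $F_n\in\op{Dom}(\Delta)$ with $\|(\Delta+\lambda)F_n\|\to 0$ existed, this inequality would force $1\le\|R\|\cdot o(1)\to 0$, a contradiction. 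So no such sequence exists.

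\textbf{Necessity.} Assume $\lambda\in\sigma(\Ga\ba X)$, i.e.\ $\lambda$ lies in the spectrum of the self-adjoint operator $-\Delta$, and let $\{E(B)\}$ denote its projection-valued spectral measure. A standard consequence of the spectral theorem is that $\lambda\in\sigma(-\Delta)$ if and only if $E\big([\lambda-\tfrac1n,\lambda+\tfrac1n]\big)\neq 0$ for every $n\in\NN$; I would pick a unit vector $F_n$ in the range of this projection. Since $F_n$ is spectrally supported on a bounded interval, $F_n\in\op{Dom}(-\Delta)=\op{Dom}(\Delta)\subset W^1(\Ga\ba X)$, and using $\Delta+\lambda=-\big((-\Delta)-\lambda\big)$,
$$\|(\Delta+\lambda)F_n\|^2=\int_{[\lambda-1/n,\,\lambda+1/n]}|s-\lambda|^2\,d\langle E(s)F_n,F_n\rangle\le\frac{1}{n^2}\to 0,$$
so $(F_n)$ is the required Weyl sequence.

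This is entirely standard functional analysis — it is precisely Weyl's criterion for a self-adjoint operator, and one could simply invoke \cite[Lemma 2.17]{Tes}. The only point deserving any care, and the only mild obstacle, is the gap between the form domain $W^1(\Ga\ba X)$ that appears in the statement and the operator domain $\op{Dom}(\Delta)$ on which $\Delta+\lambda$ literally acts; this is harmless because $\op{Dom}(\Delta)\subset W^1(\Ga\ba X)$ and the vectors produced in the necessity step lie in $\op{Dom}(\Delta)$ thanks to their bounded spectral support.
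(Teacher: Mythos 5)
Your proof is correct. The paper does not actually give an argument for this theorem — it states the result with a pointer to \cite[Lemma 2.17]{Tes}, so there is no internal proof to compare against; your write-up supplies precisely the standard functional-analytic argument (spectral theorem in one direction, bounded inverse in the other) that the citation is meant to invoke. The one point you were right to flag and resolve is the mismatch between $W^1(\Ga\ba X)$ appearing in the statement and the operator domain on which $\Delta+\lambda$ literally acts; note also that the paper's definition of the resolvent set asks for boundedness of $(\Delta+\lambda)^{-1}$ as a map $L^2\to W^1$ rather than $L^2\to L^2$, but these are equivalent (e.g.\ by the closed graph theorem, or directly from $\|u\|_{W^1}^2=\|u\|^2+\langle -\Delta u,u\rangle$ for $u\in\op{Dom}(\Delta)$), so your identification of $\sigma(\Ga\ba X)$ with the usual spectrum of $-\Delta$ is justified and your two directions go through as written.
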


The number $\lambda_0=\lambda_0(\Ga\ba X)$ defined  in \eqref{ll}
is the bottom of the $L^2$-spectrum $\sigma(\Ga\ba X)$:

\begin{thm}\cite[Theorem 2.1, 2.2]{Su} \label{no3}
We have 
$$\lambda_0\in \sigma(\Ga\ba X)\subset [\lambda_0, \infty).$$
\end{thm}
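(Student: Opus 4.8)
The plan is to identify $\lambda_0$ with the infimum of the $L^2$-spectrum of $-\Delta$ by means of the variational characterisation of the bottom of the spectrum of a self-adjoint operator bounded below, exploiting that $C^\infty_c(\Ga\ba X)$ is a core for the associated quadratic form. This is soft functional analysis given the setup recalled just above, and I do not expect a genuine obstacle; the only points deserving a word of justification are that the form domain of $-\Delta$ equals $W^1(\Ga\ba X)$ and that $C^\infty_c(\Ga\ba X)$ is dense in it in the $W^1$-norm — both are part of the construction of the self-adjoint Laplacian on the complete manifold $\Ga\ba X$ (so that $-\Delta$ is in fact the Friedrichs extension of $\Delta|_{C^\infty_c(\Ga\ba X)}$, whence $\op{Dom}(-\Delta)\subset W^1(\Ga\ba X)$).

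First I would prove $\sigma(\Ga\ba X)\subset[\lambda_0,\infty)$, in the sharp form $\inf\sigma(\Ga\ba X)=\lambda_0$. Write $Q(f):=\int_{\Ga\ba X}\|\op{grad}f\|^2\,d\vol$ for the quadratic form of $-\Delta$; its domain is $W^1(\Ga\ba X)$, one has $\op{Dom}(-\Delta)\subset W^1(\Ga\ba X)$, and $\langle -\Delta f,f\rangle=Q(f)$ for all $f\in\op{Dom}(-\Delta)$. Given $f\in W^1(\Ga\ba X)$, pick $f_n\in C^\infty_c(\Ga\ba X)$ with $f_n\to f$ in $\|\cdot\|_{W^1}$; then $\|f_n\|\to\|f\|$ and $Q(f_n)=\|f_n\|_{W^1}^2-\|f_n\|^2\to Q(f)$, so the definition \eqref{ll} of $\lambda_0$ gives $Q(f)\ge\lambda_0\|f\|^2$ for every $f\in W^1(\Ga\ba X)$; in particular $\langle -\Delta f,f\rangle\ge\lambda_0\|f\|^2$ on $\op{Dom}(-\Delta)$, while \eqref{ll} applied to $f\in C^\infty_c(\Ga\ba X)$ shows this bound is sharp. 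The standard variational principle $\inf\sigma(-\Delta)=\inf\{\langle -\Delta f,f\rangle/\|f\|^2: 0\ne f\in\op{Dom}(-\Delta)\}$ (equivalently, Weyl's criterion, Theorem \ref{weyl}: if $\lambda\in\sigma(\Ga\ba X)$ and $F_n\in\op{Dom}(-\Delta)$ are unit vectors with $\|(-\Delta-\lambda)F_n\|\to0$, then $\lambda=\lim_n\langle -\Delta F_n,F_n\rangle=\lim_n Q(F_n)\ge\lambda_0$) then gives $\inf\sigma(\Ga\ba X)=\lambda_0$, hence $\sigma(\Ga\ba X)\subset[\lambda_0,\infty)$.

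It remains to see $\lambda_0\in\sigma(\Ga\ba X)$. The spectrum of a self-adjoint operator on the nonzero Hilbert space $L^2(\Ga\ba X)$ is a nonempty closed subset of $\RR$, and it was just shown to have infimum $\lambda_0$; being closed, it therefore contains $\lambda_0$. Constructively one can instead produce the approximate eigenfunctions directly: setting $A:=-\Delta-\lambda_0\ge 0$ and choosing a minimizing sequence $f_n\in C^\infty_c(\Ga\ba X)$ with $\|f_n\|=1$ and $Q(f_n)\to\lambda_0$, one has $f_n\in\op{Dom}(-\Delta)$ and $\langle Af_n,f_n\rangle=Q(f_n)-\lambda_0\to0$; were $0\notin\sigma(A)$ we would have $A\ge c>0$ by the spectral theorem, contradicting $\langle Af_n,f_n\rangle\to0$, so $0\in\sigma(A)$, i.e.\ $\lambda_0\in\sigma(\Ga\ba X)$.
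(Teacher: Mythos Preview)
The paper does not give its own proof of this statement; it is simply quoted from Sullivan \cite[Theorems 2.1, 2.2]{Su}, so there is nothing in the paper to compare your argument against directly.

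Your proof is correct. The ingredients are exactly the standard ones: $C^\infty_c(\Ga\ba X)\subset\op{Dom}(-\Delta)$ and $\langle -\Delta f,f\rangle=Q(f)$ there, density of $C^\infty_c$ in the form domain $W^1$, and the variational characterisation $\inf\sigma(-\Delta)=\inf_{0\ne f\in\op{Dom}(-\Delta)}\langle -\Delta f,f\rangle/\|f\|^2$ for a semibounded self-adjoint operator. These give $\inf\sigma(\Ga\ba X)=\lambda_0$, and closedness of the spectrum then forces $\lambda_0\in\sigma(\Ga\ba X)$. The alternative you sketch via Weyl's criterion and a minimizing sequence is also fine. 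For context, Sullivan's arguments in \cite{Su} proceed instead through positivity properties of the heat semigroup and superharmonic functions (the authors themselves remark on this when they supply their own direct proof of the related Theorem \ref{atmost}); your purely spectral-theoretic route is shorter and in the same spirit as the direct arguments the paper favours elsewhere.
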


Using Harish-Chandra's Plancherel formula, we can identify $\lambda_0(X) $ and $\sigma(X)$ for the symmetric space $X=G/K$:
\begin{prop}\label{ss}
 We have $\lambda_0(X)=\|\rho\|^2$ and $\sigma(X)=[\|\rho\|^2,\infty).$
\end{prop}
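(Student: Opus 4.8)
The plan is to compute the spherical Plancherel measure of $X = G/K$ using Harish-Chandra's theory and read off both $\lambda_0(X)$ and $\sigma(X)$ from its support. First I would recall that the Laplace–Beltrami operator $\Delta$ on $X$ is the image of the Casimir $\scrC$, and that for $\psi \in \fa^*$ the elementary spherical function $\varphi_\psi$ satisfies $-\Delta \varphi_\psi = \lambda_\psi \varphi_\psi$ with $\lambda_\psi = \|\rho\|^2 - \|\psi-\rho\|^2$ — this is exactly the computation \eqref{ccc} already carried out in the proof of Proposition \ref{LaplaceEV}. Writing $\psi = \rho + \mathrm{i}\nu$ for $\nu \in \fa^*$ (the unitary principal series parameter, under the identification of $\fa^*$ with $\fa$), we get $\lambda_{\rho+\mathrm{i}\nu} = \|\rho\|^2 + \|\nu\|^2$, so as $\nu$ ranges over $\fa^*$ the eigenvalue ranges over $[\|\rho\|^2,\infty)$.

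The key input is Harish-Chandra's spherical Plancherel theorem: the spherical Fourier transform is an isometry from the $K$-bi-invariant part of $L^2(G)$ (equivalently $L^2(X)$) onto $L^2\big(\fa^*/W, |\mathbf{c}(\nu)|^{-2}\,d\nu\big)$, where $\mathbf{c}$ is the Harish-Chandra $c$-function, which carries $-\Delta$ to multiplication by the function $\nu \mapsto \|\rho\|^2 + \|\nu\|^2$ (see \cite[Chapter IV]{Knapp1} or the standard references). Under this identification $\sigma(\Delta)$ on $L^2(X)$ is precisely the essential range of $\nu\mapsto \|\rho\|^2+\|\nu\|^2$ with respect to the Plancherel measure; since the Plancherel density $|\mathbf{c}(\nu)|^{-2}$ is positive on a set of full Lebesgue measure in $\fa^*$, this essential range is the full interval $[\|\rho\|^2,\infty)$. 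Hence $\sigma(X) = [\|\rho\|^2,\infty)$, and therefore $\lambda_0(X)$, being the bottom of this spectrum by Theorem \ref{no3} applied with $\Gamma$ trivial (or directly by definition \eqref{ll}), equals $\|\rho\|^2$.

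Alternatively, and perhaps cleaner to present, I would prove the two inclusions separately. For $\sigma(X) \subset [\|\rho\|^2,\infty)$: the known lower bound $\Xi_G(g) \gg e^{-\rho(\mu(g))}$ from \eqref{hc} together with integrability estimates shows $\varphi_\rho$ — the basic spherical function — is "almost $L^2$", and one shows via the standard argument that $\langle -\Delta f, f\rangle \ge \|\rho\|^2 \|f\|^2$ for all $f \in C_c^\infty(X)$; equivalently $\lambda_0(X) \ge \|\rho\|^2$. For the reverse inclusion $[\|\rho\|^2,\infty) \subset \sigma(X)$, I would use Weyl's criterion (Theorem \ref{weyl}): for each $\nu \in \fa^*$ one constructs approximate eigenfunctions for the eigenvalue $\|\rho\|^2 + \|\nu\|^2$ by truncating the (bounded, non-$L^2$) spherical function $\varphi_{\rho+\mathrm{i}\nu}$ — multiply by a slowly-varying cutoff supported on a large ball and normalize; the commutator terms $[\Delta, \text{cutoff}]$ are lower order and vanish in the limit because $\varphi_{\rho + \mathrm{i}\nu}$ has controlled ($\ll e^{-\rho(\mu(g))}$ times polynomial) decay while the spherical volume grows like $e^{2\rho(\mu(g))}$, giving enough mass. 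Letting $\nu$ vary covers all of $[\|\rho\|^2,\infty)$.

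The main obstacle is the $[\|\rho\|^2,\infty) \subset \sigma(X)$ direction: one needs the approximate eigenfunctions to be genuinely normalizable in $W^1(X)$ with the error going to zero, which requires knowing that $\int_{B(o,T)} |\varphi_{\rho+\mathrm{i}\nu}|^2\, d\vol \to \infty$ as $T \to \infty$ (so that the normalization is legitimate and the boundary commutator terms are negligible relative to the bulk). This follows from the precise asymptotics of spherical functions — $\varphi_{\rho+\mathrm{i}\nu}(\exp H) \sim \mathbf{c}(\nu) e^{(\mathrm{i}\nu - \rho)(H)} + (\text{Weyl translates})$ for $H$ deep in $\fa^+$ — combined with the volume element $e^{2\rho(H)}(1+o(1))\,dH$ on $A^+$, so the integrand is $\asymp 1$ there and the integral over $B(o,T)$ diverges. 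Invoking Harish-Chandra's Plancherel theorem directly sidesteps this by packaging it into the statement that the spherical transform is a unitary isomorphism onto the stated $L^2$-space, so I would likely present the Plancherel-theorem proof as the main argument and remark that Weyl's criterion gives an alternative.
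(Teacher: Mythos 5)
Your proposal is correct and, in its main form, is essentially the same argument as the paper's: both use Harish-Chandra's spherical Plancherel theorem as the central tool, both exploit the computation that $-\Delta$ acts on the spherical function $\varphi_{\rho+\mathrm{i}\nu}$ by the scalar $\|\rho\|^2+\|\nu\|^2$, and both deduce $\lambda_0(X)\geq\|\rho\|^2$ from the existence of the positive $\|\rho\|^2$-harmonic function $\varphi_\rho$ together with Theorem \ref{no}.

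The one genuine difference is presentational. The paper does not invoke the Plancherel isomorphism as a black box; instead it explicitly constructs wavepackets $F_n=\int_{\fa^*} f_n(\psi)\Phi_\psi\,\frac{d\psi}{|\mathbf{c}(\psi)|^2}$ with $f_n$ a bump function shrinking to $\psi_0$, uses the Plancherel formula only to normalize $\|F_n\|_{L^2}=1$, and then feeds these into Weyl's criterion (Theorem \ref{weyl}) to land in $\sigma(X)$. Your ``essential range'' phrasing compresses exactly this: saying $\sigma(\Delta)$ equals the essential range of $\nu\mapsto\|\rho\|^2+\|\nu\|^2$ is the same content as constructing wavepackets concentrating the transform near a given $\nu$. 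What the paper's version buys is self-containedness (one only needs the Plancherel formula as an equality of norms, not the full statement that the transform conjugates $-\Delta$ to a multiplication operator on the whole of $L^2$). Your alternative sketch — truncating the non-$L^2$ spherical function $\varphi_{\rho+\mathrm{i}\nu}$ by a cutoff — is not what the paper does and is somewhat delicate (you yourself flag the need to control the commutator terms via spherical function asymptotics); the wavepacket construction avoids these estimates entirely since $F_n$ is already in $L^2$ by Plancherel.

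Two small remarks on precision. First, the spherical Plancherel theorem identifies the $K$-bi-invariant part of $L^2(G)$, i.e. $L^2(K\backslash G/K)$, not all of $L^2(X)=L^2(G/K)$; the parenthetical ``(equivalently $L^2(X)$)'' is a slip. The argument survives because you only need $[\|\rho\|^2,\infty)\subset\sigma(X)$ from the $K$-invariant part, and the reverse inclusion comes from $\lambda_0(X)\geq\|\rho\|^2$ via Theorem \ref{no3}. Second, the paper's upper bound $\lambda_0(X)\leq\|\rho\|^2$ is obtained by citing Karpelevič \cite{Ka} for the non-existence of positive eigenfunctions with eigenvalue above $\|\rho\|^2$; your route (reading it off from $\sigma(X)\supseteq[\|\rho\|^2,\infty)$ and Theorem \ref{no3}) dispenses with that citation, which is a genuine minor simplification.
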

\begin{proof}
It is shown in \cite{Ka} that there are no positive Laplace eigenfunctions on $X$ with eigenvalue strictly bigger than $\|\rho\|^2$; hence
the inequality $\lambda_0 (X)\le \|\rho\|^2$ follows from Theorem \ref{no} for $\Gamma=\{e\}$. On the other hand, as seen in the proof of (1), $\varphi_{\rho,h}$ is a positive $\|\rho\|^2$-harmonic function (for any $h\in G$), hence $\lambda_0(X)=\|\rho\|^2$ by Theorem \ref{no}.
We now deduce the second claim  $\sigma(X)=[ \|\rho\|^2,\infty)$ from Harish-Chandra's Plancherel theorem (cf.\ e.g.\ \cite{Rosenberg}). For $\psi\in \fa^*$, define $\Phi_{\psi}\in C^{\infty}(K\ba G/K)$ by
$$ \Phi_{\psi}(g)= \int_K \varphi_{\rho+i\psi,k}(g)\,dk.$$
where $\varphi_{\rho+i\psi,k}(g)=e^{-(\rho+i\psi)\big(H(g^{-1}k)\big)}$.

Then by the same computation as \eqref{ccc}, we have $$-\mathcal{C}\Phi_{\psi}=-\Delta\Phi_{\psi}=(\|\rho\|^2+\|\psi\|^2)\Phi_{\psi}.$$

Given any $f\in C_c^{\infty}(\fa^*),$ we can define a function $F\in L^2(X)$ by the formula
$$ F(g)=\int_{\fa^*} f(\psi)\Phi_{\psi}(g)\,\frac{d\psi}{|\mathbf{c}(\psi)|^2};$$
here $d\psi$ denotes the Lebesgue measure on $\fa^*$ and $\mathbf{c}(\psi)$ denotes the Harish-Chandra $\mathbf{c}$-function.  The Plancherel formula says
$$ \|F\|_{L^2(X)}^2=\int_{\fa^*} |f(\psi)|^2\,\frac{d\psi}{|\mathbf{c}(\psi)|^2} $$
(see \cite{Rosenberg}). Let $\lambda \in [ \|\rho\|^2,\infty)$ be any number.
Choose $\psi_0\in\fa^*$ so that $\lambda=\|\rho\|^2+\|\psi_0\|^2$. We then choose a sequence of non-negative functions $\lbrace f_n\rbrace\subset C_c^{\infty}(\fa^*)$ with $\supp f_n\subset B_{1/n}(\psi_0)$ and $ \|F_n\|_{L^2(X)}=1$. 

Then
\begin{align*}(\Delta+\lambda)F_n&=\int_{\fa^*} f_n(\psi) (\Delta+\lambda) \Phi_{\psi}(g)\,\frac{d\psi}{|\mathbf{c}(\psi)|^2}\\
&= \int_{\fa^*} f_n(\psi) \big(\lambda - \|\rho\|^2-\|\psi\|^2\big) \Phi_{\psi}(g)\,\frac{d\psi}{|\mathbf{c}(\psi)|^2}.
\end{align*}

This gives
\begin{align*}\| (\Delta +\lambda)F_n\|_{L^2(X)}^2=&\int_{\fa^*} |\big(\lambda - \|\rho\|^2-\|\psi\|^2\big)f_n(\psi)|^2\,\frac{d\psi}{|\mathbf{c}(\psi)|^2}\\&\leq \max_{\psi\in B_{1/n}(\psi_0)} \left| \|\psi_0\|^2-\|\psi\|^2\right|^2 .
\end{align*}
Consequently, $$\lim_{n\rightarrow\infty}\| (\Delta +\lambda)F_n\|_{L^2(X)}=0.$$ By Weyl's criterion (Theorem \ref{weyl}), this implies that
$\lambda\in \sigma(X)$. This proves the claim.
\end{proof}

\begin{thm}\label{tlam} If $L^2(\Ga\ba G)$ is tempered, then $$\lambda_0(\Ga\ba X)=\|\rho\|^2.$$
\end{thm}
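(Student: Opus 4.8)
The plan is to prove the two inequalities $\lambda_0(\Ga\ba X) \ge \|\rho\|^2$ and $\lambda_0(\Ga\ba X) \le \|\rho\|^2$ separately. The lower bound is where temperedness enters, while the upper bound follows from general considerations. For the lower bound, recall that $\lambda_0(\Ga\ba X)$ is the infimum of the Rayleigh quotient over $C_c^\infty(\Ga\ba X)$, as in \eqref{ll}. Since $K$-invariant functions on $\Ga\ba G$ correspond to functions on $\Ga\ba X$ and the Casimir operator $\scrC$ restricts to $\Delta$ on such functions, the quantity $\lambda_0(\Ga\ba X)$ equals the bottom of the spectrum of $-\scrC$ acting on the $K$-invariant vectors of $L^2(\Ga\ba G)$. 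The key point is that temperedness of $L^2(\Ga\ba G)$ means $L^2(\Ga\ba G) \propto L^2(G)$ (Proposition \ref{chh}), and weak containment is preserved under taking the Casimir operator: the spectrum of $-\scrC$ on $K$-invariant vectors of $L^2(\Ga\ba G)$ is then contained in the corresponding spectrum for $L^2(G)$, which by Proposition \ref{ss} is exactly $[\|\rho\|^2, \infty)$. Concretely, I would argue that if $\lambda < \|\rho\|^2$ were in $\sigma(\Ga\ba X)$, Weyl's criterion (Theorem \ref{weyl}) would produce approximate eigenfunctions $F_n$ in $L^2(\Ga\ba G)^K$ with $\|(\scrC + \lambda)F_n\| \to 0$; weak containment in $L^2(G)$ then transfers this to approximate $K$-invariant eigenfunctions for $-\scrC$ in $L^2(G)$ with eigenvalue near $\lambda$, forcing $\lambda \in \sigma(X) = [\|\rho\|^2,\infty)$ by Proposition \ref{ss}, a contradiction. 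Hence $\lambda_0(\Ga\ba X) \ge \|\rho\|^2$.

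For the upper bound $\lambda_0(\Ga\ba X) \le \|\rho\|^2$, I would instead show directly that $\|\rho\|^2 \in \sigma(\Ga\ba X)$, or equivalently use Corollary \ref{noo} together with the fact that $\rho \in D_\Ga$. Indeed, since $\psi_\Ga \le 2\rho$ always holds and in fact $\psi_\Ga \le \rho$ is part of what temperedness gives (via the implication $(2)\Rightarrow(1)$ of Theorem \ref{m33}, proved elsewhere — though for this theorem it suffices to note $\rho \ge \psi_\Ga$ follows from temperedness, or alternatively one can cite that $\rho \in D_\Ga$ under the temperedness hypothesis), we have $\rho \in D_\Ga$. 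Then $\lambda_\rho = \|\rho\|^2 - \|\rho - \rho\|^2 = \|\rho\|^2$, and if $\Ga$ is not cocompact, Corollary \ref{bou} (or \ref{noo}) gives $\|\rho\|^2 = \lambda_\rho \le \lambda_0(\Ga\ba X)$ — wait, that is the wrong direction. Let me instead use Theorem \ref{no}: I would produce a positive $\|\rho\|^2$-harmonic function on $\Ga\ba X$ using Theorem \ref{Q1} and Proposition \ref{d2} applied to a $(\Ga,\rho)$-conformal density (which exists because $\rho \in D_\Ga$, noting $\rho \in \dg$ or appealing to Corollary \ref{coexi} when $\Ga$ is of the second kind, or more robustly just using that a $(\Ga,\psi)$-conformal measure exists for $\psi \in \dg$ and $\psi_\Ga \le \rho$ implies a tangent form $\le \rho$ can be compared). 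A cleaner route: by Proposition \ref{ss}, $\varphi_{\rho,h}$ is a positive $\|\rho\|^2$-harmonic function on $X$; but I need one on $\Ga\ba X$. Since temperedness gives $\psi_\Ga \le \rho$, Theorem \ref{Q1} with a tangent form $\psi^\star \in \dg$ satisfying $\psi^\star \le \rho$ yields, via Proposition \ref{d2} and Proposition \ref{LaplaceEV}, a positive $\lambda_{\psi^\star}$-harmonic function on $\Ga\ba X$ with $\lambda_{\psi^\star} = \|\rho\|^2 - \|\psi^\star - \rho\|^2 \le \|\rho\|^2$; combined with Theorem \ref{no}(2) this forces $\lambda_0(\Ga\ba X) \ge \lambda_{\psi^\star}$, and running this for all such forms together with the lower bound pins down $\lambda_0 = \|\rho\|^2$. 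Actually the simplest: once we know $\lambda_0 \ge \|\rho\|^2$ from temperedness, and we exhibit by the above any positive harmonic function on $\Ga\ba X$ with eigenvalue $\le \|\rho\|^2$ that is not allowed to have eigenvalue $> \lambda_0$ by Theorem \ref{no}(2), we conclude $\lambda_0 \le \|\rho\|^2$ as well — provided $\Ga$ is non-cocompact; the cocompact rank-one case is handled separately by the classical Elstrodt–Patterson theorem (Theorem \ref{ep}).

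The main obstacle I anticipate is making the lower-bound transfer argument fully rigorous: weak containment $L^2(\Ga\ba G) \propto L^2(G)$ controls matrix coefficients, and one must check that it implies containment of the $L^2$-spectra of $-\scrC$ on the respective $K$-invariant subspaces. This is standard (spectral measures of a fixed self-adjoint operator behave well under weak containment, since approximate eigenvectors with the right $K$-type and Casimir behavior can be constructed using a bump function in the convolution algebra), but it requires care to phrase in terms of the self-adjoint extension $\Delta$ on $W^1(\Ga\ba X)$ and Weyl's criterion as stated in Theorem \ref{weyl}. A secondary subtlety is the edge case where $\Ga\ba X$ is compact — which only happens in rank one, where the statement reduces to Theorem \ref{ep} — so I would dispatch that case at the outset and assume $\Ga\ba X$ non-compact thereafter, allowing free use of Theorem \ref{no} and Theorem \ref{no3}.
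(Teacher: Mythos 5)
Your overall plan — lower bound from temperedness, upper bound from general considerations — matches the paper's in outline, but both halves have issues worth flagging.

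For the lower bound, the paper does not argue via a ``transfer of approximate eigenfunctions.'' That step, as you phrase it, is not a consequence of weak containment: weak containment controls matrix coefficients (equivalently, the operator norms of $\pi(f)$ for $f\in C_c(G)$, i.e.\ the $C^*$-norm), not vectors, so you cannot take a Weyl sequence $F_n\in L^2(\Ga\ba G)^K$ with $\|(\Delta+\lambda)F_n\|\to 0$ and produce a corresponding Weyl sequence in $L^2(G)^K$. What the paper does instead is decompose $L^2(\Ga\ba G)=\int^\oplus_{\mathsf Z}\pi_\zeta\,d\mu(\zeta)$ into irreducibles, note that temperedness forces a.e.\ $\pi_\zeta$ to be tempered, and then use the Plancherel-theoretic fact that a tempered \emph{spherical} irreducible has infinitesimal character $\chi_{\psi_\pi}$ with $\psi_\pi=\rho+i\,\op{Im}(\psi_\pi)$, hence $\chi_{\pi}(-\scrC)=\|\rho\|^2+\|\op{Im}(\psi_\pi)\|^2\ge\|\rho\|^2$. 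Applying this to the components $f_\zeta$ of a Weyl vector $f$ contradicts $\|\scrC f\|<\|\rho\|^2$. Your intuition (``the $K$-invariant Casimir spectrum of a tempered representation sits inside that of $L^2(G)$'') is correct, but to make it rigorous you must route through the Hecke algebra $C_c(G//K)$ and the parametrization of its Gelfand spectrum by spherical functions, which is precisely what the direct-integral decomposition packages; the version you wrote leaves that bridge unbuilt, and you acknowledge as much.

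For the upper bound, your argument does not close, and in fact the directions are inverted throughout. Theorem~\ref{no}(2) says a positive $\lambda$-harmonic function forces $\lambda\le\lambda_0$, so exhibiting a positive $\lambda$-harmonic function with $\lambda\le\|\rho\|^2$ gives $\lambda_0\ge\lambda$ — a \emph{lower} bound — not $\lambda_0\le\|\rho\|^2$. The paper's upper bound is a single line: $\lambda_0(\Ga\ba X)\le\lambda_0(X)=\|\rho\|^2$, using Proposition~\ref{ss} together with the (implicit, but standard) monotonicity of $\lambda_0$ under Riemannian covers $X\to\Ga\ba X$. If you want to phrase the upper bound via Theorem~\ref{no}, the correct route is: by Theorem~\ref{no}(1) there is a positive $\lambda_0$-harmonic function on $\Ga\ba X$; lifting it to $X$ gives a positive $\lambda_0$-harmonic function on $X$, and since positive Laplace eigenfunctions on $X$ have eigenvalue at most $\|\rho\|^2$ (Karpelevi\v{c}, as cited in the proof of Proposition~\ref{ss}), $\lambda_0\le\|\rho\|^2$. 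Finally, the cocompact case does not need a separate treatment: if $\Ga$ is cocompact then $L^2(\Ga\ba G)$ contains the trivial representation and is not tempered, so that case is excluded by hypothesis.
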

\begin{proof} 
Note that $\lambda_0=\lambda_0(\Ga\ba X)\le\lambda_0(X)= \|\rho\|^2$ by Proposition \ref{ss}.
Assume that $\lambda_0<\|\rho\|^2$. By Theorem \ref{weyl}, we can then find a $K$-invariant unit vector $f\in L^2(\Ga\ba G)_K$ such that
$$ \|(\Delta-\lambda_0) f\|<\frac{\|\rho\|^2-\lambda_0}{2}.$$
This gives
$$ \|\mathcal{C}f\|=\|\Delta f\| \leq \|(\Delta-\lambda_0) f\|+\lambda_0 <\frac{\|\rho\|^2+\lambda_0}{2}<\|\rho\|^2.$$
On the other hand, consider the direct integral representation of $L^2(\Ga\ba G)=\int_{\mathsf{Z}}^{\oplus} (\pi_{\zeta},\scrH_{\zeta})\,d\mu(\zeta) $ into irreducible unitary representations of $G$ which are tempered, by the hypothesis on the temperedness of $L^2(\Ga\ba G)$.
Hence
$$\|\scrC f\|^2 =\int_{\mathsf{Z}} \| d\pi_{\zeta}(\scrC)f_{\zeta}\|_{\zeta}^2\,d\mu(\zeta)\geq \left(\min_{\pi\;\mathrm{spherical\,tempered}} | d\pi(\scrC)|^2\right), $$
where $d\pi$ denotes the derived representation of $\mathcal{U}(\mathfrak{g}_{\CC})$ induced by $\pi$.
By Schur's lemma, there exists a character $\chi_{\pi}$ of $\Zg$ such that $d\pi(Z)=\chi_{\pi}(Z)$ for all $Z\in\Zg$. Moreover, for any spherical $\pi$, there exists $\psi_{\pi}\in\fa_{\CC}^*$ such that $\chi_{\pi}=\chi_{\psi_{\pi}}$ (cf. \eqref{Z}). Now, by Harish-Chandra's Plancherel formula (cf.\ e.g.\ \cite{Helg}), for any tempered spherical representation, we have
$$ \psi_{\pi}=\rho+i\op{Im}(\psi_{\pi}),$$
where $\op{Im}(\psi_{\pi})\in\fa^*$. As in the proof of Proposition \ref{ss}, we then obtain
$$ \chi_{\pi}(-\mathcal{C})=\|\rho\|^2+
\|\op{Im}(\psi_{\pi})\|^2.$$
 Thus for any spherical tempered representation $(\pi,\mathcal{H})$, we have $d\pi(\mathcal{C})\in\sigma(X)$ and hence, by Proposition \ref{ss},
$$\min_{\pi\;\mathrm{spherical\,tempered}} | d\pi(\scrC)|\geq\|\rho\|^2, $$
giving a contradiction.
\end{proof}

\begin{thm} \cite[Theorem 2.8 and Corollary 2.9]{Su} \label{atmost}
\begin{enumerate}
 \item Any positive Laplace eigenfunction in $L^2(\Ga\ba X)$  is $\lambda_0$-harmonic.
  \item If there exists a $\lambda_0$-harmonic function in $L^2(\Ga\ba X)$, then the space of $\lambda_0$-harmonic functions  in $\Ga\ba X$ is one-dimensional and generated by a positive function.
   \end{enumerate}
\end{thm}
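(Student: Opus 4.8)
If $\Ga\ba X$ is compact both statements are immediate ($\lambda_0=0$ and the $\lambda_0$-harmonic functions are exactly the constants), so I assume $\Ga\ba X$ non-compact throughout. Both assertions then concern only the Laplacian on the complete connected Riemannian manifold $\Ga\ba X$ --- the rank of $X$ plays no role --- and I would reproduce Sullivan's argument from \cite{Su}.

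For part (1): let $f>0$ be a Laplace eigenfunction in $L^2(\Ga\ba X)$, $-\Delta f=\lambda f$. Since $\Ga\ba X$ is complete, $\Delta$ is essentially self-adjoint on $C^\infty_c(\Ga\ba X)$, so the smooth $L^2$-solution $f$ automatically lies in the domain of the self-adjoint extension and exhibits $\lambda$ as an eigenvalue; hence $\lambda\in\sigma(\Ga\ba X)\subset[\lambda_0,\infty)$ by Theorem \ref{no3}. On the other hand $f$ is in particular a positive $\lambda$-harmonic function on $\Ga\ba X$, so Theorem \ref{no}(2) forbids $\lambda>\lambda_0$. Therefore $\lambda=\lambda_0$.

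For part (2): by Theorem \ref{no}(1) there is a positive $\lambda_0$-harmonic function $\phi$ on $\Ga\ba X$, a priori not necessarily square-integrable. Given \emph{any} $\lambda_0$-harmonic $g\in L^2(\Ga\ba X)$, I would write $g=\phi h$ with $h:=g/\phi$ smooth and use the ground-state substitution: integrating by parts and using $-\Delta\phi=\lambda_0\phi$ yields
$$\int_{\Ga\ba X}\|\op{grad}g\|^2\,d\vol-\lambda_0\int_{\Ga\ba X}|g|^2\,d\vol=\int_{\Ga\ba X}\phi^2\|\op{grad}h\|^2\,d\vol .$$
Since $g$ is a $\lambda_0$-eigenfunction in $L^2$ it lies in the form domain $W^1(\Ga\ba X)$ and the left-hand side vanishes; hence $\op{grad}h\equiv0$, so $h$ is a constant ($\Ga\ba X$ being connected) and $g=h\,\phi$. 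Consequently $\phi$ is, up to scaling, square-integrable, the space of $\lambda_0$-harmonic functions in $L^2(\Ga\ba X)$ is one-dimensional, and it is generated by the positive function $\phi$. (The existence of a positive $L^2$ ground state can alternatively be read off the variational problem \eqref{ll}: for $f\in L^2$ $\lambda_0$-harmonic, $|f|$ has the same Rayleigh quotient, hence is a minimizer and a nonnegative $\lambda_0$-eigenfunction, which the Harnack inequality forces to be everywhere positive.)

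The steps needing genuine care are purely soft-analytic and concern the behaviour at infinity of the non-compact, infinite-volume manifold $\Ga\ba X$: the essential self-adjointness of $\Delta$ (so that $L^2$ classical eigenfunctions are genuine spectral eigenfunctions lying in $W^1$), and --- what I expect to be the main obstacle --- the cutoff argument justifying the ground-state identity, in which $g^2/\phi$ is in general neither bounded nor compactly supported, so that the boundary error terms must be shown to vanish using the $L^2$-decay of $g$. All of this rests only on completeness of $\Ga\ba X$ and is carried out in \cite{Su}; the connectedness and Harnack inputs are routine.
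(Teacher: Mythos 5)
Your proof of part (1) is correct but follows a different route from the paper's: you obtain $\lambda\le\lambda_0$ by invoking Theorem \ref{no}(2), whereas the paper gives a direct, self-contained variational argument via Barta's identity applied to compactly supported test functions (so that the integration by parts is painless). For the inequality $\lambda\ge\lambda_0$ the two routes coincide, both resting on the fact that a smooth $L^2$ eigenfunction of a complete manifold lies in the form domain $W^1$.

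For part (2) your main argument has exactly the gap you name, and naming it does not close it: the ground-state identity
\[
\int_{\Ga\ba X}\|\op{grad}g\|^2\,d\vol-\lambda_0\int_{\Ga\ba X}|g|^2\,d\vol=\int_{\Ga\ba X}\phi^2\,\|\op{grad}(g/\phi)\|^2\,d\vol
\]
requires an integration by parts whose boundary terms must be controlled when $g$ is merely $L^2$ and $\phi$ is a priori not square-integrable. The paper's argument circumvents this entirely. It first proves, by a soft variational computation (differentiating $x\mapsto R(f+x\varphi)$ at $x=0$ over $\varphi\in C_c^\infty$), the equivalence: for $f\in W^1(\Ga\ba X)$ one has $-\Delta f=\lambda_0 f$ if and only if $R(f)=\lambda_0$, where $R$ is the shifted Rayleigh quotient. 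Then, for a real $\lambda_0$-harmonic $f\in W^1\cap C^\infty$, the pointwise equality $\|\op{grad}|f|\|=\|\op{grad}f\|$ a.e.\ gives $R(|f|)=R(f)=\lambda_0$, so $|f|$ is itself $\lambda_0$-harmonic; since $\Delta|f|=-\lambda_0|f|\le 0$, the strong minimum principle forces $|f|>0$ everywhere unless $f\equiv 0$, so $f$ never changes sign and $f=\pm|f|$; and two positive $L^2$ functions cannot be orthogonal, whence the eigenspace is one-dimensional. Your parenthetical alternative is precisely in this spirit and is the direction you should develop rather than the ground-state transform, but as written it stops at the existence of a positive eigenfunction $|f|$: you must still draw the conclusion that $f=\pm|f|$, and then invoke non-orthogonality of positive functions, to get one-dimensionality.
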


\begin{proof}  Sullivan's proof in \cite{Su} uses the heat operator and superharmonic functions. We provide a more direct proof here.

Note that if $f\in L^2(\Ga \ba X)\cap C^{\infty}(\Ga \ba X)$ is a real-valued $\lambda$-harmonic function, then $f\in W^1(\Ga\ba X)$, since
$$ \int_{\Ga \ba X} \|\op{grad}f\|^2\,d\vol= -\int_{\Ga \ba X}  f\Delta f\,d\vol=\lambda \int_{\Ga \ba X} f^2\,d\vol. $$

The key fact for us is that $\lambda_0$ may also be expressed as an infimum over real-valued functions in $W^1(\Ga\ba X)$; for $f\neq 0$ in $W^1(\Ga\ba X)$, define $R(f)$ by 
$$ R(f)=  \frac{\|f\|^2_{W^1}}{\|f\|^2}-1 \geq 0 $$
where $\|\cdot\|$ denotes the $L^2(\Ga\ba X)$ norm. For any $f\neq 0\in W^1(\Ga\ba X)$, and all $\varphi$ with $\|f-\varphi\|_{W^1}$ small enough, we have
$$ \frac{\|\varphi\|_{W^1}-\|f-\varphi\|_{W^1}}{\|\varphi\|+\|f-\varphi\|_{W^1}}-1\leq R(f) \leq \frac{\|\varphi\|_{W^1}+\|f-\varphi\|_{W^1}}{\|\varphi\|-\|f-\varphi\|_{W^1}}-1, $$
i.e. $f\mapsto R(f)$ is continuous at each $f\neq0\in W^1(\Ga \ba X)$. The density of $C_c^{\infty}(\Ga \ba X)$ in $W^1(\Ga \ba X)$ then gives 
$$\lambda_0 = \inf_{\underset{f\neq 0}{f\in  C_c^{\infty}(\Ga\ba X)}} R(f)=\inf_{\underset{f\neq 0}{f\in  W^1(\Ga\ba X)}} R(f).$$

Now suppose that $\phi\in L^2(\Gamma\ba X)$ is a positive $\lambda$-harmonic function; so $\phi\in W^1(\Gamma\ba X)$.
We claim that $\lambda=\lambda_0$.  
By Green's identity, we have 
\begin{align*}\lambda_0 \leq R(\phi) = \frac{\int_{\Gamma\ba X}\|\text{grad} \, \phi\|^2\,d\vol}{\int_{\Gamma\ba X}|\phi|^2\,d\vol} =\frac{\int_{\Gamma\ba X}\phi(-\Delta \phi )\,d\vol}{\int_{\Gamma\ba X}|\phi|^2\,d\vol}= \lambda
\end{align*}
(cf.\ Proposition \ref{LaplaceEV}). On the other hand, since $\phi>0$, we have that for any $\varphi\in C_c^{\infty}(\Ga\ba X)$, 
\begin{align*}
\frac{\int_{\Gamma\ba X}\|\text{grad} \, \varphi \|^2\,d\vol}{\int_{\Gamma\ba X}| \varphi |^2\,d\vol}=\frac{\int_{\Gamma\ba X}\|\text{grad} \,\big(\phi\, \cdot\frac{\varphi}{\phi}\big) \|^2\,d\vol}{\int_{\Gamma\ba X}| \varphi |^2\,d\vol}.
\end{align*}
By Barta's identity \cite{Bar},
$$\int_{\Gamma\ba X}\|\text{grad} \,\big(\phi\, \cdot\sfrac{\varphi}{\phi}\big) \|^2\,d\vol= \int_{\Gamma\ba X} \phi^2\|\text{grad} \,\sfrac{\varphi}{\phi} \|^2\,d\vol -  \int_{\Gamma\ba X} \big(\sfrac{\varphi}{\phi}\big)^2\phi \Delta \phi\,d\vol  ,$$
so 
$$ \int_{\Gamma\ba X}\|\text{grad} \, \varphi \|^2\,d\vol \geq \int_{\Gamma\ba X} \big(\sfrac{\varphi}{\phi}\big)^2\phi (-\Delta \phi)\,d\vol=  \lambda \int \varphi^2\,d\vol,$$
i.e. 
$$\lambda  \leq   \frac{\int_{\Gamma\ba X}\|\text{grad} \, \varphi \|^2\,d\vol}{\int_{\Gamma\ba X}|\varphi|^2\,d\vol},$$
showing that $\lambda_0\geq\lambda  $. Hence $\lambda=\lambda_0$.

In order to prove (2), we first claim that $f\in W^1(\Ga\ba X)$ satisfies $-\Delta f= \lambda_0 f$ if and only if $R(f)=\lambda_0$. Suppose that $R(f)=\lambda_0$.  We will then show
that for any $\varphi\in C_c^\infty(\Ga\ba X)$, we have
\be\label {harmonic} \langle f,-\Delta\varphi\rangle{}=\lambda_0\langle f,\varphi\rangle{};\ee 
 this implies that $f$ is $\lambda_0$-harmonic.
Let $\varphi\in C_c^{\infty}(\Ga\ba X)$.
Since $R(f)=\lambda_0$, $f$ minimizes $R$. So for any $\varphi\in C_c^{\infty}(\Ga\ba X)$, the function $F:\RR\rightarrow\RR_{\geq 0}$ defined by $F(x)= R(f+x\varphi)$ has a local minimum at $x=0$, hence $F'(0)=0$. Now computing $F'(0)$ gives
$$ F'(0)=\frac{2\langle f,\varphi\rangle_{W^1}\|f\|^2-2\langle f,\varphi\rangle{}\|f\|_{W^1}^2}{\|f\|^4}=0.$$
From $R(f)=\lambda_0$, we obtain $\|f\|^2_{W^1}=(\lambda_0+1)\|f\|^2$, which, when entered into the identity above, gives
\begin{equation}\label{ipi}\langle f,\varphi\rangle_{W^1}=(\lambda_0+1)\langle f,\varphi\rangle{}.
\end{equation}
Letting $\lbrace f_i\rbrace_{i\in \NN} \subset C_c^{\infty}(\Ga\ba X)$ be a sequence converging to $f$ in $W^1(\Ga\ba X)$, Green's identity again gives
\begin{align}\label{Wlim}
  \notag\langle f,\varphi\rangle_{W^1}   =&\lim_{i\rightarrow\infty}  \langle f_i,\varphi\rangle_{W^1} =\lim_{i\rightarrow\infty} \int_{\Ga\ba X} f_i\varphi + \langle \op{grad}f_i,\op{grad}\varphi\rangle\,d\vol\\=&\lim_{i\rightarrow\infty} \int_{\Ga\ba X} f_i\varphi + f_i(-\Delta\varphi)\,d\vol=\langle f,\varphi\rangle{}+\langle f,-\Delta\varphi\rangle{}.
\end{align}
Combined with \eqref{ipi}, this gives $\langle f,-\Delta\varphi\rangle{}=\lambda_0\langle f,\varphi\rangle{}$  as in \eqref{harmonic}.

Conversely, if $f\in W^1(\Ga\ba X)$ satisfies $-\Delta f=\lambda_0 f$, then for any $\varphi\in C_c^{\infty}(\Ga\ba X)$, we have (as in \eqref{Wlim})
$$ \langle f,\varphi\rangle_{W^1}=\langle f,\varphi\rangle{}+\langle f,-\Delta\varphi\rangle{}=(\lambda_0+1)\langle f,\varphi\rangle{},$$
hence
$$ \|f\|^2_{W^1}=\sup_{\varphi\in C_c^{\infty}(\Ga \ba X)} \langle f,\varphi\rangle_{W^1}=\sup_{\varphi\in C_c^{\infty}(\Ga \ba X)} (\lambda_0+1)\langle f,\varphi\rangle{}=(\lambda_0+1)\|f\|^2,$$
giving $R(f)=\lambda_0$. This proves the claim.

Let $f\in W^1(\Ga\ba X)\cap C^{\infty}(\Ga\ba X)$ now be a real-valued $\lambda_0$-harmonic function. Then $|f|\in W^1(\Ga\ba X)$ and $R(|f|)=\lambda_0$. As shown above, $|f|$ is also a $\lambda_0$-harmonic function. 
Hence either $f$ is a constant multiple of $|f|$ or $f$ must change sign at some point $x_0$, hence $|f(x)|\geq |f(x_0)|=0$ for all $x\in \Ga \ba X$. However, since $\Delta |f|=-\lambda_0 |f| \leq 0$, the strong minimum principle (cf.\ e.g.\ \cite[Theorem 66, p. 280]{Petersen}) gives that if $|f|$ attains its infimum, then $|f|$ is in fact constant (in this case equal to zero). We therefore conclude that any $\lambda_0$-harmonic function in $L^2(\Ga\ba X)$ is a constant multiple of a positive function. This then implies that the space of $\lambda_0$-harmonic functions must be one-dimensional as two positive functions cannot be orthogonal to each other.
\end{proof}

The uniqueness in the above theorem has the following implications for joint eigenfunctions:
\begin{cor}\label{lzero}
\begin{enumerate}
    \item There exists at most one
positive joint eigenfunction in $L^2(\Ga\ba X)$ up to a constant multiple.
\item If there exists a positive joint eigenfunction in $L^2(\Ga\ba X)$ with character $\chi_{\psi-\rho}$, $\psi\in \fa^*$, then
$$\lambda_0 = \lambda_\psi .$$
\item  There exists a positive Laplace eigenfunction in $L^2(\Ga\ba X)$ if and only if
there exists a positive joint eigenfunction in $L^2(\Ga\ba X)$ of character $\chi_{\psi-\rho}$ with $\lambda_\psi=\lambda_0$.
\end{enumerate}
\end{cor}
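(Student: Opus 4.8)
The plan is to obtain all three parts from Sullivan's Theorem~\ref{atmost} (every positive Laplace eigenfunction in $L^2(\Ga\ba X)$ is $\lambda_0$-harmonic, and the $\lambda_0$-harmonic functions in $L^2(\Ga\ba X)$ form a one-dimensional space spanned by a positive function), together with Proposition~\ref{LaplaceEV}, and, for part~(3), the integral representation of positive $\lambda$-harmonic functions on $X$ from \cite[Theorem~2]{La} already used in the proof of Proposition~\ref{LaplaceEV}(2). Parts~(1) and~(2) I expect to be immediate. For~(1): since $\Delta\in\cal D(X)$, any positive joint eigenfunction in $L^2(\Ga\ba X)$ is in particular a positive Laplace eigenfunction, hence $\lambda_0$-harmonic, hence an element of the one-dimensional space of Theorem~\ref{atmost}(2); so any two such are proportional. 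For~(2): if $f\in L^2(\Ga\ba X)$ is a positive joint eigenfunction with character $\chi_{\psi-\rho}$, then $f$ is $\lambda_\psi$-harmonic by Proposition~\ref{LaplaceEV}(1) and $\lambda_0$-harmonic by Theorem~\ref{atmost}(1); since a nonzero function is $\lambda$-harmonic for at most one $\lambda$, this gives $\lambda_\psi=\lambda_0$.

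For~(3) the implication ``$\Leftarrow$'' is trivial, so the content is to show that a positive Laplace eigenfunction $f\in L^2(\Ga\ba X)$ is in fact a joint eigenfunction; granting this, its character is $\chi_{\psi-\rho}$ with $\psi\ge\rho$ by Theorem~\ref{intrep} and $\lambda_\psi=\lambda_0$ by part~(2), which completes the proof. Here I would argue as follows. By Theorem~\ref{atmost} we may assume $f>0$; it is $\lambda_0$-harmonic and spans the one-dimensional space $V$ of $\lambda_0$-harmonic functions in $L^2(\Ga\ba X)$. Lift $f$ to a positive $\Ga$-invariant $\lambda_0$-harmonic function on $X$ and apply \cite[Theorem~2]{La} to write $f=\int_{K/M\times S}\varphi_{\psi,k}\,d\mu([k],\psi)$ for a finite positive Borel measure $\mu$, where $S=\{\psi\in\fa^*:\psi\ge\rho,\;\lambda_\psi=\lambda_0\}$; this $S$ is a closed and bounded, hence compact, subset of $\fa^*$, and it is nonempty because $f\neq 0$. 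Now fix $D\in\cal D(X)$. By Lemma~\ref{Zeval}, $D\varphi_{\psi,k}=\chi_{\psi-\rho}(D)\,\varphi_{\psi,k}$, and $\psi\mapsto\chi_{\psi-\rho}(D)$ is polynomial, so it is bounded on $S$ by some $C_D$; applying $D$ termwise (the only point to check here is that one may differentiate under the integral, which is routine since the parameter space is compact) and using $\varphi_{\psi,k}>0$ yields the pointwise bound $|Df|\le C_D\,f$ on $\Ga\ba X$. Since $f\in L^2(\Ga\ba X)$, this forces $Df\in L^2(\Ga\ba X)$, and since $D$ commutes with $\Delta$ the function $Df$ is again $\lambda_0$-harmonic, hence $Df\in V=\RR f$. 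Thus every $D\in\cal D(X)$ acts on $f$ by a scalar, i.e.\ $f$ is a joint eigenfunction.

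The one genuinely non-formal step is the passage from ``$Df$ is a smooth $\lambda_0$-harmonic function'' to ``$Df\in L^2(\Ga\ba X)$'': a priori the smooth $\lambda_0$-harmonic functions on $\Ga\ba X$ form a much larger space (containing plenty of non-square-integrable ones), so one cannot simply claim that $\cal D(X)$ preserves the $\lambda_0$-eigenspace inside $L^2$. Passing through the integral representation over the \emph{compact} parameter set $S$ is exactly what produces the domination $|Df|\le C_D\,f$ and thereby makes $L^2$-membership automatic; the only technical nuisance I anticipate is the (routine) justification of differentiation under the integral sign.
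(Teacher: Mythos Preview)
Your argument is correct and follows the same route as the paper: parts (1) and (2) are immediate from Theorem~\ref{atmost} and Proposition~\ref{LaplaceEV}, and for (3) both arguments observe that $D\phi$ is again $\lambda_0$-harmonic (since $D$ commutes with the Casimir) and then invoke the one-dimensionality in Theorem~\ref{atmost}(2) to conclude $D\phi\in\RR\phi$.

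The one difference is exactly the point you flag. The paper simply writes ``by the uniqueness in Theorem~\ref{atmost}, it follows that $D\phi$ is a constant multiple of $\phi$'' without verifying that $D\phi\in L^2(\Ga\ba X)$, which is what the uniqueness statement in Theorem~\ref{atmost}(2) actually requires (its proof is entirely about $\lambda_0$-harmonic functions in $W^1(\Ga\ba X)$). You supply this missing step via the integral representation over the compact parameter set $S=\{\psi\ge\rho:\lambda_\psi=\lambda_0\}$ and the resulting pointwise domination $|Df|\le C_D\,f$. This is a clean and genuine addition: your argument is strictly more complete than the paper's on this point, at the modest cost of invoking \cite[Theorem~2]{La} once more.
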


\begin{proof} We only need to verify the third claim. Suppose that
$\phi\in L^2(\Gamma\ba X)$ is a postive Laplace eigenfunction.
Via the identification $L^2(\Gamma\ba X)=L^2(\Ga\ba G)_K$,
we may consider $\phi\in L^2(\Ga\ba G)_K$ as a positive
$\cal C$-eigenfunction for the Casimir operator $\cal C$. By Theorem \ref{atmost},
 $\cal C \phi =-\lambda_0\phi $. Let $D\in \cal Z(\mathfrak g_{\CC})$. Then $\cal C\circ D \phi= D \circ \cal C \phi =-\lambda_0 D\phi$. By the uniqueness in
Theorem \ref{atmost}, it follows that $D\phi$ is a constant multiple of $\phi$;
 and hence $\phi$ is an eigenfunction for $D$ as well. Therefore $\phi$ is a joint eigenfunction.\end{proof}

\noindent{\bf Spherical unitary representations contained in $L^2(\Ga\ba G)$.}
We let $C_c(G//K)$ denote the Hecke algebra of $G$, i.e.
$$C_c(G//K)=\lbrace f\in C_c(G)\,:\, f(k_1gk_2)=f(g)\qquad \text{for all
$g\in G,\;k_1,k_2\in K$}\rbrace.
$$

Each element of $C_c (G // K)$ acts on $C(G)$ via right convolution $*$.
\begin{lem} \label{hecke}
A positive $K$-invariant joint
eigenfunction on $G$ is an eigenfunction for the action of the Hecke algebra. More precisely, if 
\be\label{joint} \phi(g)=\int_{\scrF} \varphi_{\psi,k}(g)\,d\nu_o([k]), \quad\text{$g\in G$},\ee
for some $\psi\in \fa^*$ and a $(\Ga, \psi)$-conformal measure $\nu_o$ on $\F=K/M$, then for all $f\in C_c(G//K)$,
$$(\phi *f)(g)  =\left(\int_{G}f(h) e^{-\psi\big(H(h)\big)}\,dh\right)\phi(g).$$
\end{lem}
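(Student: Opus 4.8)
The plan is to reduce everything to the single basis function $\varphi_{\psi,k}$ and use the fact that it is (a multiple of) the elementary spherical function attached to $\psi$, which by construction diagonalizes the Hecke algebra. First I would observe that right convolution by $f\in C_c(G//K)$ commutes with the integral over $\scrF$ in \eqref{joint}, since $f$ has compact support and $\nu_o$ is a finite measure; thus it suffices to compute $(\varphi_{\psi,k}*f)(g)$ for each fixed $k\in K$. Since $\varphi_{\psi,k}(g)=\varphi_{\psi,e}(k^{-1}g)$ up to the observation made after Lemma \ref{Zeval} (the argument $k\in K$ contributes no $\fa$-part), and since right convolution commutes with left translation, it further suffices to treat $\varphi_{\psi,e}(g)=e^{-\psi(H(g^{-1}))}$.

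Next I would recall that the spherical function $\Phi_\psi(g)=\int_K \varphi_{\psi,k}(g)\,dk$ (as in the proof of Proposition \ref{ss}, but now for general, not necessarily imaginary, $\psi$) satisfies the standard functional equation
\[
\Phi_\psi*f = \widehat f(\psi)\,\Phi_\psi,\qquad \widehat f(\psi):=\int_G f(h)\,e^{-\psi(H(h))}\,dh,
\]
because $\Phi_\psi$ is a $\Zg$-eigenfunction that is bi-$K$-invariant, hence an eigenfunction for the commutative algebra $C_c(G//K)$, with eigencharacter given by the spherical transform evaluated at $\psi$ (cf.\ \cite{Hel}, \cite{Knapp1}). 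The key point making the eigenvalue come out as $\int_G f(h)e^{-\psi(H(h))}\,dh$ rather than some other normalization is that $\varphi_{\psi,e}$ is right-$K$-invariant but need not be left-$K$-invariant; however, for $f$ bi-$K$-invariant one has
\[
(\varphi_{\psi,e}*f)(e)=\int_G \varphi_{\psi,e}(h)f(h^{-1})\,dh=\int_G e^{-\psi(H(h^{-1}))}f(h)\,dh,
\]
and using $H(h^{-1})=-\mathrm{Ad}_{\kappa(h)^{-1}}H(h)$-type identities together with the bi-$K$-invariance of $f$ one checks this equals $\widehat f(\psi)$. More cleanly: averaging $\varphi_{\psi,e}$ over left $K$-translates yields $\Phi_\psi$ without changing the action of the bi-$K$-invariant $f$ on the right, and since $C_c(G//K)$ acts by scalars on the (one-dimensional, by minimality) space spanned by $\varphi_{\psi,e}$ under right convolution—this minimality was recorded just after Lemma \ref{Zeval}—the scalar must be the one read off from $\Phi_\psi$, namely $\widehat f(\psi)$.

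Assembling these: for each $k$, $(\varphi_{\psi,k}*f)(g)=\widehat f(\psi)\,\varphi_{\psi,k}(g)$, and integrating against $d\nu_o([k])$ gives $(\phi*f)(g)=\widehat f(\psi)\,\phi(g)$, which is the claim. The main obstacle is the bookkeeping in the previous paragraph: verifying that the right-convolution eigenvalue of the merely right-$K$-invariant $\varphi_{\psi,e}$ agrees with the spherical-transform eigenvalue $\int_G f(h)e^{-\psi(H(h))}\,dh$ of the bi-$K$-invariant $\Phi_\psi$. This is where one must be careful about which side the $K$-invariance lives on and about the precise form of the Iwasawa cocycle under inversion; everything else (interchanging integrals, commuting with translations) is routine. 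An alternative that avoids this subtlety is to note that $g\mapsto\phi(g)$ and $g\mapsto(\phi*f)(g)$ are both $\Zg$-eigenfunctions with the same character $\chi_{\psi-\rho}$ (right convolution by $C_c(G//K)$ commutes with $\Zg$), both right-$K$-invariant, both dominated by a multiple of $\phi$; then the minimality/uniqueness statement after Lemma \ref{Zeval} forces $\phi*f=c(f)\phi$ for a scalar $c(f)$, and evaluating a single convenient matrix coefficient (e.g.\ testing against the basis element $\varphi_{\rho,e}$ or using the known spherical transform) identifies $c(f)=\int_G f(h)e^{-\psi(H(h))}\,dh$.
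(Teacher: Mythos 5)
Your proposal is correct in outline and reaches the stated eigenvalue, but it takes a genuinely different route from the paper — and it leaves a few steps unjustified that the paper's argument handles at no cost. The paper's proof is a short direct computation: it expands $(\phi*f)(g)$, applies the Iwasawa cocycle identity $H(hg^{-1}k)=H(h\kappa(g^{-1}k))+H(g^{-1}k)$, changes variables $h\mapsto h\kappa(g^{-1}k)$, and uses the right $K$-invariance of $f$ to peel off the factor $\int_G f(h)e^{-\psi(H(h))}dh$. This works verbatim for every $\psi\in\fa^*$ and uses nothing beyond the cocycle identity and unimodularity.

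Your route instead reduces to $\varphi_{\psi,e}$, invokes the spherical functional equation for $\Phi_\psi$, and then uses the minimality remark following Lemma~\ref{Zeval} to force $\varphi_{\psi,e}*f=c(f)\,\varphi_{\psi,e}$. There are three places that need tightening. First, to invoke minimality you must show that (for $f\ge 0$) the positive joint eigenfunction $\varphi_{\psi,e}*f$ is dominated by a constant multiple of $\varphi_{\psi,e}$; you assert this but do not prove it. It is true, but the proof is essentially the same cocycle/Busemann estimate the paper uses directly, so the appeal to minimality defers rather than avoids that computation. Second, the minimality statement in the text is recorded only for $\psi\ge\rho$, whereas the lemma (and the paper's direct proof) is stated for arbitrary $\psi\in\fa^*$; for the actual applications this does not matter since $\psi\ge\rho$ holds, but it is a restriction your route imposes. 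Third, your displayed formula $(\varphi_{\psi,e}*f)(e)=\int_G e^{-\psi(H(h^{-1}))}f(h)\,dh$ has a slip: since $\varphi_{\psi,e}(h^{-1})=e^{-\psi(H(h))}$, one gets $(\varphi_{\psi,e}*f)(e)=\int_G e^{-\psi(H(h))}f(h)\,dh=\widehat f(\psi)$ outright, with no need for any $H(h^{-1})$ inversion identity — the subtlety you flagged there is spurious. The "alternative" you sketch at the end (invoking minimality for the integrated $\phi$ itself) is weaker still, since $\phi$ is generally not a single $\varphi_{\psi,k}$ and the domination "by a multiple of $\phi$" is again unestablished. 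What your approach buys is conceptual transparency — it makes visible that the lemma is the statement that $C_c(G//K)$ acts on the minimal positive eigenfunctions by the spherical transform — but the paper's two-line cocycle computation is both shorter and more general.
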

\begin{proof} 
Given $f\in C_c(G//K)$, we have
\begin{align*}
\big( \phi\ast f \big)(g)&=\int_G \phi(gh^{-1})f(h)\,dh=\int_{G}\int_{\scrF}\varphi_{\psi,k}(gh^{-1})f(h) \,d\nu_o([k])\,dh\\&=\int_{\scrF}\int_{G}f(h) e^{-\psi\big(H(hg^{-1}k)\big)}\,dh\,d\nu_o([k]).
\end{align*}
Now using $H(hg^{-1}k)=H(h\kappa(g^{-1}k))+H(g^{-1}k)$ and then the change of variables $h'=h\kappa(g^{-1}k)$ gives
\begin{align*}
\big( \phi\ast f \big)(g)&= \int_{\scrF}\left(\int_{G}f\big(h\kappa(g^{-1}k)^{-1}\big) e^{-\psi\big(H(h)\big)}\,dh\right)e^{-\psi\big(H(g^{-1}k)\big)}\,d\nu_o([k])
\\&=\int_{\scrF}\left(\int_{G}f(h) e^{-\psi\big(H(h)\big)}\,dh\right)e^{-\psi\big(H(g^{-1}k)\big)}\,d\nu_o([k])\\&=\left(\int_{G}f(h) e^{-\psi\big(H(h)\big)}\,dh\right)\phi(g),
\end{align*}
since $f\in C(G//K)$, and is thus right $K$-invariant. In total, we have shown that $\phi$ is an eigenfunction of the $f$-action, with eigenvalue $\int_{G}f(h) e^{-\psi\big(H(h)\big)}\,dh$.\\
\end{proof}

\begin{thm}\label{rep}
If $\phi\in L^2(\Ga\ba G)_K$ is a positive Laplace eigenfunction of norm one, there exists  a unique irreducible spherical unitary subrepresentation $(\pi, \scrH_{\phi})$ of $L^2(\GaG)$, and $\phi$ is the unique $K$-invariant unit vector in $\scrH_{\phi}$. 
\end{thm}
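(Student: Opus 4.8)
The plan is to take for $\scrH_\phi$ the cyclic subrepresentation of $L^2(\Ga\ba G)$ generated by $\phi$ under the right-regular action $\pi$, i.e.\ $\scrH_\phi:=\overline{\langle\pi(G)\phi\rangle}$, and to analyze it through the diagonal matrix coefficient $\Psi(g):=\langle\pi(g)\phi,\phi\rangle_{L^2(\Ga\ba G)}$, which is a continuous, bounded, $K$-bi-invariant, positive-definite function with $\Psi(e)=\|\phi\|^2=1$. By Corollary \ref{lzero}, $\phi$ is a positive joint eigenfunction, so Proposition \ref{d2} gives an integral representation $\phi(g)=\int_{\scrF}\varphi_{\psi,k}(g)\,d\nu_o([k])$ for a $(\Ga,\psi)$-conformal density $\nu$ with $\psi\ge\max(\rho,\psi_\Ga)$. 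Lemma \ref{hecke} then shows $\phi$ is an eigenfunction for the Hecke algebra $C_c(G//K)$ (acting by convolution) with character $f\mapsto\widehat f(\psi):=\int_G f(h)e^{-\psi(H(h))}\,dh$; transferring this through the formula $\int_G\Psi(gh)f(h)\,dh=\langle\pi(g)\pi(f)\phi,\phi\rangle$ and using that $f$ is bi-$K$-invariant, one finds that $\Psi$ is a $C_c(G//K)$-eigenfunction for the same character.

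Next I would invoke the theory of the Gelfand pair $(G,K)$: a bounded $K$-bi-invariant function with value $1$ at $e$ that is a simultaneous eigenfunction of the Hecke algebra is an elementary spherical function, uniquely determined by its character (Godement). Since the elementary spherical function $\omega_\psi(g):=\int_K e^{-\psi(H(g^{-1}k))}\,dk$ carries precisely the character $\widehat{(\cdot)}(\psi)$ — which is exactly the computation already performed in Lemma \ref{hecke} and in the proof of Proposition \ref{ss} — we conclude $\Psi=\omega_\psi$. As a diagonal matrix coefficient of a unitary representation, $\Psi$ is positive-definite, and a positive-definite spherical function on a Gelfand pair is pure; hence its GNS representation is irreducible. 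But the GNS representation attached to $\Psi=\langle\pi(\cdot)\phi,\phi\rangle$ inside $L^2(\Ga\ba G)$ is exactly $\scrH_\phi$ with cyclic vector $\phi$, so $\scrH_\phi$ is irreducible, and it is spherical because $0\ne\phi\in\scrH_\phi^K$.

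The uniqueness statements then follow quickly. For a Gelfand pair, an irreducible unitary representation has at most a one-dimensional space of $K$-fixed vectors, so $\scrH_\phi^K=\CC\phi$; as $\phi$ is a positive function, it is the unique positive $K$-invariant unit vector. And if $\scrH'$ is any irreducible subrepresentation of $L^2(\Ga\ba G)$ containing $\phi$, then $\phi$ is a nonzero, hence cyclic, vector of $\scrH'$, whence $\scrH'=\overline{\langle\pi(G)\phi\rangle}=\scrH_\phi$; thus $\scrH_\phi$ is the unique irreducible (hence the unique irreducible spherical) subrepresentation containing $\phi$.

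The step I expect to require the most care is the identification $\Psi=\omega_\psi$: one must match convolution and translation conventions when passing the Hecke-eigenvalue property from $\phi\in L^2(\Ga\ba G)$ to the function $\Psi$ on $G$, and must correctly apply the classical but somewhat delicate fact that a normalized $K$-bi-invariant Hecke eigenfunction is forced to be the elementary spherical function with the matching character. Everything after that is standard Gelfand-pair representation theory. A self-contained alternative for the irreducibility, bypassing Godement's characterization, is available: the Casimir $\scrC$ is central, so it acts on the dense subspace $\langle\pi(G)\phi\rangle$ — and hence, via the spectral theorem, on all of $\scrH_\phi$ — by the same scalar as on $\phi$; therefore every vector of $\scrH_\phi^K$ is a $\lambda_0$-harmonic $L^2$ function, so $\scrH_\phi^K=\RR\phi$ by Theorem \ref{atmost}(2), and a cyclic representation generated by a $K$-fixed vector with one-dimensional $K$-isotypic subspace is irreducible — for a proper nonzero closed invariant subspace and its orthogonal complement would yield two $K$-fixed vectors summing to $\phi$, forcing one of them to contain $\phi$ and therefore to be all of $\scrH_\phi$.
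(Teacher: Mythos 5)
Your main argument follows the paper's proof almost verbatim: both form the diagonal matrix coefficient $\Psi(g)=\langle g.\phi,\phi\rangle$, show via Lemma \ref{hecke} that it is a $C_c(G//K)$-eigenfunction, observe that it is bi-$K$-invariant, positive-definite, and normalized, and invoke the standard Gelfand-pair fact (the paper cites Lang; you cite Godement's characterization) that a positive-definite spherical function generates an irreducible representation. Your intermediate step of pinning down $\Psi=\omega_\psi$ via Harish-Chandra's parametrization is not actually needed (and is where your conventions would need the most care, as you yourself note) --- the paper stops at ``$\Psi$ is a normalized $K$-bi-invariant Hecke eigenfunction, hence a positive-definite spherical function,'' which is already enough to apply the cited result. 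Your uniqueness argument is also equivalent to the paper's appeal to Corollary \ref{lzero}.

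The one genuine departure is your ``self-contained alternative'' for irreducibility, and it is a nice one: you bypass Gelfand-pair machinery entirely by observing that $\scrC$ acts by $-\lambda_0$ on all of $\scrH_\phi$ (by translation-invariance of $\scrC$ plus a spectral-theorem continuity argument), so $\scrH_\phi^K$ consists of $\lambda_0$-harmonic $L^2$-functions, which by Theorem \ref{atmost}(2) form a one-dimensional space; then the orthogonal-complement argument shows a cyclic representation generated by a $K$-fixed vector with $\dim\scrH_\phi^K=1$ is irreducible. This uses only results already proved in the paper and is arguably more transparent than the Lang citation, at the modest cost of spelling out the spectral-theorem step that promotes the Casimir eigenvalue from the algebraic span $\langle\pi(G)\phi\rangle$ to its closure.
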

\begin{proof} 
By Corollary \ref{lzero}, $\phi$ is given by \eqref{joint}
for some $\psi\in \fa^*$.
 Define $\Phi:G\rightarrow\CC$ by 
$$ \Phi(g):=\langle g.\phi,\phi\rangle$$
for all $g\in G$ where the $g$ action on $L^2(\Gamma\ba G)$ is via the translation action of $G$ on $\Gamma\ba G$ from the right. Given $f\in C_c(G//K)$, we then have, using Lemma \ref{hecke},
\begin{align*}
\big( \Phi\ast f \big)(g)&=\int_G \Phi(gh^{-1})f(h)\,dh=\int_{G} \langle (gh^{-1}).\phi,\phi\rangle f(h)\,dh\\&=\int_{G} \langle f(h) h^{-1}.\phi,g^{-1}.\phi\rangle \,dh=\big\langle \phi\ast f, g^{-1}.\phi\big\rangle
\\&=\left( \small\int_{G}f(h) e^{-\psi\big(H(h)\big)}\,dh\right)\Phi(g),
\end{align*}
i.e. $\Phi$ is also a $C_c(G//K)$-eigenfunction. Also note that $\Phi(e)=1$, and since $\phi$ is right $K$-invariant, $\Phi$ is \emph{bi}-$K$-invariant. Moreover, being the matrix coefficient of a unitary representation, $\Phi$ is also positive definite, i.e., for any
$g_1, \cdots, g_n\in G$ and $z_1, \cdots, z_n\in \mathbb C$,
$$\sum_{1\le i,j\le n} z_i \bar{z_j}
\Phi(g_j^{-1} g_i)\ge 0.$$
We have thus shown that $\Phi$ is a positive definite \emph{spherical} function. Letting $\scrH_{\phi}$ denote the closure of $\mathrm{span}\lbrace g.\phi\,:\,g\in G\rbrace$ in $L^2(\GaG)$, by \cite[Chapter IV§5, Corollary of Theorem 9]{Lang}, $ \scrH_{\phi}$ is an irreducible (spherical) unitary subrepresentation of the quasi-regular representation $L^2(\GaG)$. The uniqueness follows from Corollary \ref{lzero}.
\end{proof}

We require the following lemma in the proof of Theorem \ref{nod}:
\begin{lem}\label{max}
Let $\psi \geq \rho$ and $\psi\not\in \RR\rho$. Denote by $\psi'$ be the element of the line $\RR \psi$ closest to $\rho$. Then $\psi'\not \geq \rho$.
\end{lem}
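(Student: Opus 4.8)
The plan is to argue entirely inside the Euclidean space $\fa^*$, using the identification $\fa^*\cong\fa$ via the Killing form introduced in Section 4, and to exploit that $\psi'$ is exactly the orthogonal projection of $\rho$ onto the line $\RR\psi$. The one external input I would invoke is the standard fact that, under this identification, the vector $v_\rho\in\fa$ representing $\rho$ lies in the closed positive Weyl chamber $\fa^+$ (equivalently $\langle\rho,\alpha\rangle>0$ for all $\alpha\in\Pi$, since $\rho$ is the half-sum of the positive roots); in particular $\rho\neq 0$. From $\psi\geq\rho$ and $v_\rho\in\fa^+$ I then get $\langle\psi,\rho\rangle=\psi(v_\rho)\geq\rho(v_\rho)=\|\rho\|^2>0$, so $\psi\neq 0$ and $\psi'$ is well-defined (and is in fact a positive multiple of $\psi$).

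Next I would isolate the piece of elementary geometry that does all the work: decomposing $\rho=\psi'+(\rho-\psi')$ orthogonally along and transverse to $\RR\psi$ gives $\langle\psi',\rho\rangle=\|\psi'\|^2$ together with $\|\rho\|^2=\|\psi'\|^2+\|\rho-\psi'\|^2$. Since $\psi\notin\RR\rho$ and $\rho\neq 0$, we have $\rho\notin\RR\psi$, hence $\rho\neq\psi'$, so $\|\rho-\psi'\|>0$ and therefore $\|\psi'\|^2<\|\rho\|^2$ strictly. (Geometrically: the foot of the perpendicular from $\rho$ to a line not passing through $\rho$ is strictly shorter than $\rho$.)

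Finally I would conclude by contradiction. Assuming $\psi'\geq\rho$, the linear form $\psi'-\rho$ is $\geq 0$ on $\fa^+$, so evaluating it at $v_\rho\in\fa^+$ yields
\[
0\leq(\psi'-\rho)(v_\rho)=\langle\psi',\rho\rangle-\langle\rho,\rho\rangle=\|\psi'\|^2-\|\rho\|^2<0,
\]
which is absurd; hence $\psi'\not\geq\rho$. I do not anticipate a genuine obstacle: the argument is just the above Euclidean observation combined with the dominance of $\rho$. The only points that need a little care are the correct reading of the order relation ``$\geq$'' on $\fa^*$ (pointwise domination on $\fa^+$) and the membership $v_\rho\in\fa^+$, which is what licenses using $v_\rho$ as a test vector; both are routine.
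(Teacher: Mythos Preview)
Your proof is correct. Both your argument and the paper's rest on the same two ingredients: the orthogonal projection formula $\psi'=\frac{\langle\psi,\rho\rangle}{\|\psi\|^2}\psi$, and the fact that $v_\rho\in\fa^+$ (so that $\langle\rho,\psi-\rho\rangle\geq 0$). The difference lies in how the contradiction is extracted. The paper computes that $\psi'=t\psi$ with $0<t<1$, and then argues by iteration: if $\psi'\geq\rho$, one could project again and obtain a \emph{different} closest point on the same line $\RR\psi'=\RR\psi$, contradicting uniqueness. You instead go directly: from Pythagoras you get $\|\psi'\|^2<\|\rho\|^2$, and since $\langle\psi',\rho\rangle=\|\psi'\|^2$ this says $\psi'(v_\rho)<\rho(v_\rho)$, immediately violating $\psi'\geq\rho$. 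Your route is slightly more elementary in that it avoids the self-referential iteration step and isolates the single test vector $v_\rho$ at which $\psi'\geq\rho$ fails; the paper's route has the mild advantage of making the strict inequality $t<1$ explicit, which is what is actually used later in the proof of Theorem~\ref{nod}.
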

\begin{proof} Let $\phi:=\psi-\rho$. Note that $\phi \ge 0$ on $\fa$ by the hypothesis.
Then 
\begin{align*}
\psi'=\frac{\langle \psi,\rho\rangle}{\|\psi\|^2}\psi=\frac{\langle \rho+\phi,\rho\rangle}{\|\rho+\phi\|^2}\psi=\left( 1-\frac{\|\phi\|^2+\langle\rho,\phi\rangle}{\|\rho+\phi\|^2}\right)\psi,
\end{align*}
i.e. $\psi'=t \psi$ with $0<t<1$. Now, if $\psi'\geq \rho$, we could repeat the process with $\psi'$ in place of $\psi$ to find another, \emph{different}, closest vector in $\RR \psi$  to $\rho$, which is not possible.
\end{proof}

\begin{thm}\label{nod} Let $\Ga<G$ be of the second kind with
$\L\subset  \inte \fa^+\cup\{0\}$.
If there exists a $\lambda_0$-harmonic function in $L^2(\Ga\ba X)$, then $$\lambda_0= \lambda_\psi$$ for some
$\psi\in \dg\cup\{\rho\}$.
\end{thm}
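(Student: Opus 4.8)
The plan is to extract from the hypothesized $\lambda_0$-harmonic function a positive joint eigenfunction realizing the bottom eigenvalue, and then to pin down its spectral parameter by a convexity argument inside $\fa^*$. By Theorem~\ref{atmost}(2), the existence of a $\lambda_0$-harmonic function in $L^2(\Ga\ba X)$ gives a positive $\lambda_0$-harmonic function in $L^2(\Ga\ba X)$; thus there is a positive Laplace eigenfunction in $L^2(\Ga\ba X)$, so Corollary~\ref{lzero}(3) provides a positive joint eigenfunction in $L^2(\Ga\ba X)$ of character $\chi_{\psi-\rho}$ with $\lambda_\psi=\lambda_0$. By Proposition~\ref{d2} this eigenfunction is of the form $E_\nu$ for a $(\Ga,\psi)$-conformal density $\nu$, with $\psi\ge\max(\rho,\psi_\Ga)$; in particular $\psi\in D_\Ga$. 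It remains to prove $\psi\in\dg\cup\{\rho\}$, which I would establish by contradiction: suppose $\psi\neq\rho$ and $\psi\notin\dg$.

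Since $\L\subset\inte\fa^+\cup\{0\}$ we have $\L\cap\inte\fa^+=\L\setminus\{0\}$, so $\psi\notin\dg$ together with $\psi\ge\psi_\Ga$ means exactly that $\psi(v)>\psi_\Ga(v)$ for every $v\in\L\setminus\{0\}$. The crucial step is then to deduce that $\psi$ lies in the interior of $D_\Ga$. On the compact cross-section $S=\{v\in\L:\|v\|=1\}$ the function $v\mapsto\psi(v)-\psi_\Ga(v)$ is lower semicontinuous (as $\psi_\Ga$ is upper semicontinuous and finite on $\L$) and strictly positive, hence attains a minimum $c>0$; therefore every linear form $\psi'$ with $\|\psi'-\psi\|<c$ satisfies $\psi'>\psi_\Ga$ on $S$, hence $\psi'\ge\psi_\Ga$ on all of $\L$ by homogeneity, so $\psi'\in D_\Ga$. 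I expect this to be the main obstacle: it is precisely here that $\psi\notin\dg$ and $\L\subset\inte\fa^+\cup\{0\}$ are combined to upgrade pointwise strict domination of $\psi_\Ga$ to uniform strict domination, which is what lets small perturbations of $\psi$ remain in $D_\Ga$.

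Finally, I would move $\psi$ toward $\rho$ while staying in $D_\Ga$: for $\epsilon>0$ small, $\psi_\epsilon:=(1-\epsilon)\psi+\epsilon\rho$ lies in $D_\Ga$ by the interior property just established, and since $\psi\neq\rho$ we have $\|\psi_\epsilon-\rho\|=(1-\epsilon)\|\psi-\rho\|<\|\psi-\rho\|$, hence $\lambda_{\psi_\epsilon}=\|\rho\|^2-\|\psi_\epsilon-\rho\|^2>\lambda_\psi=\lambda_0$. (When $\psi\notin\RR\rho$ one may instead stay on the ray $\RR_{>0}\psi$: since $\psi\ge\rho$ and $\psi\neq\rho$, Lemma~\ref{max} shows that the point of $\RR\psi$ nearest $\rho$ is $t\psi$ with $0<t<1$, so $s\psi\in D_\Ga$ is strictly closer to $\rho$ than $\psi$ for $s\in(t,1)$ near $1$.) But $\Ga$ is of the second kind with $\L\subset\inte\fa^+\cup\{0\}$, so Corollary~\ref{noo} gives $\lambda_{\psi'}\le\lambda_0$ for every $\psi'\in D_\Ga$ --- equivalently, Theorem~\ref{exi} produces a positive $\lambda_{\psi_\epsilon}$-harmonic function on $\Ga\ba X$, which contradicts Theorem~\ref{no}(2). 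This contradiction forces $\psi\in\dg\cup\{\rho\}$, and then $\lambda_0=\lambda_\psi$ as claimed.
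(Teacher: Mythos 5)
Your proof is correct and reaches the paper's conclusion by a genuinely different route. Both arguments reduce, via Theorem~\ref{atmost}, Corollary~\ref{lzero}, and Proposition~\ref{d2}, to showing that the spectral parameter $\psi$ of the positive joint eigenfunction lies in $\dg\cup\{\rho\}$, and both derive a contradiction from $\psi\notin\dg\cup\{\rho\}$ by producing some $\psi'\in D_\Ga$ strictly closer to $\rho$ than $\psi$, which violates Corollary~\ref{noo}. The difference is in how $\psi'$ is produced. The paper works along the ray $\RR\psi$: it scales $\psi$ down to the tangent form $\psi_0=c\psi\in\dg$ (using concavity of $\psi_\Ga$ to obtain $c$, and $\psi\notin\dg$ to get $c<1$), identifies the orthogonal projection $s_0\psi_0$ of $\rho$ onto $\RR\psi$, proves $s_0c\le 1$ by a case split on $\psi\in\RR\rho$ versus $\psi\notin\RR\rho$ (with Lemma~\ref{max} underlying the second case), and then scales back up to some $t\psi_0\in D_\Ga$ with $t>1$ strictly closer to $\rho$. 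You instead show directly that $\psi\in\inte D_\Ga$: since $\L\subset\inte\fa^+\cup\{0\}$ turns ``$\psi\notin\dg$'' into strict inequality $\psi>\psi_\Ga$ on $\L\setminus\{0\}$, lower semicontinuity of $\psi-\psi_\Ga$ (from upper semicontinuity of $\psi_\Ga$) together with compactness of $S=\{v\in\L:\|v\|=1\}$ yields a uniform gap $c>0$, so every $\psi'$ within distance $c$ of $\psi$ still dominates $\psi_\Ga$, and then the convex combination $\psi_\epsilon=(1-\epsilon)\psi+\epsilon\rho$ does the job in one stroke. Your approach avoids the case analysis and Lemma~\ref{max}, makes explicit the compactness/semicontinuity input that the paper compresses into ``since $\psi_\Ga$ is concave,'' and treats $\psi\in\RR\rho$ and $\psi\notin\RR\rho$ uniformly. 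Both arguments use the hypotheses $\L\subset\inte\fa^+\cup\{0\}$ (to identify $\L\cap\inte\fa^+$ with $\L\setminus\{0\}$) and $\Ga$ of the second kind (through Corollary~\ref{noo}) in the same essential way.
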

\begin{proof}

Suppose that
$\psi\in D_{\Gamma}\setminus (\lbrace \rho\rbrace\cup D_{\Gamma}^\star)$ and that $\psi\ge \rho$. Assume that
there exists a positive joint eigenfunction $\phi\in L^2(\Ga\ba X)$ with character $\chi_{\psi-\rho}$. By Corollary \ref{lzero},
\be\label{no2} \lambda_0=\lambda_\psi=\|\rho\|^2-\|\psi-\rho\|^2.\ee 
Since $\psi_\Ga$ is  concave, there exists $0<c\le 1$ such that $c\psi (u)=\psi_\Ga(u)$
for some $u\in \L$. So $\psi_0:=c\psi\in D_{\Ga}^\star$.
Since $\psi\not\in \dg$, we have $0<c<1$. 
There exists a unique $s_0\in \br$ such that
\be\label{so2} \|s_0\psi_0-\rho \|=\min\{\|s\psi-\rho \|: s\in \br\},\ee  that is,
$s_0 \psi_0$ be the element on the line $\br \psi$ that is closest to $\rho$.

We claim that $s_0 c\le 1$; since $0<c<1$, this implies that $\max\{1, s_0\}<c^{-1}$.
If $\psi\in\RR\rho$, then $s_0\psi_0=\rho$. Since $\psi_0=c\psi$, we get $s_0c\psi=\rho$. By the hypothesis $\rho\le \psi$, $s_0c\le 1$.
Now suppose $\psi\notin \br \rho$. Assume that $s_0c>1$. Then $s_0\psi_0=
s_0c\psi> \psi$. Hence $s_0c\psi\in D_\Ga$. By Corollary \ref{noo}
and \eqref{no2}, we get 
$$\|s_0c\psi-\rho\| \ge \|\psi-\rho\|.$$ By the choice of $s_0$ in \eqref{so2}, it follows that $\|s_0c\psi-\rho\|=\|\psi-\rho\|$.
Since $s_0c\psi> \psi\ge \rho$, this yields a contradiction. Therefore the claim $s_0c\le 1$ follows.

We now choose $t$ so that $\max\{1, s_0\}<t<c^{-1}$.
Since $t>1$ and $\psi_0\in \dg$, $t\psi_0\in D_\Ga$.
Note also that $s\mapsto \lambda_{s\psi_0}$ is strictly decreasing on the interval $[s_0,\infty)$.  Since $s_0<t<c^{-1}$ and $c^{-1}\psi_0=\psi$, we get 
$$ \lambda_0=\lambda_{\psi}<\lambda_{t\psi_0}. $$
This contradicts Corollary \ref{noo}. This implies the claim by Corollary \ref{lzero}. \end{proof}

If we use the norm on $\frak{so}(n,1)$
which endows the constant curvature $-1$ metric on $\bH^n$, then for any non-elementary discrete subgroup
$\G<\op{SO}^\circ(n,1)$, $\dg=\{\delta\}$ and hence the above theorem says that if a $\lambda_0$-harmonic function belongs to $L^2(\Ga\ba \bH^n)$, then $\lambda_0$ must be given by either $\delta(n-1-\delta)$
or $\frac{1}{4}(n-1)^2$.

\section{Smearing argument in higher rank}\label{ssm}
Let $\G$ be a torsion-free discrete subgroup of a connected semisimple real algebraic group $G$. 
The goal of this section is to prove the following: 
\begin{thm}\label{tr} 
If $\L\ne\fa^+$, then no positive joint eigenfunction belongs to $L^2(\Gamma\ba X)$. 
\end{thm}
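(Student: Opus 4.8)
The plan is to prove the contrapositive in a quantitative form: if a positive joint eigenfunction $\phi$ lies in $L^2(\Gamma\ba X)$, then the limit cone $\mathcal L$ must be all of $\fa^+$. By Proposition \ref{d2}, such a $\phi$ is of the form $E_\nu$ for a $(\Gamma,\psi)$-conformal density $\nu$ with $\psi\ge\max(\rho,\psi_\Gamma)$, and the pair $(\psi,\nu)$ is unique. The key idea, following Sullivan and Thurston, is a \emph{smearing} argument: one spreads the boundary measure $\nu_o$ out over a thickened copy of $\Gamma\ba X$ (or of $\Gamma\ba G$) to produce an auxiliary $\Gamma$-invariant measure on $X$ (or on $G$), and then one compares its total mass to $\|\phi\|_{L^2}^2$. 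First I would fix a small relatively compact neighborhood $\mathcal O$ of the identity in $G$ and, for each $\xi=[k]\in\mathcal F$, consider the ``tube'' over the geodesic/flat ray toward $\xi$, weighted by $e^{-\psi(\beta_\xi(\cdot,o))}$; integrating $d\nu_o(\xi)$ against the pushforward of Haar measure on these tubes gives a measure $\tilde m$ on $G$ which is left $\Gamma$-invariant (by conformality) and whose local density is comparable to $E_\nu$ — more precisely, by the standard Harish-Chandra-type estimate \eqref{hc} on $\Xi_G$ together with the shadow lemma (Lemma \ref{sh}), one gets $\tilde m$ dominated by (a constant times) $\phi\cdot\phi^\circ$ locally, where $\phi^\circ$ is the companion eigenfunction built from the opposition involution. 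Since $\phi\in L^2$ and, by Theorem \ref{atmost}/Corollary \ref{lzero}, $\phi$ is $\lambda_0$-harmonic with $\psi$ essentially forced, one deduces that $\tilde m$ has finite total mass on $\Gamma\ba G$.

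The next step is to extract geometric information from the finiteness of $\tilde m(\Gamma\ba G)$. The mass computation localizes near the limit set: a point of $\Gamma\ba G$ contributes to $\tilde m$ only if it lies within bounded distance of a $\Gamma$-orbit direction pointing into the limit cone, and the weight there is $\asymp e^{-\psi(\mu(\gamma))}$ summed over the relevant $\gamma\in\Gamma$. Now I would invoke the hypothesis $\mathcal L\ne\fa^+$: pick $v\in\fa^+\setminus\mathcal L$ and an open cone $\mathcal C\ni v$ with $\mathcal C\cap\mathcal L=\emptyset$; along directions in $\mathcal C$ the measure $\tilde m$ is carried by a \emph{noncompact} piece of $\Gamma\ba G$ on which, crucially, the weight does \emph{not} decay (because $\psi$ restricted to a direction missing the limit cone is bounded below by $0$ only on $\mathcal L$, and off $\mathcal L$ one has $\psi_\Gamma=-\infty$, so the conformal density gives mass bounded below on arbitrarily large such regions). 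This produces an infinite lower bound for $\tilde m(\Gamma\ba G)$, contradicting the finiteness obtained from $\phi\in L^2$. Hence $\mathcal L=\fa^+$.

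I expect the main obstacle to be making the smearing construction genuinely work in higher rank: unlike the rank-one case, the ``shadow'' of a tube in $\mathcal F$ is governed by the full Cartan projection $\mu(\gamma)\in\fa^+$ rather than a single distance, so the comparison between $\tilde m$ and $\phi\cdot\phi^\circ$ must be done with the $\fa$-valued Busemann cocycle and the anisotropic estimate \eqref{hc}, and one must choose the thickening $\mathcal O$ (and the family of flats/tubes) so that the local density estimate is uniform — this is where Lemma \ref{sh}(2) and the hypothesis $\mathcal L\subset\inte\fa^+$-type regularity of directions enter. A secondary technical point is handling the conformal weight $\psi$ versus $\rho$: one needs $\psi\ge\rho$ (guaranteed by Proposition \ref{d2}) so that $e^{-\psi(\mu(\gamma))}\ll\Xi_G(\gamma)\asymp e^{-\rho(\mu(\gamma))}$ holds, which is exactly what converts the geometric mass into an $L^2$ pairing; once that inequality is in hand, the Cauchy–Schwarz/Fubini bookkeeping identifying $\tilde m(\Gamma\ba G)$ with a quantity controlled by $\|\phi\|_2\,\|\phi^\circ\|_2$ is routine. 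The contradiction with $\mathcal L\ne\fa^+$ is then a matter of exhibiting, in the excluded cone $\mathcal C$, an escaping sequence of regions of $\Gamma\ba G$ each carrying a definite amount of $\tilde m$-mass.
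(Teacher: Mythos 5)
Your overall architecture — appeal to Proposition \ref{d2} to write $\phi=E_\nu$, produce a smeared measure whose total mass is controlled by $\|\phi\|_{L^2}^2$, and then extract information about the limit cone from finiteness of that measure — matches the paper, which uses Theorem \ref{smear} applied to the BMS-type measure $m_{\nu,\nu}$. The intermediate step about $\Xi_G$, the estimate \eqref{hc}, and comparing $\psi$ to $\rho$ is not needed: the smearing theorem is purely geometric (flat projections and a compactness/finite-cover estimate), and the bound on the total mass of $m_{\nu,\nu}$ comes directly from $\int E_\nu^2\,d\vol=\|\phi\|_2^2<\infty$, with $\bar\nu=\nu$ (no ``companion eigenfunction $\phi^\circ$'' is required).

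The genuine gap is in your final step, where you try to derive $\L=\fa^+$ directly from finiteness by claiming that for $v\in\fa^+\setminus\L$ and a cone $\mathcal C\ni v$ disjoint from $\L$, ``the measure $\tilde m$ is carried by a noncompact piece ... on which the weight does not decay ... because off $\L$ one has $\psi_\Gamma=-\infty$.'' This does not track. The BMS measure $m_{\nu,\nu}$ lives in the Hopf parametrization $\F^{(2)}\times\fa$; the conformal density $\nu_o$ is supported on $\La\subset\F$ (a boundary limit set), and the $\fa$-factor of the measure is simply Lebesgue measure with an exponential density $e^{(-\psi+\psi\circ\i)(\cdot)}$ — it has no localization to ``directions in $\mathcal C$,'' and the vanishing of $\psi_\Gamma$ off $\L$ is a statement about growth of the orbit, not about mass of $\nu_o$ on some region. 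So the claimed ``definite mass in escaping regions'' argument has no content.

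What actually does the work, and what you need to supply, is the following two-step ergodic input. First, $m_{\nu,\nu}$ is a priori only $A$-\emph{semi}-invariant, satisfying $a_*m_{\nu,\nu}=e^{(-\psi+\psi\circ\i)(\log a)}m_{\nu,\nu}$ by \eqref{semi}; if $|m_{\nu,\nu}|$ is finite and nonzero, this equality of total masses forces the scalar to be $1$ for all $a$, hence $m_{\nu,\nu}$ is genuinely $A$-invariant. Second, a finite $A$-invariant measure is conservative for every one-parameter subgroup of $A$ by Poincar\'e recurrence; so for any nonzero $v\in\fa^+$ there exist $g$ in the support, $t_i\to+\infty$ and $\gamma_i\in\Ga$ with $\gamma_ig\exp(t_iv)$ convergent, which gives $\|t_iv-\mu(\gamma_i^{-1})\|$ bounded and hence $v=\lim t_i^{-1}\mu(\gamma_i^{-1})\in\L$. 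Without the recurrence step you have no mechanism to translate finiteness of $m_{\nu,\nu}$ into a statement about which $v$ lie in $\L$, so the contradiction you are aiming for never materializes.
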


Combined with Corollary \ref{lzero}, we get the following corollary which implies Theorem \ref{m33}(4) in higher rank.

\begin{cor}\label{ttt} 
If $\L\ne\fa^+$, 
there exists no positive Laplace eigenfunction in $L^2(\Ga\ba X)$.
In particular,
if $\text{rank }G\ge 2$ and $\Ga<G$ is Anosov, 
no positive Laplace eigenfunction 
belongs to $L^2(\Gamma\ba X)$.\end{cor}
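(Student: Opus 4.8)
The plan is to argue by contradiction, via a higher‑rank analogue of Sullivan and Thurston's smearing argument (Theorem~\ref{smear}). Assume that $\phi\in L^2(\Ga\ba X)$ is a positive joint eigenfunction. By Proposition~\ref{d2} we may write $\phi=E_\nu$ for a unique $(\Ga,\psi)$‑conformal density $\nu=\{\nu_x\}$ with $\psi\ge\max(\rho,\psi_\Ga)$, and by Corollary~\ref{lzero}(2) we have $\lambda_\psi=\lambda_0\le\|\rho\|^2$, i.e. $\|\psi-\rho\|\le\|\rho\|$. It therefore suffices to show that such an $E_\nu$ cannot be square‑integrable once $\L\ne\fa^+$.

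\emph{The smearing reduction.} Writing $E_\nu(g)=|\nu_{g(o)}|$ and $|\nu_x|^2=\nu_x(\cal F)^2$, and using the $\Ga$‑equivariance $\ga_*\nu_x=\nu_{\ga x}$, one unfolds $\int_{\Ga\ba X}E_\nu^2\,d\vol$ into the total mass of an explicit $\Ga$‑invariant (``smeared'') measure $\widetilde m_\nu$ on $\Ga\ba G$ obtained by spreading $\nu$ over the $A$‑directions against the Haar/volume data; this is the content of Theorem~\ref{smear}, which thereby reduces the statement ``$E_\nu\in L^2(\Ga\ba X)$'' to ``$|\widetilde m_\nu|<\infty$''. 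I would then show $|\widetilde m_\nu|=\infty$ when $\L\ne\fa^+$. Since $\L$ is a closed cone that equals $\fa^+$ as soon as it contains $\inte\fa^+$, the hypothesis provides a regular $v_0\in\inte\fa^+\setminus\L$ and an open convex cone $\mathcal C\ni v_0$ with $\overline{\mathcal C}\cap\L=\{0\}$. Because $\L$ is exactly the set of limiting directions of $\mu(\ga_i)$ along sequences $\ga_i\to\infty$ in $\Ga$, the set $\{\ga\in\Ga:\mu(\ga)/\|\mu(\ga)\|\in\mathcal C\}$ is \emph{finite}; this is the input replacing the classical fact that $\bH^n\to\Ga\ba\bH^n$ is essentially injective over the funnels missed by $\La$. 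Consequently the restriction of $\widetilde m_\nu$ to the part of $\Ga\ba G$ whose Cartan projection points into $\mathcal C$ is comparable (the covering is finite‑to‑one there) to the corresponding piece of the Haar‑smeared measure of $\nu$ on $G$ itself. Along these ``escaping'' directions $\nu$ feels no cancellation from the limit set, and an estimate of the Iwasawa cocycle $\beta_\xi(k\exp(v)(o),o)=H(\exp(-v)k^{-1}k_\xi)$ together with the conformal‑density shadow lemma (Lemma~\ref{sh} and $\nu_o(O_r(\ga o,o))\asymp e^{-\psi(\mu(\ga))}$) shows that this contribution is an integral over $\mathcal C\cap\{\|v\|\ge R\}$ whose integrand is bounded below so that, thanks to $\psi\ge\rho$ and $\|\psi-\rho\|\le\|\rho\|$ and to the $e^{2\rho(v)}$ growth of the volume element, it diverges — $\mathcal C$ carrying a full cone's worth (dimension $\rank G$) of such directions. (When $\rank G=1$, $\L\ne\fa^+$ forces $\Ga=\{e\}$ and one is in the rank‑one setting already treated in \cite{Su}.) Hence $|\widetilde m_\nu|=\infty$, so $E_\nu\notin L^2(\Ga\ba X)$, a contradiction.

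\emph{Main obstacle, and Corollary~\ref{ttt}.} The crux is Theorem~\ref{smear} itself: identifying $\widetilde m_\nu$ precisely in Iwasawa/Cartan coordinates, proving the ``if and only if'', controlling the covering multiplicity over the $\mathcal C$‑sector, and — most delicately — carrying out the balance between the decay of $E_\nu$ along the escaping directions and the $e^{2\rho}$ volume growth so that divergence holds for \emph{every} admissible $\psi$ (handling $v_0$ on a wall of $\fa^+$ would add bookkeeping, but $\L\ne\fa^+$ already yields a regular $v_0$). Granting Theorem~\ref{tr}, Corollary~\ref{ttt} is immediate: a positive Laplace eigenfunction in $L^2(\Ga\ba X)$ is automatically a positive \emph{joint} eigenfunction by Corollary~\ref{lzero}(3) (equivalently, by the uniqueness in Theorem~\ref{atmost}), so Theorem~\ref{tr} applies; and for an Anosov subgroup $\Ga<G$ with $\rank G\ge2$, Theorem~\ref{PS} gives $\L\subset\inte\fa^+\cup\{0\}\subsetneq\fa^+$, so the hypothesis of Theorem~\ref{tr} holds.
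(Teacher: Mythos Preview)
Your final paragraph — deducing Corollary~\ref{ttt} from Theorem~\ref{tr} via Corollary~\ref{lzero}(3) and Theorem~\ref{PS} — is correct and is exactly how the paper does it.

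The bulk of your proposal, however, is an attempted proof of Theorem~\ref{tr}, and here your route diverges from the paper's and has a genuine gap. You want to show $|\widetilde m_\nu|=\infty$ by direct estimation over an ``escaping'' cone $\mathcal C\subset\fa^+\setminus\L$, invoking finite covering multiplicity and a balance between the decay of $E_\nu$ and the $e^{2\rho}$ volume growth. But the measure $m_{\nu,\nu}$ lives on $\Ga\ba(\F^{(2)}\times\fa)$ and is built from $\nu_o\otimes\nu_o$ on $\F^{(2)}$, not from Haar; its total mass has no direct relation to how many $\ga\in\Ga$ satisfy $\mu(\ga)\in\mathcal C$, and the phrase ``the part of $\Ga\ba G$ whose Cartan projection points into~$\mathcal C$'' is not well-defined on the quotient. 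You also speak of an ``if and only if'' in Theorem~\ref{smear}, but the paper proves only the inequality $|m_{\nu,\nu}|\le c\int E_\nu^2\,d\vol$ — and that is all that is needed.

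The paper's argument for Theorem~\ref{tr} is shorter and uses an idea you do not mention: from the smearing inequality, $E_\nu\in L^2$ forces $|m_{\nu,\nu}|<\infty$. By the semi-invariance relation~\eqref{semi}, a finite $m_{\nu,\nu}$ must be genuinely $A$-invariant. Poincar\'e recurrence then makes $m_{\nu,\nu}$ conservative for \emph{every} one-parameter subgroup $\{\exp(tv):t\in\br\}$ of $A$. Conservativity along the direction $v$ produces $g\in G$, $t_i\to\infty$, $\ga_i\in\Ga$ with $\ga_i g\exp(t_i v)$ bounded, whence $t_i^{-1}\mu(\ga_i^{-1})\to v$ and $v\in\L$. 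Since $v\in\fa^+$ was arbitrary, $\L=\fa^+$. This recurrence/conservativity step is the missing idea in your sketch.
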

 The second part follows from the first by Theorem \ref{PS}.
Theorem \ref{tr} will be deduced from Theorem \ref{smear}, the proof of which is based
on the smearing argument of Thurston and Sullivan (see \cite{Su2}, \cite{CI} and also \cite{Su3} for historical remarks and the origin of the name ``smearing argument"). We also refer to \cite[Theorem 3.1]{RT}.

\begin{Def}[Hopf parameterization]\label{hopf} \rm The homeomorphism 
  $G/M\to \F^{(2)} \times \mathfrak a$
  given by $gM \mapsto (g^+, g^-, b=\beta_{g^-}(e,g)) $
  is called the Hopf parameterization of $G/M$.
\end{Def}

Fix   a pair of linear forms $\psi_1, \psi_2\in \mathfrak a^*$. For $x\in X$ and $(\xi, \eta)\in \F^{(2)}$,
 let 
 \be\label{phi} \phi_x (\xi, \eta)= e^{\psi_1\big( \beta_\xi(x, go)\big)+ \psi_2\big( \beta_{\eta}(x, go)\big)},\ee 
 where $g\in G$ is such that $g^+=\xi$ and $g^-=\eta$.
 
Let $\nu=\{\nu_x: x\in X\}$ and $\bar\nu=\{\bar \nu_x: x\in X\}$
 be respectively $(\Gamma, \psi_1)$ and $(\Gamma, \psi_2)$-conformal densities on $\cal F$.
Using the Hopf parametrization, we define the following locally finite Borel measure $\tilde m_{\nu, \bar \nu}$ on $G/M$: for $(\xi,\eta, v)\in \F^{(2)}\times \fa$, 
\begin{equation}\label{bms}
d\tilde m_{\nu, \bar\nu}(\xi, \eta, v)
={\phi_{ x} (\xi, \eta)} d\nu_{x} (\xi) d\bar \nu_{x}(\eta) dv
\end{equation}
  where $dv$ is the Lebesgue measure on $\mathfrak a$ and $x\in X$ is any element;
  it follows from the $\Ga$-conformality
  of $\{\nu_x\}$ and $\{\bar\nu_x\}$ that this definition is independent of $x\in X$. The measure $\tilde m_{\nu, \bar\nu}$ is left $\G$-invariant and right $A$-semi-invariant:
  for all $a\in A$,
\be\label{semi}
a_*\tilde m_{\nu, \bar{\nu}}=e^{(-\psi_1+\psi_2\circ \i)(\log a)}\,\tilde m_{\nu, \bar{\nu}} .\ee 
Note that $\psi_2=\psi_1\circ \i$ if and only if $\tilde m_{\nu, \bar \nu}$ is $A$-invariant.
We denote by $m_{\nu, \bar\nu}$
the $M$-invariant Borel measure on $\Ga\ba G$ induced by
$\tilde m_{\nu,\bar\nu}$; this measure is called the (generalized) Bowen-Margulis-Sullivan measure associated to the pair $(\nu, \bar\nu)$ \cite{ELO}.

\begin{thm}[Smearing theorem] 
\label{smear} For any pair $(\nu, \bar\nu)$ of $\Ga$-conformal densities on $\F$, there exists $c>0$ such that
$$m_{\nu,\bar{\nu}}(\Ga\ba G) \le 
c \int_{\text{$one$-neighborhood of $\op{supp}m_{\nu, \bar\nu}$}} E_{\nu}(x) E_{\bar{\nu}}(x) \, d\vol(x)  .$$
\end{thm}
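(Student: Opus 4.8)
The plan is to run the Thurston--Sullivan smearing argument in the present generality: I will thicken each point of $\op{supp}m_{\nu,\bar\nu}\subset\Ga\ba G$ by a fixed small ball, estimate the resulting ``smeared'' density pointwise by $E_\nu E_{\bar\nu}$, and integrate. Concretely, fix once and for all a symmetric relatively compact neighborhood $\mathcal O$ of $e$ in $G$ contained in the unit ball about $e$ for the left-$G$-invariant, right-$K$-invariant metric, and let $\tilde m:=\tilde m_{\nu,\bar\nu}$ also denote the left-$\Ga$-invariant, $M$-invariant Borel measure on $G$ obtained from the measure \eqref{bms} on $G/M$ by averaging over the $M$-fibres.

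The first step is the elementary unfolding identity
\be\label{smear-unfold}
\mathrm{Haar}(\mathcal O)\cdot m_{\nu,\bar\nu}(\Ga\ba G)=\int_{\Ga\ba G}\tilde m\big(h\mathcal O\big)\,dh ,
\ee
obtained by writing $\tilde m(h\mathcal O)=\int_G \mathbf{1}_{\mathcal O}(h^{-1}g)\,d\tilde m(g)$, unfolding both integrals against the lift of $m_{\nu,\bar\nu}$ and against Haar measure, interchanging the (nonnegative) integrals, and using that $G$ is unimodular; note $h\mapsto\tilde m(h\mathcal O)$ is left-$\Ga$-invariant, so the right-hand side is meaningful. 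This reduces the theorem to the uniform pointwise bound
\be\label{smear-pt}
\tilde m(h\mathcal O)\ \le\ C\, E_{\nu}(h(o))\,E_{\bar\nu}(h(o))\qquad(h\in G)
\ee
for some $C>0$ independent of $h$. Granting \eqref{smear-pt}, the left side vanishes unless $h\mathcal O$ meets $\op{supp}\tilde m$, which (as $G\to X$ is $1$-Lipschitz) forces $h(o)$ into the $1$-neighborhood of $\op{supp}m_{\nu,\bar\nu}$; hence \eqref{smear-unfold}, together with the right-$K$-invariance of $E_\nu,E_{\bar\nu}$ and the compactness of the $K$-fibres of $\Ga\ba G\to\Ga\ba X$, yields the stated inequality for a suitable $c>0$.

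To prove \eqref{smear-pt} I would use the Hopf parametrization, in which $\tilde m$ on $G/M$ is, for \emph{any} basepoint $x\in X$, the product of $\phi_x(\xi,\eta)\,d\nu_x(\xi)\,d\bar\nu_x(\eta)$ with Lebesgue measure $dv$ on $\fa$, and then one multiplies by the finite Haar mass of $M$. Evaluating everything at $x=h(o)$, three estimates combine: (i) for fixed $(\xi,\eta)\in\F^{(2)}$ the set of $v\in\fa$ with $(\xi,\eta,v)$ in the image of $h\mathcal O$ has Lebesgue measure bounded by a constant depending only on $\mathcal O$ — because two elements of $h\mathcal O$ with the same ordered pair of fixed points differ by an element of $MA$ lying in the relatively compact set $(\mathcal O^{-1}\mathcal O)\cap MA$, and the Hopf coordinate $v$ transforms affinely under right $A$-translation; (ii) since $M\subset K$ fixes $o$, every $g\in h\mathcal O M$ satisfies $d(g(o),h(o))<1$, hence $\|\beta_{g^+}(h(o),g(o))\|,\|\beta_{g^-}(h(o),g(o))\|<1$ and therefore $\phi_{h(o)}(g^+,g^-)<e^{\|\psi_1\|+\|\psi_2\|}$; and (iii) bounding the remaining boundary integral by the full product,
\[
\iint_{\F\times\F} d\nu_{h(o)}(\xi)\,d\bar\nu_{h(o)}(\eta)\;=\;|\nu_{h(o)}|\cdot|\bar\nu_{h(o)}|\;=\;E_{\nu}(h(o))\,E_{\bar\nu}(h(o)).
\]
Putting (i)--(iii) together gives \eqref{smear-pt}, and hence the theorem.

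I expect the one genuine point to be \eqref{smear-pt}, and within it ingredient (i): the assertion that $h\mathcal O$ occupies a set of $\fa$-coordinates of bounded size \emph{uniformly in} $h$, for which one must track how the $\fa$-valued Busemann cocycle entering the Hopf coordinates behaves under right translation by $MA$. Everything else — the unfolding identity \eqref{smear-unfold}, the support localization, and the passage from $\Ga\ba G$ to $\Ga\ba X$ — is formal; so too is the comparison of $\phi_{h(o)}$ with $1$ on the relatively compact set $h\mathcal O M$.
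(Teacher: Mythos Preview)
Your argument is correct and carries out the smearing idea cleanly. It differs from the paper's proof in its packaging rather than its essence, but the difference is worth noting.

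The paper works on the auxiliary space $Z=X\times\F^{(2)}$: it builds a measure $\alpha$ on $Z$ by integrating $dm(\xi,\eta)$ against Riemannian volume on the $1$-tube $W_{\xi,\eta}$ around each maximal flat $[\xi,\eta]$, then projects $Z$ in two ways --- down to $\Ga\ba G/M$ via orthogonal projection of $x$ onto $[\xi,\eta]$ (a CAT(0) fact), and down to $\Ga\ba X$ via the first coordinate. The first projection produces the factor $\vol(D_0)\cdot m_{\nu,\bar\nu}$ from the cross-sectional disk $D_0$; the second gives the fibre bound via compactness of $V_o=\{(\xi,\eta):[\xi,\eta]\cap B(o,1)\ne\emptyset\}$ in $\F^{(2)}$.

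You instead thicken by a fixed neighborhood $\mathcal O\subset G$ and use the convolution/unfolding identity \eqref{smear-unfold} to reduce immediately to the local mass estimate \eqref{smear-pt}. Your step (ii) is actually slicker than the paper's: by taking the representative $g'=g$ in the definition of $\phi_x$ you bound $\phi_{h(o)}$ directly by $e^{\|\psi_1\|+\|\psi_2\|}$, bypassing the compactness argument for $V_o$. Your step (i) --- which you rightly flag as the substantive point --- replaces the paper's orthogonal-projection geometry by the group-theoretic observation that the fibre of the Hopf map over $(\xi,\eta)$ is an $MA$-coset, so two preimages in $h\mathcal O$ differ by an element of the compact set $(\mathcal O^{-1}\mathcal O)\cap MA$, and the $\fa$-coordinate shifts by a bounded amount under right $MA$-translation. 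This avoids invoking CAT(0) projection and the disk $D_0$ entirely. The paper's version is more visibly geometric; yours is shorter and stays inside the group.
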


\begin{proof} Let $Z=G/K\times \cal F^{(2)}$. For any $(\xi, \eta)\in \F^{(2)}$, we write $[\xi, \eta]=gAo\subset X$ for any $g\in G$ such that $g^+=\xi$ and $g^-=\eta$;  $[\xi, \eta]$ is a maximal flat in $X$ defined independently of the choice of $g\in G$. Let $\psi_1, \psi_2\in \fa^*$ be linear forms such that
$\nu$ and $\bar\nu$ are respectively $(\Ga, \psi_1)$ and $(\Ga, \psi_2)$-conformal densities. Let $\phi_x$ be defined as in \eqref{phi}
 for  all $x\in X$. We also denote by $W_{\xi, \eta}\subset X$ the one neighborhood of $[\xi, \eta]$.
 Consider the following locally finite Borel measure
 $\alpha$ on $Z$ defined as follows: for any $f\in C_c(Z)$, 
 $$\alpha (f)=\int_{(\xi, \eta)\in \F^{(2)}}\int_{z\in W_{\xi, \eta}} f(z, \xi, \eta) \, dz \, dm(\xi, \eta),$$
 where  $dz$ is the $G$-invariant measure on $X$, and
 $$dm (\xi, \eta)=\phi_x(\xi, \eta)
 d\nu_x(\xi) d\bar\nu_{ x}(\eta)$$ (observe that this definition is independent of $x$).

Consider the natural diagonal action of $\G$ on $Z$. Since
$dz$ and $dm$ are both left $\Ga$-invariant, $\alpha$ is also left $\Ga$-invariant and hence induces a measure on the quotient space $\Gamma\ba Z$, which we also denote by $\alpha$ by abuse of notation.

Define the projection $\pi':Z\to G/M$  as follows: for
 $(x, \xi, \eta)\in X \times \F^{(2)}$, choose $g\in G$ so that $g^+=\xi$ and $g^-=\eta$. Then there exists a unique element $a\in A$
 such that 
 $$d(x, gao)=d(x, gA o)=\inf_{b\in A} d(x, gbo);$$ this follows from \cite[Proposition 2.4]{BH} since $X$ is a $\rm{CAT} (0)$ space and $gA(o)$ is a convex complete subspace of $X$. In other words, the point
 $gao$ is the orthogonal projection of $x$ to the flat $[\xi, \eta]=gAo$.
 We then set $$\pi'(x, \xi, \eta)= gaM\in G/M;$$
 this is well-defined independent of the choice of $g\in G$. Noting that  $\pi'$ is $\Ga$-equivariant, we denote by
 $$\pi: \text{supp}(\alpha )\subset \Gamma\ba Z \to \text{supp }{ (m_{\nu,\bar\nu})}\subset \Gamma\ba G/M$$ the map induced by  $\pi'$. Fixing $[ga]\in \Ga\ba G/M$, the fiber $\pi^{-1}[ga]$
 is of the form $[( ga D_0, g^+, g^-)]$, where \begin{multline*} D_0=\{s\in X: d(s, o)\le1,\\
 \text{the geodesic connecting $s$ and $o$ is orthogonal to
 $Ao$ at $o$}\}.\end{multline*}

 Noting that each fiber $\pi^{-1} (v)$, $v\in\supp m_{\nu,\bar\nu}$, is isometric to $D_0$,
 we have for any Borel subset
 $S\subset \supp m_{\nu,\bar\nu}$,
\be\label{mu} \alpha (\pi^{-1} (S))=\op{Vol}(D_0)\cdot m_{\nu,\bar\nu}(S); \ee the volume of $D_0$ being computed with respect to the volume form induced by the $G$-invariant measure on $X$. Consider now the map $p: \text{supp} (\alpha) \to \Gamma\ba X$ defined by $p([(z,\xi, \eta)])=[z]$
 for any $(z,\xi,\eta)\in \text{supp} (\alpha)$.
 Let $F=\pi^{-1} (\supp m_{\nu,\bar\nu})\subset \text{supp}(\alpha)$. We write 
 $$\alpha (F)=\int_{\Gamma\ba X} \alpha_x(p^{-1}(x)\cap F)\, dx,$$ 
 where $\alpha_x$ is a conditional measure on the fiber $p^{-1}(x)$. We claim that there exists a constant $c>0$
 such that for any $x\in \Gamma\ba  X$,
 \be\label{al} \alpha_x(p^{-1}(x))\le c  \, E_\nu (x) \cdot  E_{\bar\nu} (x) .\ee
Since $p^{-1}(x)\cap F=\emptyset$ for $x$ outside
of the one neighborhood of $\op{supp}(m_{\nu, \bar\nu})$,
this together with \eqref{mu}, implies that
 $$\op{Vol}(D_0) \cdot |m_{\nu, \bar\nu}|= \alpha (F)\le c\cdot   \int_{\text{one neighborhood of $\op{supp}(m_{\nu, \bar\nu})$}} E_\nu(x) E_{\bar\nu}(x) \, dx $$
 finishing the proof. Note that for any $h\in G$,
$$V_{ho}:=\{( \xi, \eta)\in \F^{(2)}: [\xi, \eta]\cap B(ho,1)\ne \emptyset\}$$ is a compact subset
 of $\cal F^{(2)}$; if $\lbrace g_i\rbrace \subset G$ and $\lbrace a_i\rbrace\subset A$ are sequences such that
 $d(g_ia_i o, ho)\le 1$, then (by passing to a subsequence) we may assume that $g_ia_i$ converges to some $g_0\in G$. This implies $(g_i^+, g_i^-)\to (g_0^+, g_0^-)\in  \cal F^{(2)}$ as $i\to \infty$ and $d(g_0o,ho)\leq 1$, from which the compactness of $V_{ho}$ follows.
 It follows that
 $$c:=\sup \{ \phi_{o}(\xi,\eta) : (\xi, \eta)\in V_{o}\} <\infty .$$ 
 
 By the equivariance
 $\phi_{ho}(\xi, \eta)=\phi_o(h^{-1}\xi,h^{-1}\eta)$,
 we have for any $h\in G$,
 $$\sup \{ \phi_{ho}(\xi,\eta) : (\xi, \eta)\in V_{ho}\}=c .$$

 Note that if $x= [ho]\in \Gamma\ba X$ for $h\in G$, then
 $$p^{-1}(x)=\{[(ho, \xi, \eta)]\in \supp (\alpha): [\xi, \eta]\cap B(ho,1)\ne \emptyset\}\simeq V_{ho}.$$

Therefore for any $x =[ ho] \in \Gamma\ba X$,
 \begin{align*}
\alpha_x(p^{-1}(x)) &= \alpha_x(V_{ho})\\
&=\int_{(\xi, \eta)\in V_{ho}} \phi_{ho}(\xi, \eta)\, d\nu_{ho}(\xi) d\bar\nu_{ho}(\eta) \\
 &\le {c}   \int_{(\xi, \eta)\in V_{ho}} d\nu_{ho}(\xi) d\bar\nu_{ho}(\eta)\\ &
 \le  {c}\cdot |\nu_{ho}|\cdot |\bar\nu_{ ho}| = {c}\cdot E_\nu (x)\cdot E_{\bar\nu}(x).
 \end{align*}
 This proves \eqref{al}, and hence finishes the proof.
 \end{proof}

\noindent{\bf Proof of Theorem \ref{tr}.} Suppose
that $\phi\in L^2(\Ga\ba X)$ is a positive joint eigenfunction. 
By Proposition \ref{d2}, $\phi=E_\nu$
for some $(\Ga, \psi)$-conformal density $\nu$. We may form the $MA$-semi-invariant measure $m_{\nu, \nu}$, and apply Theorem \ref{smear}.
Since $E_\nu\in L^2(\Gamma\ba G)$, it follows that $m_{\nu, \nu}(\Ga\ba G)<\infty$.
The finiteness of $|m_{\nu, \nu}|$ implies that
 $m_{\nu, \nu}$ is indeed $MA$-invariant by \eqref{semi} and it is conservative for
any one-parameter subgroup of $A$. In particular, for any non-zero $v\in \fa^+$,
there exist $g\in G$, sequences $t_i\to +\infty$ and $\gamma_i\in \Ga$ such that the sequence
$\gamma_i g \exp (t_i v) $ is convergent. This implies that
$\sup_i \|t_i v -\mu(\ga_i^{-1})\|<\infty$ and hence $t_i^{-1}
\mu(\ga_i^{-1})$ converges to $v$, and hence $v\in \L$.
Therefore $\L=\fa^+$. 
This finishes the proof.

\begin{Rmk}
If $\G<G$ is Zariski dense and $\psi >\psi_\Ga$, then for any $(\Ga, \psi)$-conformal density $\nu$,
$E_\nu\notin L^2(\Gamma\ba X)$.
To see this, note that by  \cite[Lem. III. 1.3]{Quint2}, the condition $\psi>\psi_\Ga$ implies that $$\sum_{\ga\in \Ga}e^{-\psi (\mu(\ga))} <\infty.$$  On the other hand,
by Theorem 1.4 of \cite{BLLO}, the finiteness of
$m_{\nu, \nu}$ implies that
$\sum_{\ga\in \Ga}e^{-\psi (\mu(\ga))} =\infty$. Hence we must have $|m_{\nu,\nu}|=\infty$. Then the claim follows from
Theorem \ref{smear}.
\end{Rmk}

\section{Injectivity radius and  $L^2(G)\propto L^2(\Ga\ba G)$}\label{injs}
As before let $G$ be a connected semisimple real algebraic group. Recall
from Proposition \ref{ss} that $\sigma(X)=[\|\rho\|^2, \infty)$.
In this section, we prove the following: 
\begin{thm} \label{iinj} Let $\Ga< G$ be an Anosov subgroup. We suppose that $\Ga$ is not a cocompact lattice in a rank one group $G$.
Then
$$L^2(G)\propto L^2(\Ga\ba G)\quad\text{ and }\quad \sigma(X)=[\|\rho\|^2, \infty)\subset \sigma(\Ga\ba X).$$
\end{thm}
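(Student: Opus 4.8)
The plan is to derive both conclusions from a single geometric fact: \emph{the injectivity radius of $\Ga\ba G$ is unbounded}, i.e.\ there is a sequence $g_n\in G$ with $\inf_{\gamma\in\Ga\setminus\{e\}}d(g_n,\gamma g_n)\to\infty$. Since $K$ is compact this is equivalent to the same statement for $\Ga\ba X$, namely $\sup_{x\in X}\inf_{\gamma\ne e}d(x,\gamma x)=\infty$. Granting this, both assertions follow by standard transplantation arguments. For the weak containment $L^2(G)\propto L^2(\Ga\ba G)$: given $v\in C_c(G)$ with $\|v\|_{L^2(G)}=1$ and a compact set $Q\subset G$, one chooses $n$ so large that a sufficiently big metric ball around $\Ga g_n$ embeds isometrically into $\Ga\ba G$, and folds the translate $g\mapsto v(g_n^{-1}g)$ down to a unit vector $\bar v\in L^2(\Ga\ba G)$; since the folding is injective on the relevant region, the diagonal matrix coefficient $g\mapsto\langle g\cdot\bar v,\bar v\rangle$ agrees on $Q$ with $g\mapsto\langle g\cdot v,v\rangle$, and as $C_c(G)$ is dense in $L^2(G)$ this gives $L^2(G)\propto L^2(\Ga\ba G)$. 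For the spectral inclusion it suffices by Proposition \ref{ss} to show $[\|\rho\|^2,\infty)\subseteq\sigma(\Ga\ba X)$: for $\lambda$ in this interval, Weyl's criterion (Theorem \ref{weyl}) applied to $X$ yields $u_m\in C_c^\infty(X)$ with $\|u_m\|=1$ and $\|(\Delta+\lambda)u_m\|\to0$, and transplanting each $u_m$ into an embedded ball of $\Ga\ba X$ larger than the support of $u_m$ produces a unit vector in $C_c^\infty(\Ga\ba X)$ with the same error, so $\lambda\in\sigma(\Ga\ba X)$ by Weyl's criterion again.

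It remains to prove that the injectivity radius of $\Ga\ba G$ is unbounded whenever $\Ga$ is Anosov and is not a cocompact lattice of a rank one group. When $\rank G=1$, $\Ga$ Anosov just means $\Ga$ is convex cocompact, and the hypothesis forces $\La\neq\F=\partial X$. Picking $\xi\in\F\setminus\La$ that is a fixed point of no element of $\Ga\setminus\{e\}$ (a generic choice) and letting $x_t\to\xi$ along a geodesic ray, the standard escape argument shows $\inf_{\gamma\neq e}d(x_t,\gamma x_t)\to\infty$: if it stayed $\le D$ along some $t_j\to\infty$ via $\gamma_j\neq e$, then $\gamma_j x_{t_j}\to\xi$ as well; either $\{\gamma_j\}$ is finite and some $\gamma_0\neq e$ fixes $\xi$, or $\gamma_j\to\infty$ and convergence dynamics on $\F$ forces $\xi\in\La$ -- either way a contradiction.

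For $\rank G\ge 2$ the crucial input is Theorem \ref{PS}: the limit cone $\L$ lies in $\inte\fa^+\cup\{0\}$, so it is a \emph{proper} closed subcone of $\fa^+$, and since $\inte\fa^+\not\subseteq\L$ there is a direction $v_0\in\inte\fa^+$ with $d(v_0,\L)>0$. One again fixes $g_0\in G$ with the flag $g_0^+=g_0 P$ in generic position (fixed by no $\gamma\neq e$, and transverse to the endpoints of the axis-flat of every $\gamma\neq e$), sets $x_t:=g_0\exp(tv_0)o$, so that $x_t\to g_0^+$ in the sense of Definition \ref{conver}, and shows $\inf_{\gamma\neq e}d(x_t,\gamma x_t)\to\infty$. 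Here $d(x_t,\gamma x_t)=\|\mu(\exp(-tv_0)g_0^{-1}\gamma g_0\exp(tv_0))\|$, and one argues by splitting according to the Jordan projection $\lambda(\gamma)$: for $\|\lambda(\gamma)\|>R$, conjugation-invariance of $\lambda$ together with $\|\mu(\cdot)\|\ge\|\lambda(\cdot)\|$ gives $d(x_t,\gamma x_t)\ge\|\lambda(\gamma)\|>R$ for \emph{all} $t$; the remaining $\gamma$ all have bounded Jordan projection, hence -- since $\Ga$ is a torsion-free word-hyperbolic group -- are conjugate into one of finitely many cyclic subgroups, and for each such conjugacy type one uses that $g_0^{-1}\gamma g_0$ is loxodromic and does not stabilize the flag $e^+$ attracted by $\exp(tv_0)$ (so that conjugation by $\exp(-tv_0)$, which expands the unipotent radical opposite to $P$, drives the Cartan projection to infinity) to conclude $d(x_t,\gamma x_t)\to\infty$, uniformly over the conjugating elements thanks to the genericity of $g_0^+$. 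Choosing $t$ large then makes $\inf_{\gamma\neq e}d(x_t,\gamma x_t)>R$; as $R$ is arbitrary, the injectivity radius is unbounded.

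\textbf{Main obstacle.} The delicate point is exactly this last uniformity in the rank $\ge 2$ case: one must rule out sequences $\gamma_j\to\infty$ in $\Ga$ of \emph{bounded} Jordan projection whose axis-flats drift to infinity and keep pace with the escaping point $x_{t_j}$, thereby keeping $d(x_{t_j},\gamma_j x_{t_j})$ bounded. Excluding this requires marrying Theorem \ref{PS} -- so that $v_0$ is genuinely transverse to all asymptotic directions of $\mu(\Ga)$ -- with the coarse geometry of Anosov subgroups: the quasi-isometric embedding $\gamma\mapsto\mu(\gamma)$, the regularity estimate \eqref{dis}, and the fact that the endpoints of axis-flats of elements of $\Ga$ all lie in the limit set $\La$. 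This interplay is the technical core of the section.
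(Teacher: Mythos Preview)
Your reduction to unbounded injectivity radius and the transplantation arguments for both $L^2(G)\propto L^2(\Ga\ba G)$ and $\sigma(X)\subset\sigma(\Ga\ba X)$ are correct and match the paper's Proposition \ref{inj0} essentially verbatim. Your rank-one argument for unbounded injectivity radius is also fine and agrees with the paper's one-line treatment.

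The divergence is in the rank $\ge 2$ case. You attempt a single unified argument: pick $v_0\in\inte\fa^+\setminus\L$ (available by Theorem \ref{PS}), push along $x_t=g_0\exp(tv_0)o$, and split $\Ga\setminus\{e\}$ by the size of the Jordan projection. You then honestly flag as the ``main obstacle'' the uniformity over conjugating elements for the finitely many conjugacy classes of bounded Jordan projection, and do not resolve it. That is a genuine gap, not a routine detail: the infinitely many conjugates of a single loxodromic $\gamma_0$ have axis-flats whose basepoints escape to infinity, and nothing in your sketch prevents those flats from tracking the ray $x_t$; the genericity of $g_0^+$ by itself does not give the quantitative transversality needed to force $\|\mu(\exp(-tv_0)g_0^{-1}\gamma g_0\exp(tv_0))\|\to\infty$ uniformly over that conjugacy class. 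Your closing paragraph names the right ingredients but does not assemble them into an argument.

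The paper avoids this difficulty entirely by taking a completely different route in higher rank. It splits into two subcases. If every simple factor of $G$ has rank at least $2$, it invokes the Fraczyk--Gelander theorem \cite{FG}, which gives $\op{inj}(\Ga\ba G)=\infty$ for \emph{any} infinite-covolume discrete subgroup, with no Anosov hypothesis needed. If instead $G=G_1\times G_2$ with $G_2$ of rank one, the paper exploits the product structure directly: the Anosov condition \eqref{dis} forces each projection $\pi_i(\Sigma)$ to be discrete, so for any fixed $x_1\in X_1$ only finitely many $\sigma_1,\dots,\sigma_m\in\Sigma$ move $x_1$ by less than $R$; one then uses Lemma \ref{zzz} (a direct computation in a rank-one group showing that points far from the axis of a loxodromic element are moved far) to choose $x_2\in X_2$ outside the $R$-displacement sets of $\pi_2(\sigma_1),\dots,\pi_2(\sigma_m)$, and the point $(x_1,x_2)$ has injectivity radius at least $R$. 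This sidesteps any uniformity-over-conjugates issue: the finitely many bad elements are pinned down in one factor and then defeated one at a time in the other.
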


Note that if $\Ga<G$ is Anosov, $\Ga\ba G$ has infinite volume except when $\Ga$ is a cocompact lattice in a rank one group $G$. The latter case has to be ruled out from the above theorem since the conclusions are not true in that case; $L^2(\Ga\ba G)$ contains the constant function and $\sigma(\Ga\ba X)$ is countable.
When $G=\SO^\circ(n,1)$, an Anosov subgroup $\Ga<G$ is simply a convex cocompact subgroup, in which case this theorem is well-known due to the work of Lax and Phillips \cite{LP}.

We will need the following lemma: when $G$ is of rank one, we may write $A=\{a_t:t\in \br\}$ as a one-parameter subgroup, and a loxodromic element
$g\in G$ is of the form $g= h a_tm h^{-1}$ for some $t\ne 0$, $m\in M$ and $h\in G.$ The translation axis of $g$ is then given by $h A(o)$.
\begin{lem}\label{zzz} Let $G$ be a simple real algebraic group of rank one. For any loxodromic element $g\in G$ with translation axis $L$ and any sequence $x_i\in X$ such that  $d(x_i, L)\to \infty$,
we have $d(x_i, gx_i)\to \infty$.
\end{lem}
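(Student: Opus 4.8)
My plan is to deduce this by comparison trigonometry in a negatively curved space. Since both the hypothesis $d(x_i,L)\to\infty$ and the conclusion $d(x_i,gx_i)\to\infty$ are unchanged when the metric is rescaled by a positive constant, I may first assume $X$ is a $\mathrm{CAT}(-1)$ space: a rank one symmetric space of noncompact type has pinched negative sectional curvature, hence is $\mathrm{CAT}(-b)$ for some $b>0$, and rescaling normalises $b=1$. Write $\ell>0$ for the translation length of $g$ along $L=hA(o)$ (so $g$ acts on $L$ as translation by $\ell$), and for $x\in X$ let $\pi_L(x)$ denote the nearest-point projection of $x$ onto the complete convex subset $L$. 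I will use two standard facts (cf.\ \cite{BH}): $g$ commutes with $\pi_L$, so $d(\pi_L x,\pi_L(gx))=d(\pi_L x,g\,\pi_L x)=\ell$; and the Alexandrov angle at $\pi_L x$ between $[\pi_L x,x]$ and $[\pi_L x,y]$ is at least $\pi/2$ for every $y\in L$.

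The key intermediate step is a uniform lower bound on an angle. Fix $i$ and set $t=d(x_i,L)=d(x_i,p)$ with $p=\pi_L(x_i)$, $q=\pi_L(gx_i)=gp$, and $w=d(p,gx_i)$. In the triangle $\triangle(p,q,gx_i)$ the two sides at $q$ have lengths $\ell$ and $d(q,gx_i)=d(gx_i,L)=t$, and the angle at $q$ is $\ge\pi/2$. Applying the $\mathrm{CAT}(-1)$ comparison inequality (Alexandrov angle $\le$ comparison angle) together with the hyperbolic law of cosines in the comparison triangle gives, on one hand, $\cosh w\ge\cosh\ell\cosh t$, so $w\to\infty$ as $t\to\infty$ while $w-t\le\ell$ stays bounded; and on the other hand, computing the comparison angle at $p$ and using $\cosh t\le\cosh w$ and $\coth w\ge1$, that this angle has cosine $\ge(\cosh\ell-1)/\sinh\ell=\tanh(\ell/2)$. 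Hence $\angle_p(gx_i,q)\le\phi:=\arccos\tanh(\ell/2)$, which lies strictly in $(0,\pi/2)$. Combining this with $\angle_p(x_i,q)\ge\pi/2$ and the triangle inequality for Alexandrov angles yields the uniform bound
$$\angle_p(x_i,gx_i)\ \ge\ \tfrac{\pi}{2}-\phi\ >\ 0 .$$

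Finally I run the comparison inequality once more, in the triangle $\triangle(p,x_i,gx_i)$: its two sides at $p$ have lengths $t$ and $w$, and its angle at $p$ is $\ge\frac{\pi}{2}-\phi$, so the hyperbolic law of cosines gives
$$\cosh d(x_i,gx_i)\ \ge\ \cosh t\cosh w-\sinh t\sinh w\,\sin\phi\ \ge\ (1-\sin\phi)\,\cosh t .$$
Since $\phi<\pi/2$ we have $1-\sin\phi>0$, and $t=d(x_i,L)\to\infty$ by hypothesis, so the right-hand side tends to $\infty$; therefore $d(x_i,gx_i)\to\infty$, as claimed.

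The main obstacle is exactly the angle estimate in the second paragraph: the naive bound $d(x_i,gx_i)\ge d(x_i,gp)-d(gp,gx_i)=w-t$ is useless, since $w-t$ is bounded, and one genuinely needs negative curvature to see that $x_i$ and $gx_i$ lie far from $L$ on "roughly opposite sides", i.e.\ that $\angle_p(x_i,gx_i)$ does not degenerate to $0$; everything after that is routine $\mathrm{CAT}(-1)$ trigonometry. (An equivalent route goes through Gromov hyperbolicity: after replacing $g$ by a power so that $\ell$ exceeds a fixed multiple of the hyperbolicity constant, any geodesic $[x_i,gx_i]$ must fellow-travel the concatenation $[x_i,p]\cup[p,q]\cup[q,gx_i]$, which gives $d(x_i,gx_i)\ge 2\,d(x_i,L)+\ell-O(1)$.)
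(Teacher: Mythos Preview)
Your proof is correct, and it follows a genuinely different route from the paper's.

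The paper argues by explicit computation in Iwasawa coordinates: conjugating so that $L=A(o)$ and $g=m^{-1}a_{-s_0}\in MA$, it writes $x_i=n_ia_{-t_i}(o)$ with $n_i\in N$, expands $\log(h_in_i)$ via the (two-step) Baker--Campbell--Hausdorff formula in the root-space decomposition $\fn=\fn_\alpha\oplus\fn_{2\alpha}$, and shows by contradiction that boundedness of $a_{t_i}h_in_ia_{-t_i}$ forces boundedness of $a_{t_i}n_ia_{-t_i}$, contradicting $d(x_i,L)\to\infty$. This is tailored to the Lie-theoretic structure of rank one groups.

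Your argument is purely synthetic $\mathrm{CAT}(-1)$ geometry: the projection-angle fact, the angle triangle inequality, and two applications of the hyperbolic law of cosines in comparison triangles. It is both more elementary (no root spaces, no BCH) and more general, since it applies verbatim to any axial isometry of any $\mathrm{CAT}(-\kappa)$ space with $\kappa>0$, not just rank one symmetric spaces. The key non-obvious step---bounding $\angle_p(q,gx_i)$ away from $\pi/2$ uniformly via $\cos\bar\alpha\ge\tanh(\ell/2)$---is what rescues the argument from the useless naive bound $d(x_i,gx_i)\ge w-t$, and you have identified and handled this correctly. The paper's approach, by contrast, gains nothing in generality but is perhaps more self-contained given that the ambient paper already works in Lie-theoretic coordinates throughout.
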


\begin{proof}
Without loss of generality, we may assume $g=m^{-1}a_{-s_0}\in MA$ with $s_0\ne 0$ so that $L=A(o)$.
Let $x_i\in X$ be a sequence such that $d(x_i, A(o))\to \infty$ as $i\to \infty$.
Write $x_i= n_ia_{-t_i}(o)$ with $n_i\in N$ and $t_i\in \br$.

We may then write
$$d(gx_i, x_i)= d(a_{t_i} h_i  n_i a_{-t_i}, a^{-1} o).$$
where $h_i= m a_{s_0}  n_i^{-1} a_{-s_0}m^{-1}\in N $.
As $d(x_i, A(o))\to \infty$, we have $a_{t_i}n_i a_{-t_i}\to \infty$.
It suffices to show
$a_{t_i} h_in_i a_{-t_i}\to \infty$.

 By the assumption that $G$ has rank one,
there is only one simple root, say $\alpha$, and
$\fn$ is the sum of at most two root subspaces $\fn=\fn_{\alpha} +\fn_{2\alpha}$ where $[\fn, \fn]= \fn_{2\alpha}$. Note that when $N$
is abelian, $\fn_{2\alpha}=\{0\}$.
Hence we have that for any $X, Y\in \fn$,
\be\label{log} \log\big( \exp( X)\exp( Y)\big)= X + Y +\frac{1}{2} [ X,  Y].\ee

Write $\log n_i=Y_i+Z_i$ with $Y_i\in \fn_{\alpha}$ and $Z_i\in \fn_{2\alpha}$. Since $\Ad_m $ preserves $\fn_{\alpha}$ and $\fn_{2\alpha}$,
we have 
$$\log h_i= -\Ad_{ ma_{s_0}}\log n_i=-e^{\alpha(s_0)} \Ad_m Y_i - e^{2\alpha (s_0)} 
\Ad_m Z_i.$$ Therefore by \eqref{log}, we get
$$\log h_i n_i= (1 -e^{\alpha(s_0)}\Ad_m ) Y_i +
(1-e^{2\alpha(s_0)}\Ad_m) Z_i  -\frac{1}{2} [e^{\alpha (s_0)}\Ad_m Y_i, Y_i].$$
Hence \begin{multline*}
    \Ad_{a_{t_i}} \log h_in_i
=  (1 -e^{\alpha(s_0)} \Ad_m    ) e^{\alpha(t_i)} Y_i  +\\
(1-e^{2\alpha(s_0)}\Ad_m) e^{2\alpha(t_i) }  Z_i  -  [e^{\alpha (s_0)} \Ad_m e^{\alpha(t_i)} Y_i, e^{\alpha(t_i)} Y_i].
\end{multline*}

Now suppose that $a_{t_i} h_in_i a_{-t_i}$ does not go to infinity as $i\to \infty$. By passing to a subsequence,
we may assume that $\Ad_{a_{t_i}}\log h_in_i$ is uniformly bounded. It follows that both sequences
$(1 -e^{\alpha(s_0)} \Ad_m    ) e^{\alpha(t_i)} Y_i$ and 
$ (1-e^{2\alpha(s_0)}\Ad_m) e^{2\alpha(t_i) }  Z_i  -  [e^{\alpha (s_0)} \Ad_m e^{\alpha(t_i)} Y_i, e^{\alpha(t_i)} Y_i]$ are uniformly bounded. Since $\alpha(s_0)\ne 0$, we have $e^{\alpha(t_i)} Y_i $ is uniformly bounded, which then implies that 
$e^{2\alpha(t_i)} Z_i$ is uniformly bounded. This implies that $\Ad_{a_{t_i}} \log n_i=e^{\alpha(t_i)}Y_i +e^{2\alpha(t_i)} Z_i$ is uniformly bounded, contradicting the hypothesis that
$d(a_{t_i}n_i a_{-t_i})\to \infty$ as $i\to \infty$.
This proves the claim.
\end{proof}

Let $\Ga<G$ be a discrete subgroup.
For $x=[g]\in \Gamma\ba G$, the injectivity radius $\op{inj} x$ is defined as
the supremum $r>0$ such that the ball $B_r(g)=\{h\in G: d(h,g)<r\}$
injects to $\Ga\ba G$ under the canonical quotient map $G\to \Ga\ba G$.
The injectivity radius of $\Ga\ba G$ is defined as $\op{inj}(\Ga\ba G)=\sup_{x\in \Ga\ba G} \op{inj} (x)$.

\begin{prop} \label{inj1} 
For any Anosov subgroup $\Ga<G$ which is not a cocompact lattice in a rank one group $G$, we have $\op{inj}(\Ga\ba G)=\infty$.
\end{prop}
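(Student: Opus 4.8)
The plan is to reduce the statement to the following reformulation: for every $R>0$ there is $g\in G$ with $d(\ga go,go)>2R$ for all $\ga\in\Ga\setminus\{e\}$. Indeed, since $\Ga$ is torsion-free it acts freely, and $\op{inj}(\Ga\ba G)$ at $[g]$ equals $\op{inj}(\Ga\ba X)$ at $[go]$, which is $\tfrac12\inf_{\ga\ne e}d(\ga go,go)$. I will treat $\op{rank}G=1$ and $\op{rank}G\ge 2$ separately. Note that if $\op{rank}G\ge 2$ the word hyperbolic group $\Ga$ is automatically not a lattice, whereas if $\op{rank}G=1$ the hypothesis ``$\Ga$ is not a cocompact lattice'' is equivalent to $\La\ne\partial X$ (a convex cocompact subgroup that is a lattice must be cocompact), i.e.\ to $\op{vol}(\Ga\ba G)=\infty$.

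\emph{Rank one.} Here $\Ga$ is convex cocompact with $\La\ne\partial X$, so $\cal C:=\op{Hull}(\La)$ is a $\Ga$-invariant closed convex set with $\cal C/\Ga$ compact, and $\Ga\ba X$ is noncompact; choose $x_n\in X$ with $d(x_n,\cal C)\to\infty$. If $\op{inj}([x_n])$ stays $\le R$, one gets $\ga_n\ne e$ with $d(\ga_n x_n,x_n)\le 2R$. Every nontrivial element of $\Ga$ is loxodromic with axis contained in $\cal C$ and translation length $\le d(\ga_nx_n,x_n)\le 2R$; since stable translation lengths in the word hyperbolic group $\Sigma$ are bounded below and proper on conjugacy classes, the $\ga_n$ lie in finitely many conjugacy classes. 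Writing $\ga_n=\eta_n\sigma_n\eta_n^{-1}$ with $\sigma_n$ in a finite set, we get $d(\sigma_n y_n,y_n)\le 2R$ with $y_n:=\eta_n^{-1}x_n$, while $d(y_n,\op{axis}(\sigma_n))\ge d(y_n,\cal C)=d(x_n,\cal C)\to\infty$ by $\Ga$-invariance of $\cal C$. Applying Lemma \ref{zzz} to the finitely many $\sigma_j$ then forces $d(\sigma_n y_n,y_n)\to\infty$, a contradiction; hence $\op{inj}([x_n])\to\infty$. (The elementary case is immediate.)

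\emph{Higher rank.} By standard properties of Anosov subgroups (Theorem \ref{PS} together with the Anosov inequality \eqref{dis}, which gives $\alpha(\lambda(\ga))>0$ for all $\ga\ne e$, $\alpha\in\Pi$, where $\lambda(\ga)=\lim_n\mu(\ga^n)/n$), every $\ga\in\Ga\setminus\{e\}$ is $\mathbb R$-regular loxodromic with $\lambda(\ga)\in\inte\fa^+$; let $F_\ga$ be its translation flat, on which $\ga$ acts by translation by $\lambda(\ga)$. Convexity of the displacement function on the CAT(0) space $X$ gives $d(\ga y,y)\ge\|\lambda(\ga)\|$ with $d(\ga y,y)\to\infty$ as $d(y,F_\ga)\to\infty$, and, using the uniform bound $\|\lambda(\ga)\|\ge\varepsilon_0>0$, a uniform $\rho_0=\rho_0(R)$ with $d(y,F_\ga)\le\rho_0$ whenever $d(\ga y,y)\le 2R$; also $\{\ga\ne e:\|\lambda(\ga)\|\le R\}$ meets only finitely many conjugacy classes. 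The limit cone $\cal L_\Ga$ is a closed cone inside $\inte\fa^+\cup\{0\}$, hence proper in $\fa^+$; I fix $v\in\inte\fa^+\setminus\cal L_\Ga$, and since $\cal L_\Ga$ is the asymptotic cone of $\mu(\Ga)$ I choose an open cone $\cal C_v\ni v$ with $\{\ga\in\Ga:\mu(\ga)\in\cal C_v\}$ finite. Finally, after conjugating $\Ga$ by an element of $K$ — which changes neither $\op{inj}(\Ga\ba G)$ nor the Anosov hypothesis — I may assume $\Ga\cap P=\{e\}$ and $eP\notin\La$, because $\bigcup_{\ga\ne e}\op{Fix}(\ga)\subset\F$ is countable (a regular loxodromic has finitely many fixed flags) and $\La$ is a proper closed subset, while $\F=K/M$ is an uncountable manifold.

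Set $p_t:=\exp(tv)o$; I claim $\inf_{\ga\ne e}d(\ga p_t,p_t)\to\infty$, which finishes the proof. For fixed $\ga\ne e$: $\ga\notin P$, so the regular geodesic rays $t\mapsto p_t$ and $t\mapsto\ga p_t$ carry distinct flag data ($eP$ versus $\ga P$) and hence have distinct endpoints in $\partial_\infty X$, so $d(\ga p_t,p_t)\to\infty$. If the infimum stayed $\le 2R$ along $t_i\to\infty$ with witnesses $\ga_i\ne e$, then $\|\lambda(\ga_i)\|\le 2R$, so the $\ga_i$ lie in finitely many conjugacy classes; finiteness of $\{\ga_i\}$ contradicts the previous sentence, so $\ga_i\to\infty$. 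Write $\ga_i=\eta_i\sigma_i\eta_i^{-1}$ with $\sigma_i$ in a finite set; then $q_i:=\eta_i^{-1}p_{t_i}$ satisfies $d(q_i,F_{\sigma_i})\le\rho_0$. If $\{q_i\}$ is bounded, then from $\mu(p_{t_i})=t_iv$ and the coarse Lipschitz estimate $\|\mu(\eta_iq_i)-\mu(\eta_i)\|\le\|\mu(q_i)\|$ one gets $\mu(\eta_i)=t_iv+O(1)$, so $\mu(\eta_i)\in\cal C_v$ eventually, contradicting $\eta_i\to\infty$ and finiteness of $\{\ga:\mu(\ga)\in\cal C_v\}$. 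The remaining case $q_i\to\infty$ (so, on a subsequence, $\sigma_i=\sigma$ is fixed and $q_i$ stays within $\rho_0$ of the single flat $F_\sigma$ while leaving every compact set) is the main obstacle; I expect to handle it by noting that $q_i$ then converges in $\partial_\infty X$ to a point of $\partial F_\sigma$ whose flag is $\sigma$-fixed and, by Anosov antipodality (distinct limit points are in general position), is transverse to the limit of the repelling flags of $\eta_i$ unless it is one of the two $\La$-points fixed by $\sigma$ — in the transverse case the contraction of the Anosov sequence $\eta_i$ forces $p_{t_i}=\eta_iq_i$ to converge into $\La$, contradicting $eP\notin\La$, and in the exceptional case a direct comparison of $\mu(q_i)/\|\mu(q_i)\|$ with $v$ via the limit cone forces $v\in\cal L_\Ga$. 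The hard part is to make this last case rigorous, especially the degenerate subcase where $q_i$ escapes along a wall of $F_\sigma$; there one must exploit that $p_{t_i}=\eta_iq_i$ is regular in the direction $v\in\inte\fa^+$ together with the regularity of $\mu(\eta_i)$ supplied by the Anosov property. Once this is done, $\op{inj}([p_t])\to\infty$ and therefore $\op{inj}(\Ga\ba G)=\infty$.
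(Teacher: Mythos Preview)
Your rank one argument is correct (if more elaborate than needed; the paper simply sends a sequence of points toward any boundary point outside $\La$). The real problem is in higher rank, where you yourself flag the case $q_i\to\infty$ as ``the main obstacle'' and offer only a sketch. That sketch does not close the gap. When $q_i$ stays within $\rho_0$ of the fixed flat $F_\sigma$ and escapes to infinity, its direction of escape may lie on a wall of $F_\sigma$, where no well-defined flag is attached; even along a regular direction, the claim that the Anosov contraction of $\eta_i$ forces $p_{t_i}=\eta_i q_i$ to converge into $\La$ requires controlling the interaction between $\mu(\eta_i)$ and $\mu(q_i)$, and you provide no mechanism for this. The ``exceptional case'' assertion that one can read off $v\in\cal L_\Ga$ from $\mu(q_i)/\|\mu(q_i)\|$ is likewise unjustified: there is no a priori relation between $\mu(\eta_i^{-1}p_{t_i})$ and $v$. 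There is also a smaller earlier gap: the uniform $\rho_0(R)$ with $d(y,F_\ga)\le\rho_0$ whenever $d(\ga y,y)\le 2R$ does not follow from convexity of the displacement function and $\|\lambda(\ga)\|\ge\varepsilon_0$ alone; you are implicitly using a higher-rank version of Lemma~\ref{zzz} that is neither stated nor proved.

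The paper's proof bypasses all of this by a completely different decomposition. If every simple factor of $G$ has rank at least $2$, it simply quotes the Fraczyk--Gelander theorem \cite{FG} that $\op{inj}(\Ga\ba G)=\infty$ for any infinite-covolume discrete subgroup. If $G=G_1\times G_2$ with $G_2$ of rank one, it exploits the product structure: the projections $\pi_i(\Sigma)$ are discrete (by \eqref{dis}), so for any chosen $x_1\in X_1$ only finitely many $\sigma_1,\ldots,\sigma_m\in\Sigma$ satisfy $d_1(\pi_1(\sigma_j)x_1,x_1)<R$; one then picks $x_2\in X_2$ outside the finitely many sets $T_2(\sigma_j)=\{z:d_2(\pi_2(\sigma_j)z,z)<R\}$, which are bounded neighborhoods of axes by Lemma~\ref{zzz}. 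This uses Lemma~\ref{zzz} only in rank one, exactly where it is proved. Your attempt to run a single limit-cone argument uniformly across all higher-rank $G$, avoiding \cite{FG}, is a natural idea, but as written the unbounded-$q_i$ case is a genuine gap, not a routine detail.
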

\begin{proof} If $G$ has rank one, $\G$ is a convex cocompact subgroup which is not a cocompact lattice. 
In this case, take any $\xi\in \partial X$ which is not a limit point, and any $g_i\in G$ such that
$g_i(o)\to \xi$. Then $\op{inj}(g_i(o))\to \infty$ as $i\to \infty$.

Now suppose $\op{rank} G\ge 2$. We first observe that $\op{Vol}(\Ga\ba G)=\infty$; otherwise, $\G<G$ is a co-compact lattice, as Anosov subgroups consist only of loxodromic elements.
Since any Anosov subgroup $\Gamma$ is a Gromov hyperbolic group as an abstract group (\cite{KLP}, \cite{BPS}), it follows that $G$ is a Gromov hyperbolic space and hence must be of rank one, which contradicts the hypothesis.

If every simple factor of $G$ has rank at least $2$, the claim $\op{inj}(\Ga\ba G)=\infty$ follows from a more general result of  Fraczyk and Gelander \cite{FG} which applies to all discrete subgroups of infinite co-volume. Therefore it remains to consider the case where
$G=G_1\times G_2$ where $G_1$ and $G_2$ are respectively semisimple real algebraic subgroups of rank at least one and of rank precisely one. Let $\Sigma$ be a finitely generated group and $\pi:\Sigma\to G$ be an Anosov representation with $\Gamma=\pi(\Sigma)$ as in Definition \ref{Ano}. Let $\pi_i:\Sigma\to G_i$ be the composition of
$\pi$ and the projection $G\to G_i$ for each $i$. It follows from \eqref{dis} that
$\pi_i(\Sigma)$ is a discrete subgroup of $G_i$ for each $i=1,2$.
Let $X_i$ denote the rank one symmetric space associated to $G_i$ and let $X$ denote the Riemmanian product $X=X_1\times X_2$. Let $R>0$ be an arbitrary number. We will find a point $x\in X$ with $\op{inj}(x)\ge R$, i.e.,
 $d(x, \ga x)>R$ for all non-trivial $\ga \in \Ga$; this implies the claim. Choose any $x_1\in X_1$. By the discreteness of $\pi_1(\Sigma)$,
the set $\{\sigma\in \Sigma - \lbrace e\rbrace:
d_1(\pi_1(\sigma) x_1, x_1)<R\}$ is finite, which we write as $\{\sigma_1, \cdots, \sigma_m\}$.
For each $\sigma\in\Sigma\setminus\lbrace e\rbrace$, define a subset $T_2(\sigma)\subset X_2$ by 
$$T_2(\sigma)=\{z\in X_2: d_2(\pi_2(\sigma) z, z)<R\}.$$ Note that $\pi_2(\sigma)$ is a loxodromic element of $G_2$ and $T_2(\sigma)$ is contained in a bounded neighborhood of the translation axis of $\pi_2(\sigma)$ by Lemma \ref{zzz}. In particular,
the symmetric space $X_2$ is not covered by the finite union $\bigcup_{j=1}^m T_2(\sigma_j)$. Hence
we may choose $x_2\in X_2$ outside of $\bigcup_{j=1}^m T_2(\sigma_j)$.
We now claim that 
the injectivity radius at $x:=(x_1, x_2)$ is at least $R$; suppose not. Then
for some $\sigma\in \Sigma - \lbrace e\rbrace$, $d(
(\pi_1 (\sigma)x_1, \pi_2(\sigma)x_2), x)<R$. In particular, for $i=1,2$, $d_i (\pi_i(\sigma) x_i, x_i)<R$.
It follows that $\sigma=\sigma_j$ for some $1\le j\le m$ and
$x_2 \in T_2(\sigma_j)$, contradicting the choice of $x_2$. This proves the claim. \end{proof}

Theorem \ref{iinj} follows from Proposition \ref{inj1} and the following proposition, which was suggested by C. McMullen.
 \begin{prop}\label{inj0} Let $\Ga<G$ be a discrete subgroup with
$\op{inj}(\Ga\ba G)=\infty$.
Then
$$L^2(G)\propto L^2(\Ga\ba G)\quad\text{ and }\quad \sigma(X)\subset \sigma(\Ga\ba X).$$

\end{prop}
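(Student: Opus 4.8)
The plan is to exploit the infinite injectivity radius to transport $L^2$ data on $G$ into $L^2(\Ga\ba G)$ via local isometric copies. First I would fix, for each $R>0$, a point $x_R=[g_R]\in\Ga\ba G$ with $\op{inj}(x_R)>R$, so that the ball $B_R(g_R)\subset G$ maps isometrically (for the Haar measure and the left-invariant metric) onto its image in $\Ga\ba G$. Given any $F\in C_c(G)$, choosing $R$ large enough that $\supp F\subset B_R(e)$ and then translating, the function $g_R\cdot F$ (i.e.\ $h\mapsto F(g_R^{-1}h)$, cut off to $B_R(g_R)$ and pushed to the quotient) is a well-defined element of $C_c(\Ga\ba G)$ with the same $L^2(G)$-norm and, crucially, the same diagonal matrix coefficient: for the right regular representation, $\langle \pi_{\Ga\ba G}(s)(g_R F), g_R F\rangle=\langle \lambda_G(s)F,F\rangle$ for all $s$ in a fixed compact set, once $R$ is taken large relative to that set. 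Letting $R\to\infty$ over a sequence of such translates shows every diagonal matrix coefficient of $L^2(G)$ is a limit, uniformly on compacta, of diagonal matrix coefficients of $L^2(\Ga\ba G)$; this is exactly the weak containment $L^2(G)\propto L^2(\Ga\ba G)$.

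For the spectral statement I would argue with Weyl's criterion (Theorem \ref{weyl}). Let $\lambda\in\sigma(X)=[\|\rho\|^2,\infty)$; again invoking Theorem \ref{weyl} on $X=G/K$, there is a sequence of unit vectors $f_n\in C_c^\infty(X)$ with $\|(\Delta+\lambda)f_n\|_{L^2(X)}\to 0$, and we may assume $\supp f_n\subset B_{R_n}(o)$ for some $R_n$. Now choose $x_{n}=[g_n]\in\Ga\ba X$ with $\op{inj}(x_n)>R_n+1$ (using $\op{inj}(\Ga\ba G)=\infty$, which also gives infinite injectivity radius on $\Ga\ba X$ since the fibration $\Ga\ba G\to\Ga\ba X$ has compact fibers $K$); translating $f_n$ by $g_n$ and pushing to $\Ga\ba X$ produces $\tilde f_n\in C_c^\infty(\Ga\ba X)$ which is isometric to $f_n$ on its support, hence a unit vector with $\|(\Delta+\lambda)\tilde f_n\|_{L^2(\Ga\ba X)}=\|(\Delta+\lambda)f_n\|_{L^2(X)}\to 0$. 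By Weyl's criterion $\lambda\in\sigma(\Ga\ba X)$, so $\sigma(X)\subset\sigma(\Ga\ba X)$.

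The only genuinely delicate point is the bookkeeping that the local copy is a genuine isometric embedding for \emph{all} the relevant structure simultaneously: the Riemannian metric, the volume form, and the action of $\Delta$ (equivalently $\mathcal C$), and that the matrix coefficient identity holds once $R$ exceeds the diameter of the compact set on which approximation is demanded. This is where one must be a little careful with definitions of injectivity radius: $\op{inj}([g])>R$ means $B_R(g)\hookrightarrow\Ga\ba G$, so to compare $\langle\lambda_G(s)F,F\rangle$ with the quotient coefficient for $s$ in a compact set $Q$ we need $\supp F\cdot Q^{-1}\cup\supp F$ to sit inside $B_R(g)$, i.e.\ $R$ must beat $\op{diam}(\supp F)+\op{diam}(Q)$; since $\op{inj}(\Ga\ba G)=\infty$ this is no obstruction, but it dictates the order of quantifiers. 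Everything else — density of $C_c^\infty$, that $\Delta$ is local so it commutes with the isometric cut-and-paste, and that $W^1$-norms are likewise preserved — is routine.
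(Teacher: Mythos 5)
Your proposal is correct and follows essentially the same approach as the paper: use $\op{inj}(\Ga\ba G)=\infty$ to find translates $g_R$ so that a compactly supported function on $G$ can be pushed to $\Ga\ba G$ isometrically with identical matrix coefficients over a given compact set (the paper realizes this as the observation that in the periodization $\sum_{\gamma}f_i(g_i^{-1}\gamma h)$ only $\gamma=e$ contributes), then conclude by density of $C_c(G)$ in $L^2(G)$, and for the spectrum apply Weyl's criterion to translated approximate eigenfunctions using $G$-invariance of $\Delta$. The only presentational difference is that the paper makes the cut-and-paste explicit as a $\Gamma$-sum and verifies by direct computation; your quantifier bookkeeping in the final paragraph is exactly the condition the paper imposes as $(\supp f_i)\mathcal{K}\subset B_{R_i}(e)$.
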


\begin{proof}
To prove the first claim, we need to show that the diagonal matrix coefficients of $L^2(G)$ can be approximated by the diagonal matrix coefficients of $L^2(\Ga\ba G)$ uniformly on compact subsets of $G$.

Let $v$ be any element of $L^2(G)$ and $\cal{K}\subset G$ a compact subset containing $e$. We will use the fact that $\mathrm{inj}(\Ga\ba G)=\infty$ to construct a sequence of functions $\lbrace F_i\rbrace\subset C_c(\Ga \ba G)$ such that
$$\lim_{i\rightarrow\infty} \max_{g\in\cal{K}} \left | \langle g.v,v\rangle_{L^2(G)}- \langle g. F_i, F_i\rangle_{L^2(\Ga\ba G)} \right|=0,$$
as required.
By the density of $C_c(G)$ in $L^2(G)$, there exists a sequence $\lbrace f_i\rbrace \subset C_c(G)$ such that $\lim_{i\rightarrow\infty}\| f_i-v\|_{L^2(G)}=0$, hence 
\begin{equation}\label{fapprox}
\lim_{i\rightarrow\infty} \max_{g\in\cal{K}} \left | \langle g.v,v\rangle_{L^2(G)}- \langle g. f_i, f_i\rangle_{L^2(G)} \right|=0.
\end{equation}
For each $i\ge 1$, we let $R_i>0$ be such that $(\text{supp} f_i) \cal{K} \subset B_{R_i}(e)$. Since $\mathrm{inj}(\Ga\ba G)=\infty$, there then exists a sequence $\lbrace g_i\rbrace\subset G$ such that $g_i B_{R_i}(e)$ injects to $\Ga\ba G$, i.e.\ the map $h\mapsto \Ga h$ is injective on $g_i B_{R_i}(e)$. 

For each $i$, consider the function $F_i\in C_c(\Ga\ba G)$ given by 
$$F_i(x)=\sum_{\gamma\in \Gamma}  f_i(g_i^{-1}\gamma h)\quad\text{ for any $x=[h]\in \Ga\ba G$}.$$

We then have that for any $g\in G $,
\begin{align*}
&\la g. F_i, F_i\ra_{L^2(\Ga\ba G)} = \int_{\Ga\ba G} F_i(xg) F_i(x)\, dx\\ &= \int_{\Ga\ba G} F_i(\Gamma h g) \left(\sum_{\ga\in \Ga} f_i(g_i^{-1}\ga h)\right)\, d(\Gamma h)
= \int_{G} F_i(\Gamma h g) f_i(g_i^{-1}h)\, dh 
\\ &= \int_{G} \left(\sum_{\ga\in \Ga }f_i(g_i^{-1}\ga g_i hg )\right)  f_i(h)  \,dh.
\end{align*}
We now observe that for $h\in G$, $g\in \mathcal{K}$, and $\gamma\in \Ga$,
$ f_i(g_i^{-1}\gamma g_i hg)f_i(h)\neq 0$ implies that $g_i^{-1}\gamma g_i hg \in B_{R_i}(e)$ and $h \in \mathrm{supp}\,f$. This in turn implies that both $\gamma g_i hg$ and $g_i h g$ are in $g_i B_{R_i}(e)$. Since $g_i B_{R_i}(e)$ injects to $\Gamma\ba G$, we must then have $\gamma= e$. Hence for all $i\ge 1$ and $g\in \mathcal{K}$,
\begin{align*} & \la g. F_i, F_i\ra_{L^2(\Ga\ba G)} =\int_G  \left(\sum_{\ga\in \Ga }f_i(g_i^{-1}\ga g_i hg )\right)  f_i(h) dh
\\& =\int_G f_i(g_i^{-1}e g_i hg )  f_i(h) dh =\int_G f_i(hg)f_i(h) dh = \langle g.f_i,f_i\rangle_{L^2( G)}.\end{align*}
 Combined with \eqref{fapprox}, this proves the first claim.

In order to prove the second claim, 
let $W^1(\Ga \ba X)\subset L^2(\Ga \ba X)$ be as defined in the proof of Theorem \ref{atmost}. Let $\lambda\in \sigma(X)$.
By Weyl's criterion (Theorem \ref{weyl}), there exists a sequence of $L^2(X)$-unit vectors  $\lbrace u_n\rbrace_{n\in\NN}\subset W^1(X)$ such that
$$ \lim_{n\rightarrow\infty} \|(\Delta +\lambda)u_n\|_{L^2(X)}=0.$$
Since $C_c^{\infty}(X)$ is dense in $W^1(X)$ with respect to $\|\cdot\|_{W^1(X)}$, we may assume that $\lbrace u_n\rbrace_{n\in\NN}\subset C_c^{\infty}(X)$. Since $\Ga\ba X$ has infinite injectivity radius, for each $n\in \NN$, we can find $g_n\in G$ so that $g_n  \text{supp}(u_n)$ injects to $\Ga\ba G$. We may therefore define $\lbrace v_n\rbrace_{n\in\NN}\subset W^1(\Ga\ba X)$ by
$$ v_n(\Gamma g_n x)=\begin{cases} u_n(x)\qquad & \mathrm{if}\, x\in  \text{supp}(u_n)\\ 0\qquad & \mathrm{otherwise.} \end{cases}$$ 
The $G$-invariance of $\Delta$ then gives 
$$  \lim_{n\rightarrow\infty} \|(\Delta +\lambda)v_n\|_{L^2(\Ga\ba X)}=\lim_{n\rightarrow\infty} \|(\Delta +\lambda)u_n\|_{L^2(X)}=0;$$
and so using Weyl's criterion again yields $\lambda\in\sigma(\Ga\ba X)$. Hence $\sigma(X)\subset \sigma(\Ga \ba X)$, as claimed.   
\end{proof}

\section{Temperedness of $L^2(\Ga\ba G)$}\label{sect}
Let $G$ be a connected semisimple real algebraic group and $\Ga<G$ be a Zariski dense discrete subgroup. The goal of this section is to prove Theorem \ref{tem} and Corollary \ref{tempc}.

\subsection*{ Burger-Roblin measures.}
 We set $N^+=w_0 N w_0^{-1}$ and $N^-=N$. For a $(\Ga, \psi)$-conformal measure $\nu_o$ on $\F$, or equivalently
 for a $(\Ga, \psi)$-conformal density $\nu=\{\nu_x:x\in X\}$, 
we denote by $m_\nu^{\BR}$ and $m_\nu^{\BR_*}$ the associated $N^+$ and $N^-$-invariant Burger-Roblin measures on $\Ga\ba G$ respectively, as defined in \cite{ELO}. By \cite[Lem. 4.9]{ELO}, it can also be defined as follows: for any $f\in C_c(\Ga\ba G)$,
$$m^{\BR}_\nu(f)=\int_{[k] m (\exp a) n\in K/M \times M AN^+} f([ k] m (\exp a ) n) e^{-\psi\circ \i (a)} \, d\nu_o(k^-) dm dadn $$
and 
$$m^{\BR_*}_\nu(f)=\int_{[k] m (\exp a) n\in K/M \times M AN^-} f([ k] m (\exp a ) n) e^{\psi (a)} \, 
d\nu_o(k^+) dm dadn $$
where $dm, da, dn$
are Haar measures on $M, \fa, N^{\pm}$ respectively.

Recall that $dx$ denotes the $G$-invariant measure on $\Gamma\ba G$
which is defined using the $(G, 2\rho)$-conformal measure, that is, the $K$-invariant probability measure on $\cal F$
(see \cite[(3.11)]{ELO}).
For real-valued functions $f_1, f_2$ on $\Ga\ba G$,
we write
$$\langle f_1, f_2\rangle=\int_{\Gamma\ba G} f_1(x) f_2(x) \, dx$$
whenever the integral converges. We write $C_c(\Gamma\ba G)_K$ for the space of $K$-invariant compactly supported continuous functions on $\Gamma\ba G$.
\begin{lem}\label{bbr} For a $(\Ga, \psi)$-conformal density $\nu$ and
any $f\in C_c(\Gamma\ba G)_K$, we have
 $$m^{\BR}_\nu (f)=\la f, E_{\nu}\ra= m^{\BR_*}_\nu(f).$$
\end{lem}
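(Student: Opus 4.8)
The plan is to derive both identities by unfolding the explicit formulas for the Burger--Roblin measures on $\Ga\ba G$ recorded above (from \cite[Lem.~4.9]{ELO}) and performing a change of variables that uses the right $K$-invariance of $f$. By linearity we may assume $f\ge 0$, so that all the integrals below converge absolutely and Fubini's theorem applies.

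Consider first $\mBRast_\nu$. In the defining integral
$$\mBRast_\nu(f)=\int_{K/M\times MAN^-} f\big(k_0\,m\,(\exp a)\,n\big)\, e^{\psi(a)}\,d\nu_o(k^+)\,dm\,da\,dn,$$
where $k_0\in K$ denotes a representative of $[k]\in K/M$, the variables $([k],m,a,n)$ describe $G$ in Iwasawa coordinates with $N^-=N$, and $g=k_0m(\exp a)n$ has $g^+=[k]$. Two reductions apply. First, since $f$ is right $K$-invariant, $f(k_0m(\exp a)n)$ depends only on the point $z:=k_0m(\exp a)n(o)\in X$ and equals $\hat f(z)$, where $\hat f$ denotes the function on $\Ga\ba X$ induced by $f$. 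Second, $M$ normalizes $N^-$, centralizes $A$, and fixes $o$; hence after the substitution $n\mapsto mnm^{-1}$ in the inner integral the quantity $\int_{\fa}\int_{N^-}\hat f\big(k_0(\exp a)n(o)\big)e^{\psi(a)}\,dn\,da$ no longer involves $m$, so the $\int_M dm$ drops out. The $(a,n)$-integral is then pushed forward to $X$ along $(a,n)\mapsto(\exp a)n(o)$; a short computation shows the Jacobian is trivial, since the factor $e^{2\rho}$ coming from reordering $AN^-$ into the horospherical ordering $N^-A$ cancels the $e^{-2\rho}$ in the Iwasawa volume formula, so $e^{\psi(a)}\,da\,dn$ becomes $e^{\psi(a(\,\cdot\,))}\,d\vol$, where $a(z)\in\fa$ denotes the $\fa$-coordinate of $z$ in the $N^-A$-horospherical parametrization of $X$. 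Finally, applying the isometry $k_0$ and using the identity $a(k_0^{-1}z)=-H(g_z^{-1}k_0)$ (obtained by comparing the $N^-AK$- and $KAN^-$-decompositions of $g_z^{-1}k_0$, for any $g_z\in G$ with $g_z(o)=z$), then interchanging the $X$-integral with $\int_{K/M}d\nu_o$, yields
$$\mBRast_\nu(f)=\int_{\Ga\ba X} \hat f(z)\,\Big(\int_{\F}e^{-\psi\big(H(g_z^{-1}k_0)\big)}\,d\nu_o([k_0])\Big)\,d\vol(z)=\int_{\Ga\ba X}\hat f(z)\,E_\nu(z)\,d\vol(z),$$
the inner integral being exactly $E_\nu(z)=|\nu_z|$ by \eqref{ef}. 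Since $\langle f_1,f_2\rangle=\int_{\Ga\ba X}\hat f_1\hat f_2\,d\vol$ for right $K$-invariant functions, the right-hand side equals $\langle f,E_\nu\rangle$.

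The computation for $\mBR_\nu$ is entirely parallel, with $N^-$ replaced by $N^+$ and the Iwasawa cocycle $H$ by its $N^+$-counterpart: the weight $e^{-\psi\circ\i(a)}$ and the appearance of the point $g^-$ in place of $g^+$ are exactly what is needed so that the same chain of reductions again produces the inner integral $\int_\F e^{-\psi(H(g_z^{-1}k))}\,d\nu_o([k])=E_\nu(z)$, the opposition involution $\i$ absorbing the asymmetry between $N^+$ and $N^-$. The step that demands genuine care --- and the one I expect to be the main obstacle --- is ensuring that the unfolding and change of variables are carried out compatibly with the $\Ga$-action: the measures in \cite[Lem.~4.9]{ELO}, like the measure $E_\nu\,dx$, descend to $\Ga\ba G$ only because $\nu$ is $(\Ga,\psi)$-conformal, and one must track the cocycle factors $e^{-\psi(\beta_\xi(\gamma o,o))}$ through the coordinate change to confirm that the computation descends correctly and yields $E_\nu$ itself rather than a twisted variant. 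With that in place, the rest is a direct calculation with the Iwasawa decomposition and Fubini's theorem.
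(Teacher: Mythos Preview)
Your outline is essentially correct, but it is considerably more laborious than what the paper does, and several steps remain assertions rather than arguments: the Jacobian computation, the identification $a(k_0^{-1}z)=-H(g_z^{-1}k_0)$, and especially the $N^+$ case, where you simply declare that ``the opposition involution absorbs the asymmetry.'' Each of these can be made precise, but the bookkeeping (modular factors when reordering $AN^-$ versus $N^-A$, and the analogous $N^+$ horospherical coordinates) is exactly the kind of thing that tends to go wrong.

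The paper sidesteps all of this with one clean trick: rather than pushing forward to $X$, it \emph{inserts an extra $K$-integral on the right}. Since $f$ is right $K$-invariant, $f(k\exp(b)n)=\int_K f(k\exp(b)nk_0)\,dk_0$, and now the $(b,n,k_0)\in \fa\times N^\pm\times K$ integral is simply the Haar integral over $G$ in $AN^\pm K$ coordinates. A one-line Busemann computation (e.g.\ for $g=(\exp b)nk_0\in AN^+K$ one has $\beta_{e^-}(go,o)=\i(b)$) turns the weight $e^{-\psi\circ\i(b)}$ into $e^{-\psi(\beta_{e^-}(go,o))}$, and after the change of variables $g\mapsto k^{-1}g$ the remaining $K/M$-integral against $d\nu_o(k^-)$ is exactly $E_\nu(g)$. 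No Jacobians on $X$, no horospherical reparametrizations. Your approach and the paper's are logically equivalent, but the paper's is the one to remember.

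Finally, your worry about the $\Gamma$-action is misplaced: the measures $\tilde m^{\BR}_\nu$, $\tilde m^{\BR_*}_\nu$ and $E_\nu\,dx$ are all left $\Gamma$-invariant on $G$ by construction (this is where the conformality of $\nu$ has already been used), so the computation can be carried out on $G$ for $f\in C_c(G)_K$ and then descends automatically; there is no twisted variant to rule out.
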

\begin{proof}
If $g=(\exp b) nk\in AN^+K$, then 
$$\beta_{e^-} (go, o)=\beta_{e^+} (\exp(-\i(b)), o) = \i (b).$$
Hence
\begin{align*}
    m^{\BR}_\nu (f)&=
    \int_{KAN^+}\int_K f(k\exp b n k_0) e^{-\psi \circ \i (b)} dk_0 d\nu_o(k^-) db dn\\
    &= \int_G \int_K f(kg) e^{-\psi (\beta_{e^-}(go, o))} d\nu_o(k^-) dg\\
    &=\int_G f(g)\int_K e^{-\psi (\beta_{k^-} (go,o))}d\nu_o(k^-) dg=\langle f, E_{\nu} \rangle
\end{align*}
If $g=(\exp b)nk\in ANK$, then
$\beta_{e^+}(go, o)=-b$ and using this, the second identity can be proved similarly.
\end{proof}

\subsection*{ Local matrix coefficients for Anosov subgroups.}
In the rest of this section, we assume that 
$$\text{$\G<G$ is a Zariski dense Anosov subgroup}.$$

\begin{lem}\label{ps}
For any $\psi\in D_\Ga$, 
there exists a unique unit vector $u\in \fa^+$ and $0<c\le 1$ such that $c\psi(u)=\psi_\Ga(u)$. Moreover 
$u\in \inte\L$.
\end{lem}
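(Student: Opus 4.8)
I read the statement as asserting existence and uniqueness of the pair $(u,c)$ together with the (implicit, and needed) normalization that $c\psi\in D_\Ga$; equivalently, $c$ is the largest value of $\psi_\Ga(v)/\psi(v)$ over unit vectors $v$ and $u$ is the direction realizing it. Accordingly the plan is to set $c:=\inf\{t>0:\ t\psi\ge\psi_\Ga\ \text{on }\fa\}$ and to show (i) $0<c\le1$ with the infimum attained, and (ii) the resulting contact direction lies in $\inte\cal L$ and is unique. For (i): since $\psi\ge\psi_\Ga$ by definition of $D_\Ga$, the value $t=1$ is admissible, so $c\le1$. Because $\psi\ge\psi_\Ga>0$ on $\inte\cal L$ and $\inte\cal L$ is dense in the closed convex cone $\cal L$ \cite{Ben}, continuity of the linear form $\psi$ gives $\psi\ge0$ on $\cal L$; hence the admissible set of $t$ is upward closed and, being an intersection of closed half-lines indexed by $v\in\cal L$, is closed, so it equals $[c,\infty)$ and $c\psi\ge\psi_\Ga$ on $\fa$. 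If $c=0$ then $\psi_\Ga\le0$ on all of $\cal L$, contradicting $\psi_\Ga>0$ on $\inte\cal L\neq\emptyset$; thus $c>0$.

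Next I would locate the contact direction by a soft compactness argument. By minimality of $c$, for each $n\ge1$ there is a unit vector $v_n\in\cal L$ with $(c-\tfrac1n)\psi(v_n)<\psi_\Ga(v_n)\le c\psi(v_n)$, the right-hand inequality coming from the previous step. The set $\{v\in\cal L:\|v\|=1\}$ is compact, so after passing to a subsequence $v_n\to u$; continuity of $\psi$ makes both outer terms converge to $c\psi(u)$, forcing $\psi_\Ga(v_n)\to c\psi(u)$, and upper semicontinuity of $\psi_\Ga$ then gives $\psi_\Ga(u)\ge c\psi(u)$. Together with $c\psi(u)\ge\psi_\Ga(u)$ this yields $c\psi(u)=\psi_\Ga(u)$; in other words $c\psi$ is a supporting linear functional of the concave function $\psi_\Ga$ at the unit vector $u\in\cal L$. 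The same conclusion holds at every point of the contact cone $Z:=\{v\in\cal L:c\psi(v)=\psi_\Ga(v)\}$, which is a convex cone (a sublevel set of the convex function $c\psi-\psi_\Ga$).

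The remaining point — and the one I expect to be the actual obstacle — is to upgrade ``$u\in\cal L$'' to ``$u\in\inte\cal L$'' and to deduce uniqueness, and this is where the Anosov hypothesis is essential; the soft arguments above use only that $\Ga$ is Zariski dense. By Theorem \ref{PS}, $\cal L\setminus\{0\}\subset\inte\fa^+$, and for a Zariski dense Anosov subgroup $\psi_\Ga$ is strictly concave on $\inte\cal L$ and is steep along the boundary of its support, i.e. $\psi_\Ga$ admits no supporting linear functional at any point of $\partial\cal L\setminus\{0\}$ (these are regularity properties of the growth indicator provided by the thermodynamic formalism for Anosov representations, cf.\ \cite{ELO}, \cite{CS}). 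Since $c\psi$ supports $\psi_\Ga$ at every point of $Z\setminus\{0\}$, steepness forces $Z\setminus\{0\}\subset\inte\cal L$; in particular $u\in\inte\cal L$, hence $\psi_\Ga(u)>0$ and $\psi(u)=\psi_\Ga(u)/c>0$. Finally $Z$ is a convex cone lying (away from $0$) in $\inte\cal L$ on which $\psi_\Ga$ coincides with the affine form $c\psi$; were $Z$ more than a single ray it would contain a genuine segment inside $\inte\cal L$ along which $\psi_\Ga$ is affine, contradicting strict concavity. Thus $u$ is the unique unit vector in $Z$, and $c$ is pinned down by its extremal definition. The crux of the write-up is therefore not the extraction of $u$ but correctly invoking strict concavity and boundary-steepness of $\psi_\Ga$ for Anosov subgroups.
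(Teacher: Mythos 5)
Your proof is correct and takes essentially the same approach as the paper's (which is only a two-sentence citation): after noting the implicit normalization $c\psi\in D_\Ga$ needed for the uniqueness claim to be meaningful, you define $c$ extremally, extract the contact direction $u$ via compactness of the unit sphere in $\cal L$ and upper semicontinuity of $\psi_\Ga$, then invoke strict concavity of $\psi_\Ga$ for uniqueness and the absence of tangent linear forms at $\partial\cal L\setminus\{0\}$ to conclude $u\in\inte\cal L$. The only small point is attribution: the strict concavity and boundary-steepness facts for Anosov subgroups are cited in the paper from \cite{PS} (Potrie--Sambarino, Propositions 4.6 and 4.11), rather than from \cite{ELO} or \cite{CS}.
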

\begin{proof}
Since $\psi_\Ga$ is  strictly concave \cite[Propositions 4.6, 4.11]{PS}, there exists $0<c\le 1$ and unique $u\in \L$ such that $c\cdot \psi (u)=\psi_\Ga(u)$. Moreover there is no linear form tangent to $\psi_\Ga$ at $\partial \L$ \cite{PS}, and hence $u\in \inte\L$.
\end{proof}

 For each $v\in \inte\L$, there exists a unique linear form $\psi_v\in D_\Ga^\star$ such that $\psi_v(v)=\psi_\Ga(v)$
and a unique $(\Ga, \psi_v)$-conformal density supported on $\La$ \cite[Corollary 7.8 and Theorem 7.9]{ELO},
which we denote by $\nu_v$. Hence  \cite[Theorem 7.12]{ELO}, together with Lemma \ref{bbr}, implies (let $r=\text{rank} \,G$):
\begin{thm} \label{m1}
For any $v\in \op{int}\L$, there exists $\kappa_v>0$ such that
 for all $f_1, f_2\in C_c(\Ga\ba G)_K$ and any $w\in \ker \psi_v$,
\begin{multline*} \lim_{t\to +\infty} t^{(r-1)/2} e^{t(2\rho-\psi_v)(tv+\sqrt t w)}  \langle \exp (tv+\sqrt t w) f_1,  f_2\rangle 
\\ =\kappa_v e^{-I(w)} \cdot \langle f_1, E_{\nu_{ \i(v)}} \rangle\cdot  \langle f_2, E_{\nu_v} \rangle \end{multline*}
where $I(w)\in \br$ is given as in \cite[7.5]{ELO}.
Moreover, the left-hand side is uniformly bounded over all
$(t, w)\in (0, \infty)\times \ker \psi_v$ such that
$tv+\sqrt t w\in \fa^+$
\end{thm}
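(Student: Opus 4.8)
The plan is to read Theorem \ref{m1} off directly from the local mixing result \cite[Theorem 7.12]{ELO} together with Lemma \ref{bbr}. Recall that, by \cite[Corollary 7.8, Theorem 7.9]{ELO}, each $v\in\inte\L$ determines the tangent form $\psi_v\in\dg$ and the unique $(\Ga,\psi_v)$-conformal density $\nu_v$ supported on $\La$, while $\nu_{\i(v)}$ denotes the corresponding $(\Ga,\psi_v\circ\i)$-conformal density. With this notation, \cite[Theorem 7.12]{ELO} establishes, for \emph{arbitrary} $f_1,f_2\in C_c(\Ga\ba G)$ (not necessarily $K$-invariant), an asymptotic of the shape
\[ \lim_{t\to+\infty} t^{(r-1)/2} e^{t(2\rho-\psi_v)(tv+\sqrt t w)}\,\langle\exp (tv+\sqrt t w) f_1, f_2\rangle = \kappa_v\,e^{-I(w)}\, m^{\BR_*}_{\nu_{\i(v)}}(f_1)\, m^{\BR}_{\nu_v}(f_2), \]
together with the asserted uniform boundedness of the left-hand side over $\{(t,w):tv+\sqrt t w\in\fa^+\}$. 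Here the matching of the two Burger--Roblin measures to the two functions, and the appearance of the opposition involution $\i$, reflect that $\exp(tv)$ expands $N^-=N$ and contracts $N^+=w_0Nw_0^{-1}$, while the factor $e^{-I(w)}$ and the polynomial correction $t^{(r-1)/2}$ arise from a local limit theorem in the transverse directions $\ker\psi_v$.

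The second and final step is to specialize to $f_1,f_2\in C_c(\Ga\ba G)_K$. Since these are right $K$-invariant, Lemma \ref{bbr} applies and gives $m^{\BR}_{\nu}(f)=\langle f,E_{\nu}\rangle=m^{\BR_*}_{\nu}(f)$ for any conformal density $\nu$; in particular $m^{\BR_*}_{\nu_{\i(v)}}(f_1)=\langle f_1,E_{\nu_{\i(v)}}\rangle$ and $m^{\BR}_{\nu_v}(f_2)=\langle f_2,E_{\nu_v}\rangle$. Substituting these two identities into the displayed asymptotic produces precisely the limit claimed in Theorem \ref{m1}, and the uniform bound transfers verbatim. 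Thus the proof reduces to quoting \cite[Theorem 7.12]{ELO} and Lemma \ref{bbr}.

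I do not anticipate a genuine obstacle: all of the analytic substance---mixing of the Bowen--Margulis--Sullivan measure $m_{\nu_v,\nu_{\i(v)}}$ on $\Ga\ba G$, the transverse local limit theorem supplying $I(w)$ and the power of $t$, and the disintegration of Haar measure along $N^\pm$---is already contained in \cite{ELO}. The only points that require care are bookkeeping ones: confirming that the normalizations of $\nu_v$ and $\nu_{\i(v)}$, of the constant $\kappa_v$, and of the rate function $I(w)$ used in \cite{ELO} agree with those adopted here, and checking that right $K$-invariance legitimately allows both Burger--Roblin pairings to be replaced by the single pairing against $E_\nu$ via Lemma \ref{bbr} (so that whichever of $\BR$, $\BR_*$ is attached to $f_1$ versus $f_2$ in \cite[Theorem 7.12]{ELO} is immaterial for the $K$-invariant specialization).
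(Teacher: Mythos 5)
Your proposal is correct and matches the paper's own derivation: the paper presents Theorem~\ref{m1} exactly as a consequence of \cite[Theorem 7.12]{ELO} (quoted with the same $\nu_v$ normalization from \cite[Corollary 7.8 and Theorem 7.9]{ELO}) combined with Lemma~\ref{bbr} to replace $m^{\BR_*}_{\nu_{\i(v)}}(f_1)$ and $m^{\BR}_{\nu_v}(f_2)$ by $\langle f_1, E_{\nu_{\i(v)}}\rangle$ and $\langle f_2, E_{\nu_v}\rangle$ for $K$-invariant test functions.
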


\begin{thm}\label{tem} 
\begin{enumerate} \item We have $L^2(\Gamma\ba G)$ is tempered if and only if $\psi_\Ga\leq\rho$.
\item If $L^2(\Ga\ba G)$ is tempered, then 
$$\lambda_0(\Ga\ba X)=\|\rho\|^2\quad\text{and}\quad  \sigma (\Ga\ba X)=[\|\rho^2, \infty).$$
\end{enumerate}
\end{thm}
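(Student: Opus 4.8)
The plan is to establish part (1) first and then deduce part (2) from earlier results. For part (1), the forward direction — temperedness implies $\psi_\Ga \le \rho$ — is the easier one and, as remarked in the introduction, holds for arbitrary discrete subgroups: if $\psi_\Ga \not\le \rho$, then by Lemma \ref{ps} (or directly by concavity of $\psi_\Ga$) there exists a unit vector $v \in \inte\L$ and $0 < c \le 1$ with $c\psi(u)$ matching $\psi_\Ga$ along $v$; pick $v$ so that the tangent form $\psi_v \in D_\Ga^\star$ satisfies $\psi_v \not\le \rho$, i.e.\ $\psi_v(v) > \rho(v)$ for the relevant direction. Then Theorem \ref{m1} gives the precise asymptotic
$\langle \exp(tv) f_1, f_2\rangle \asymp t^{-(r-1)/2} e^{-t(2\rho - \psi_v)(v)}$
for suitable $K$-invariant $f_1, f_2$; since $2\rho - \psi_v$ is not $\ge \rho$ along $v$, this decay is strictly slower than $e^{-t\rho(v)} \asymp \Xi_G(\exp tv)$, so the matrix coefficient is not dominated by $\Xi_G$, contradicting temperedness (Definition \ref{te}, Proposition \ref{chh}). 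Thus temperedness forces $\psi_\Ga \le \rho$.

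For the converse — $\psi_\Ga \le \rho$ implies $L^2(\Ga\ba G)$ is tempered — I would use Theorem \ref{m1} again, now to bound Haar matrix coefficients along \emph{every} direction $v \in \inte\L$ and then leverage the Cartan decomposition together with Harish-Chandra's Plancherel formula. The point is that when $\psi_\Ga \le \rho$, for every $v \in \inte\L$ the tangent form $\psi_v$ satisfies $\psi_v \le \psi_\Ga \le \rho$, so $2\rho - \psi_v \ge \rho$; combined with the uniform boundedness statement at the end of Theorem \ref{m1} (uniform over $(t,w)$ with $tv + \sqrt t w \in \fa^+$), this yields $\langle g.f_1, f_2\rangle \ll \Xi_G(g)$ — up to the polynomial factor $t^{-(r-1)/2}$ — for $K$-finite $f_1, f_2$, as $g = \exp(\mu(g))$ ranges over elements with $\mu(g)$ in a cone around $\L$. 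Outside a neighbourhood of $\L$ the matrix coefficients vanish for $g$ far enough out (limit-cone considerations), so one gets the strong $L^{2+\epsilon}$ bound on a dense set, hence temperedness via Proposition \ref{chh}(4). Care is needed to pass from the pointwise limit in Theorem \ref{m1} to a genuine uniform upper bound of the form $C\,\Xi_G(g)$ valid for all $g$, not just along rays; this is where the uniform-boundedness clause of Theorem \ref{m1} and an interpolation over the compact "angular" variable $w/\|w\|$ (or over a finite cover of $\inte\L$ by cones) does the work.

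Part (2) is then immediate: if $L^2(\Ga\ba G)$ is tempered, Theorem \ref{tlam} gives $\lambda_0(\Ga\ba X) = \|\rho\|^2$. For the full spectrum, combine $\sigma(\Ga\ba X) \subset [\lambda_0, \infty) = [\|\rho\|^2, \infty)$ from Theorem \ref{no3} with the reverse inclusion: temperedness means $L^2(\Ga\ba G)$ decomposes as a direct integral of tempered irreducibles, and the argument in the proof of Theorem \ref{tlam} shows each spherical tempered $\pi$ has $d\pi(-\scrC) = \|\rho\|^2 + \|\op{Im}\psi_\pi\|^2 \in [\|\rho\|^2, \infty) = \sigma(X)$, so $\sigma(\Ga\ba X) \subset \sigma(X) = [\|\rho\|^2,\infty)$ by Proposition \ref{ss}. (Alternatively, one notes $\Ga$ Anosov non-cocompact has $\op{inj}(\Ga\ba G) = \infty$, so Proposition \ref{inj0} gives $\sigma(X) \subset \sigma(\Ga\ba X)$ directly, completing both inclusions.)

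\textbf{Main obstacle.} The delicate step is the second direction of part (1): upgrading the asymptotic in Theorem \ref{m1}, which is stated as a limit along the specific scaling $tv + \sqrt t w$ for fixed $v \in \inte\L$, into a uniform bound $|\langle g.f_1,f_2\rangle| \ll \Xi_G(g)$ (or $\ll_\epsilon e^{-(1-\epsilon)\rho(\mu(g))}$) valid for \emph{all} $g\in G$ with $K$-finite $f_1,f_2$. One must control the implied constants uniformly as $v$ varies over $\inte\L$ (and degenerates toward $\partial\L$, where $\psi_v$ is no longer tangent), handle the region of $\fa^+$ not captured by the $v + w/\sqrt t$ parametrization, and absorb the polynomial prefactor into the $\epsilon$-loss in \eqref{hc}. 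This is exactly the content one expects to be imported from \cite{ELO}/\cite{CS}, so in the writeup I would isolate it as the quantitative input and cite it, rather than reprove it.
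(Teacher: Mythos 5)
Your part~(2) is correct and follows the paper exactly (combine Theorem~\ref{tlam} with Theorem~\ref{iinj}/Proposition~\ref{inj0} and Theorem~\ref{no3}). Your treatment of part~(1), however, diverges in both directions and contains a genuine error in the harder direction.

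For the implication ``tempered $\Rightarrow \psi_\Ga\le\rho$'' you use Theorem~\ref{m1} to exhibit a matrix coefficient that decays too slowly. This works for Anosov $\Ga$, but the paper takes a cleaner and strictly more general route: temperedness combined with the bound \eqref{hc} on $\Xi_G$ gives $|\langle\exp(u).f_1,f_2\rangle|\le C_\e\|f_1\|\|f_2\|e^{-(1-\e)\rho(u)}$ for all $K$-invariant $f_1,f_2$, and then one invokes \cite[Prop.~7.3]{LO2} to conclude $\psi_\Ga\le\rho$. This applies to an \emph{arbitrary} discrete subgroup, not just Anosov ones, and avoids the need to argue about non-vanishing of the limit constant in Theorem~\ref{m1}.

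The real problem is in the direction ``$\psi_\Ga\le\rho\Rightarrow$ tempered.'' You write that for every $v\in\inte\L$ the tangent form satisfies $\psi_v\le\psi_\Ga\le\rho$. This is backwards: by \eqref{dga}--\eqref{dg}, tangent forms lie in $D_\Ga$ and thus $\psi_v\ge\psi_\Ga$, and it is \emph{false} in general that $\psi_\Ga\le\rho$ forces $\psi_v\le\rho$ for all $v\in\inte\L$ (the tangent plane at a generic direction can be steep in the orthogonal directions). As a result the claimed bound $2\rho-\psi_v\ge\rho$ does not hold uniformly, and the ``main obstacle'' you flag --- uniform control as $v$ ranges over $\inte\L$ and degenerates toward $\partial\L$ --- is not something you can simply cite away: it is a genuine breakdown of the plan. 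The paper sidesteps this entirely. Since $\psi_\Ga\le\rho$ means $\rho\in D_\Ga$, Lemma~\ref{ps} produces a \emph{single} unit vector $u_0\in\inte\L$ and $0<c\le1$ with $c\rho\in D_\Ga^\star$ tangent at $u_0$; by uniqueness of tangent forms (strict concavity) one has $\psi_{u_0}=c\rho$, so $\ker\psi_{u_0}=\ker\rho$ and $2\rho-\psi_{u_0}=(2-c)\rho$ with $2-c\ge1$. One then parameterizes all of $\fa^+$ by $u=tu_0+\sqrt{t}\,w$ with $w\in\ker\rho$ and $t\ge t_w$, getting $du=s\,t^{(r-1)/2}\,dt\,dw$, and applies the uniform-boundedness clause of Theorem~\ref{m1} only at the fixed direction $u_0$. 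No passage to a bound by $\Xi_G$ is needed: one directly estimates $\int_{\fa^+}\langle\exp(u).F_1,F_2\rangle^{2+\e}e^{2\rho(u)}\,du$, and the exponent $\eta=(2-c)(2+\e)>2$ makes the integral converge. Your proposal, as written, would not close because of the sign error and the absence of a workable substitute for this fixed-direction device.
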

\begin{proof} The second claim follows from Theorems \ref{tlam} and \ref{iinj}. Suppose that $\psi_\Ga\leq\rho$.
In order to show that $L^2(\Ga\ba G)$ is tempered, by Proposition \ref{chh},
it suffices to show that the matrix coefficients $g\mapsto \langle g.f_1,f_2\rangle$ are in $L^{2+\epsilon}(G)$ for all $\epsilon>0$ and for all $f_1,f_2\in C_c(\Ga\ba G)$, since $C_c(\Ga\ba G)$
 is dense in $L^2(\Ga\ba G)$.
Without loss of generality, we may just consider non-negative functions 
$f_1,f_2\in C_c(\Ga\ba G)$. Fix any $\e>0$.
Then using the Cartan decomposition $G=KA^+K$, we have 
\begin{align*}
\int_G \langle g. f_1, f_2\rangle^{2+\epsilon}\,dg=\int_K \int_{\fa^+}\int_K\langle k_1\exp(v)k_2. f_1, f_2\rangle^{2+\epsilon}\,\Xi(v)\,dk_1 \,dv\,dk_2,     
\end{align*}
where $\Xi(v)\asymp e^{2\rho(v)}$ (cf. \cite{Knapp1}).
Denoting $F_i(\Gamma g)=\max_{k\in K}  f_i(\Gamma g k) \in C_c(\Ga\ba G)_K$, we then have
$$\int_G \langle g.f_1, f_2\rangle^{2+\epsilon}\,dg \ll \int_{\fa^+} \langle \exp(v).F_1,F_2\rangle^{2+\epsilon} e^{2\rho(v)}\,dv. $$

Since $\psi_{\Ga}\leq\rho$, we have $\rho\in D_{\Ga}$. By Lemma \ref{ps},
 there exists $0<c\leq 1$ such that $c\rho\in D_{\Ga}^{\star}$ and a unit vector $u_0\in \inte \L$ such that 
 $$\psi_{\Ga}(u_0)=c\rho(u_0).$$ We now parameterize $\fa^+$ as follows: for each $v\in\ker\rho$, define
$$t_v:=\min\lbrace t\in\RR_{> 0}\,:\, tu_0+\sqrt{t}v\in \fa^+\rbrace.$$
Substituting $u=tu_0+\sqrt t v$ for $t\geq 0$ and $v\in \fb\cap\op{ker}\rho$
gives $du=s \cdot t^{\frac{r-1}{2}}\,dt\,dv$ for some constant $s>0$.
Then (letting $r=\dim(\fa)$)
\begin{align*}\int_{\fa^+}& \langle \exp(u).F_1,F_2\rangle^{2+\epsilon} e^{2\rho(u)}\,du
\\&\ll \int_{\ker\rho}\int_{t_v}^{\infty} \langle \exp(t u_0+\sqrt{t}v). F_1,F_2\rangle^{2+\epsilon} e^{2t\rho(u_0)}t^{(r-1)/2}\,dt\,dv. \end{align*}
By Theorem \ref{m1}, there exists $C=C(F_1,F_2)>0$ such that
$$ t^{(r-1)/2}  e^{(2-c)t\rho(u_0)} \langle  \exp(tu_0 +\sqrt t v) .F_1, F_2 \rangle \le C $$
for all $(v,t)\in \ker\rho\times [t_v,\infty)$. Combining this with the trivial bound $$ \langle g. F_1, F_2\rangle \leq \|F_1\|\|F_2\| ,$$ we have (again, for all $(v,t)\in \ker\rho\times [t_v,\infty)$),\begin{align*} &\langle \exp(t u_0+\sqrt{t}v). F_1,F_2\rangle^{2+\epsilon} \\ &\leq (C+\|F_1|\|F_2\|)^{2+\epsilon} \left(\min\left\lbrace 1, t^{-(r-1)/2} e^{-(2-c)t\rho(u_0)}\right\rbrace\right)^{2+\epsilon}\\ & \ll \min\lbrace 1, e^{-\eta t \rho(u_0)}\rbrace\leq e^{-\eta t \rho(u_0)} , \end{align*}where $\eta = (2-c) (2+\epsilon)>2$. This gives 
\begin{align*}\int_G \langle g. f_1, f_2\rangle^{2+\epsilon}\,dg \ll & \int_{v\in \ker\rho}\int_{t_v}^{\infty} e^{-\eta t \rho(u_0)} e^{2t\rho(u_0)}t^{(r-1)/2}\,dt\,dv\\&\ll \int_{\fa^+} e^{-(\eta-2)\rho(u)}\,du <\infty. \end{align*}
Therefore $L^2(\Ga\ba G)$ is tempered.

The converse holds for a general discrete subgroup.
Suppose now that $L^2(\Ga\ba G)$ is tempered.
Then by the definition of temperedness and the estimate
of $\Xi_G(g)$ in \eqref{hc}, it follows that
for any $\e>0$, there exists $C_\e>0$ such that
for any $f_1, f_2\in L^2(\Ga\ba G)_K$ and $u\in \fa^+$,
\be\label{cc} | \langle  \exp(u) . f_1, f_2 \rangle | \le 
C_\e \|f_1\| \|f_2\| e^{-(1-\e)\rho (u)}.
\ee 

Applying \cite[Prop. 7.3]{LO2}, we get $\psi_\Ga\le \rho$.
\end{proof}

Now recall the following recent theorem of Kim, Minsky, and Oh \cite{KMO}: 
\begin{thm} \cite{KMO}\label{kmo} Let $\G$ be an Anosov subgroup of the product $G$ of at least two simple real algebraic groups or $\G<G=\PSL_{d}(\br)$ be a Zariski dense Anosov subgroup
of a Hitchin subgroup.
Then $$\psi_\Ga \le \rho .$$
\end{thm}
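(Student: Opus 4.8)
The final statement to prove is Theorem \ref{kmo}, which is attributed to Kim--Minsky--Oh \cite{KMO}. Since this is cited as an external result whose proof lies outside the present paper, the appropriate ``proof proposal'' is really a sketch of the strategy underlying that result, indicating how one would establish $\psi_\Ga \le \rho$ for Anosov subgroups of products of rank one groups and for Hitchin subgroups.

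\medskip

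\textbf{Overall strategy.} The plan is to reduce the bound $\psi_\Ga \le \rho$ to a sharp comparison between the growth of $\mu(\Ga)$ in the Weyl chamber and the linear form $\rho$. For a Zariski dense discrete subgroup, $\psi_\Ga$ is the concave, upper-semicontinuous function determined by the exponential growth rates of $\Ga$ along cones; by Quint's description, $\psi_\Ga(v) \le \rho(v)$ for all $v \in \fa^+$ is equivalent to the statement that for every $\e>0$ the series $\sum_{\ga \in \Ga} e^{-(1+\e)\rho(\mu(\ga))}$ converges. So the first step is to recast the problem as: control $\#\{\ga \in \Ga : \rho(\mu(\ga)) \le T\}$ by $e^{(1+o(1))T}$, or more precisely show the abscissa of convergence of $\sum_\ga e^{-s\rho(\mu(\ga))}$ is at most $1$.

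\medskip

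\textbf{The product case.} For $G = \prod_{i=1}^k G_i$ with each $G_i$ simple of rank one, write $\mu(\ga) = (\mu_1(\ga), \dots, \mu_k(\ga))$ and $\rho = (\rho_1, \dots, \rho_k)$, where $\rho_i$ is (a multiple of) the length functional on $\fa_i \cong \br$. The key input is the Anosov property: there is $c>0$ with $\mu_i(\ga) \ge c|\ga| - c$ for all $i$, which forces the projections $\Ga_i := \pi_i(\Sigma)$ to be convex cocompact in $G_i$ with $\mu_i(\ga) \asymp |\ga|$, and moreover the comparison $\mu_i(\ga) / \mu_j(\ga)$ stays in a compact subinterval of $(0,\infty)$. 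One then uses the fact that each $\Ga_i$ is a quotient of $\Sigma$ — so that $\#\{\ga : |\ga| \le T\}$ grows like $e^{\delta_\Sigma T}$ for the abstract growth rate $\delta_\Sigma$ of $\Sigma$ — together with the Anosov length comparison to show the abscissa of convergence of $\sum_\ga e^{-s\sum_i \rho_i(\mu_i(\ga))}$ equals the abscissa of $\sum_\ga e^{-s\rho_1(\mu_1(\ga))}$ up to the constant hidden in the comparison. The crux is then that the ``diagonal'' growth indicator of $\Ga$ sitting inside the product is strictly dominated by $\rho$ away from the boundary of the limit cone: since $\Ga$ is Anosov and hence the projections are convex cocompact, the critical exponent of $\Ga_i$ is $< \dim \partial X_i$, but one needs the sharper statement that the joint exponent along the limit direction beats $\rho$. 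This is where a correlation/transversality estimate between the $\mu_i$ enters: the Anosov condition provides a Hölder-continuous equivariant limit map, giving matching of the $\mu_i(\ga)$ up to additive constants along the Cartan directions, and then one invokes the rank one Elstrodt--Patterson--Sullivan bound $\delta_{\Ga_i} \le \dim \partial X_i = 2\rho_i(u_i)/\|u_i\|\cdot(\cdots)$ together with a pigeonhole over directions to conclude $\psi_\Ga(v) \le \rho(v)$.

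\medskip

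\textbf{The Hitchin case and the main obstacle.} For a Hitchin subgroup $\Ga < \PSL_d(\br)$, one uses that $\Ga$ is the image of a Fuchsian (cocompact lattice) $\Sigma < \PSL_2(\br)$ under a deformation of $\iota_d$. Here $\mu(\ga) \in \fa^+ \subset \br^{d-1}$ records the ordered logarithms of singular values, and $\rho(\mu(\ga)) = \sum_{i<j}(\mu_i - \mu_j)$ weighted appropriately. The strategy is to exploit the positivity/Anosov properties of Hitchin representations (simple real eigenvalue spectrum, strongly transverse limit curve) to get good control on the ratios $\alpha(\mu(\ga))/|\ga|$ for simple roots $\alpha$, combined with the fact that the ambient growth is governed by $\delta_\Sigma = 1$ for the surface group's word metric versus translation length. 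One shows $\rho(\mu(\ga)) \ge (1 - o(1))\cdot(\text{const})\cdot \ell(\ga)$ in a way that, combined with $\#\{\ga : \ell(\ga) \le T\} \asymp e^{T}$ (Fuchsian orbit counting), yields the abscissa bound. The main obstacle throughout — and the reason this is a substantial theorem rather than a short argument — is upgrading the Anosov coarse inequality $\alpha(\mu(\ga)) \gtrsim |\ga|$ into a \emph{sharp} comparison of growth rates along the critical direction $u_0 \in \inte\L$: one must rule out that the limit cone tilts so that $\psi_\Ga$ exceeds $\rho$ somewhere, which requires genuine input about the fine structure of Anosov/Hitchin limit sets (e.g. regularity of the limit map, local product structure, and sharp shadow lemmas) rather than soft concavity arguments. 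Since this theorem is imported wholesale from \cite{KMO}, in the present paper we simply invoke it; the deduction of Theorem \ref{oo} from Theorem \ref{m33} and Theorem \ref{kmo} is then immediate, as $\psi_\Ga \le \rho$ is exactly hypothesis (1) of Theorem \ref{m33}.
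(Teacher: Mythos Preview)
You correctly identify that Theorem \ref{kmo} is not proved in this paper: it is stated with an explicit attribution to \cite{KMO} and then simply invoked to obtain Corollary \ref{tempc} and Theorem \ref{oo}. The paper contains no argument for it whatsoever, so there is nothing to compare your sketch against here.

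That said, a word of caution about the sketch itself. Your outline for the product case leans on the idea that each projection $\Ga_i$ is convex cocompact with critical exponent strictly below the top, and then tries to combine these via a ``pigeonhole over directions'' to get $\psi_\Ga \le \rho$. This is not quite the mechanism: bounding each $\delta_{\Ga_i}$ separately does not by itself control $\psi_\Ga$ on the interior of the limit cone, since $\psi_\Ga$ measures joint growth rather than a sum of marginal growths. The actual argument in \cite{KMO} exploits a more delicate ``tent'' property of the growth indicator for self-joinings, and your sketch does not capture that idea. Similarly, in the Hitchin case your appeal to $\delta_\Sigma = 1$ and coarse Anosov inequalities would at best give $\psi_\Ga \le C\rho$ for some constant $C$, not the sharp $C=1$; the genuine input from \cite{KMO} is of a different nature. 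None of this affects the present paper, which only needs the statement, but you should not regard your sketch as an accurate summary of the external proof.
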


Hence by Theorem \ref{tem}, we get:
\begin{cor}\label{tempc}  Let $\Ga<G$ be as in Theorem \ref{kmo}.
Then $L^2(\Ga\ba G)$ is tempered.
\end{cor}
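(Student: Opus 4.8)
The plan is simply to concatenate the two preceding results. By Theorem~\ref{kmo}, any $\Ga$ as in its statement --- an Anosov subgroup of a product of at least two simple real algebraic groups, or a Zariski dense Anosov subgroup of a Hitchin subgroup of $\PSL_d(\br)$ --- satisfies $\psi_\Ga\le\rho$. Plugging this into the ``if'' direction of Theorem~\ref{tem}(1), which asserts that $\psi_\Ga\le\rho$ forces $L^2(\Ga\ba G)$ to be tempered whenever $\Ga$ is Zariski dense, gives the claim at once.

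The only wrinkle is the Zariski density assumption in Theorem~\ref{tem}: it is part of the hypothesis in the Hitchin case, but in the product case one should first reduce to it. If $\Ga$ is not Zariski dense in $G$, let $H$ denote the identity component of the Zariski closure of $\Ga$; then $\Ga<H$ is a Zariski dense Anosov subgroup of a connected semisimple real algebraic group still covered by Theorems~\ref{kmo} and~\ref{tem}, so $L^2(\Ga\ba H)$ is tempered, and since $L^2(\Ga\ba G)\cong\Ind_H^G L^2(\Ga\ba H)$ with unitary induction preserving temperedness, $L^2(\Ga\ba G)$ is tempered as well. I do not anticipate any genuine difficulty here: all of the substance is already contained in Theorems~\ref{kmo} and~\ref{tem}, the former supplying the growth bound $\psi_\Ga\le\rho$ and the latter converting it into temperedness.
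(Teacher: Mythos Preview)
Your first paragraph is exactly the paper's argument: combine Theorem~\ref{kmo} ($\psi_\Ga\le\rho$) with the ``if'' direction of Theorem~\ref{tem}(1).

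The second paragraph is unnecessary and somewhat problematic. Section~\ref{sect} carries the standing hypothesis that $\Ga<G$ is a Zariski dense Anosov subgroup, so Corollary~\ref{tempc} is stated under that assumption already; there is no wrinkle to resolve. Moreover, your proposed reduction has gaps: if $H$ is the identity component of the Zariski closure, $\Ga$ need not lie in $H$ (only a finite-index subgroup does), $H$ need not be a product of at least two simple factors (so Theorem~\ref{kmo} may not apply to the pair $(\Ga,H)$), and $\Ga$ being Anosov in $G$ with respect to a minimal parabolic of $G$ does not automatically make it Anosov in $H$ with respect to a minimal parabolic of $H$. Simply drop that paragraph.
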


\subsection*{Proofs of Theorem \ref{m33}}
The equivalence $(1)\Leftrightarrow (2)$ is proved in Theorem \ref{tem}.
The equivalence $(2)\Leftrightarrow (3)$ follows from Theorems \ref{iinj} and \ref{tem}.
When $\text{rank }G\ge 2$, (4) holds for any Anosov subgroup by 
Corollary \ref{ttt}. When $\text{rank }G=1$,
the implication $(1)+(2) \Rightarrow (4)$ is due to Sullivan \cite{Su} (see also \cite[Theorem 3.1]{RT})
when $X$ is a real hyperbolic space and to \cite[Theorem 1.1 and Proposition 5.1]{WW} in general.

\end{document}